\documentclass[11pt]{article}
\usepackage{etex}

\pagestyle{plain}               

\usepackage[margin=1in]{geometry}
\usepackage{stackengine}

\usepackage{stmaryrd}

\usepackage{color}
\usepackage[latin1]{inputenc}
\usepackage[T1]{fontenc}
\usepackage[normalem]{ulem}
\usepackage[english]{babel}
\usepackage{verbatim}
\usepackage{graphicx}
\usepackage{enumerate}
\usepackage{amsmath,amssymb,amsfonts,amsthm,amscd,mathrsfs}
\usepackage{array}

\usepackage{dsfont}

\usepackage[T1]{fontenc}
\usepackage{babel}

\usepackage{url}

\usepackage{caption}
\usepackage{subcaption}
\usepackage[bookmarksopen, bookmarksnumbered]{hyperref}

\usepackage{tikz}
\usetikzlibrary{arrows}
\usetikzlibrary{arrows.meta}
\definecolor{wwhhii}{rgb}{1.,1.,1.}
\definecolor{rreedd}{rgb}{1.,0.,0.}
\definecolor{uuuuuu}{rgb}{0.26666666666666666,0.26666666666666666,0.26666666666666666}

\newtheorem{theorem}{Theorem}[section]

\newtheorem{lemma}[theorem]{Lemma}

\newtheorem{prop}[theorem]{Proposition}
\newtheorem{cor}[theorem]{Corollary}

\theoremstyle{definition}

\input xy
\xyoption{all}

\DeclareMathOperator{\argmax}{argmax}
\DeclareMathOperator{\Exp}{Exp}

\DeclareMathOperator{\argmin}{argmin}

\newcommand{\bu}{\bullet}

\newcommand{\E}{\mathbb E}
\newcommand{\PP}{\mathbb P}
\newcommand{\Z}{\mathbb Z}
\newcommand{\R}{\mathbb R}

\newcommand{\N}{\mathbb N}

\newcommand{\LL}{\mathbb L}
\newcommand{\VV}{\mathbb V}
\newcommand{\HH}{\mathbb H}

\newcommand{\don}{\mathds{1}}

\newcommand{\cD}{\mathcal D}

\newcommand{\cT}{\mathcal T}
\newcommand{\cB}{\mathcal B}

\newcommand{\cF}{\mathcal F}
\newcommand{\cE}{\mathcal E}
\newcommand{\cA}{\mathcal A}
\newcommand{\cU}{\mathcal U}
\newcommand{\cL}{\mathcal L}
\newcommand{\cM}{\mathcal M}

\newcommand{\cH}{\mathcal H}

\newcommand{\sS}{\mathscr S}

\newcommand{\sT}{\mathscr T}
\newcommand{\sM}{\mathscr M}

\newcommand{\fP}{\mathfrak P}
\newcommand{\fp}{\mathfrak p}
\newcommand{\sN}{\mathscr N}

\newcommand{\bc}{\mathbf c}

\newcommand{\bB}{\mathbf B}
\newcommand{\bG}{\mathbf G}

\newcommand{\bPsi}{\mathbf \Psi}
\newcommand{\hbPsi}{\Tilde{\bPsi}}
\newcommand{\brho}{\boldsymbol{\rho}}

\newcommand{\hh}{\hat{h}}

\newcommand{\br}{\mathbf r}
\newcommand{\boo}{\mathbf 0}
\newcommand{\bn}{\mathbf n}

\newcommand{\bfeta}{\boldsymbol \eta}
\newcommand{\boeta}{\boldsymbol \zeta}

\newcommand{\oL}{L}
\newcommand{\oI}{I}
\newcommand{\oxi}{\xi}

\newcommand{\oga}{\gamma}

\newcommand{\oY}{\overline{Y}}
\newcommand{\oR}{\overline{R}}

\newcommand{\ocT}{\mathbf{T}}

\newcommand{\heta}{\hat{\eta}}

\newcommand{\isd}{\overset{d}{=}}

\makeatletter
\renewcommand\tableofcontents{%
  \null\hfill\textbf{\Large\contentsname}\hfill\null\par
  \@mkboth{\MakeUppercase\contentsname}{\MakeUppercase\contentsname}%
  \@starttoc{toc}%
}

\g@addto@macro\normalsize{%
  \setlength\abovedisplayskip{5pt}
  \setlength\belowdisplayskip{5pt}
  \setlength\abovedisplayshortskip{3pt}
  \setlength\belowdisplayshortskip{3pt}
}

\makeatother

\numberwithin{equation}{section}

\begin{document}
\title{Convergence of the Environment Seen from Geodesics in Exponential Last-Passage Percolation}

\author{James B. Martin
\thanks{Department of Statistics, University of Oxford. e-mail: martin@stats.ox.ac.uk}
\and Allan Sly
\thanks{Department of Mathematics, Princeton University. e-mail: asly@princeton.edu} \and Lingfu Zhang
\thanks{Department of Mathematics, Princeton University, and Department of Statistics, University of California, Berkeley, and Division of Physics, Mathematics and Astronomy, California Institute of Technology. e-mail: lingfuz@caltech.edu}
}
\date{}

\maketitle

\begin{abstract}
A well-known question in planar first-passage percolation concerns the convergence of the empirical distribution of weights as seen along geodesics.
We demonstrate this convergence for an explicit model, directed last-passage percolation on $\Z^2$ with i.i.d.\;exponential weights, and provide explicit formulae for the limiting distributions, which depend on the asymptotic direction.
For example, for geodesics in the direction of the diagonal, the limiting weight distribution has density $(1/4+x/2+x^2/8)e^{-x}$, and so is a mixture of Gamma($1,1$), Gamma($2,1$) and Gamma($3,1$) distributions with weights $1/4$, $1/2$, and $1/4$ respectively.
More generally, we study the local environment as seen from vertices along geodesics 
(including information about the shape of the path and about the weights on and off the path
in a local neighborhood). 
We consider finite geodesics from $(0,0)$ to $n\brho$ for some vector $\brho$ in the first quadrant, in the limit as $n\to\infty$, as well as semi-infinite geodesics in direction $\brho$.
We show almost sure convergence of the empirical distributions of the environments along these geodesics, as well as convergence of the distributions of the environment around a typical point in these geodesics, to the same limiting distribution, for which we give an explicit description. 

We make extensive use of a correspondence
with TASEP as seen from an isolated second-class
particle for which we prove new results concerning
ergodicity and convergence to equilibrium. 
Our analysis relies  on geometric arguments involving estimates for last-passage times, available from the integrable probability literature.
\end{abstract}
\begin{figure}[hbt!]
    \centering
\begin{tikzpicture}[line cap=round,line join=round,>=triangle 45,x=3.2cm,y=3.2cm]
\clip(-0.5,-0.2) rectangle (2.5,2.2);

\fill[line width=0.pt,color=green,fill=green,fill opacity=0.2]
(-0.25,-0.15) -- (0.25,-0.15) -- (0.25,0.35) -- (-0.25,0.35) -- cycle;
\fill[line width=0.pt,color=green,fill=green,fill opacity=0.2]
(0.05,0.25) -- (0.55,0.25) -- (0.55,0.75) -- (0.05,0.75) -- cycle;
\fill[line width=0.pt,color=green,fill=green,fill opacity=0.2]
(0.65,0.55) -- (1.15,0.55) -- (1.15,1.05) -- (0.65,1.05) -- cycle;
\fill[line width=0.pt,color=green,fill=green,fill opacity=0.2]
(1.15,1.05) -- (1.65,1.05) -- (1.65,1.55) -- (1.15,1.55) -- cycle;
\fill[line width=0.pt,color=green,fill=green,fill opacity=0.2]
(1.75,1.55) -- (2.25,1.55) -- (2.25,2.05) -- (1.75,2.05) -- cycle;

\draw [line width=.1pt, opacity=0.3] (-0.6,-0.1) -- (2.6,-0.1);
\draw [line width=.1pt, opacity=0.3] (-0.6,0.) -- (2.6,0.);
\draw [line width=.1pt, opacity=0.3] (-0.6,0.1) -- (2.6,0.1);
\draw [line width=.1pt, opacity=0.3] (-0.6,0.2) -- (2.6,0.2);
\draw [line width=.1pt, opacity=0.3] (-0.6,0.3) -- (2.6,0.3);
\draw [line width=.1pt, opacity=0.3] (-0.6,0.4) -- (2.6,0.4);
\draw [line width=.1pt, opacity=0.3] (-0.6,0.5) -- (2.6,0.5);
\draw [line width=.1pt, opacity=0.3] (-0.6,0.6) -- (2.6,0.6);
\draw [line width=.1pt, opacity=0.3] (-0.6,0.7) -- (2.6,0.7);
\draw [line width=.1pt, opacity=0.3] (-0.6,0.8) -- (2.6,0.8);
\draw [line width=.1pt, opacity=0.3] (-0.6,0.9) -- (2.6,0.9);
\draw [line width=.1pt, opacity=0.3] (-0.6,1.) -- (2.6,1.);
\draw [line width=.1pt, opacity=0.3] (-0.6,1.1) -- (2.6,1.1);
\draw [line width=.1pt, opacity=0.3] (-0.6,1.2) -- (2.6,1.2);
\draw [line width=.1pt, opacity=0.3] (-0.6,1.3) -- (2.6,1.3);
\draw [line width=.1pt, opacity=0.3] (-0.6,1.4) -- (2.6,1.4);
\draw [line width=.1pt, opacity=0.3] (-0.6,1.5) -- (2.6,1.5);
\draw [line width=.1pt, opacity=0.3] (-0.6,1.6) -- (2.6,1.6);
\draw [line width=.1pt, opacity=0.3] (-0.6,1.7) -- (2.6,1.7);
\draw [line width=.1pt, opacity=0.3] (-0.6,1.8) -- (2.6,1.8);
\draw [line width=.1pt, opacity=0.3] (-0.6,1.9) -- (2.6,1.9);
\draw [line width=.1pt, opacity=0.3] (-0.6,2.) -- (2.6,2.);
\draw [line width=.1pt, opacity=0.3] (-0.6,2.1) -- (2.6,2.1);

\draw [line width=.1pt, opacity=0.3] (-0.4,-0.2) -- (-0.4,2.2);
\draw [line width=.1pt, opacity=0.3] (-0.3,-0.2) -- (-0.3,2.2);
\draw [line width=.1pt, opacity=0.3] (-0.2,-0.2) -- (-0.2,2.2);
\draw [line width=.1pt, opacity=0.3] (-0.1,-0.2) -- (-0.1,2.2);
\draw [line width=.1pt, opacity=0.3] (0.,-0.2) -- (0.,2.2);
\draw [line width=.1pt, opacity=0.3] (0.1,-0.2) -- (0.1,2.2);
\draw [line width=.1pt, opacity=0.3] (0.2,-0.2) -- (0.2,2.2);
\draw [line width=.1pt, opacity=0.3] (0.3,-0.2) -- (0.3,2.2);
\draw [line width=.1pt, opacity=0.3] (0.4,-0.2) -- (0.4,2.2);
\draw [line width=.1pt, opacity=0.3] (0.5,-0.2) -- (0.5,2.2);
\draw [line width=.1pt, opacity=0.3] (0.6,-0.2) -- (0.6,2.2);
\draw [line width=.1pt, opacity=0.3] (0.7,-0.2) -- (0.7,2.2);
\draw [line width=.1pt, opacity=0.3] (0.8,-0.2) -- (0.8,2.2);
\draw [line width=.1pt, opacity=0.3] (0.9,-0.2) -- (0.9,2.2);
\draw [line width=.1pt, opacity=0.3] (1.,-0.2) -- (1.,2.2);
\draw [line width=.1pt, opacity=0.3] (1.1,-0.2) -- (1.1,2.2);
\draw [line width=.1pt, opacity=0.3] (1.2,-0.2) -- (1.2,2.2);
\draw [line width=.1pt, opacity=0.3] (1.3,-0.2) -- (1.3,2.2);
\draw [line width=.1pt, opacity=0.3] (1.4,-0.2) -- (1.4,2.2);
\draw [line width=.1pt, opacity=0.3] (1.5,-0.2) -- (1.5,2.2);
\draw [line width=.1pt, opacity=0.3] (1.6,-0.2) -- (1.6,2.2);
\draw [line width=.1pt, opacity=0.3] (1.7,-0.2) -- (1.7,2.2);
\draw [line width=.1pt, opacity=0.3] (1.8,-0.2) -- (1.8,2.2);
\draw [line width=.1pt, opacity=0.3] (1.9,-0.2) -- (1.9,2.2);
\draw [line width=.1pt, opacity=0.3] (2.,-0.2) -- (2.,2.2);
\draw [line width=.1pt, opacity=0.3] (2.1,-0.2) -- (2.1,2.2);
\draw [line width=.1pt, opacity=0.3] (2.2,-0.2) -- (2.2,2.2);
\draw [line width=.1pt, opacity=0.3] (2.3,-0.2) -- (2.3,2.2);
\draw [line width=.1pt, opacity=0.3] (2.4,-0.2) -- (2.4,2.2);

\draw [red] plot coordinates {(-0.4,-0.1) (-0.3,-0.1) (-0.3,0.) (-0.3,0.1) (-0.2,0.1) (-0.1,0.1) (0.,0.1) (0.,0.2) (0.1,0.2) (0.1,0.3) (0.1,0.4) (0.2,0.4) (0.2,0.5) (0.3,0.5) (0.4,0.5) (0.4,0.6) (0.6,0.6) (0.6,0.7) (0.9,0.7) (0.9,0.9) (1.,0.9) (1.2,0.9) (1.2,1.) (1.3,1.) (1.3,1.3) (1.4,1.3) (1.4,1.5) (1.5,1.5) (1.5,1.7) (1.9,1.7) (1.9,1.8) (2.1,1.8) (2.1,2.) (2.3,2.) (2.3,2.1)  (2.4,2.1)};

\draw [fill=uuuuuu] (-0.4,-0.1) circle (1.0pt);
\draw [fill=uuuuuu] (2.4,2.1) circle (1.0pt);

\draw [fill=uuuuuu] (0.,0.1) circle (1.0pt);
\draw [fill=uuuuuu] (0.3,0.5) circle (1.0pt);
\draw [fill=uuuuuu] (0.9,0.8) circle (1.0pt);
\draw [fill=uuuuuu] (1.4,1.3) circle (1.0pt);
\draw [fill=uuuuuu] (2.,1.8) circle (1.0pt);

\end{tikzpicture}
\caption{An illustration of local environments along a finite geodesic.}  
\end{figure}

\setcounter{tocdepth}{2}
\tableofcontents

\section{Introduction}

In this article we study exactly solvable models of planar directed last-passage percolation (LPP), an instance of the more general Kardar-Parisi-Zhang (KPZ) universality class, which dates back to the seminal work of \cite{KPZ86}. 
The KPZ universality class has been a major topic of interest both in statistical physics and in probability theory in recent decades. In \cite{KPZ86}, the authors predicted universal scaling behaviour for a large number of planar random growth processes, including first-passage percolation (FPP) and corner growth processes; in particular, it is predicted that these models have length fluctuation exponent $1/3$ and transversal fluctuation exponent $2/3$.
Since then, rigorous progress has been made only in a handful of cases. 
The first breakthrough was made by Baik, Deift and Johansson \cite{BDJ99} when they established $n^{1/3}$ fluctuations of the length of the longest up-right path from $(0,0)$ to $(n,n)$ in a homogeneous Poissonian field on $\R^2$, and also established the GUE Tracy-Widom scaling limit.
Then Johansson proved a transversal fluctuation exponent of $2/3$ for that model, and also $n^{1/3}$ fluctuations and a Tracy-Widom scaling limit for LPP on $\Z^2$ with i.i.d.\;geometric or exponential weights \cite{J00, johansson2000shape}. 
For these models such results could be obtained due to their exact solvability, using exact distributional formulae from algebraic combinatorics, random matrix theory, or queueing theory in some cases. 
Since then there have been tremendous developments in achieving a detailed understanding of these exactly solvable models, with notable progress concerning scaling limits 
(see e.g.\ the recent works of \cite{matetski2016kpz, DOV}). For  surveys in this direction, see e.g.\ \cite{Corwin11, quastel2014airy, zygouras2018some}.

In another related direction, there has been great interest in studying FPP with general weights. In the 2D setting, such models are also conjectured to be in the KPZ universality class, but much less is known due to the lack of exact formulae. 
The geometry of the set of geodesics has been an important 
tool in the study of these models; 
see e.g. \cite{New95, auffinger201750}.
When trying to understand the behaviour of large finite or infinite geodesics, a well-known open question is whether the empirical distributions of weights as seen along geodesics converge; 
see e.g. \cite{AIM} where it was proposed by Hoffman during a 2015 American Institute of Mathematics workshop.
Recently, Bates gave an affirmative answer to this question for various abstract dense families of weight distributions \cite{bates2020empirical}. The proof uses a variational formula, and does not rely on any exactly solvable structure.

In this paper we study the limiting local behaviour for LPP in the exactly solvable case.
We focus on LPP on $\Z^2$ with i.i.d.\;exponential weights.
Rather than the weights along geodesics, we consider the more general `empirical environments' around vertices, along finite or semi-infinite geodesics, and we show that they converge to a deterministic measure. 
By the environment around a vertex, we mean the weights of nearby vertices,
and the path of the geodesic through them. 
In particular, this positively answers the question of Hoffman for a first explicit model.
Our approach is different from \cite{bates2020empirical} and relies on information provided by the exactly
solvable structure. 
In addition to proving convergence results, we also give an explicit description of the limiting distribution, which depends on the direction of the considered geodesics.
Using this description one can compute any limiting local statistics along the geodesics, and we give some first examples in this paper.

A particular exactly solvable input that we use is the connection 
between LPP and the totally asymmetric exclusion process (TASEP), dating back to \cite{rost1981non}. We use the correspondence between LPP semi-infinite geodesics and the trajectory of a second-class particle in TASEP, as developed in a series of works \cite{ferrari2005competition, ferrari2009phase, pimentel2016duality}. Then in order to understand local environments along LPP geodesics, we study stationary distributions of TASEP as seen from an isolated second-class particle.
Models involving second-class particles have been proved
powerful in understanding the evolution of TASEP
\cite{ferrari1991microscopic, ferrari1992shock, 
derrida1993exact, balazs2006cube, balazs2010order, schmid2021mixing, schmid2022mixing},
and stationary distributions for multi-type systems
have been widely studied 
\cite{derrida1993exact, speer1994two, ferrari1994invariant, angel2006stationary, ferrari2007stationary, evans2009matrix, arita2011}.
See also \cite{ferrari2018tasep} for a recent survey of related ideas.

Before formally stating our results, we remark that (besides this paper and \cite{bates2020empirical}) there are several other recent works on environments along geodesics in random planar geometry.
In \cite{dauvergne2020three}, the authors study geodesics in the directed landscape, the joint scaling limit of exponential LPP (see \cite{DOV,dauvergne2021scaling}).
They proved that when zooming in around a point in the geodesic,  the local environment converges to an object termed `the directed landscape with Brownian-Bessel boundary conditions' 
(\cite[Theorem 1.1]{dauvergne2020three}).
Back to the non-exactly solvable model of general weights FPP, tail estimates for the averaged empirical distribution of weights along geodesics have been obtained in \cite{janjigian2020tail}.
In \cite{basu2021environment}, convergence of the empirical distribution of environments along geodesics has been obtained in the Liouville Quantum Gravity setting.

\subsection{Model definition and main results}
We study the exponential weight planar directed last-passage percolation (LPP) model, which is defined as follows.
To each vertex $v \in \Z^2$ we associate an independent weight $\xi(v)$ with $\Exp(1)$ distribution.
For two vertices $u, v \in \Z^2$, we say $u\leq v$ if $u$ is coordinate-wise less than or equal to $v$.
For such $u, v$ and any up-right path $\gamma$ from $u$ to $v$, we define the \emph{passage time of the path} to be
\[
T(\gamma) := \sum_{w \in \gamma} \xi(w) .
\]
Then almost surely there is a unique up-right path from $u$ to $v$ that has the largest passage time.
We call this path the \emph{geodesic} $\Gamma_{u,v}$, and call $T_{u,v}:=T(\Gamma_{u,v})$ the \emph{(last-)passage time from $u$ to $v$}.
In this paper we always work under the event that there is a unique geodesic between any $u\le v$.

For any fixed $\rho\in (0,1)$, it is known that almost surely the following statements hold (see \cite{coupier2011multiple, ferrari2005competition}).
For each $u \in \Z^2$ there is a unique infinite up-right path from $u$ (called the \emph{semi-infinite geodesic} and denoted by $\Gamma_u^\rho$) asymptotically going to the $\brho:=((1-\rho)^2,\rho^2)$ direction, such that for any $v \le w$ contained in $\Gamma_u^\rho$, the part of $\Gamma_u^\rho$ between $v$ and $w$ is the geodesic $\Gamma_{v,w}$.
For any $u, v \in \Z^2$, the semi-infinite geodesics $\Gamma_u^\rho$ and $\Gamma_v^\rho$ coalesce; i.e.\;both $\Gamma_u^\rho\setminus \Gamma_v^\rho$ and $\Gamma_v^\rho\setminus \Gamma_u^\rho$ are finite.
Below and whenever we consider a specific $\rho$, we always work under the almost sure event where these statements hold.

Our main results concern the local behaviour around vertices along geodesics.
For each $v\in \Z^2$, we denote $\xi\{v\} := \{\xi(v+u)\}_{ u \in \Z^2}$.
For any (finite or semi-infinite) up-right path $\gamma$ we let $\gamma[i]$ be the $i$-th vertex in $\gamma$.

For any $u\le v\in \Z^2$, and each $w\in\Gamma_{u,v}$, we regard $(\xi\{w\}, \Gamma_{u,v}-w)$ as a point in $\R^{\Z^2}\times \{0,1\}^{\Z^2}$ (equipped with the product topology and the cylinder $\sigma$-algebra), and we define the \emph{empirical environment along $\Gamma_{u,v}$} as
\[
\mu_{u,v} := \frac{1}{|\Gamma_{u,v}|} \sum_{w\in \Gamma_{u,v}} \delta_{(\xi\{w\}, \Gamma_{u,v}-w)},
\]
where $\delta_{(\xi\{w\}, \Gamma_{u,v}-w)}$ is the dirac measure concentrated on $(\xi\{w\}, \Gamma_{u,v}-w)$.
Similarly, we define the \emph{empirical environment along the semi-infinite geodesic $\Gamma^\rho_v$} as
\[
\mu_{v;r}^\rho := \frac{1}{2r+1} \sum_{i=1}^{2r+1} \delta_{(\xi\{\Gamma^\rho_v[i]\}, \Gamma^\rho_v-\Gamma^\rho_v[i])},
\]
for any $v\in\Z^2$, $\rho\in(0,1)$, and $r\in\Z_{\ge 0}$.
We will show that these empirical environments converge.
For each $\rho$, there is a limiting measure $\nu^\rho$ on $\R^{\Z^2}\times \{0,1\}^{\Z^2}$, which is explicit and will be defined in Section \ref{sec:limit-dist}.

For any $n\in\Z$ we denote $\bn^\rho:=\left(\left\lfloor\frac{2(1-\rho)^2n}{\rho^2+(1-\rho)^2}\right\rfloor, \left\lceil\frac{2\rho^2n}{\rho^2+(1-\rho)^2}\right\rceil\right)$. We also denote $\boo:=(0,0)$.
For the following results we fix any $\rho\in(0,1)$.
\begin{theorem}  \label{thm:finite}
Almost surely the measures $\mu_{\boo,\bn^\rho}$ converge to $\nu^\rho$ weakly as $n\to \infty$.
\end{theorem}

\begin{theorem}  \label{thm:semi-infinite}
Almost surely the measures $\mu_{\boo;r}^\rho$ converge to $\nu^\rho$ weakly as $r\to \infty$.
\end{theorem}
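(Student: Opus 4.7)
The plan is to set up the shift along the semi-infinite geodesic as a stationary, ergodic dynamical system whose invariant measure is $\nu^\rho$, apply the Birkhoff ergodic theorem, and then transfer the conclusion from the stationary initial law to our actual initial law. Using the correspondence between the semi-infinite geodesic $\Gamma^\rho_\boo$ and a single second-class particle in TASEP alluded to in the introduction, the sequence $i \mapsto (\xi\{\Gamma^\rho_\boo[i]\}, \Gamma^\rho_\boo - \Gamma^\rho_\boo[i])$ can be identified with TASEP viewed from a single second-class particle, with left/right asymptotic densities determined by $\rho$. The measure $\nu^\rho$ will, by construction, be the translation-invariant stationary distribution of this process.

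First I would verify that $\nu^\rho$ is invariant under the one-step shift $\theta$ along the geodesic. Next I would prove that $(\theta, \nu^\rho)$ is ergodic. The natural route is a 0-1 law: by the product-type structure of the ambient exponential weights (or equivalently of the Bernoulli TASEP configurations) together with a finite-propagation/decoupling estimate for the second-class particle, any shift-invariant event must lie in a tail $\sigma$-algebra and hence have probability zero or one. Combined with stationarity, Birkhoff's theorem yields almost sure convergence $\mu_{\boo;r}^\rho \to \nu^\rho$ \emph{when the process is started from $\nu^\rho$}. Here I would apply the ergodic theorem to the bounded continuous test functions on $\R^{\Z^2}\times\{0,1\}^{\Z^2}$ that determine the topology of convergence of the empirical distributions.

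Finally I would handle the fact that the actual geodesic is not started in $\nu^\rho$, by invoking the convergence-to-equilibrium result promised in the abstract: the law of the environment seen from $\Gamma^\rho_\boo[i]$ converges to $\nu^\rho$ as $i \to \infty$. Combined with a coupling between the non-stationary process and a stationary version built on the same underlying weights (so that the two environments agree in an arbitrarily large neighborhood after a finite random time), the ergodic-theorem conclusion transfers and gives the desired a.s. convergence. The main obstacle lies in the two TASEP-theoretic inputs: ergodicity of the "TASEP as seen from a second-class particle" dynamics under $\nu^\rho$, and the quantitative convergence to equilibrium from non-stationary initial conditions. While stationary multi-type measures for TASEP are well understood, these ergodicity and mixing statements for the process seen from the second-class particle are the genuinely new probabilistic inputs, and will likely require concentration estimates for the location of the second-class particle and for the local density profile in its vicinity, drawn from the integrable probability toolbox cited in the introduction.
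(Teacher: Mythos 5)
Your plan correctly identifies the main structural ingredients the paper uses: ergodicity of the stationary environment seen from the second-class particle (Proposition \ref{prop:ergodic-2nd-class-tasep}), Birkhoff's theorem applied to the stationary process, and convergence to equilibrium from the non-stationary initial condition (Theorem \ref{thm:cov-phi}). However, assembling these ingredients as you propose only yields convergence \emph{in probability} — which is exactly what the paper proves as the weaker Theorem \ref{thm:semi-infinite-in-prob} — not the almost sure convergence of Theorem \ref{thm:semi-infinite}.

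The gap is in your final transfer step. You posit ``a coupling between the non-stationary process and a stationary version built on the same underlying weights (so that the two environments agree in an arbitrarily large neighborhood after a finite random time).'' No such coupling is constructed in the paper, and one should not expect it to exist: the second-class particle fluctuates on scale $t^{2/3}$, and the coupling actually built (Lemma \ref{lem:couple-s}) achieves agreement only with probability $1-C(rs^{-2/3})^{-1/10}$, which tends to $1$ as the parameters grow but is never exactly $1$ at any finite time. What the convergence-to-equilibrium result gives you is convergence of laws $\Phi_t^\rho \to \Psi^\rho$, which lets you compare \emph{expectations} of ergodic averages and conclude convergence in probability via a second-moment/Chernoff step (see the proof of Proposition \ref{prop:tasep-in-prob}), but it does not give a pathwise identification of the two processes after a random time. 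To upgrade to almost sure convergence, the paper works entirely in LPP and proves exponential concentration of $\mu_{\boo;r}^\rho(f)$ around $\nu^\rho(f)$ (Proposition \ref{prop:exp-concen-infinite}), by covering the long geodesic with $\Theta(r/m)$ shorter ones between anti-diagonal segments, using Proposition \ref{prop:uniform-conv} for the blocks, and counting bad block sequences via Lemma \ref{lem:large-total-tran}; Borel–Cantelli then gives the a.s.\ statement. This concentration mechanism is a genuinely separate ingredient that your proposal is missing.
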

In other words, for any bounded continuous function $f:\R^{\Z^2}\times \{0,1\}^{\Z^2}\to \R$ we have $\mu_{\boo,\bn^\rho}(f) \to \nu^\rho(f)$ as $n\to\infty$ almost surely, and $\mu_{\boo;r}^\rho(f) \to \nu^\rho(f)$ as $r\to\infty$ almost surely.

We also have convergence of distributions.
\begin{theorem}  \label{t:one-point-converg-infinite}
The laws of $(\xi\{\Gamma_\boo^\rho[i]\}, \Gamma_\boo^\rho-\Gamma_\boo^\rho[i])$ converge to $\nu^\rho$ weakly as $i\to\infty$.
\end{theorem}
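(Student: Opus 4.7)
The plan is to reduce Theorem \ref{t:one-point-converg-infinite} to a convergence-to-equilibrium statement for the process ``TASEP as seen from a second-class particle'', exactly the Markov process flagged in the abstract. Under the standard Rost coupling of exponential LPP with TASEP and the Ferrari--Pimentel duality, the semi-infinite geodesic $\Gamma_\boo^\rho$ corresponds to the trajectory of a second-class particle inserted at the origin into a TASEP with asymptotic densities $\rho$ and $1-\rho$ on the two sides. Under this correspondence, the pair $(\xi\{\Gamma_\boo^\rho[i]\},\,\Gamma_\boo^\rho-\Gamma_\boo^\rho[i])$ is a measurable function of the particle configuration together with the driving clocks, recentred at the second-class particle, and ``$i$-th vertex of the geodesic'' becomes ``$i$-th step of the associated discrete chain''. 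So $i\to\infty$ in the LPP picture is exactly a large-time limit for this Markov chain.

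I would next identify $\nu^\rho$, as defined in Section \ref{sec:limit-dist}, with the pushforward through the inverse correspondence of a distinguished stationary measure $\pi^\rho$ of that Markov chain. Stationarity of the product Bernoulli measure with densities $\rho$ and $1-\rho$ on the two sides of the second-class particle is classical for the two-species TASEP; uniqueness within the relevant class of initial data, together with an ergodicity statement, is exactly the new input the authors announce. Granting these, the theorem reduces to showing that the initial distribution dictated by the LPP set-up at $\boo$ (a step initial condition with one full half-line and one empty half-line) converges under the TASEP dynamics to $\pi^\rho$ in the product topology on $\R^{\Z^2}\times\{0,1\}^{\Z^2}$, that is, on every cylinder of finite radius around the second-class particle.

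For this last step I would couple the actual dynamics with a copy initialised from $\pi^\rho$ via the basic coupling of TASEPs and argue that on any fixed window around the second-class particle the two configurations agree with probability tending to $1$. Discrepancies propagate at the characteristic speeds, and the tool for showing that each one either drifts off the window or is absorbed at the second-class particle is the suite of sharp one-point passage-time estimates and $i^{2/3}$-scale transversal fluctuation bounds imported from the integrable probability literature that the authors explicitly rely on throughout. The main obstacle is precisely this coupling argument: since the second-class particle's trajectory is not monotone in the configuration, attractiveness does not apply directly, and one must show that the far-field tails decouple from a finite neighbourhood of the particle at a genuinely quantitative rate rather than in the Ces\`aro sense that would already follow from Theorem \ref{thm:semi-infinite}.
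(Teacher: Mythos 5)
Your proposal takes a genuinely different route from the paper, but it has a gap at its central reduction. You claim that ``$i\to\infty$ in the LPP picture is exactly a large-time limit for this Markov chain.'' This is not accurate, and the discrepancy is precisely what makes Theorem~\ref{t:one-point-converg-infinite} nontrivial given Theorem~\ref{thm:cov-phi}. The $i$-th vertex of $\Gamma_\boo^\rho$ corresponds to the $i$-th \emph{jump} of the hole-particle pair, whereas Theorem~\ref{thm:cov-phi} gives convergence of the law $\Phi_t^\rho$ of the process at a fixed \emph{time} $t$. The law of the environment seen at a typical time differs from the law seen at a typical jump by a size-biasing (inspection paradox) by the waiting time $\oxi(\boo)$: this is exactly the content of Lemma~\ref{lem:reweight}, which identifies $d\hat\nu^\rho=\oxi(\boo)\,d\nu^\rho/\E_{\nu^\rho}[\oxi(\boo)]$. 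Convergence of the time-indexed process to its stationary law does not directly yield convergence of the embedded jump chain to its (different) stationary law; some additional averaging or coupling at the level of the jump sequence is required. The paper achieves the Ces\`aro (empirical-distribution) version of this transfer via Birkhoff's theorem plus the ergodicity result (Theorem~\ref{thm:semi-infinite-in-prob}), since averaging over all vertices in the geodesic corresponds to integrating over time, which is where Lemma~\ref{lem:reweight} does the conversion. But for the one-point law at a fixed index $i$, the paper instead goes through a geometric coalescence argument (Lemma~\ref{lem:coal-semi-inf}) to show $\E f(\xi\{\Gamma_\boo^\rho[i]\})$ is within $C\log(k)k^{-2/3}$ of $\E f(\xi\{\Gamma_\boo^\rho[i-r]\})$ for $r\lesssim i/k$, then averages over $r$ and invokes the uniform covering estimate (Proposition~\ref{prop:uniform-conv}) to pull the empirical average toward $\nu^\rho$. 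Your proposal does not supply a substitute for this step.

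Two further inaccuracies are worth flagging. First, the stationary measure of TASEP as seen from an isolated second-class particle in density $\rho$ is \emph{not} product Bernoulli; it is the Ferrari--Fontes--Kohayakawa renewal-process measure of Section~\ref{ssec:2cp-sta-def}, which the paper explicitly notes is mutually singular with product measure (via the $\sqrt{x}$ growth of $\cE$). You seem to be recalling the shock measure for two distinct densities, which is a different setting. Second, the TASEP initial data attached to the semi-infinite geodesic $\Gamma_\boo^\rho$ is not the step condition but rather Bernoulli($\rho$) with a forced hole-particle pair at sites $0,1$, matching the Busemann-function/competition-interface construction. Your closing coupling idea is indeed the mechanism the paper uses, but to prove Theorem~\ref{thm:cov-phi}, not Theorem~\ref{t:one-point-converg-infinite}; the gap above is in passing from the former to the latter.
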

\begin{theorem}  \label{t:one-point-converg-finite}
For each $0<\alpha<2$, the laws of $(\xi\{\Gamma_{\boo,\bn^\rho}[\lfloor \alpha n\rfloor]\}, \Gamma_{\boo,\bn^\rho}-\Gamma_{\boo,\bn^\rho}[\lfloor \alpha n\rfloor])$ converge to $\nu^\rho$ weakly as $n\to\infty$.
\end{theorem}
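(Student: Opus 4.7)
The plan is to reduce Theorem \ref{t:one-point-converg-finite} to Theorem \ref{t:one-point-converg-infinite} via a geodesic-coalescence argument. Since $\R^{\Z^2}\times\{0,1\}^{\Z^2}$ carries the product topology, weak convergence is equivalent to convergence of every finite-dimensional marginal. Writing $m:=\lfloor\alpha n\rfloor$ and $v_n:=\Gamma_{\boo,\bn^\rho}[m]$, it therefore suffices, for each finite box $B\subset\Z^2$, to prove that $\bigl((\xi(v_n+u))_{u\in B},(\don_{v_n+u\in\Gamma_{\boo,\bn^\rho}})_{u\in B}\bigr)$ converges in distribution to the $B$-marginal of $\nu^\rho$.

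The heart of the argument is the coalescence claim: for every fixed $K\ge 0$,
\[
\PP\bigl(\Gamma_{\boo,\bn^\rho}[m+j]=\Gamma_\boo^\rho[m+j]\text{ for every }-K\le j\le K\bigr)\longrightarrow 1
\]
as $n\to\infty$. If $K$ is taken larger than the $\ell^\infty$-radius of $B$, then on the coalescence event the vectors $\bigl((\xi(v_n+u))_{u\in B},(\don_{v_n+u\in\Gamma_{\boo,\bn^\rho}})_{u\in B}\bigr)$ and $\bigl((\xi(\Gamma_\boo^\rho[m]+u))_{u\in B},(\don_{\Gamma_\boo^\rho[m]+u\in\Gamma_\boo^\rho})_{u\in B}\bigr)$ coincide. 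Since $m\to\infty$ as $n\to\infty$, Theorem \ref{t:one-point-converg-infinite} implies that the latter converges in distribution to the $B$-marginal of $\nu^\rho$, and the target claim follows.

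To justify the coalescence claim I would combine two standard tools from the integrable-probability toolkit: transversal fluctuation estimates, which place both $\Gamma_{\boo,\bn^\rho}[m]$ and $\Gamma_\boo^\rho[m]$ within $O(n^{2/3})$ of the deterministic point $\tfrac{\alpha}{2}\bn^\rho$; and sharp exit-point or moderate-deviation bounds for last-passage times, which show that the finite and semi-infinite geodesics from $\boo$ can only systematically diverge in a neighborhood of the finite endpoint $\bn^\rho$. The hypothesis $\alpha<2$ enters precisely here, since it keeps $v_n$ at macroscopic distance from $\bn^\rho$. The main obstacle is establishing the coalescence claim with uniform-in-$n$ control, i.e., with a fixed $K$ independent of $n$ rather than $K=K(n)\to\infty$; this is the delicate technical point, and I would expect it to follow from the same sharp LPP estimates that underlie the semi-infinite results earlier in the paper.
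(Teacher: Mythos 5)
The coalescence claim at the core of your argument is false. Both $\Gamma_{\boo,\bn^\rho}$ and $\Gamma_\boo^\rho$ start at $\boo$, so they share an initial segment and then bifurcate, never to rejoin; the question is the level of the bifurcation. The semi-infinite geodesic $\Gamma_\boo^\rho$ crosses $\LL_n$ at some $\langle n, b_n'\rangle_\rho$ where $|b_n'|$ is typically of order $n^{2/3}$, and its initial segment up to $\LL_n$ is exactly $\Gamma_{\boo,\langle n, b_n'\rangle_\rho}$. Thus your claim amounts to saying that two finite geodesics from $\boo$ to endpoints on $\LL_n$ that are $\Theta(n^{2/3})$ apart in the anti-diagonal direction agree, with probability tending to one, at the macroscopic level $\LL_{\alpha n/2}$. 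That is not the case: rescaled by $n$, the bifurcation level converges to a continuous random variable supported throughout $(0,1)$, so the probability that the two geodesics still coincide at $\LL_{\alpha n/2}$ converges to a number strictly less than $1$. The transversal-fluctuation bounds you cite only place both paths within $O(n^{2/3})$ of the deterministic line, which is far from putting them on top of each other, and the intuition that the finite and semi-infinite geodesics from $\boo$ can "only systematically diverge in a neighborhood of $\bn^\rho$" is incorrect: the bifurcation point is typically at macroscopic distance from both endpoints, not confined near $\bn^\rho$.

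The paper avoids this by comparing $\Gamma_{\boo,\bn^\rho}$ not with $\Gamma_\boo^\rho$ but with a shifted copy of itself, $\gamma = \Gamma_{\langle\lfloor r/2\rfloor, 0\rangle_\rho,\,\bn^\rho+\langle\lfloor r/2\rfloor, 0\rangle_\rho}$. Because the start and the end are translated by the \emph{same} vector, Lemma \ref{lem:coal-f} gives that $\Gamma_{\boo,\bn^\rho}$ and $\gamma$ merge after $O(rk)$ levels and stay merged until $O(rk)$ levels before the end, with probability at least $1-C\log(k)k^{-2/3}$. On that merged stretch $\Gamma_{\boo,\bn^\rho}[\lfloor\alpha n\rfloor]$ coincides with $\gamma[\lfloor\alpha n\rfloor-r]$, whose law equals that of $\Gamma_{\boo,\bn^\rho}[\lfloor\alpha n\rfloor-r]$ by translation invariance; averaging this one-point comparison over $0\le r \le \alpha n/4k$ produces an empirical average along a stretch of the geodesic, which is controlled by Proposition \ref{prop:uniform-conv}, and sending $k\to\infty$ finishes the proof. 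The correct mechanism is thus a self-comparison of the finite geodesic, and Theorem \ref{t:one-point-converg-infinite} never enters.
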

There results in particular imply that the marginal distribution of $\nu^\rho$ on $\R^{\Z^2}$ is singular to the i.i.d.\;$\Exp(1)$ distribution.
Specifically, these convergence results imply that for $(\xi,\gamma)\sim \nu^\rho$, the path $\gamma$ is a \emph{bigeodesic} for $\xi$, i.e.\;$\gamma$ is a bi-infinite up-right path such that for any $u\le v$ contained in $\gamma$, the part of $\gamma$ between $u$ and $v$ is the geodesic from $u$ to $v$, under the weights $\xi$. However, for $\xi$ being i.i.d.\;$\Exp(1)$, almost surely there is no bigeodesic, as proved in \cite{BBSnon, BHS}.

For the limiting measure $\nu^\rho$ to be defined in Section \ref{sec:limit-dist}, its construction is explicit, and from it one can compute any finite dimensional distributions of $\nu^\rho$, thus any limiting local statistics along exponential LPP geodesics.
Here we give a first example, which is the distribution function of $\xi(\boo)$ under $\nu^\rho$.
\begin{prop}  \label{prop:lawofone}
For $(\xi,\gamma)\sim \nu^\rho$, we have $\PP[\xi(\boo)>h]=\left(1+ \frac{\rho(1-\rho) h}{(1-\rho)^2+\rho^2} \right)(1+\rho(1-\rho) h)e^{-h}$. 
\end{prop}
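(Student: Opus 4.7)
The plan is to extract the marginal distribution of $\xi(\boo)$ from the explicit construction of $\nu^\rho$ in Section \ref{sec:limit-dist}, which realizes $\nu^\rho$ via the stationary environment of TASEP seen from a second-class particle, translated back to LPP weights through the standard correspondence highlighted in the introduction.

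A purely algebraic observation reduces the goal to identifying a specific Gamma mixture. Writing $a := \rho(1-\rho)/((1-\rho)^2+\rho^2)$ and $b := \rho(1-\rho)$, the claimed survival function expands as
\[
(1+ah)(1+bh)e^{-h} = (1 - a - b)\,e^{-h} + (a + b - 2ab)(1+h)e^{-h} + 2ab\Bigl(1+h+\tfrac{h^2}{2}\Bigr)e^{-h}.
\]
Since $a \le 1/2$ and $b \le 1/4$ for every $\rho \in (0,1)$, the coefficients $p_1 := 1-a-b$, $p_2 := a+b-2ab$, $p_3 := 2ab$ are nonnegative and sum to $1$. Hence the proposition is equivalent to the statement that the marginal of $\xi(\boo)$ under $\nu^\rho$ is a mixture of Gamma$(k,1)$ distributions ($k=1,2,3$) with weights $p_1, p_2, p_3$; the case $\rho=1/2$ gives the $1/4, 1/2, 1/4$ mixture announced in the abstract.

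The remaining step is to realize this mixture in the explicit description of $\nu^\rho$. The natural decomposition is according to the local type of the geodesic at the origin, namely the pair $(\text{incoming edge, outgoing edge}) \in \{H,V\}^2$ together with the occupation data of the sites dictated by the TASEP construction. Using the product-Bernoulli$(\rho)$ structure of the stationary measure seen from the second-class particle, each such type occurs with an explicit probability. Conditionally on a type, the Burke/exponential-sojourn property of the exponential LPP should express $\xi(\boo)$ as a sum of one, two, or three independent $\Exp(1)$ increments, producing Gamma$(1,1)$, Gamma$(2,1)$, or Gamma$(3,1)$ respectively. Grouping types by the number of increments and summing the corresponding probabilities should then yield exactly $p_1, p_2, p_3$.

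The main technical obstacle is this bookkeeping: one must verify that the exponential increments are genuinely $\Exp(1)$ and mutually independent conditional on the local type (a consequence of the memoryless property and the product structure of the TASEP invariant measure), and that the grouping of types by increment count recovers the precise coefficients $p_1, p_2, p_3$. Both are expected to be routine given the construction in Section \ref{sec:limit-dist}, after which the stated formula follows by the elementary expansion above.
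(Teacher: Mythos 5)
Your algebraic observation --- that the claimed survival function is a mixture of $\text{Gamma}(k,1)$ tails, $k=1,2,3$, with weights $p_1=1-a-b$, $p_2=a+b-2ab$, $p_3=2ab$ --- is correct and matches the formula the paper itself records at the end of Section~\ref{sec:limit-dist}. But the mechanism you propose for \emph{establishing} the mixture does not work. The construction of $\nu^\rho$ identifies $\oxi(\boo)=\oL(1,0)\wedge\oL(0,1)$, and after conditioning on the direction of the jump at time $0$ (i.e.\ working under $\bPsi^{\rho,1}$ or $\bPsi^{\rho,2}$) these two jump times are independent with $\oL(1,0)\sim\text{Gamma}(D_+,1)$ and $\oL(0,1)\sim\text{Gamma}(D_-,1)$, where $D_\pm$ are random, \emph{unbounded}, and read off from the local occupation pattern. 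So conditioning on a ``local type'' in your sense fixes $D_+,D_-$ but leaves $\xi(\boo)$ as the minimum of two independent Gammas, which is not a $\text{Gamma}(k,1)$ for any deterministic $k\in\{1,2,3\}$. For instance, under $\bPsi^{\rho,2}$ the event $D_+=2,\,D_-=1$ has positive probability, and there $\PP[\xi(\boo)>h]=(1+h)e^{-2h}$; the rate-$2$ exponential factor already rules out any unit-rate Gamma. The $\text{Gamma}(1,1)/(2,1)/(3,1)$ mixture emerges only after multiplying the two tails from Lemma~\ref{lem:psi-L-comp}, or, in the paper's closing remark, via a memoryless ``race'' argument whose case distinctions are resolved dynamically as exponentials ring, not by inspecting the time-$0$ configuration.

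A second, independent gap: the stationary measure of TASEP seen from an isolated second-class particle is \emph{not} product Bernoulli$(\rho)$. Section~\ref{ssec:2cp-sta-def} constructs $\Psi^\rho$ as a biased modification of product measure (surplus of particles to the right of the origin, surplus of holes to the left) and explicitly notes that the two measures are mutually singular. Correspondingly, the gap $D_+$ has law $\PP[D_+=k]=k(1-\rho)^2\rho^{k-1}$ under $\bPsi^{\rho,1}$ (shifted by one under $\bPsi^{\rho,2}$), not the $\Geom(1-\rho)$ law that product measure would give; plugging in product measure would not reproduce $p_1,p_2,p_3$. The paper's actual proof is shorter and bypasses the mixture representation entirely: Lemma~\ref{lem:psi-L-comp} gives $\PP[\oL(1,0)>h]$ and $\PP[\oL(0,1)>h]$ directly as (affine in $h$) times $e^{-(1-\rho)h}$ and $e^{-\rho h}$ respectively, one multiplies them using independence, and then takes the $(1-\rho)^2\!:\!\rho^2$-weighted average over $\bPsi^{\rho,1},\bPsi^{\rho,2}$. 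The Gamma-mixture reading is a corollary of that computation, not a proof strategy for it.
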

The distribution of $\xi(\boo)$ given in Proposition \ref{prop:lawofone}
is a mixture of Gamma($1,1$), Gamma($2,1$) and Gamma($3,1$) distributions. 
In the case $\rho=1/2$, for example, the weights of this mixture 
are $1/4$, $1/2$, and $1/4$ respectively, and the distribution of $\xi(\boo)$
can be interpreted as that of $2\min(E_1+E_2, E_3+BE_4)$ with $B\sim\text{Bernoulli}(1/2)$ and
$(E_i)_{1\leq 1\leq 4}$ i.i.d.\;$\sim \Exp(1)$ independently of $B$.
Related but slightly less simple representations can be given for general $\rho$.
See the discussion after the proof of Proposition \ref{prop:turning} in Section \ref{sec:limit-dist}.

One interesting question in exponential LPP is to derive descriptions for geodesics.
They are known to be different from simple random walks, as their scaling limits are known be H\"{o}lder-$2/3^-$ regular \cite{DOV,dauvergne2020three}, and for $\Gamma_{\boo,\bn^\rho}$ its transversal fluctuation is in the order of $n^{2/3}$ \cite{balazs2006cube}.
Exact formulae for the geodesic one-point distribution are also obtained recently \cite{liu2022one}.
Our next result implies that $\Gamma_{\boo,\bn^\rho}$ is not like a simple random walk even at a small scale, by showing that one step is more likely to follow the same direction as the previous step than to make a `turning'.
It follows from the convergence results, and our explicit construction of the limiting measure $\nu^\rho$.
\begin{prop}  \label{prop:turning}
Denote by $N_{n,\rho}$ the number of `corners' along $\Gamma_{\boo,\bn^\rho}$; that is, the number of $v\in \Z^2$ such that $\{v-(1,0),v,v+(0,1)\} \subset \Gamma_{\boo,\bn^\rho}$, or $\{v-(0,1),v,v+(1,0)\} \subset \Gamma_{\boo,\bn^\rho}$.
Then almost surely we have $\frac{N_{n,\rho}}{2n}\to \frac{2\rho^2(1-\rho)^2(1+2\rho-2\rho^2)}{(1-\rho)^2+\rho^2}$, as $n\to\infty$.
\end{prop}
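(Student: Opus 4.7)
The plan is to realize $N_{n,\rho}$ as the integral of a bounded continuous cylinder function against the empirical distribution $\mu_{\boo,\bn^\rho}$, and then invoke Theorem \ref{thm:finite}. Define $f\colon \R^{\Z^2}\times\{0,1\}^{\Z^2}\to\{0,1\}$ by
\[
f(\xi,\gamma) := \don\!\big[\{-(1,0),\boo,(0,1)\}\subset\gamma\big] + \don\!\big[\{-(0,1),\boo,(1,0)\}\subset\gamma\big],
\]
where $\gamma\in\{0,1\}^{\Z^2}$ is identified with its support. Since $f$ depends on $\gamma$ only at finitely many vertices, it is bounded and continuous for the product topology. For $w\in\Gamma_{\boo,\bn^\rho}$, the quantity $f(\xi\{w\},\Gamma_{\boo,\bn^\rho}-w)$ is exactly the indicator that $w$ is a corner of $\Gamma_{\boo,\bn^\rho}$ (automatically zero at the two endpoints, by up-right monotonicity). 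Hence
\[
N_{n,\rho} \;=\; |\Gamma_{\boo,\bn^\rho}|\int f\,d\mu_{\boo,\bn^\rho}.
\]

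Next, Theorem \ref{thm:finite} gives $\mu_{\boo,\bn^\rho}\to\nu^\rho$ weakly, in probability, and since $f$ is bounded and continuous this yields $\int f\,d\mu_{\boo,\bn^\rho}\to \int f\,d\nu^\rho$ in probability. The two coordinates of $\bn^\rho$ sum to $2n$ up to a rounding error, so $|\Gamma_{\boo,\bn^\rho}|=2n+O(1)$. Combining these,
\[
\frac{N_{n,\rho}}{2n}\longrightarrow \PP_{(\xi,\gamma)\sim\nu^\rho}\big[\boo\text{ is a corner of }\gamma\big] \qquad \text{in probability.}
\]

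The main obstacle is then to show that this limit equals $\frac{2\rho^2(1-\rho)^2(1+2\rho-2\rho^2)}{(1-\rho)^2+\rho^2}$. A preliminary reduction I would use: along any up-right path the numbers of $RU$ corners and $UR$ corners differ by at most one, so integrating against $\mu_{\boo,\bn^\rho}$ and passing to the limit forces
\[
\PP_{\nu^\rho}\big[\{-(1,0),\boo,(0,1)\}\subset\gamma\big] \;=\; \PP_{\nu^\rho}\big[\{-(0,1),\boo,(1,0)\}\subset\gamma\big],
\]
so the target probability is twice this common value. Evaluating it is a direct but nontrivial algebraic computation against the explicit description of $\nu^\rho$ in Section \ref{sec:limit-dist}: through the correspondence between the path and the trajectory of a second-class particle in TASEP, a corner at $\boo$ translates into a specific local particle/hole pattern around the tagged particle, whose probability is read off from the explicit stationary multi-type measure. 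The final step is to check that this quantity simplifies to the stated expression, which I expect to be the only place in the argument where the particular structure of $\nu^\rho$ is really used.
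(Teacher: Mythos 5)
Your framework matches the paper's: express the corner count as a bounded, continuous cylinder function of the shifted path averaged along $\Gamma_{\boo,\bn^\rho}$, apply Theorem~\ref{thm:finite}, and use $|\Gamma_{\boo,\bn^\rho}| = 2n + O(1)$ to reduce the statement to computing $\PP_{\nu^\rho}[\boo\text{ is a corner of }\oga]$. Your side observation that the two corner types have equal $\nu^\rho$-probability (their counts along any up-right path differ by at most one) is a neat shortcut the paper does not state, although the paper's explicit computation confirms it a posteriori.

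The genuine gap is that you never evaluate the corner probability, which is the entire content of the formula, and your plan for doing so is off-target. You say a corner ``translates into a specific local particle/hole pattern around the tagged particle, whose probability is read off from the explicit stationary multi-type measure,'' but the corner event at $\boo$ is not a static local-configuration event. The \emph{incoming} step of the geodesic (whether $(-1,0)\in\oga$ or $(0,-1)\in\oga$) does come from a local conditioning, namely $\bPsi^{\rho,1}$ versus $\bPsi^{\rho,2}$, mixed with weights $(1-\rho)^2$ and $\rho^2$. But the \emph{outgoing} step is decided by which of the jump times $\oL(1,0)$, $\oL(0,1)$ is smaller, and these are random sums of $\Exp(1)$ variables whose number of summands is the distance to the nearest hole on the right (respectively, particle on the left); their Gamma-type laws are computed in Lemma~\ref{lem:psi-L-comp}, from which one obtains $\PP_{\bPsi^{\rho,1}}[\oL(1,0)>\oL(0,1)] = \rho^2(1+2\rho-2\rho^2)$ and $\PP_{\bPsi^{\rho,2}}[\oL(1,0)<\oL(0,1)] = (1-\rho)^2(1+2\rho-2\rho^2)$. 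Combining these with the weights $(1-\rho)^2,\rho^2$ gives the stated constant; without this step the formula does not follow from your outline.
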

For example, for $\rho=1/2$, the proportion of steps which are `corners' converges to $3/8$.
For $(\xi, \gamma)\sim \nu^\rho$, the limiting path $\gamma$ can also be described as the `competition interface' in a growth process with some explicit random initial configurations. See the discussion at the end of Section \ref{sec:limit-dist}.

In our proofs of the above results we will use the connection between LPP and the totally asymmetric exclusion process (TASEP), which can be described as a Markov process $(\eta_t)_{t\in\R}$ on the space $\{0,1\}^\Z$ (also equipped with the product topology and the cylinder $\sigma$-algebra), where $\eta_t(x)=1$ means that there is a particle at site $x$ at time $t$, whereas 
$\eta_t(x)=0$ means that there is a hole at site $x$ at time $t$.
If there is a particle at site $x$ and a hole at site $x+1$, they switch at rate $1$, independently for all such $x$.
We shall consider TASEP with a single `second-class particle', which is denoted by $*$ and can switch with a hole to the right of it, or with a (normal) particle to the left of it.
We prove a corresponding result for TASEP with a single second-class particle as well, which may be of independent interest.
\begin{theorem}  \label{thm:cov-phi}
$\lim_{t\to\infty}\Phi_t^\rho = \Psi^\rho$ weakly.
\end{theorem}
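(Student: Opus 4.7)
The plan is to deduce this TASEP convergence-to-equilibrium statement from the LPP results proved earlier in the paper, via the standard graphical coupling between TASEP and exponential last-passage percolation together with the Ferrari--Pimentel duality between second-class particle trajectories and LPP geodesics. The underlying idea is that the environment seen from the second-class particle at time $t$, translated so the particle sits at the origin, should be a deterministic functional of the weight field $\xi$ in a tight neighbourhood of a specific vertex $v_t$ lying on an LPP geodesic; and $v_t$ can be arranged so that as $t\to\infty$ it is approximately the $\lfloor\alpha n\rfloor$-th vertex of $\Gamma_{\boo,\bn^\rho}$ for some $\alpha\in(0,2)$ and some $n=n(t)\to\infty$.

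Concretely, I would proceed in four steps. First, construct TASEP with a single second-class particle from the exponential LPP field via the Harris graphical construction, choosing the initial TASEP data so that the tracked second-class particle drifts in the direction dual to $(1,\rho^2/(1-\rho)^2)$; by Ferrari--Pimentel duality its trajectory then shadows a semi-infinite LPP geodesic in direction $\bn^\rho$. Second, show that the TASEP configuration at time $t$, as seen from the second-class particle, is (with high probability) a measurable function of $\xi$ restricted to a neighbourhood of $v_t$ of size tight in $t$. Third, apply Theorem~\ref{t:one-point-converg-finite} at the vertex $v_t$ and push forward under the functional of step two to obtain that $\Phi_t^\rho$ converges weakly to some limiting measure, which we \emph{define} to be $\Psi^\rho$. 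Fourth, verify that $\Psi^\rho$ so defined is invariant for the Markov dynamics of the environment as seen from the second-class particle; if $\Psi^\rho$ has already been characterised in the paper as the unique such invariant law, identify the two.

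The main obstacle is the localization in step two. The graphical construction uses Poisson clocks throughout $\Z$, so a priori the TASEP configuration at time $t$ near the second-class particle depends on the full field $\xi$ and on clock rings far from $v_t$. One therefore needs to show that, outside a tight neighbourhood of $v_t$, alterations to $\xi$ do not affect the tagged local environment. This is the kind of statement controlled by transversal fluctuation estimates for LPP (of order $n^{2/3}$) and tail bounds for competition interfaces / second-class particle positions, both available from the integrable-probability literature used throughout the paper. A secondary technical point is matching scales: one must arrange that $v_t$ genuinely sits near the $\lfloor\alpha_t n_t\rfloor$-th vertex of $\Gamma_{\boo,\bn_t^\rho}$ with $\alpha_t$ bounded away from $0$ and $2$; this can be handled via the explicit LPP-to-TASEP time correspondence (which turns the macroscopic time $t$ into a specific position along the geodesic) combined with the law of large numbers for the second-class particle's speed.
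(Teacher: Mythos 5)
Your proposal runs into a fatal circularity. You want to deduce the TASEP convergence $\Phi_t^\rho\to\Psi^\rho$ by invoking Theorem~\ref{t:one-point-converg-finite} (one-point convergence of the environment around $\Gamma_{\boo,\bn^\rho}[\lfloor\alpha n\rfloor]$). But in the paper's logical structure, Theorem~\ref{t:one-point-converg-finite} sits downstream of Theorem~\ref{thm:cov-phi}: it is proved via Proposition~\ref{prop:uniform-conv}, which rests on Theorem~\ref{thm:finite}, which rests on Theorem~\ref{thm:semi-infinite-in-prob}, which in turn is derived from Theorem~\ref{thm:cov-phi} together with the ergodicity result Proposition~\ref{prop:ergodic-2nd-class-tasep}. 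You cannot use a result whose only available proof depends on the very statement you are trying to establish. Unless you supply an independent LPP-only proof of the one-point convergence (which the paper's Section 1.3 describes as possible in principle but ``highly technical'' and explicitly chose not to pursue), the proposed chain of implications does not close.

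There are two further gaps even setting aside circularity. First, your step two (localization of the TASEP environment to a tight neighbourhood of a geodesic vertex $v_t$) is the real technical content of any such argument, and you flag it as an obstacle without resolving it; this is exactly the kind of control the paper's Lemmas~\ref{lem:couple-s}--\ref{lem:phi-close-event} are designed to provide, via Busemann functions, coalescence bounds, and transversal fluctuation estimates. Second, your step four asks to identify the limit with the paper's $\Psi^\rho$ via ``uniqueness of the invariant law''; but the paper proves only ergodicity of $\Psi^\rho$ (Proposition~\ref{prop:ergodic-2nd-class-tasep}), which does not by itself imply uniqueness among all invariant measures, so the identification is not automatic. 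The paper sidesteps this entirely: its proof of Theorem~\ref{thm:cov-phi} starts from the explicit stationary measure $\Psi^\rho$ and shows convergence to it directly, by combining a time-averaged convergence (Proposition~\ref{prop:converge-2nd-class-tasep}, a purely TASEP coupling argument comparing to the infinite-second-class-particle stationary state) with a proof that $\Phi_t^\rho$ and $\Phi_{t+s}^\rho$ are close in total variation for $s\ll t$ (Proposition~\ref{prop:phi-close}, which uses LPP geometry but only the self-contained estimates from Section~\ref{sec:prelim}, never the later convergence theorems).
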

Here $\Phi_t^\rho$ and $\Psi^\rho$ are measures on $\{0,1,*\}^\Z$ (with the product topology) to be defined in Section \ref{ssec:2cp-sta-def}, and we describe them here.
Consider TASEP with a single second-class particle, where initially the second-class particle is at the origin, and any other site has a (normal) particle with probability $\rho$ independently.
Then $\Phi_t^\rho$ is the law of such TASEP at time $t$, as seen from the only second-class particle. The measure $\Psi^\rho$ is the stationary distribution of TASEP as seen from an isolated second-class particle, with particle density $\rho$. 
In proving this theorem, we will also show that the corresponding stationary process (of TASEP as seen from an isolated second-class particle) is ergodic in time (Proposition \ref{prop:ergodic-2nd-class-tasep}).

\subsection{A roadmap of our arguments}
There are two main ingredients in our proofs of the above results: geometry of geodesics in exponential LPP, and TASEP as seen from an isolated second-class particle.

For each $\rho\in(0,1)$ there is a (density $\rho$) stationary distribution for TASEP, where for each site there is a particle with probability $\rho$ and a hole with probability $1-\rho$ independently (i.e.\;i.i.d.\;Bernoulli$(\rho)$).
Such i.i.d.\;Bernoulli TASEP corresponds to a growth process in $\Z^2$, which (when rotated by $\pi/4$) is a random walk at any time.
Dividing the interface into two competing clusters, this gives a competition interface which corresponds to a semi-infinite geodesic in LPP; see e.g. \cite{ferrari2009phase, pimentel2016duality}.
On the other hand, such a competition interface corresponds to a second-class particle in TASEP.
Thus, the environment seen from a semi-infinite geodesic corresponds to TASEP as seen from an isolated second-class particle. Connections between TASEP and LPP will be discussed in details in Section \ref{sec:prelim}.

We will construct the limiting measure $\nu^\rho$ in Section \ref{sec:limit-dist}, using the density $\rho$ stationary measure of TASEP as seen from an isolated second-class particle, as described in \cite{ferrari1994invariant} and to be studied in Section \ref{ssec:2cp-sta-def}; we then prove Propositions \ref{prop:lawofone} and \ref{prop:turning} in Section \ref{sec:limit-dist} assuming the convergence results.

For the convergence results we take the following approach.
For Theorem \ref{thm:cov-phi}, in Section \ref{ssec:cv-avg-tasep} we first prove a weak version of convergence in the averaged sense (Proposition \ref{prop:converge-2nd-class-tasep}), using a coupling argument of interacting particle systems.
In Section \ref{sec:cov-phi} we upgrade Proposition \ref{prop:converge-2nd-class-tasep} to Theorem \ref{thm:cov-phi} using LPP and geometric arguments.
In Section \ref{sec:weak-cov} we prove
weak versions of Theorems \ref{thm:finite} and \ref{thm:semi-infinite},
involving convergence in probability.
The in probability convergence along semi-infinite geodesics (Theorem \ref{thm:semi-infinite-in-prob}) is deduced from the TASEP convergence result Theorem \ref{thm:cov-phi} (or the averaged version Proposition \ref{prop:converge-2nd-class-tasep}) and ergodicity of the TASEP stationary process, which we have proved as Proposition \ref{prop:ergodic-2nd-class-tasep} in Section \ref{sec:semi-inf-cov}.
From then on we work completely in the LPP setting.
In Section \ref{sec:finite-cov} we prove the in probability convergence version of Theorem \ref{thm:finite} (Theorem \ref{thm:finite-slope}), by using Theorem \ref{thm:semi-infinite-in-prob} and covering a finite geodesic with an infinite one.

The next several sections rely on a generalization of Theorem \ref{thm:finite-slope}, which is Proposition \ref{prop:uniform-conv}, the main result of Theorem \ref{sec:paracov}. It says that for geodesics whose endpoints vary along two anti-diagonal segments, their empirical environments converge jointly (in probability).
The proof is via taking a finite (i.e.\;size not growing) dense family of geodesics, and showing that each geodesic connecting the two anti-diagonal segments can be mostly covered by one geodesic in the family.
Using this result, in Section  \ref{sec:one-point} we prove Theorems \ref{t:one-point-converg-infinite} and \ref{t:one-point-converg-finite}, by showing that environments of nearby vertices (along geodesics) are close to each other in distribution. In Section \ref{sec:exp-con}, by covering a long or semi-infinite geodesic by short ones, we prove that its empirical environment concentrates exponentially fast, and thus upgrade Theorem \ref{thm:semi-infinite-in-prob} to Theorem \ref{thm:semi-infinite} and Theorem \ref{thm:finite-slope} to Theorem \ref{thm:finite}.

At the end of this roadmap, we comment on how much our arguments rely on exact solvability. As mentioned above, while we do not work directly on formulae, we rely on the structure of the considered exponential LPP model.
The construction of $\nu^\rho$ in Section \ref{sec:limit-dist} uses the exact equivalence between exponential LPP and TASEP (as stated in Section \ref{sec:prelim}); and Section \ref{ssec:2cp-sta-def} contains purely interacting particle system arguments.
Most other proofs in this paper are via LPP geometric arguments, using basic estimates on passage times and geometric properties that have appeared in the literature (and are stated in Section \ref{ssec:elpps}).
For Section \ref{sec:cov-phi}, while we prove Theorem \ref{thm:cov-phi} which is about TASEP, the arguments are mainly via the connection with LPP and its geometry.
Starting from Section \ref{sec:weak-cov} all the proofs use only geometric arguments, except for the short Section \ref{ssec:semi-inf-weak} (where the in probability convergence of empirical environments along semi-infinite geodesics is quickly deduced using TASEP results).
We point out that the LPP geometric arguments throughout this paper are robust, with the only inputs from exact solvability being the passage time distribution tail estimates (Theorem \ref{t:onepoint} below), and that the so-called Busemann function (to be defined in Section \ref{ssec:buseman}) in an anti-diagonal is a random walk.

\subsection{Further applications and questions}
With the limiting measure $\nu^\rho$ one can get any local information along geodesics in LPP.
Before closing the introduction we discuss some questions, which can potentially be answered using our explicit description of $\nu^\rho$, either as direct applications or requiring some further analysis.

The first question is communicated to us by Alan Hammond.
Given that a vertex on a geodesic has a large weight, how would the local environment behave? For a vertex with a large weight, it would force the geodesic to go through it. Thus we expect that conditioned on this, weights of nearby vertices are distributed like i.i.d.\;$\Exp(1)$ random variables. From the TASEP aspect, a large weight corresponds to a long waiting time between two jumps of the second-class particle, and this is mostly due to a `jam' in TASEP, i.e.\;a consecutive sequence of particles to the right of the second-class particle, and a sequence of holes to the left. This resembles a `reversed' step initial condition.

A related question is about vertices near but off a geodesic.
For such vertices we have the following result.
\begin{lemma}
For $(\xi,\gamma)\sim \nu^\rho$, and any vertex $v\neq \boo$, the random variable $\xi(v)$ conditioned on $v\not\in\gamma$ is stochastically dominated by $\Exp(1)$.
\end{lemma}
\begin{proof}
For any vertices $u\le v$, any up-right path $\Gamma$ from $u$ to $v$, any vertex $w\not\in \Gamma$ with $u\le w \le$, and any $x>0$, the events $\Gamma_{u,v}=\Gamma$ and $\xi(w)>x$ are negatively correlated, by the FKG inequality.
Thus the law of $\xi(w)$ is stochastically dominated by $\Exp(1)$, conditioned on $\Gamma_{u,v}=\Gamma$.
This implies that for any vertex $v\neq \boo$, $n\in\N$, the random variable $\xi(\Gamma_{\boo,\bn^\rho}[n]+v)$ conditioned on that $v \not\in \Gamma_{\boo,\bn^\rho}-\Gamma_{\boo,\bn^\rho}[n]$ is stochastically dominated by $\Exp(1)$.
By Theorem \ref{t:one-point-converg-finite} and sending $n\to\infty$ we get the conclusion.
\end{proof}
It is then interesting to see if the distribution converges to $\Exp(1)$ as the distance of $v$ to the geodesic increases to infinity.

The next question is about a slightly different setting, that of LPP with i.i.d.\;geometric weights.
The main difference is that, due to that the weights are discrete, geodesics are not necessarily unique in this case.
However, one could still consider `rightmost' geodesics.
Geometric LPP corresponds to discrete-time TASEP, and one can similarly construct stationary measures for such TASEP as seen from an isolated second-class particle.
For a correspondence with rightmost geodesics, in discrete-time TASEP one takes a second-class particle which is prioritized to jump to the right rather than to the left.
One can similarly construct limiting measures, and thus get local information about the environment along rightmost geodesics.
One question that would be interesting to study is the proportion of `unique geodesic vertices'.
For fixed endpoints (or for one fixed endpoint and a fixed direction), take the intersection of all the geodesics, and call those vertices in that intersection `unique geodesic vertices'.
Do these unique geodesic vertices asymptotically make up a positive proportion of the geodesics? Furthermore, does the proportion converge in probability, and can we compute the limit explicitly? 
We think such questions are related
to the constructed limiting measures of the environment along rightmost geodesics, because we expect that 
a vertex in the geodesics is unlikely to be
`locally unique' without being a unique geodesic
vertex in the sense mentioned above. Anomalous 
`locally but not globally unique' vertices
should make up a vanishing proportion of the geodesics in the limit.

Another direction concerns the scaling limit of the measure $\nu^\rho$.
As mentioned above, in \cite{dauvergne2020three} the authors constructed the small scaling limit of the local environment around a vertex in the geodesic, in the directed landscape setting.
It is reasonable to expect that when zooming out, the measure $\nu^\rho$ would converge to the local environment constructed there.
Also, once this is established, we would like to see if our explicit description of $\nu^\rho$ could be used to get some explicit information about the local environment and geodesics in the directed landscape (see e.g. \cite[Problem 4]{dauvergne2020three}).
In fact, for the geodesic under $\nu^\rho$,  one can possibly obtain various information on its large scale behaviour using the description as a competition interface (see the end of Section \ref{sec:limit-dist}).

We expect that the LPP geometric arguments in this paper can be extended to get more information on environments along geodesics.
For example, it can be shown that, for any $0<\alpha<\beta<2$, the environments $(\xi\{\Gamma_{\boo,\bn^\rho}[\lfloor \alpha n\rfloor]\}, \Gamma_{\boo,\bn^\rho}-\Gamma_{\boo,\bn^\rho}[\lfloor \alpha n\rfloor])$ and $(\xi\{\Gamma_{\boo,\bn^\rho}[\lfloor \beta n\rfloor]\}, \Gamma_{\boo,\bn^\rho}-\Gamma_{\boo,\bn^\rho}[\lfloor \beta n\rfloor])$ are asymptotically independent, as $n\to\infty$.
A possible route to prove this statement is described as follows.
Consider the point-to-line profiles from $\boo$ to $\{(a,b):a+b=\lfloor (\alpha-\varepsilon) n \rfloor\}$ and from $\{(a,b):a+b=\lfloor (\alpha+\varepsilon) n \rfloor\}$ to $\bn^\rho$, i.e.\;consider the passage times $T_{\boo, u}$ and $T_{v,\bn^\rho}$, for $u,v$ varying in these two lines respectively. Here $\varepsilon>0$ is a small number.
These two point-to-line profiles are independent, each converges (after rescaling) to the so-called Airy$_2$ process \cite{BF08,BP08}, which is locally like a Brownian motion.
Then it can be shown that in small neighborhoods of the intersections of the geodesic $\Gamma_{\boo,\bn^\rho}$ with these two lines, the point-to-line profiles (after rescaling) are similar to two independent Brownian motions around the maximum of their sum, or equivalently $R-B, R+B$, where $R$ is a Bessel$_3$ process and $B$ is a Brownian motion. (In \cite{dauvergne2020three}, such behaviour is observed for geodesics in the directed landscape.)
Using coalescence of geodesics, such picture can be established even conditioned on the environment around $\Gamma_{\boo,\bn^\rho}[\lfloor \beta n\rfloor]$.
One can also show that the part of the geodesic $\Gamma_{\boo,\bn^\rho}$ between these lines is stable with respect to small perturbations of the point-to-line profiles. 
This implies that, no matter how the environment around $\Gamma_{\boo,\bn^\rho}[\lfloor \beta n\rfloor]$ behaves, conditioned on it the distribution of the environment around $\Gamma_{\boo,\bn^\rho}[\lfloor \alpha n\rfloor]$ remains roughly the same, as $n\to \infty$.
In fact, such asymptotic independence can be used to give an alternative proof of the convergence of $\mu_{\boo,\bn^\rho}$, without using any TASEP arguments or identifying the limit as $\nu^\rho$.
Indeed, it implies that for any bounded continuous $f$ the variance of $\mu_{\boo,\bn^\rho}(f)$ decays to zero. To upgrade such decay of variance to convergence, one needs to cover long geodesics with short ones, using arguments similar to those in Sections \ref{sec:weak-cov}--\ref{sec:exp-con}. \\

\noindent\textbf{Notations.}
Throughout the rest of this paper the following notations will be used.
For any $x,y\in\R\cup\{-\infty,\infty\}$ we denote $x\vee y = \max\{x,y\}$, and $x\wedge y =\min\{x,y\}$, and $\llbracket x, y\rrbracket$ is the set $[x, y]\cap \Z$.
For each $u=(a,b)\in \Z^2$, we denote $d(u)=a+b$ and $ad(u)=a-b$.
For $n\in\Z$ we denote $\LL_n=\{u\in\Z^2: d(u)=2n\}$.
Unless otherwise noted (mainly in Section \ref{ssec:elpps}), for the rest of this paper we always fix $\rho\in (0,1)$, and the choice of all other parameters and constants can depend on $\rho$.
We denote $\brho=((1-\rho)^2,\rho^2)$.
We also drop $\rho$ from some notations. Specifically, we write $\Gamma_u$ for $\Gamma_u^\rho$, $\mu_{v;r}$ for $\mu_{v;r}^\rho$, $\bn^{}$ for $\bn^\rho$, and $\nu$, $\Phi_t$, $\Psi$ for $\nu^\rho$, $\Phi_t^\rho$, $\Psi^\rho$.

\section{Stationary distribution of TASEP with a second-class particle} \label{ssec:2cp-sta-def}

We start with the totally asymmetric simple exclusion process (TASEP), which is a classical interacting particle system.
For TASEP with second-class particles, we represent it as a Markov process on $\{1,*,0\}^\Z$,
where the symbols $1$, $*$, and $0$ represent particles, 
second-class particles, and holes respectively. 
As in ordinary TASEP, any (normal) particle can switch with a hole to its right.
In addition, any second-class particle can switch with a hole to its right, and can switch with a (normal) particle to its left.
We consider TASEP as seen from an isolated second-class particle, which is related to LPP semi-infinite geodesics, as will be explained later in Section \ref{ssec:ci-hppair}.
Namely, suppose that $(\eta_t^*)_{t\in I}$ for some interval $I\subset \R$ is TASEP containing a single second-class particle, then the process $(\eta_t^*(l_t+\cdot))_{t\in I}$ is the corresponding TASEP as seen from an isolated second-class particle, where $l_t$ is the location of the second-class particle at time $t$.
There is a family of stationary distributions of TASEP as seen from an isolated second-class particle, constructed in \cite{ferrari1994invariant}.
In this section we study a particular one $\Psi=\Psi^\rho$, under which the configuration has the same asymptotic density $\rho$ of particles in both directions.

We first construct $\Psi$ following \cite{ferrari1994invariant}. We start by constructing a stationary distribution for TASEP with infinitely many second-class particles.

Let $Y_1(x), x\geq 1$ and $Y_2(x), x\geq 1$ be independent collections
of i.i.d.\;Bernoulli$(\rho)$ random variables. 
Let $R_1(x)=\sum_{y=1}^x Y_1(y)$ and $R_2(x)=\sum_{y=1}^x Y_2(y)$.
Then we can define a symmetric random walk $W$ by 
\begin{equation}\label{Wdef}
W(x)=R_2(x)-R_1(x)
\end{equation}
for $x\geq 0$.
We define also     
\begin{equation}\label{Mdef}
M(x)=\sup_{0\leq y\leq x} W(y),
\end{equation}
and $\cE=\{x\geq 1:M(x)>M(x-1)\}$. Then
$M(x)=|\cE\cap\llbracket 1,x \rrbracket|$.

Then we can see $M(x)-W(x)$ as a symmetric random walk with steps in $\{-1,0,1\}$ and forced to stay non-negative: if at one step this walk `tries' to go from $0$ to $-1$, it will be altered and stay $0$.
The points of $\cE$, i.e.\;the points of increase of $M$, are those steps where such alternation occurs. 
More precisely, we have that $x\in\cE$ if and only if $M(x-1)=W(x-1)$, and $Y_2(x)=1$, $Y_1(x)=0$.
By well-known properties of symmetric random walks, we can obtain that as $x\to\infty$,
$\PP[x\in\cE]$ decays like $x^{-1/2}$, while
$M(x)/x^{1/2}=|\cE\cap
\llbracket 1,x \rrbracket|/x^{1/2}$ converges in distribution to a random variable supported
on $(0,\infty)$.

Now we define a configuration $\sigma$ on $\Z_{\ge 0}$, by copying $Y_1$ except at points of $\cE$.
We set $\sigma(0)=*$ and, for $x\geq 1$,
\begin{equation}\label{eq:sigmadef}    
\sigma(x)=
\begin{cases}
1&\text{if }Y_1(x)=1\\
0&\text{if }Y_1(x)=0 \text{ and } x\notin\cE\\
*&\text{if }Y_1(x)=0 \text{ and } x\in\cE.
\end{cases}
\end{equation}
(There is a natural interpretation involving the departure process 
of a discrete-time $M/M/1$ queue -- see \cite{ferrari2007stationary}.)
We wish to extend this to give a configuration $\sigma(x)$ on the whole line $\Z$. We can do this in two equivalent ways:
\begin{enumerate}
    \item Note that $\sigma(x), x\geq 0$ is a renewal process
    with renewals at points $x$ where $\sigma(x)=*$, i.e.\;where
    $x\in\cE$. Between successive renewal points, we see an 
    i.i.d.\;sequence
    of finite strings in $\cup_{n\geq0}\{0,1\}^n$ (but the length of each string has an infinite expectation).
    We can extend $\sigma$ to a renewal process on the whole line by extending this sequence of i.i.d.\;strings, separated by stars, leftward from the origin also. 
    \item Alternatively, we can exploit the symmetry of TASEP under exchanging
    holes/particles and left/right. Write $\pi_\rho$ for the distribution defined above
    on $\sigma(x), x\geq 0$. Now generate another configuration $\tilde{\sigma}(x), x\geq 0$
    from $\pi_{1-\rho}$, independently of $\sigma$, and for $x\geq 1$ set
    \[
    \sigma(-x)=\begin{cases} 
    1&\text{if }\tilde{\sigma}(x)=0\\
    0&\text{if }\tilde{\sigma}(x)=1\\
    *&\text{if }\tilde{\sigma}(x)=*.
    \end{cases}
    \]
\end{enumerate}
The equivalence of these two definitions follows from 
the random walk construction above. If we look at the configuration
between $0$ and the first $*$ to the right of $0$, we obtain
a finite string of holes and particles whose distribution
is invariant under exchanging both left/right and hole/particle; 
this invariance comes from the invariance under reflection
of the random walk path beginning and ending at level $0$ which
is used to construct the configuration.

We also extend the definition of $\cE$ to the whole line, by saying $x\in\cE$
whenever $\sigma(x)=*$.

Now we have defined the distribution of $\{\sigma(x)\}_{x\in\Z}$. From the construction we note immediately that $\{\sigma(x)\}_{x\in\Z_+}$ is independent of $\{\sigma(x)\}_{x\in\Z_-}$.
Also if we consider the interval $\llbracket -x, x\rrbracket$, as $x\to \infty$ the density of $*$ in this interval converges to $0$ (since $\PP[x\in\cE]\to 0$ as $x\to\infty$), and the densities of $1$ and $0$ converge to $\rho$ and $1-\rho$ respectively.

This distribution is stationary for TASEP with second-class particles, as seen from one of the second-class particles.
\begin{prop}[\protect{\cite[Theorem 1]{ferrari1994invariant}}]  \label{prop:ferr94}
Let $(\sigma_t)_{t\ge 0}$ be TASEP with second-class particles, started from $\sigma_0=\sigma$. Suppose that at time $t\ge 0$, the second-class particle starting from the origin is at site $l_t$. Then $\sigma_t(l_t+\cdot)$ has the same distribution as $\sigma$.
\end{prop}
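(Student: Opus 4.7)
The plan is to check stationarity of $\pi$, the law of $\sigma$, via the generator criterion: it suffices to show $\int \cL f\, d\pi = 0$ for every cylinder function $f$, where $\cL$ is the generator of the TASEP with a distinguished second-class particle, viewed from that particle. I would split the transitions in $\cL$ into two types: (i) swaps that do not involve the tagged particle at the origin, and (ii) the two shift events in which the tagged second-class particle exchanges with its left or right neighbor, after which the whole configuration is re-centered by $\pm 1$.

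For type (i), I would exploit the fact that $\sigma(x), x\geq 0$ has an explicit representation as the ``departure process'' of a discrete-time $M/M/1$-type queue driven by the i.i.d.\ sequences $(Y_1(x))_{x\geq 1}$ and $(Y_2(x))_{x\geq 1}$, with the renewal points $\cE$ corresponding to idle periods of the queue. Stationarity under local swaps of adjacent $\sigma$-coordinates on the positive half-line then follows from a Burke-type output theorem: the joint law of the queue length $M-W$ and of the departure sequence to the right is preserved by one step of the queueing dynamics, which after identification coincides with the local TASEP swap. The negative half-line is handled by the hole/particle and left/right symmetry built into the second construction of $\sigma$, together with the independence of the two halves under $\pi$.

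The type (ii) events are more delicate because the swap changes the identity of the tagged particle's neighbors, so the new positive and negative half-lines overlap different portions of the old configuration. Consider the swap of the tagged $*$ with a hole at position $1$: after the shift, the new origin is the old position $1$, the new positive half-line is $\sigma(2),\sigma(3),\ldots$, and the new negative half-line begins with the old $\sigma(-1)$, followed by the old hole, and then $\sigma(-2),\sigma(-3),\ldots$. To check that the resulting two-sided configuration still has law $\pi$, I would combine the half-line independence with the random-walk reflection symmetries $W \leftrightarrow -W$ and $Y_1 \leftrightarrow Y_2$; this yields an exchangeability of ``half-excursions'' of $M-W$ that allows one to splice a single site from the right half-line onto the left half-line without changing the law.

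The main obstacle I anticipate is precisely this splicing identity for the shift events. One must show that the joint distribution of the first excursion of $M-W$ to the right of the origin and the state of the left half-line just inside the origin, after dynamics that remove one coordinate from the right excursion and append it to the left, matches the original joint law $\pi$. A cleaner alternative would be to avoid the direct generator verification and instead realize $\pi$ as the marginal, at a fixed time, of a two-sided coupled multi-type system built from two independent stationary $M/M/1$ queues; applying Burke's theorem to each queue separately then makes stationarity of $\pi$ automatic and recovers the original argument of Ferrari and Martin.
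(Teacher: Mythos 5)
The paper offers no proof of this proposition: it is quoted as a black box from Ferrari, Fontes and Kohayakawa (1994), so there is nothing in the paper to compare your argument against. What you have written is a plan rather than a proof, and by your own account the plan is missing its central step.

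Your type (i) claim is the first concern. Burke's theorem in the form relevant here is a \emph{static} distributional identity: the departure sequence of a stationary discrete-time $M/M/1$ queue is i.i.d.\ Bernoulli. That identifies the one-time law of the half-line configuration, but it does not by itself give invariance of that law under the local TASEP swap dynamics at sites $(x,x+1)$, $x\ge 1$, and the assertion that stationarity ``follows from a Burke-type output theorem'' elides exactly the work that the cited theorem performs. Turning the static queueing identification into a statement about the generator requires either a direct computation or a dynamic coupling; neither is supplied. Your type (ii) step is the second concern, and you flag it yourself: the ``splicing identity'' that transfers a site across the origin after a boundary swap is precisely where the two independent half-lines interact, and the appeal to reflection symmetries of $M-W$ and exchangeability of half-excursions is an aspiration, not a derivation. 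The alternative you sketch at the end --- realizing $\pi$ as the fixed-time marginal of a two-sided system built from coupled stationary $M/M/1$ queues and invoking Burke's theorem for each --- is indeed a viable route (it is essentially the Ferrari--Martin multi-type construction, later than FFK's original 1994 argument), but you only gesture at it: the coupling, the identification of the second-class particle's trajectory with a queue boundary, and the precise invocation of Burke's theorem are all left unstated. As written, neither branch of the proposal closes the argument.
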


Given $\sigma$, there are two related projections of 
it which involve setting all the $*$ symbols except for the one at the origin
to be either $1$s or $0$s. 
\begin{enumerate}
    \item The simpler one consists of setting all $*$ symbols on positive sites (i.e.\;$\Z_+$) to be $0$,
    and all $*$ symbols on negative sites (i.e.\;$\Z_-$) to be $1$. This gives a configuration where the non-zero sites are i.i.d.\;
    Bernoulli$(\rho)$. 
    \item
    Alternatively, we can follow the opposite rule of setting all $*$ symbols on positive sites
    to be $1$ and all $*$ symbols on negative sites to be $0$. Specifically, define
    a configuration $\zeta^*$ by $\zeta^*(0)=*$ and for $x\ne0$, 
    \[
    \zeta^*(x)=
    \begin{cases}
    0&\text{if }\sigma(x)=0, \text{ or if } \sigma(x)=* \text{ and } x<0\\
    1&\text{if }\sigma(x)=1, \text{ or if } \sigma(x)=* \text{ and } x>0.
    \end{cases}
    \]
    This gives a configuration which, compared to the product measure of Bernoulli$(\rho)$,
    has a bias towards particles on positive sites and towards holes on negative sites.
    This bias decays as one gets further away from the origin.
\end{enumerate}
We define $\Psi$ to be the distribution of this $\zeta^*$.
Theorem 2 of \cite{ferrari1994invariant} says that it is 
stationary for TASEP as seen from an isolated second-class particle. The bias
above reflects the tendency created by the dynamics of the process for the second-class particle to get stuck behind particles and to get stuck in front of holes. 

The combination of the two projections above gives a coupling between the configuration $\zeta^*$ and the i.i.d.\;Bernoulli$(\rho)$ configuration in which the discrepancies are precisely the non-zero members of $\cE$. The fact that $|\cE\cap \llbracket 1,x \rrbracket|$ grows in the order of $\sqrt{x}$ implies that the product measure of Bernoulli$(\rho)$ and the stationary distribution of TASEP as seen from an isolated second-class particle are mutually singular. 

For later calculation, it will be useful to look at the position of 
the first hole to the right of the origin in $\zeta^* \sim \Psi$
(and similarly the first particle to the left). 

Let $X_+=\min\{x\geq 1: \zeta^*(x)=0\}$, which is also $\min\{x\geq 1:\sigma(x)=0\}$.
From the random walk construction of $\sigma(x), x>0$, one gets that
\[
X_+=\min\{x\geq 1: Y_1(x)=0, \text{ and for some }y\in\llbracket1,x\rrbracket, Y_2(y)=0\}.
\]
That is, to find $X_+$ we look for the first $0$ in the process $Y_2$, and then we look for the first $0$ in the process $Y_1$ from then on. Since all the variables $Y_1(x)$ and $Y_2(x)$ 
are i.i.d.\;Bernoulli$(\rho)$, this gives that $X_++1$ is the sum of two Geometric($1-\rho$) random variables, and so
\begin{equation}\label{eq:Xplus}
\PP[X_+=k]=k(1-\rho)^2\rho^{k-1}
\end{equation}
for $k\geq 1$.
Similarly if $X_-$ is the location of the first particle to the left of the origin,
then 
\begin{equation}\label{eq:Xminus}
\PP[X_-=-k]=k\rho^2(1-\rho)^{k-1}.
\end{equation}

For the next two subsections, we prove two properties of $\Psi$, respectively: (1) the corresponding stationary process of TASEP as seen from an isolated second-class particle is ergodic in time, and (2) convergence to $\Psi$ starting from the i.i.d.\;Bernoulli$(\rho)$ configuration, in the averaged sense (in other words, a weak version of Theorem \ref{thm:cov-phi}). These two properties will be key inputs to the rest of this paper.

\subsection{Ergodicity}   \label{sec:semi-inf-cov}

This subsection is devoted to proving the following ergodicity statement.
We let $(\zeta_t^*)_{t\in\R}$ denote the process of TASEP as seen from an isolated second-class particle, such that $\zeta_t^*\sim \Psi$ for each $t$.
\begin{prop}  \label{prop:ergodic-2nd-class-tasep}
The process $(\zeta_t^*)_{t\in\R}$ is ergodic in time.
\end{prop}
The key step is the following coupling between $\Psi$ and itself.
\begin{lemma}\label{l:couple-2-psi-rho}
For any $L\in \N$ and $\epsilon > 0$, there exist an integer $M>L$, and a coupling between $\Psi$ and itself, such that the following is true.
Let $\zeta^{(1)}$ and $\zeta^{(2)}$ be sampled from this coupling, then
\begin{enumerate}
    \item restricted to $\llbracket -L, L\rrbracket$, $\zeta^{(1)}$ and $\zeta^{(2)}$ are independent.
    \item with probability $>1-\epsilon$, $\zeta^{(1)}$ and $\zeta^{(2)}$ have the same number of particles in $\llbracket -M, -1 \rrbracket$ and in $\llbracket 1, M\rrbracket$, and $\zeta^{(1)}$ and $\zeta^{(2)}$ are identical on $\Z\setminus\llbracket -M, M\rrbracket$.
\end{enumerate}
\end{lemma}
To construct this coupling, we revisit the construction of $\Psi$.
For $\zeta^*\sim\Psi$, recall that we defined it on $\Z_+$ using two independent collections of i.i.d.\;Bernoulli$(\rho)$
random variables $Y_1(x), x\geq 1$ and $Y_2(x), x\geq 1$; and  $R_1(x)=\sum_{y=1}^x Y_1(y)$, $R_2(x)=\sum_{y=1}^x Y_2(y)$.
For $x\ge 1$, let
\begin{gather*}
    \oY_1(x)=\zeta^*(x)=
    \begin{cases}
        1,&Y_1(x)=1 \text{ or } x\in\cE\\
        0,&Y_1(x)=0 \text{ and } x\notin\cE
    \end{cases}
    \\
    \oY_2(x)=
    \begin{cases}
        0,&Y_2(x)=0 \text{ or } x\in\cE\\
        1,&Y_2(x)=1 \text{ and } x\notin\cE
    \end{cases}.
\end{gather*}
Namely, $\oY_1$ is just $\zeta^*$ on $\Z_+$, and $\oY_2$ is `paired with' $\oY_1$, such that $\oY_1+\oY_2=Y_1+Y_2$.
To see why we define $\oY_2$, consider
\begin{align*}
    \oR_1(x)&=\sum_{y=1}^x \oY_1(x) =R_1(x)+M(x)\\
    \oR_2(x)&=\sum_{y=1}^x \oY_2(x) =R_2(x)-M(x).
\end{align*}
We have that $\oR_1(x)-\oR_2(x)=2M(x)-W(x)$, 
where $W$ and $M$ are defined in (\ref{Wdef}) and (\ref{Mdef}). 
In particular we have $\oR_1(x)\geq \oR_2(x)$ for all $x$.
Note that $\oR_1(x)$ is the number of particles of $\zeta^*$
in the interval $\llbracket 1,x \rrbracket$. 
The process $\oR_1$ is certainly not Markovian; however, the process $(\oR_1(x), \oR_2(x)), x\geq 0$ is a Markov chain, and we will exploit this fact.

Consider the transition function $\ocT:\Z^2_{\ge 0}\times \Z^2_{\ge 0} \to [0,1]$ defined by
\[
\begin{split}
\ocT((a,b),(a+1,b+1)) &=
\rho^2,
\\ 
\ocT((a,b),(a+1,b)) &=
\rho(1-\rho) \frac{a-b+2}{a-b+1},
\\
\ocT((a,b),(a,b+1)) &=
\rho(1-\rho) \frac{a-b}{a-b+1},
\\
\ocT((a,b),(a,b)) &= 
(1-\rho)^2.
\end{split}
\]
\begin{lemma}  \label{lem:oS-Markov}
The process $(\oR_1,\oR_2)$ is a Markov chain in $\Z^2_{\ge 0}$ with transition probability $\ocT$.
\end{lemma}
\begin{proof}
For any $x\ge 0$, we show that
\begin{multline}  \label{eq:r-e-joint-distr}
\PP\big[\{\oR_1(y)\}_{y=0}^x=\{r_1(y)\}_{y=0}^x, \{\oR_2(y)\}_{y=0}^x=\{r_2(y)\}_{y=0}^x, M(x)=h\big]\\ = \rho^{r_1(x)+r_2(x)}(1-\rho)^{2x-r_1(x)-r_2(x)}
\end{multline}
for any integers $\{r_1(y)\}_{y=0}^x$, $\{r_2(y)\}_{y=0}^x$ and $h$ such that
\begin{enumerate}
    \item $r_1(0)=r_2(0)=0$,
    \item $r_1(y)-r_1(y-1), r_2(y)-r_2(y-1) \in \{0,1\}$, and $r_1(y)\ge r_2(y)$ for any $1\le y \le x$,
    \item $0\le h \le r_1(x)-r_2(x)$.
\end{enumerate}
We prove this by induction on $x$. The base case (of $x=0$) is trivial, and now we assume that it is true for $x$, and consider $x+1$.

Note that we have $x+1\in\cE$ if the following three conditions all hold:
(i) $M(x)=W(x)$ (i.e.\;$\oR_1(x)-\oR_2(x)=M(x)$); 
(ii) $Y_1(x+1)=0$; (iii) $Y_2(x+1)=1$. 
In that case we have $\oR_1(x+1)=\oR_1(x)+1$, 
$\oR_2(x+1)=\oR_2(x)$, and $M(x+1)=M(x)+1$.
In any other case we have $\oR_1(x+1)=\oR_1(x)+Y_1(x+1)$, $\oR_2(x+1)=\oR_2(x)+Y_2(x+1)$, and $M(x+1)=M(x)$.

Denote $y_1(x+1)=r_1(x+1)-r_1(x)$ and $y_2(x+1)=r_2(x+1)-r_2(x)$. From the above transition we have that when $h\le r_1(x)-r_2(x)$,
\begin{align*}
& \PP[\{\oR_1(y)\}_{y=0}^{x+1}=\{r_1(y)\}_{y=0}^{x+1}, \{\oR_2(y)\}_{y=0}^{x+1}=\{r_2(y)\}_{y=0}^{x+1}, M(x+1)=h] \\
=&
\PP[\{\oR_1(y)\}_{y=0}^x=\{r_1(y)\}_{y=0}^x, \{\oR_2(y)\}_{y=0}^x=\{r_2(y)\}_{y=0}^x, M(x)=h]\\
& \times \PP[Y_1(x+1)=y_1(x+1), Y_2(x+1)=y_2(x+1)],
\end{align*}
where the second probability on the right-hand side equals $\rho^{y_1(x+1)+y_2(x+1)}(1-\rho)^{2-y_1(x+1)-y_2(x+1)}$.
When $h> r_1(x)-r_2(x)$, we must have that $h=r_1(x)-r_2(x)+1$ and $y_1(x+1)=1$, $y_2(x+1)=0$, and that
\begin{align*}
& \PP[\{\oR_1(y)\}_{y=0}^{x+1}=\{r_1(y)\}_{y=0}^{x+1}, \{\oR_2(y)\}_{y=0}^{x+1}=\{r_2(y)\}_{y=0}^{x+1}, M(x+1)=h] \\
=&
\PP[\{\oR_1(y)\}_{y=0}^x=\{r_1(y)\}_{y=0}^x, \{\oR_2(y)\}_{y=0}^x=\{r_2(y)\}_{y=0}^x, M(x)=h-1]\\
& \times \PP[Y_1(x+1)=0, Y_2(x+1)=1],
\end{align*}
where the second probability on the right-hand side equals $\rho(1-\rho)$, which also equals $\rho^{y_1(x+1)+y_2(x+1)}(1-\rho)^{2-y_1(x+1)-y_2(x+1)}$.
Thus by the induction hypothesis (\eqref{eq:r-e-joint-distr} for $x$), we get \eqref{eq:r-e-joint-distr} for $x+1$.

Finally, by summing over all $h$, we conclude that
\begin{multline*}
\PP[\{\oR_1(y)\}_{y=0}^{x}=\{r_1(y)\}_{y=0}^{x}, \{\oR_2(y)\}_{y=0}^{x}=\{r_2(y)\}_{y=0}^{x}] \\
=
(r_1(x)-r_2(x)+1)\rho^{r_1(x)+r_2(x)}(1-\rho)^{2x-r_1(x)-r_2(x)}.
\end{multline*}
Using this we conclude that
\begin{multline*}
\PP[\oR_1(x+1)=r_1(x+1), \oR_2(x+1)=r_2(x+1) \mid \{\oR_1(y)\}_{y=0}^x=\{r_1(y)\}_{y=0}^x, \{\oR_2(y)\}_{y=0}^x=\{r_2(y)\}_{y=0}^x] \\
=
\frac{r_1(x+1)-r_2(x+1)+1}{r_1(x)-r_2(x)+1}
\rho^{y_1(x)+y_2(x)}(1-\rho)^{2-y_1(x)-y_2(x)},
\end{multline*}
which implies the conclusion.
\end{proof}

We have the following mixing property of this Markov chain.
\begin{lemma}   \label{l:mix-nointerrw}
For any $u,v\in\Z^2_{\ge 0}$, we have $\lim_{n\to\infty}\|\ocT^n(u,\cdot)-\ocT^n(v,\cdot)\|_1=0$.
\end{lemma}
\begin{proof}
Our strategy is to construct a coupling between two Markov chains, each with transition probability $\ocT$, starting from $u$ and $v$ respectively.

To construct the coupling, we recursively define a random process $(A^{(1)}, A^{(2)}, B^{(1)}, B^{(2)}):\Z_{\ge 0} \to \Z^2\times \Z_{\ge 0}^2$.
For $x\in\Z_{\ge 0}$, given $A^{(1)}(y)$, $B^{(1)}(y)$, $A^{(2)}(y)$, $B^{(2)}(y)$ for each $y\in\llbracket 0, x \rrbracket$, we define $A^{(1)}(x+1)$, $B^{(1)}(x+1)$, $A^{(2)}(x+1)$, $B^{(2)}(x+1)$ as follows.
First, we let $U_0$ be a Bernoulli$(2\rho(1-\rho))$ random variable.
\begin{enumerate}
    \item If $U_0=0$ we do the following. Let $B^{(1)}(x+1)=B^{(1)}(x)$ and $B^{(2)}(x+1)=B^{(2)}(x)$. If $A^{(1)}(x)=A^{(2)}(x)$, we let
$A^{(1)}(x+1)=A^{(2)}(x+1)=A^{(1)}(x)+2U_1-1=A^{(2)}(x)+2U_1-1$; otherwise we let $A^{(1)}(x+1)=A^{(1)}(x)+2U_1-1$ and $A^{(2)}(x+1)=A^{(2)}(x)+2U_2-1$. Here $U_1$ and $U_2$ are independent Bernoulli$\big(\frac{\rho^2}{\rho^2+(1-\rho)^2}\big)$ random variables, and are independent of $U_0$.
    \item If $U_0=1$ we do the following. Let $A^{(1)}(x+1)=A^{(1)}(x)$ and $A^{(2)}(x+1)=A^{(2)}(x)$. 
    \begin{itemize}
        \item If $B^{(1)}(x)=B^{(2)}(x)$, we let
$B^{(1)}(x+1)=B^{(2)}(x+1)=B^{(1)}(x)+2U_3-1=B^{(2)}(x)+2U_3-1$.
\item If $B^{(1)}(x)\neq B^{(2)}(x)$ and $\max_{0\le y \le x}B^{(1)}(y)\ge N$, we let $B^{(1)}(x+1)=B^{(1)}(x)+2U_3-1$ and $B^{(2)}(x+1)=B^{(2)}(x)+2U_4-1$.
\item If $B^{(1)}(x)< B^{(2)}(x)$ and $\max_{0\le y \le x}B^{(1)}(y)< N$, we let $B^{(1)}(x+1)=B^{(1)}(x)+2U_3-1$ and $B^{(2)}(x+1)=B^{(2)}(x)+2U_3U_5-1$.
\item If $B^{(1)}(x)> B^{(2)}(x)$ and $\max_{0\le y \le x}B^{(1)}(y)< N$, we let $B^{(1)}(x+1)=B^{(1)}(x)+2U_4U_6-1$ and $B^{(2)}(x+1)=B^{(2)}(x)+2U_4-1$.
    \end{itemize}
Here $N>0$ is a number to be determined; and $U_3$, $U_4$, $U_5$, $U_6$ are independent with distribution being Bernoulli$\big(\frac{B^{(1)}(x)+2}{2B^{(1)}(x)+2}\big)$, Bernoulli$\big(\frac{B^{(2)}(x)+2}{2B^{(2)}(x)+2}\big)$, Bernoulli$\big(\frac{B^{(2)}(x)+2}{2B^{(2)}(x)+2}\cdot \frac{2B^{(1)}(x)+2}{B^{(1)}(x)+2}\big)$, and Bernoulli$\big(\frac{2B^{(2)}(x)+2}{B^{(2)}(x)+2}\cdot \frac{B^{(1)}(x)+2}{2B^{(1)}(x)+2}\big)$, respectively; and they are independent of $U_0, U_1, U_2$.
\end{enumerate}
The reason behind the construction of  $(A^{(1)}, A^{(2)}, B^{(1)}, B^{(2)})$ is that, if we set the initial condition to be $A^{(1)}(0)=d(u)$, $B^{(1)}(0)=ad(u)$, and $A^{(2)}(0)=d(v)$, $B^{(2)}(0)=ad(v)$, for  $u,v\in\Z^2_{\ge 0}$, then it is straightforward to check that $x\mapsto\big(\frac{A^{(1)}(x)+B^{(1)}(x)+x}{2}, \frac{A^{(1)}(x)-B^{(1)}(x)+x}{2}\big)$ and $x\mapsto\big(\frac{A^{(2)}(x)+B^{(2)}(x)+x}{2}, \frac{A^{(2)}(x)-B^{(2)}(x)+x}{2}\big)$ are Markov chains with the same transition probability $\ocT$, starting from $u$ and $v$ respectively.
Indeed, from this construction, it is easy to check that for each $i=1, 2$, $(A^{(i)}, B^{(i)})$ is a Markov chain, with transition probability given by
\begin{equation}  \label{eq:ABtrans}
\begin{split}
\PP[A^{(i)}(x+1)=A^{(i)}(x)+1, B^{(i)}(x+1)=B^{(i)}(x)\mid A^{(i)}(x),B^{(i)}(x)] &=
\rho^2
\\ 
\PP[A^{(i)}(x+1)=A^{(i)}(x)-1, B^{(i)}(x+1)=B^{(i)}(x)\mid A^{(i)}(x),B^{(i)}(x)]  &=
(1-\rho)^2,
\\
\PP[A^{(i)}(x+1)=A^{(i)}(x), B^{(i)}(x+1)=B^{(i)}(x)+1\mid A^{(i)}(x),B^{(i)}(x)]
&=\rho(1-\rho)\frac{B^{(i)}(x)+2}{B^{(i)}(x)+1},
\\
\PP[A^{(i)}(x+1)=A^{(i)}(x), B^{(i)}(x+1)=B^{(i)}(x)-1\mid A^{(i)}(x),B^{(i)}(x)]
&=\rho(1-\rho)\frac{B^{(i)}(x)}{B^{(i)}(x)+1}.
\end{split}    
\end{equation}
From the construction above there are several other key properties to note.
\begin{enumerate}
    \item When $B^{(1)}(x)\neq B^{(2)}(x)$ and $\max_{0\le y \le x}B^{(1)}(y)<N$, there is always $|B^{(1)}(x+1)-B^{(2)}(x+1)|\le |B^{(1)}(x)- B^{(2)}(x)|$.
    \item If $A^{(1)}(x)= A^{(2)}(x)$ (resp.\;$B^{(1)}(x)= B^{(2)}(x)$), for any $y\ge x$ we must have $A^{(1)}(y)= A^{(2)}(y)$ (resp.\;$B^{(1)}(y)= B^{(2)}(y)$).
    \item The processes $A^{(1)}$ and $A^{(2)}$ are independent random walks until they are equal; starting from the first time when $B^{(1)}$ reaches $N$, the processes $B^{(1)}$ and $B^{(2)}$ are independent until they are equal.
\end{enumerate}
To show that $\lim_{n\to\infty}\|\ocT^n(u,\cdot)-\ocT^n(v,\cdot)\|_1=0$, it now suffices to show that
\begin{equation} \label{eq:cpsuff}
\liminf_{x\to\infty} \PP[A^{(1)}(x)=A^{(2)}(x),\; B^{(1)}(x)=B^{(2)}(x)] > 1-\epsilon,    
\end{equation}
for any $\epsilon>0$ and some choice of $N$.
First, we have that $A^{(1)}(x)=A^{(2)}(x)$ for all large enough $x$, by the third property above.

We next show that when $N$ is large enough depending on $u,v,\epsilon$, with probability at least $1-\epsilon$ we have $B^{(1)}(x)=B^{(2)}(x)$ for some large enough $x$ (thus for all large $x$, by the second property above).
Let $x_0 = \min\{x\in\Z_{\ge 0}: B^{(1)}(x)=N\}$.
We have $x_0<\infty$ almost surely, since $B^{(1)}$ dominates a simple random walk.

As stated in the third property above, given $B^{(1)}(x_0)$ and $B^{(2)}(x_0)$, the processes $B^{(1)}(x_0+x)$ and $B^{(2)}(x_0+x)$ for $x\ge 0$ are independent (until they are equal); and we further note that, when $N$ is taken large they should be very close to two independent random walks.
To make this more precise, we define proxies of $B^{(1)}$ and $B^{(2)}$.
For $i=1,2$, let $V^{(i)}:\Z_{\ge 0}\to \Z$ be a random walk satisfying $V^{(i)}(0)=B^{(i)}(x_0)$, and
\begin{equation}  \label{eq:Vitrans}
\begin{split}
\PP[V^{(i)}(x+1)=V^{(i)}(x)\mid V^{(i)}(x)] &=
\rho^2+(1-\rho)^2,
\\
\PP[V^{(i)}(x+1)=V^{(i)}(x)+1\mid V^{(i)}(x)]
&=\rho(1-\rho),
\\
\PP[V^{(i)}(x+1)=V^{(i)}(x)-1\mid V^{(i)}(x)]
&=\rho(1-\rho).
\end{split}    
\end{equation}
Also we let $V^{(1)}$ and $V^{(2)}$ be independent, until $V^{(1)}(x_1)=V^{(2)}(x_1)$ for some $x_1>0$, and let $V^{(1)}(x)=V^{(2)}(x)$ for all $x>x_1$.
For some $N_1$ large enough (depending on $u, v, \epsilon$) we have $\PP[x_1<N_1]>1-\epsilon/2$, thus
\begin{equation}  \label{eq:V12eq}
\PP[V^{(1)}(N_1)=V^{(2)}(N_1)] > 1-\epsilon/2.
\end{equation}
By comparing the transition probabilities \eqref{eq:ABtrans} and \eqref{eq:Vitrans}, we can couple $B^{(1)}, B^{(2)}$ with $V^{(1)}, V^{(2)}$, such that for any $x\ge 0$, given that $B^{(1)}(x_0+x)=V^{(1)}(x)$ and $B^{(2)}(x_0+x)=V^{(2)}(x)$, we have $B^{(1)}(x_0+x+1)=V^{(1)}(x+1)$ and $B^{(2)}(x_0+x+1)=V^{(2)}(x+1)$ with probability at least \[1-\rho(1-\rho)\bigg(\frac{1}{B^{(1)}(x_0+x)+1} + \frac{1}{B^{(2)}(x_0+x)+1}\bigg)> 1-\frac{2\rho(1-\rho)}{N-x-\|u-v\|_1}.\]
Here the inequality is due to that $B^{(1)}(x_0+x)\ge B^{(1)}(x_0)-x = N-x$, and $B^{(2)}(x_0+x) \ge B^{(1)}(x_0+x) - |B^{(1)}(0)-B^{(2)}(0)|\ge N-x-\|u-v\|_1$, using the first property above.
Under this coupling, by taking a union bound over $x$ we have that $V^{(1)}(x)=B^{(1)}(x_0+x)$ and $V^{(2)}(x)=B^{(2)}(x_0+x)$ for any $0\le x \le N_1$ with probability at least $1-\frac{2\rho(1-\rho)N_1}{N-N_1-\|u-v\|_1}$.
By taking $N$ large enough (depending on $N_1, \epsilon, u, v$) we can make this probability $>1-\epsilon/2$.
From this and \eqref{eq:V12eq}, we have $\PP[B^{(1)}(x_0+N_1)=B^{(2)}(x_0+N_1)]>1-\epsilon$. This implies \eqref{eq:cpsuff}, and the conclusion follows.
\end{proof}

We let $\sS$ denote the law of a Markov chain starting from $(0,0)$ with transition probability $\ocT$, i.e.\;the law of $(\oR_1,\oR_2)$.
From the above lemma we could construct a coupling between $\sS$ and itself, as follows.
\begin{lemma}\label{l:couple-2-S}
For any $L\in \N$ and $\epsilon > 0$, there exist an integer $M>L$, and a coupling between $\sS$ and itself, such that the following is true.
Let $(\oR_1^{(1)}, \oR_2^{(1)})$ and $(\oR_1^{(2)}, \oR_2^{(2)})$ be sampled from this coupling, then
\begin{enumerate}
    \item restricted to $\llbracket 0, L\rrbracket$, $(\oR_1^{(1)}, \oR_2^{(1)})$ and $(\oR_1^{(2)}, \oR_2^{(2)})$ are independent.
    \item $\PP[\oR_1^{(1)}(M)=\oR_1^{(2)}(M), \oR_2^{(1)}(M)=\oR_2^{(2)}(M)]>1-\epsilon$.
\end{enumerate}
\end{lemma}

\begin{proof}
We construct the coupling by first allowing $(\oR_1^{(1)}, \oR_2^{(1)})$ and $(\oR_1^{(2)}, \oR_2^{(2)})$ to evolve independently for the first $L$ steps.
Then conditioned on $(\oR_1^{(1)}(L), \oR_2^{(1)}(L))$ and $(\oR_1^{(2)}(L), \oR_2^{(2)}(L))$, 
we couple $(\oR_1^{(1)}(M), \oR_2^{(1)}(M))$ and $(\oR_1^{(2)}(M), \oR_2^{(2)}(M))$ to maximize the probability that they coincide.
The conclusion follows from Lemma \ref{l:mix-nointerrw} by taking $M$ large enough, since there are only finitely many possible values of $(\oR_1^{(1)}(L), \oR_2^{(1)}(L))$ and $(\oR_1^{(2)}(L), \oR_2^{(2)}(L))$.
\end{proof}

\begin{proof}[Proof of Lemma \ref{l:couple-2-psi-rho}]
From the coupling of two copies of $\oR_1$ given by Lemma \ref{l:couple-2-S}, we get a coupling between two copies of $\oY_1$, thus two copies of $\zeta^*\sim \Psi$ on $\Z_+$.
We can similarly couple two copies of $\zeta^*\sim \Psi$ on $\Z_-$.
As the measure $\Psi$ on $\Z_+$ and $\Z_-$ are independent, we get the desired coupling satisfying the statement of this lemma.
\end{proof}
We can now prove ergodicity of the stationary process of TASEP as seen from an isolated second-class particle, using the coupling given by Lemma \ref{l:couple-2-psi-rho}.
\begin{proof} [Proof of Proposition \ref{prop:ergodic-2nd-class-tasep}]
We assume the contrary. Then there is a measurable set $B\subset \{0,1,*\}^\Z$ invariant under the Markov process (of TASEP as seen for an isolated second-class particle), with $0<\Psi(B)<1$.
Let $\zeta^*\sim \Psi$.
For any $L\in \N$ we consider the random variable $\chi_L(\zeta^*)=\PP[\zeta^* \in B \mid \{\zeta^*(x)\}_{x\in\llbracket -L, L \rrbracket}]$. 
Note that this is a martingale in $L$, and almost surely converges to $\don[\zeta^*\in B]$.
Thus for any $\epsilon > 0$, we can take $L$ large enough, such that $\PP[|\chi_L(\zeta^*)-\don[\zeta^*\in B]| > \epsilon] < \epsilon$.

For the above $L$ and $\epsilon$, by Lemma \ref{l:couple-2-psi-rho} we can find $M>L$ and a coupling between $\Psi$ and itself.
Suppose that $\zeta^{(1)}, \zeta^{(2)}$ are sampled from this coupling.
By the first property of the coupling, and that $\chi_L$ only depends on the configuration in $\llbracket -L, L\rrbracket$, 
we have that $\chi_L(\zeta^{(1)})$ and $\chi_L(\zeta^{(2)})$ are independent. Thus
\[
\PP[\chi_L(\zeta^{(1)})>1-\epsilon,\;\chi_L(\zeta^{(2)})<\epsilon]
=
\PP[\chi_L(\zeta^{(1)})>1-\epsilon]\PP[\chi_L(\zeta^{(2)})<\epsilon].
\]
Note $\zeta^{(1)}\in B$ and $|\chi_L(\zeta^{(1)})-\don[\zeta^{(1)}\in B]| < \epsilon$ imply that $\chi_L(\zeta^{(1)})>1-\epsilon$, so we have
\[
\PP[\chi_L(\zeta^{(1)})>1-\epsilon] \ge \PP[\zeta^{(1)}\in B] - \PP[|\chi_L(\zeta^{(1)})-\don[\zeta^{(1)}\in B]| > \epsilon] > \Psi(B) - \epsilon,
\]
and similarly
\[
\PP[\chi_L(\zeta^{(2)})<\epsilon] \ge \PP[\zeta^{(2)}\not\in B] - \PP[|\chi_L(\zeta^{(2)})-\don[\zeta^{(2)}\in B]| > \epsilon] > 1-\Psi(B) - \epsilon.
\]
Combining the three above inequalities, we have
\[
\PP[\chi_L(\zeta^{(1)})>1-\epsilon,\;\chi_L(\zeta^{(2)})<\epsilon] >\Psi(B)(1-\Psi(B))-\epsilon.
\]
Using $\PP[|\chi_L(\zeta^{(1)})-\don[\zeta^{(1)}\in B]| > \epsilon] < \epsilon$ and $\PP[|\chi_L(\zeta^{(2)})-\don[\zeta^{(2)}\in B]| > \epsilon] < \epsilon$ again, we have
\[
\PP[\zeta^{(1)}\in B,\;\zeta^{(2)}\not\in B]>\Psi(B)(1-\Psi(B))-3\epsilon.
\]
Using the second property of the coupling (from Lemma \ref{l:couple-2-psi-rho}), and by taking $\epsilon$ small enough, we have that with probability $>\Psi(B)(1-\Psi(B))-4\epsilon>0$, all of the following conditions are satisfied: $\zeta^{(1)}\in B$ and $\zeta^{(2)}\not\in B$, and $\zeta^{(1)}$ and $\zeta^{(2)}$ are identical on $\Z\setminus\llbracket -M, M\rrbracket$, and they have the same number of particles in $\llbracket -M, -1 \rrbracket$ and in $\llbracket 1, M\rrbracket$.

Assuming that $\zeta^{(1)}$ and $\zeta^{(2)}$ satisfy the above conditions, we next couple two TASEPs starting from $\zeta^{(1)}$ and $\zeta^{(2)}$ at time $0$, such that switches happen between neighboring sites with the same Poisson clocks.
With positive probability the following happens: from time $0$ to time $1$, no switch happens between sites $x$ and $x+1$, for $x\in\{-M-1, -1, 0, M\}$;
and switches happen between sites $x$ and $x+1$, sequentially for $x=-M,\cdots, -2$ and for $x=1,\cdots, M-1$, and repeat this for $M$ times.
Then at time $1$ the two processes starting from $\zeta^{(1)}$ and $\zeta^{(2)}$ would be identical.
However, as $B$ and $\{0,1,*\}^\Z\setminus B$ are assumed to be invariant under the evolution of TASEP as seen from an isolated second-class particle, we get a subset of $\{0,1,*\}^\Z$ with positive $\Psi$ measure, and is contained (up to a zero measure set) in both  $B$ and $\{0,1,*\}^\Z\setminus B$. This is a contradiction.
\end{proof}

\subsection{Convergence in the averaged sense}  \label{ssec:cv-avg-tasep}

As indicated in the introduction, we consider the process $(\eta_t^*)_{t\ge 0}$, which is TASEP with a single second-class particle such that $\eta_0^*(x)$ are i.i.d Bernoulli$(\rho)$ for $x\in\Z\setminus\{0\}$ and $\eta_0^*(0)=*$.
We define $\Phi_t$ to be the law of $\eta_t^*(l_t+\cdot)$, where $l_t$ is the location of the second-class particle at time $t$.
In this subsection we prove a weak version of Theorem \ref{thm:cov-phi}, i.e.\;the convergence of $\Phi_t$ to $\Psi$ in the averaged sense.

\begin{prop}  \label{prop:converge-2nd-class-tasep}
We have $T^{-1}\int_0^T \Phi_t dt \to \Psi$ weakly, as $T\to\infty$.
\end{prop}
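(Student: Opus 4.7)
The plan is to run a standard Cesaro-type ergodic argument on a compact state space: show tightness (free), show every weak subsequential limit is invariant under the Markov semigroup of TASEP as seen from a hole-particle pair, and finally identify the limit as $\Psi^\rho$ using uniqueness of invariant measures with specified asymptotic densities. Write $\mu_T := T^{-1}\int_0^T \Phi_t^\rho\,dt$ and let $(S_s)_{s\ge 0}$ denote the Markov semigroup, so $\Phi_{t+s}^\rho = \Phi_t^\rho S_s$.

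Since $\{0,1\}^\Z$ is compact in the product topology, $\{\mu_T\}_{T\ge 1}$ is automatically tight; let $\nu$ be any weak subsequential limit along some $T_n\to\infty$. For any bounded continuous cylinder function $f$ and any fixed $s\ge 0$, a direct telescoping gives
\[
\mu_T(S_s f)-\mu_T(f) \;=\; T^{-1}\Big(\int_T^{T+s}\Phi_t^\rho(f)\,dt-\int_0^s\Phi_t^\rho(f)\,dt\Big) \;=\; O\!\left(\tfrac{s\|f\|_\infty}{T}\right),
\]
which tends to $0$ as $T=T_n\to\infty$. Hence $\nu S_s=\nu$, i.e.\ $\nu$ is invariant for the TASEP as seen from an isolated hole-particle pair.

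To identify $\nu=\Psi^\rho$, I appeal to the Ferrari-Fontes-Kohayakawa classification \cite{ferrari1994invariant}: within the relevant family of invariant measures for TASEP as seen from an isolated second-class particle, $\Psi^\rho$ is uniquely characterized by having asymptotic density $\rho$ on both sides of the origin. To verify this for $\nu$, I use that the initial law $\Phi_0^\rho$ is Bernoulli$(\rho)$ outside $\{0,1\}$ and that product Bernoulli$(\rho)$ is stationary for bare TASEP, which gives $\E[\eta_t(x)]\to\rho$ as $|x|\to\infty$, uniformly in $t\ge 0$. Combined with uniform-in-$t$ control on the location of the hole-particle pair, available from the LPP-TASEP correspondence and the transversal fluctuation bounds for semi-infinite geodesics (e.g.\ Corollary \ref{cor:trans-semi-inf}), this transfers to the centered measure $\Phi_t^\rho$, hence to $\mu_T$, and hence to the weak limit $\nu$.

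The main obstacle is precisely this density identification. Even though TASEP conserves global density, the recentering by the moving hole-particle pair could in principle distort the local marginal structure, so one has to carefully couple the uniform density estimate for $\E[\eta_t(x)]$ at large $|x|$ with the sub-ballistic/transversal fluctuation estimate for the hole-particle trajectory in order to conclude that the marginals of $\Phi_t^\rho$ at sites $\pm L$ are close to $\rho$ uniformly in $t$ as $L\to\infty$. The abstract Markov-theoretic steps (tightness and invariance) are otherwise completely routine.
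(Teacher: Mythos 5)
Your Krylov--Bogolyubov skeleton (tightness on the compact space $\{0,1\}^\Z$, invariance of subsequential limits via the telescoping estimate) is correct and routine, but the identification step contains a genuine gap and is not a minor technicality. You would need a \emph{classification} theorem: every invariant measure for TASEP as seen from an isolated hole-particle pair having asymptotic density $\rho$ on both sides equals $\Psi^\rho$. The reference \cite{ferrari1994invariant} \emph{constructs} a family of invariant measures; it does not prove that this family exhausts all invariant measures, nor does it cover measures arising as Ces\`{a}ro limits, which a priori need not be ergodic (your $\nu$ could be a nontrivial mixture of invariant measures across different densities or of unclassified exotic measures). Without such a classification, ``uniquely characterized by asymptotic density'' is not an available input. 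There is also a subtlety in the density verification itself: the hole-particle pair travels at speed $\approx 1-2\rho$, precisely the characteristic speed along which the discrepancies created by the initial perturbation at $\{0,1\}$ concentrate. Consequently the re-centered marginal $\E[\eta_t'(x)]$ at a fixed large $|x|$ need not be close to $\rho$ uniformly in $t$ --- indeed under the stationary measure $\Psi^\rho$ itself these marginals carry a bias of order $|x|^{-1/2}$. You would have to work with Ces\`{a}ro densities $L^{-1}\sum_{1\le x\le L}\eta'(x)$ rather than pointwise marginals, and the combination of that with a uniqueness theorem you do not have is where the argument breaks down.

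The paper avoids this entirely by a direct coupling. It runs the multi-type TASEP from the stationary configuration $\sigma$ (with infinitely many second-class particles), tags the one initially at the origin, and observes (Lemmas \ref{l:identify-sigmas-psi} and \ref{l:identify-etas-phi}) that the re-centered process $\sigma_t^{b,0}$ has law $\sigma$ for all $t$ while simultaneously encoding $\eta_t'\sim\Phi_t^\rho$ by projecting the other $*$'s to holes/particles. Thus $|\Phi_t^\rho(B)-\Psi^\rho(B)|$ is bounded by the probability that some other second-class particle sits within a bounded window of the tagged one, and a renewal-plus-drift argument shows this probability vanishes in Ces\`{a}ro average. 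This is a constructive comparison between $\Phi_t^\rho$ and $\Psi^\rho$ that never needs to know what the set of invariant measures looks like.
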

Our strategy to prove this is to construct a coupling between $\Phi_t$ and $\Psi$ using $\sigma$, the stationary configuration of TASEP with infinitely many second-class particles constructed in \eqref{eq:sigmadef}.

Recall that we have the following two projections of $\sigma$: first, if we set all $*$ symbols on positive sites to be $0$, and all $*$ symbols on negative sites to be $1$, we get i.i.d.\;Bernoulli$(\rho)$ on all non-zero sites;
second, if we set all $*$ symbols on positive sites to be $1$, and all $*$ symbols on negative sites to be $0$, we get a distribution which is stationary for TASEP as seen from an isolated second-class particle (see the discussion after Proposition \ref{prop:ferr94}).

Now let $(\sigma_t)_{t\ge 0}$ be TASEP with (infinitely many) second-class particles, and starting from $\sigma_0=\sigma$.
At time $0$, we label all the second-class particles with $\Z$ from right to left, such that the one at the origin is labeled $0$.
We consider two ways where the labels evolve.
\begin{itemize}
    \item Rule (a): for all second-class particles, the labels never change.
    \item Rule (b): for two second-class particles labeled $i>j$, if they are at sites $x$ and $x+1$, then with rate $1$ they exchange their labels.
\end{itemize}
We note that when forgetting the labels, the dynamic is unchanged.
For each $i\in \Z$ and $t\ge 0$, we denote $l_t^{a,i}$ as the location of the second-class particle labeled by $i$ at time $t$, under Rule (a).
Then for each $i\in\Z$ we have $l_t^{a,i}>l_t^{a,i+1}$, and there is no other second-class particle between sites $l_t^{a,i}$ and $l_t^{a,i+1}$.
We also denote $l_t^{b,i}$ as the location of the second-class particle labeled by $i$ at time $t$, under Rule (b).
Define $\sigma^{a,i}_t, \sigma^{b,i}_t:\Z\to\{0,1,*\}$ as $\sigma^{a,i}_t(x)=\sigma_t(x+l_t^{a,i})$ and $\sigma^{b,i}_t(x)=\sigma_t(x+l_t^{b,i})$,
which is $\sigma_t$ as seen from the second-class particle labeled by $i$, under each rule.

Our strategy to construct the coupling between $\Psi$ and $\Phi_t$ is to project $\sigma^{b,0}_t$ in two different ways, to get these two measures respectively (see Figure \ref{fig:2cp-cov}).
For the first way, we just look at the law of $\sigma^{b,0}_t$ without considering the labels.
As $\sigma$ is a renewal process, and that $\sigma$ is stationary (Proposition \ref{prop:ferr94}), we have that $\sigma^{a,i}_t$ has the same distribution as $\sigma$. We next show that the same is true for $\sigma^{b,i}_t$.

\begin{figure}[hbt!]
    \centering
\begin{tikzpicture}[line cap=round,line join=round,>=triangle 45,x=4cm,y=4cm]
\clip(-3.5,-0.23) rectangle (0.5,.6);

\fill[line width=0.pt,color=yellow,fill=yellow,fill opacity=0.8]
(-1.55,-0.35) -- (-1.55,-0.45) -- (-1.35,-0.45) -- (-1.35,-0.35) -- cycle;
\draw [line width=1.2pt, opacity=0.3] (-2.7,-0.4) -- (-0.3,-0.4);
\draw [fill=white] (-2.6,-0.4) circle (3.0pt);
\draw [fill=white] (-2.5,-0.4) circle (3.0pt);
\draw [fill=uuuuuu] (-2.4,-0.4) circle (3.0pt);
\draw [fill=white] (-2.3,-0.4) circle (3.0pt);
\draw [fill=uuuuuu] (-2.2,-0.4) circle (3.0pt);
\draw [fill=uuuuuu] (-2.1,-0.4) circle (3.0pt);
\draw [fill=white] (-2.,-0.4) circle (3.0pt);
\draw [fill=white] (-1.9,-0.4) circle (3.0pt);
\draw [fill=uuuuuu] (-1.8,-0.4) circle (3.0pt);
\draw [fill=white] (-1.7,-0.4) circle (3.0pt);
\draw [fill=white] (-1.6,-0.4) circle (3.0pt);
\draw [fill=white] (-1.5,-0.4) circle (3.0pt);
\draw [fill=uuuuuu] (-1.4,-0.4) circle (3.0pt);
\draw [fill=uuuuuu] (-1.3,-0.4) circle (3.0pt);
\draw [fill=uuuuuu] (-1.2,-0.4) circle (3.0pt);
\draw [fill=uuuuuu] (-1.1,-0.4) circle (3.0pt);
\draw [fill=white] (-1.,-0.4) circle (3.0pt);
\draw [fill=white] (-.9,-0.4) circle (3.0pt);
\draw [fill=uuuuuu] (-0.8,-0.4) circle (3.0pt);
\draw [fill=white] (-0.7,-0.4) circle (3.0pt);
\draw [fill=uuuuuu] (-0.6,-0.4) circle (3.0pt);
\draw [fill=uuuuuu] (-0.5,-0.4) circle (3.0pt);
\draw [fill=white] (-0.4,-0.4) circle (3.0pt);

\draw [line width=1.2pt, opacity=0.3] (-2.7,-0.1) -- (-0.3,-0.1);
\draw [fill=white] (-2.6,-0.1) circle (3.0pt);
\draw [fill=white] (-2.5,-0.1) circle (3.0pt);
\draw [fill=uuuuuu] (-2.4,-0.1) circle (3.0pt);
\draw [fill=white] (-2.3,-0.1) circle (3.0pt);
\draw [fill=uuuuuu] (-2.2,-0.1) circle (3.0pt);
\draw [fill=uuuuuu] (-2.1,-0.1) circle (3.0pt);
\draw [fill=white] (-2.,-0.1) circle (3.0pt);
\draw [fill=white] (-1.9,-0.1) circle (3.0pt);
\draw [fill=uuuuuu] (-1.8,-0.1) circle (3.0pt);
\draw [fill=white] (-1.7,-0.1) circle (3.0pt);
\draw [fill=white] (-1.6,-0.1) circle (3.0pt);
\draw [fill=yellow] (-1.5,-0.1) circle (3.0pt);
\draw [fill=uuuuuu] (-1.4,-0.1) circle (3.0pt);
\draw [fill=uuuuuu] (-1.3,-0.1) circle (3.0pt);
\draw [fill=uuuuuu] (-1.2,-0.1) circle (3.0pt);
\draw [fill=white] (-1.1,-0.1) circle (3.0pt);
\draw [fill=white] (-1.,-0.1) circle (3.0pt);
\draw [fill=uuuuuu] (-0.9,-0.1) circle (3.0pt);
\draw [fill=white] (-0.8,-0.1) circle (3.0pt);
\draw [fill=uuuuuu] (-0.7,-0.1) circle (3.0pt);
\draw [fill=uuuuuu] (-0.6,-0.1) circle (3.0pt);
\draw [fill=white] (-0.5,-0.1) circle (3.0pt);
\draw [fill=white] (-0.4,-0.1) circle (3.0pt);

\draw [line width=1.2pt, opacity=0.3] (-2.7,0.2) -- (-0.3,0.2);
\draw [fill=white] (-2.6,0.2) circle (3.0pt);
\draw [fill=yellow] (-2.5,0.2) circle (3.0pt);
\draw [fill=uuuuuu] (-2.4,0.2) circle (3.0pt);
\draw [fill=white] (-2.3,0.2) circle (3.0pt);
\draw [fill=uuuuuu] (-2.2,0.2) circle (3.0pt);
\draw [fill=uuuuuu] (-2.1,0.2) circle (3.0pt);
\draw [fill=white] (-2.,0.2) circle (3.0pt);
\draw [fill=yellow] (-1.9,0.2) circle (3.0pt);
\draw [fill=uuuuuu] (-1.8,0.2) circle (3.0pt);
\draw [fill=white] (-1.7,0.2) circle (3.0pt);
\draw [fill=white] (-1.6,0.2) circle (3.0pt);
\draw [fill=yellow] (-1.5,0.2) circle (3.0pt);
\draw [fill=uuuuuu] (-1.4,0.2) circle (3.0pt);
\draw [fill=uuuuuu] (-1.3,0.2) circle (3.0pt);
\draw [fill=yellow] (-1.2,0.2) circle (3.0pt);
\draw [fill=white] (-1.1,0.2) circle (3.0pt);
\draw [fill=white] (-1.,0.2) circle (3.0pt);
\draw [fill=uuuuuu] (-0.9,0.2) circle (3.0pt);
\draw [fill=white] (-0.8,0.2) circle (3.0pt);
\draw [fill=yellow] (-0.7,0.2) circle (3.0pt);
\draw [fill=yellow] (-0.6,0.2) circle (3.0pt);
\draw [fill=white] (-0.5,0.2) circle (3.0pt);
\draw [fill=white] (-0.4,0.2) circle (3.0pt);

\draw [line width=1.2pt, opacity=0.3] (-2.7,0.5) -- (-0.3,0.5);
\draw [fill=white] (-2.6,0.5) circle (3.0pt);
\draw [fill=uuuuuu] (-2.5,0.5) circle (3.0pt);
\draw [fill=uuuuuu] (-2.4,0.5) circle (3.0pt);
\draw [fill=white] (-2.3,0.5) circle (3.0pt);
\draw [fill=uuuuuu] (-2.2,0.5) circle (3.0pt);
\draw [fill=uuuuuu] (-2.1,0.5) circle (3.0pt);
\draw [fill=white] (-2.,0.5) circle (3.0pt);
\draw [fill=white] (-1.9,0.5) circle (3.0pt);
\draw [fill=uuuuuu] (-1.8,0.5) circle (3.0pt);
\draw [fill=white] (-1.7,0.5) circle (3.0pt);
\draw [fill=white] (-1.6,0.5) circle (3.0pt);
\draw [fill=yellow] (-1.5,0.5) circle (3.0pt);
\draw [fill=uuuuuu] (-1.4,0.5) circle (3.0pt);
\draw [fill=uuuuuu] (-1.3,0.5) circle (3.0pt);
\draw [fill=uuuuuu] (-1.2,0.5) circle (3.0pt);
\draw [fill=white] (-1.1,0.5) circle (3.0pt);
\draw [fill=white] (-1.,0.5) circle (3.0pt);
\draw [fill=uuuuuu] (-0.9,0.5) circle (3.0pt);
\draw [fill=white] (-0.8,0.5) circle (3.0pt);
\draw [fill=uuuuuu] (-0.7,0.5) circle (3.0pt);
\draw [fill=white] (-0.6,0.5) circle (3.0pt);
\draw [fill=white] (-0.5,0.5) circle (3.0pt);
\draw [fill=white] (-0.4,0.5) circle (3.0pt);

\fill[line width=0.pt,color=yellow,fill=yellow,fill opacity=0.8]
(-1.55,0.75) -- (-1.55,0.85) -- (-1.35,0.85) -- (-1.35,0.75) -- cycle;
\draw [line width=1.2pt, opacity=0.3] (-2.7,0.8) -- (-0.3,0.8);
\draw [fill=white] (-2.6,0.8) circle (3.0pt);
\draw [fill=uuuuuu] (-2.5,0.8) circle (3.0pt);
\draw [fill=uuuuuu] (-2.4,0.8) circle (3.0pt);
\draw [fill=white] (-2.3,0.8) circle (3.0pt);
\draw [fill=uuuuuu] (-2.2,0.8) circle (3.0pt);
\draw [fill=uuuuuu] (-2.1,0.8) circle (3.0pt);
\draw [fill=white] (-2.,0.8) circle (3.0pt);
\draw [fill=white] (-1.9,0.8) circle (3.0pt);
\draw [fill=uuuuuu] (-1.8,0.8) circle (3.0pt);
\draw [fill=white] (-1.7,0.8) circle (3.0pt);
\draw [fill=white] (-1.6,0.8) circle (3.0pt);
\draw [fill=white] (-1.5,0.8) circle (3.0pt);
\draw [fill=uuuuuu] (-1.4,0.8) circle (3.0pt);
\draw [fill=uuuuuu] (-1.3,0.8) circle (3.0pt);
\draw [fill=uuuuuu] (-1.2,0.8) circle (3.0pt);
\draw [fill=uuuuuu] (-1.1,0.8) circle (3.0pt);
\draw [fill=white] (-1.,0.8) circle (3.0pt);
\draw [fill=white] (-.9,0.8) circle (3.0pt);
\draw [fill=uuuuuu] (-0.8,0.8) circle (3.0pt);
\draw [fill=white] (-0.7,0.8) circle (3.0pt);
\draw [fill=uuuuuu] (-0.6,0.8) circle (3.0pt);
\draw [fill=white] (-0.5,0.8) circle (3.0pt);
\draw [fill=white] (-0.4,0.8) circle (3.0pt);

\begin{scriptsize}
\draw (-1.4,0.77) node[anchor=north]{$1$};
\draw (-1.5,0.77) node[anchor=north]{$0$};
\draw (-1.5,0.47) node[anchor=north]{$0$};
\draw (-1.5,0.17) node[anchor=north]{$0$};
\draw (-1.5,-0.13) node[anchor=north]{$0$};
\draw (-1.4,-0.43) node[anchor=north]{$1$};
\draw (-1.5,-0.43) node[anchor=north]{$0$};

\draw (-1.5,0.23) node[anchor=south, color=red]{$0$};
\draw (-1.2,0.23) node[anchor=south, color=red]{$3$};
\draw (-.7,0.23) node[anchor=south, color=red]{$4$};
\draw (-.6,0.23) node[anchor=south, color=red]{$-1$};
\draw (-1.9,0.23) node[anchor=south, color=red]{$-4$};
\draw (-2.5,0.23) node[anchor=south, color=red]{$2$};
\end{scriptsize}

\draw (-3.4,-0.1) node[anchor=west]{$\zeta^*\sim \Psi$};
\draw (-3.4,0.2) node[anchor=west]{$\sigma_t^{b,0}\stackrel{d}{=} \sigma$};
\draw (-3.4,0.5) node[anchor=west]{$\fp\sigma^{b,0}_t\sim \Phi_t$};

\end{tikzpicture}
\caption{A coupling between $\Psi$ and $\Phi_t$ via $\sigma_t^{b,0}$. The red numbers are labels of second-class particles.
Here $\zeta^*$ and $\fp\sigma^{b,0}_t$ are the same on $\llbracket -9,9 \rrbracket$.}  
\label{fig:2cp-cov}
\end{figure}

\begin{lemma}  \label{l:identify-sigmas-psi}
For each $i\in\Z$ and $t\ge 0$, $\sigma_t^{b,i}$ has the same distribution as $\sigma$.
\end{lemma}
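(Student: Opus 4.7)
The plan is to verify that $\sigma$ is invariant for the Markov process $(\sigma_t^{b,i})_{t\ge 0}$. Since at $t=0$ the configuration $\sigma_0^{b,i}$ is $\sigma$ centered at its $i$-th star, and hence has law $\sigma$ by the renewal structure of $\sigma$ (the view from any star has distribution $\sigma$), invariance will immediately give $\sigma_t^{b,i}\stackrel{d}{=}\sigma$ for all $t\ge 0$. Moreover, under rule (b) the exchange rate between adjacent second-class particles does not depend on labels, so by relabeling at time $0$ and using the Markov property, the law of $\sigma_t^{b,i}$ is the same for every $i$; it therefore suffices to treat $i=0$.

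The first step is to compute the generator of $(\sigma_t^{b,0})_{t\ge 0}$. Writing $S_k$ for the shift $(S_k\sigma)(x)=\sigma(x+k)$, the generator decomposes as $\cL^b=\cL^a+\cL^{\mathrm{exch}}$, where $\cL^a$ is the generator of the process under rule (a) (for which $\sigma$ is stationary by the result of \cite{ferrari1994invariant} recalled in Section \ref{ssec:2cp-sta-def}) and
\[
\cL^{\mathrm{exch}} f(\sigma)=\don[\sigma(1)=*]\bigl(f(S_1\sigma)-f(\sigma)\bigr)+\don[\sigma(-1)=*]\bigl(f(S_{-1}\sigma)-f(\sigma)\bigr)
\]
accounts for the extra rate-$1$ label exchanges: when label $0$ has an adjacent second-class particle to its right (resp.\ left), it jumps there, which in the centered frame is a unit shift. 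It thus remains to verify $\int \cL^{\mathrm{exch}} f\,d\sigma=0$ for cylindrical $f$.

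This reduces to the pair of identities
\[
\int \don[\sigma(1)=*]f(S_1\sigma)\,d\sigma = \int \don[\sigma(-1)=*]f(\sigma)\,d\sigma
\]
and its mirror image obtained by swapping $+1$ and $-1$. Both sides represent, up to a common normalization, the law of a configuration with stars at positions $-1$ and $0$: the right-hand side is $\sigma$ conditioned on $\sigma(-1)=*$, while on the left, the conditioning $\sigma(1)=*$ forces $(S_1\sigma)(-1)=(S_1\sigma)(0)=*$. I would then invoke the renewal property of $\sigma$: conditioning on a star produces an independent fresh renewal process on each side, and by the equivalence of the two constructions of $\sigma$ in Section \ref{ssec:2cp-sta-def}, the point process of stars is two-sided stationary and left-right symmetric. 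Together these show that the two conditional laws agree and $\PP[\sigma(1)=*]=\PP[\sigma(-1)=*]$, yielding both identities and hence stationarity under $\cL^{\mathrm{exch}}$.

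The hard part will be reconciling the apparently asymmetric construction of $\sigma$ (positive half built from $\pi_\rho$, negative half from a reflection of $\pi_{1-\rho}$) with the left-right symmetry of the joint law of stars that the change of variables above requires. Once this symmetry of the star point process is in hand, the renewal property reduces everything to matching the distributions of finite strings between consecutive stars on each side of the origin, which is straightforward.
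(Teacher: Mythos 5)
Your proof is correct but follows a genuinely different route from the paper's. You verify stationarity directly at the generator level: writing $\cL^b = \cL^a + \cL^{\mathrm{exch}}$, using the FFK theorem (recalled in Section \ref{ssec:2cp-sta-def}) for the $\cL^a$ piece, and checking $\int \cL^{\mathrm{exch}} f\,d\Psi^\rho = 0$ by a change of variables plus the renewal structure of $\sigma$. The paper instead uses an averaging argument: since each $\sigma_t^{a,i}$ has law $\sigma$ by FFK, the Ces\`aro average $\frac{1}{2N+1}\sum_{|i|\le N}\don[\sigma_t^{a,i}\in B]$ converges to $\PP[\sigma\in B]$; the paper then observes that $\{l_t^{a,i}\}_{|i|\le N}$ and $\{l_t^{b,i}\}_{|i|\le N}$ are the same set of sites up to a boundary correction of $O(1)$ in expectation (second-class particles move at rate $\le 1$, so $|l_t^{a,i}-l_t^{b,i}|$ is tight uniformly in $i$), so the same limit holds for the rule-(b) average; finally, translation invariance of $\sigma_t^{b,i}$ in $i$ (which, as in your proof, comes from the renewal property of $\sigma_0$ plus label-insensitivity of the dynamics) upgrades the averaged identity to each individual $i$. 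What your approach buys is structural insight: the extra label-hopping part of the generator is stationary \emph{on its own}, because $\sigma$ is a two-sided renewal process with i.i.d.\ inter-star strings. The paper's argument is more elementary and avoids having to justify the generator decomposition in infinite volume. One correction to your closing remark: the ``hard part'' you flag is not really hard once you use the paper's first description of $\sigma$, which explicitly extends the renewal process leftward with the \emph{same} i.i.d.\ string distribution; then relabeling the strings $\{s_k\}_{k\in\Z}$ shows that conditioning on $\sigma(1)=*$ and shifting by one gives exactly the law conditioned on $\sigma(-1)=*$, with matching normalization, and no left-right symmetry of individual strings is ever needed.
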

\begin{proof}
Take any measurable set $B\subset \{0,1,*\}^\Z$, it suffices to show that $\PP[\sigma_t^{b,i}\in B] = \PP[\sigma\in B]$.

We fix $t\ge 0$.
As each second-class particle jumps with rate at most $1$, for any $\epsilon>0$ we can find $M>0$, such that $\PP[|l^{a,i}_t-l^{b,i}_t|>M]<\epsilon$ for any $i\in\Z$.
Take a large number $N\in \N$. For each $i$ with $|i|\le N-M$, if $|l^{b,i}_t-l^{a,i}_t|\le M$, we must have $l^{b,i}_t\in \{l^{a,j}_t:i-M\le j \le i+M\} \subset \{l^{a,j}_t:-N\le j \le N\}$, since the set $ \{l^{a,j}_t:i-M\le j \le i+M\}$ contains all locations of second-class particles in $\llbracket l^{a,i}_t-M, l^{a,i}_t+M\rrbracket$.
We then have that
\[
\begin{split}
&\E[|\{l^{b,i}_t:-N\le i \le N\}\setminus \{l^{a,i}_t:-N\le i \le N\} |]\\
= &\sum_{|i|\le N} \PP[l^{a,i}_t\not\in \{l^{a,j}_t:-N\le j \le N\}]\\
\le & 2M+\sum_{|i|\le N-M} \PP[l^{a,i}_t\not\in \{l^{a,j}_t:-N\le j \le N\}] \\
\le & 2M+\sum_{|i|\le N-M} \PP[|l^{b,i}_t-l^{a,i}_t|> M] \\
\le & 2M + 2N\epsilon.    
\end{split}
\]
Since both $|\{l^{b,i}_t:-N\le i \le N\}|$ and $|\{l^{a,i}_t:-N\le i \le N\} |$ equal $2N+1$, we have $|\{l^{b,i}_t:-N\le i \le N\}\setminus \{l^{a,i}_t:-N\le i \le N\} | = |\{l^{a,i}_t:-N\le i \le N\}\setminus \{l^{b,i}_t:-N\le i \le N\} |$, so
\[
\E[|\{l^{a,i}_t:-N\le i \le N\}\setminus \{l^{b,i}_t:-N\le i \le N\} |] \le 2M + 2N\epsilon.
\]
Thus since $\epsilon$ is arbitrarily taken, we have
\[
\lim_{N\to\infty}\frac{1}{2N+1}\big(\E[|\{-N\le i \le N: \sigma_t^{a,i}\in B\}|]-\E[|\{-N\le i \le N: \sigma_t^{b,i}\in B\}|]\big)=0.
\]
Since for each $i\in\Z$, $\sigma_t^{a,i}$ has the same distribution as $\sigma$,
we have 
\[
\lim_{N\to\infty}\frac{1}{2N+1}\E[|\{-N\le i \le N: \sigma_t^{a,i}\in B\}|]=
\lim_{N\to\infty}\frac{1}{2N+1}\sum_{i=-N}^N\PP[\sigma_t^{a,i}\in B] = \PP[\sigma\in B].
\]
By combining the above two equations, we have
\[
\lim_{N\to\infty}\frac{1}{2N+1}\E[|\{-N\le i \le N: \sigma_t^{b,i}\in B\}|]=
 \PP[\sigma\in B].
\]
Now that $\sigma$ is a renewal process, $\sigma_0^{b,i}$, thus $\sigma_t^{b,i}$, has the same distribution for all $i$. Thus the left-hand side in the previous equation equals $\PP[\sigma_t^{b,i}\in B]$ for each $i\in\Z$, and the conclusion follows.
\end{proof}
Now since $\sigma_t^{b,0}$ has the same distribution as $\sigma$, we can just identify all $*$ with $1$ in $\Z_+$, and identify all $*$ with $0$ in $\Z_-$, and get $\zeta^*\sim\Psi$ (by Lemma \ref{l:identify-sigmas-psi}).
For the other projection we need to look at the labels.
We define $\fp\sigma^{b,0}_t:\Z\to\{0,1,*\}$ from $\sigma^{b,0}_t$, by identifying all second-class particles whose labels are $<0$ with holes, and all second-class particles whose labels are $>0$ with particles.
Formally, we let $\fp\sigma^{b,0}_t(0)=*$, and $\fp\sigma^{b,0}_t(x)=1$ for any $x$ such that $\sigma_t(x+l_t^{b,0})=1$ or $x=l_t^{b,i}-l_t^{b,0}$ for some $i>0$; and $\fp\sigma^{b,0}_t(x)=0$ such that $\sigma_t(x+l_t^{b,0})=0$ or $x=l_t^{b,i}-l_t^{b,0}$ for some $i<0$.
See Figure \ref{fig:2cp-cov} for an illustration of $\fp\sigma^{b,0}_t$.

\begin{lemma}   \label{l:identify-etas-phi}
For each $t\ge 0$, we have $\fp\sigma^{b,0}_t\sim\Phi_t$.
\end{lemma}
\begin{proof}
We just need to check that $(\fp\sigma^{b,0}_t)_{t\ge 0}$ is TASEP as seen from an isolated second-class particle, and $\fp\sigma^{b,0}_0$ is i.i.d.\;Bernoulli$(\rho)$ on all non-zero sites.

We first consider the initial configuration $\fp\sigma^{b,0}_0$. It is obtained from $\sigma_0=\sigma$, by setting all $*$ symbols in $\Z_+$ to be $0$ and all $*$ symbols in $\Z_-$ to be $1$. This is because at $t=0$, the second-class particles in $\Z_+$ have negative labels, and the second-class particles in $\Z_-$ have positive labels.
Recall (from the discussion after Proposition \ref{prop:ferr94}) this implies that $\fp\sigma^{b,0}_0$ is i.i.d.\;Bernoulli$(\rho)$ on all non-zero sites.

We next consider the evolution of $(\fp\sigma^{b,0}_t)_{t\ge 0}$.
We now define $(\fp\sigma_t)_{t\ge 0}$ from $\sigma_t$, by identifying all second-class particles whose labels are $<0$ with holes and all second-class particles whose labels are $>0$ with particles. Then $\fp\sigma_t(l_t^{b,0})=*$, and $\fp\sigma_t(x)=1$ for any $x$ such that $\sigma_t(x)=1$, or $x=l_t^{b,i}$ for some $i>0$; and $\fp\sigma_t(x)=0$ such that $\sigma_t(x)=0$, or $x=l_t^{b,i}$ for some $i<0$.
Then $\fp\sigma_t$ is precisely $\fp\sigma^{b,0}_t$ shifted by $l_t^{b,i}$, and it suffices to check that the evolution of $(\fp\sigma_t)_{t\ge 0}$ is given by TASEP with a single second-class particle.
For $(\sigma_t)_{t\ge 0}$ and the labels evolving under Rule (b), recall that it is driven by the following generators, independently for all $x\in\Z$.
\begin{enumerate}
    \item[(1)] If $\sigma_t(x)=1$ and $\sigma_t(x+1)=0$, with rate $1$ we switch $\sigma_t(x)$ and $\sigma_t(x+1)$.
    \item[(2)] If $\sigma_t(x)=1$ and $\sigma_t(x+1)=*$ with $l_t^{b,i}=x+1$ for some $i\in\Z$, with rate $1$ we switch $\sigma_t(x)$ and $\sigma_t(x+1)$ and set $l_t^{b,i}=x$.
    \item[(3)] If $\sigma_t(x)=*$ with $l_t^{b,i}=x+1$ for some $i\in\Z$, and $\sigma_t(x+1)=0$, with rate $1$ we switch $\sigma_t(x)$ and $\sigma_t(x+1)$ and set $l_t^{b,i}=x+1$.
    \item[(4)] If $\sigma_t(x)=\sigma_t(x+1)=*$ with $l_t^{b,i}=x$ and $l_t^{b,j}=x+1$ for some $i>j$, with rate $1$ we set $l_t^{b,i}=x+1$ and $l_t^{b,j}=x$.
\end{enumerate}
From the definition of $(\fp\sigma_t)_{t\ge 0}$, these generators degenerate into that for each $x\in\Z$ we switch $\fp\sigma_t(x)$ and $\fp\sigma_t(x+1)$ with rate $1$, if one of the following happens:
\begin{enumerate}
    \item[(a)] $\fp\sigma_t(x)=1$ and $\fp\sigma_t(x+1)=0$.
    \item[(b)] $\fp\sigma_t(x)=1$ and $\fp\sigma_t(x+1)=*$.
    \item[(c)] $\fp\sigma_t(x)=*$ and $\fp\sigma_t(x+1)=0$.
\end{enumerate}
More precisely: (1) degenerates into (a); (2) degenerates into no change or (b) or (a), depending on whether $i>0$, $i=0$, or $i<0$; (3) degenerates into (a) or (c) or no change, depending on whether $i>0$, $i=0$, or $i<0$; (4) degenerates into (c) or (b) or (a) or no change, depending on whether $i=0$, $j=0$, $ij<0$, or $ij>0$.
These verify that $(\fp\sigma_t)_{t\ge 0}$ has the same generators as TASEP with a single second-class particle, so the conclusion follows.
\end{proof}

Now we finish the proof of Proposition \ref{prop:converge-2nd-class-tasep}, by the two projections of $\sigma^{b,0}_t$.
\begin{proof} [Proof of Proposition \ref{prop:converge-2nd-class-tasep}]
It suffices to take any cylinder set $B=B'\times  \{0,1\}^{\Z\setminus \llbracket -L, L\rrbracket}\subset \{0,1\}^\Z$, for some $L\in \N$ and $B'\subset \{0,1\}^{\llbracket -L, L\rrbracket}$, and show that $T^{-1}\int_0^T \Phi_t(B) dt \to \Psi(B)$.

By Lemma \ref{l:identify-sigmas-psi}, from $\sigma^{b,0}_t$, by identifying all $*$ with $1$ in $\Z_+$ and all $*$ with $0$ in $\Z_-$, we get $\zeta^*\sim\Psi$;
and by Lemma \ref{l:identify-etas-phi}, from $\sigma^{b,0}_t$ we get $\fp\sigma^{b,0}_t\sim \Phi_t$, by identifying all negatively labeled $*$ with $0$, and identifying all positively labeled $*$ with $1$ (see Figure \ref{fig:2cp-cov}). 
Then we have
\[
| \Phi_t(B) - \Psi(B)| \le 
\PP[\zeta^* \in B, \fp\sigma^{b,0}_t\not\in B] + \PP[\zeta^* \not\in B, \fp\sigma^{b,0}_t \in B] \le \PP[\zeta^*|_{\llbracket -L, L\rrbracket}\neq \fp\sigma^{b,0}_t|_{\llbracket -L, L\rrbracket}].
\]
The event in the right-hand side is equivalent to that, in $\sigma^{b,0}$, each $*$ in $\llbracket 1, L\rrbracket$ has a positive label and each $*$ in $\llbracket -L, -1\rrbracket$ has a negative label.
In other words, for any $i\in \Z$ with $l_t^{b,i} - l_t^{b,0} \in \llbracket -L, 0\rrbracket$, we must have $i\le 0$; and for any $i\in \Z$ with $l_t^{b,i} - l_t^{b,0} \in \llbracket 0, L \rrbracket$, we must have $i\ge 0$.
So we have
\begin{multline*}
| \Phi_t(B) - \Psi(B)| \le 1-\PP[\{l_t^{b,i} - l_t^{b,0}:i>0\}\cap \llbracket -L, 0\rrbracket = \{l_t^{b,i} - l_t^{b,0}:i<0\}\cap \llbracket 0, L\rrbracket = \emptyset]\\
\le \E[|\{l_t^{b,i} - l_t^{b,0}:i>0\}\cap \llbracket -L, 0\rrbracket|] + \E[|\{l_t^{b,i} - l_t^{b,0}:i<0\}\cap \llbracket 0, L \rrbracket|].    
\end{multline*}
By integrating over $t$ we have
\begin{equation}  \label{eq:psphbdt}
\int_0^T | \Phi_t(B) - \Psi(B)| dt 
\le 
\sum_{i\in\Z_+} \int_0^T \PP[l_t^{b,i} - l_t^{b,0}\in \llbracket -L, 0\rrbracket]dt
+
\sum_{i\in\Z_-} \int_0^T \PP[l_t^{b,i} - l_t^{b,0}\in \llbracket 0, L\rrbracket]dt.        
\end{equation}
We first bound the first term in the right-hand side.
For each $i\in \Z_+$ we recursively define a sequence of stopping times:
let $T_{i,1}=\inf\{t\ge 0: l_t^{b,i} - l_t^{b,0}\in \llbracket -L, 0\rrbracket\}\cup\{\infty\}$;
and given $T_{i,n}<\infty$, let $T_{i,n+1}=\inf\{t\ge T_{i,n}+1: l_t^{b,i} - l_t^{b,0}\in \llbracket -L,0\rrbracket\}\cup\{\infty\}$.

It is not difficult to see that there exists $\delta>0$ depending only on $L$, such that for any $t\ge 0$ and $n\in\N$ we have $\PP[l_{t+1}^{b,i} > l_{t+1}^{b,0} \mid T_{i,n}=t]>\delta$. Note that since $i>0$, if $l_{t_0}^{b,i} > l_{t_0}^{b,0}$ for some $t_0\ge 0$, we must have $l_{t}^{b,i} > l_{t}^{b,0}$ for any $t>t_0$. Thus the event $l_{T_{i,n}+1}^{b,i} > l_{T_{i,n}+1}^{b,0}$ implies that $T_{i,n+1}=\infty$. So for any $t\ge 0$ and $n \in \N$, we have
\[
\PP[T_{i,n+1}<\infty \mid T_{i,n}=t]<1-\delta,
\]
Then we have
\[
\PP[T_{i,n}<T] = \PP[T_{i,1}< T_{i,n}<T] \le \PP[T_{i,1}< T \text{ and } T_{i,n}<\infty] \le (1-\delta)^{n-1}\PP[T_{i,1}<T].
\]
Also note that $\int_0^T \don[l_t^{b,i} - l_t^{b,0}\in \llbracket -L, 0\rrbracket] dt \le \sum_{n=1}^\infty \don[T_{i,n}<T]$.
So we have
\begin{equation}  \label{eq:ttbd1}
\int_0^T \PP[l_t^{b,i} - l_t^{b,0}\in \llbracket -L, 0\rrbracket] dt \le \sum_{n=1}^\infty \PP[T_{i,n}<T] \le \sum_{n=1}^\infty (1-\delta)^{n-1}\PP[T_{i,1}<T] = \delta^{-1}\PP[T_{i,1}<T].
\end{equation}
Next we bound $\sum_{i\in\Z_+}\PP[T_{i,1}<T]$.
Take any $\epsilon>0$.
From the renewal construction of $\sigma$, we have that $l^{b,0}_0-l^{b,i}_0$ is the sum of $i$ i.i.d.\;positive random variables, each with infinite expectation.
Thus we have
\begin{equation} \label{eq:tltt}
\lim_{T\to\infty}\PP[l^{b,0}_0-l^{b,\lceil \epsilon T \rceil}_0 < 3T] = 0.    
\end{equation}
Given $\{l^{b,i}_0\}_{i\in\Z}$ satisfying $l^{b,0}_0-l^{b,\lceil \epsilon T \rceil}_0 \ge 3T$, for each $j\in\Z_{\ge 0}$, $\PP[T_{\lceil \epsilon T \rceil+j, 1}<T\mid \{l^{b,i}_0\}_{i\in\Z}]$ is bounded by the probability of the following event: there are two particles starting from $0$ and $-\lceil 3T \rceil - j$ respectively, jumping left and right respectively with rate $1$ independently, and the first time when they are within distance $L$ to each other is $<T$. 
This is just the probability that the sum of $\lceil 3T\rceil + j - L$ independent $\Exp(2)$ random variables is less than $T$ (since for the distance to decrease by $1$, the waiting time is the minimum of two independent $\Exp(1)$ random variables).
Summing up such probabilities for all $j$ and using \eqref{eq:tltt}, we get
\[
\lim_{T\to\infty}\sum_{i\ge \epsilon T}\PP[T_{i,1}<T] = 0.
\]
Plugging this into \eqref{eq:ttbd1} and summing over $i\in\Z_+$ there, we get
\[
\limsup_{T\to\infty}\sum_{i\in\Z_+}\int_0^T \PP[l_t^{b,i} - l_t^{b,0}\in \llbracket -L, 0\rrbracket] dt - \delta^{-1}\epsilon T \le 0.
\]
Similarly we have
\[
\limsup_{T\to\infty}\sum_{i\in\Z_-}\int_0^T \PP[l_t^{b,i} - l_t^{b,0}\in \llbracket 0, L\rrbracket] dt - \delta^{-1}\epsilon T \le 0.
\]
Adding them up and using \eqref{eq:psphbdt}, we get
\[
\limsup_{T\to\infty}T^{-1}\int_0^T | \Phi_t(B) - \Psi(B)| dt \le 2\delta^{-1}\epsilon.
\]
Since $\epsilon>0$ is arbitrarily taken, the conclusion follows. 
\end{proof}

\section{Coupling between TASEP and LPP}  \label{sec:prelim}

In this section we connect TASEP and LPP, and other objects such as an up-right growth process to be defined shortly.
These results are mostly from the literature, and will motivate the construction of the LPP limiting environment in Section \ref{sec:limit-dist}.

\subsection{Semi-infinite geodesics and the Busemann function}  \label{ssec:buseman}

We start by introducing a useful tool in studying LPP, namely, the Busemann function, and its beautiful duality property.

For any $u,v \in \Z^2$, we denote $\bB(u,v):=T_{u,\bc}-T_{v,\bc}$, where $\bc\in\Z^2$ is the coalescing point of $\Gamma_u$ and $\Gamma_v$;
i.e.\;$\bc$ is the vertex in $\Gamma_u \cap \Gamma_v$ with the smallest $d(\bc)$.
Such $\bB$ is called the \emph{Busemann function (in direction $\brho$)}.
We also write $\bG(u):=\bB(\boo,u)$.
The Busemann function satisfies the following properties.
\begin{enumerate}
\item For each $u,v,w\in\Z^2$, $\bB(u,v)+\bB(v,w)=\bB(u,w)$.
In particular, $\bB(u,v)=\bG(v)-\bG(u)$.
\item For each $u\in\Z^2$, $\bG(u)=\bG(u+(1,0))\wedge \bG(u+(0,1))-\xi(u)$.
\item For each $u\in\Z^2$, denote the \emph{dual weight} $\xi^\vee(u):= \bG(u) - \bG(u-(1,0))\vee \bG(u-(0,1))$, then its distribution is $\Exp(1)$.
\item For each $u\in \Z^2$, the distribution of $\bB(u,u+(0,1))$ is $\Exp(\rho)$, and the distribution of $\bB(u,u+(1,0))$ is $\Exp(1-\rho)$.
\item For any down-right path $\cU=\{u_k\}_{k\in\Z}$, let $\cU_- = \{u_k-(a,a): k \in \Z, a \in \N\}$ and $\cU_+ = \{u_k+(a,a): k \in \Z, a \in \N\}$.
Then the following random variables are independent: 
$\bB(u_k,u_{k-1})$ for each $k \in \Z$, $\xi(u)$ for each $u\in \cU_-$, and $\xi^\vee(u)$ for each $u\in \cU_+$.
\end{enumerate}
The first two properties are by definition.
The third property comes from \cite[Lemma 4.2]{ferrari2009phase} (see also \cite{balazs2006cube}).
For the last two properties, a proof can be found in \cite{seppalainen2020existence}.

All the semi-infinite geodesics (in direction $\brho$) can be characterized by the Busemann function $\bG$ and passage times.
\begin{lemma}  \label{lem:buse-opti}
For any $u \le v$ we have $\bB(u,v)=-\bG(u)+\bG(v)\ge T_{u,v}-\xi(v)$, and the equality holds if and only if $v\in \Gamma_u$.
\end{lemma}
\begin{proof}
Let $\bc$ be the coalescing point of $\Gamma_u$ and $\Gamma_v$. Then we have $\bB(u,v)=-\bG(u)+\bG(v)=T_{u,\bc}-T_{v,\bc}$.
From the definition of passage times, we have that $T_{u,\bc}\ge T_{u,v}+T_{v,\bc}-\xi(v)$, and the equality holds if and only if $v\in \Gamma_{u,\bc}$.
\end{proof}
In particular, by taking $v=u+(0,1)$ and $v=u+(1,0)$ in Lemma \ref{lem:buse-opti}, we must have that $\bG(u+(1,0))\neq \bG(u+(0,1))$ for any $u\in \Z^2$.
This is true as we have assumed the existence and uniqueness of all the finite geodesics, and the existence, uniqueness, and coalescence of all the semi-infinite geodesics in direction $\brho$. These properties are used in defining the Busemann function and in the proof of Lemma \ref{lem:buse-opti}.

The Busemann function $\bG$ actually contains all the information to reconstruct all the semi-infinite geodesics in direction $\brho$.
\begin{lemma}  \label{lem:reconssemiinf}
The semi-infinite geodesic $\Gamma_u$ for any $u\in \Z^2$ can be reconstructed recursively using $\bG$ as follows. We first let $\Gamma_u[1]=u$, and then we let
$\Gamma_u[i+1] = \argmin_{v \in \{\Gamma_u[i]+(1,0),\Gamma_u[i]+(0,1)\}} \bG(v)$ for each $i\in\N$.
\end{lemma}
This is proved by repeatedly using Lemma \ref{lem:buse-opti}, and we omit the details.

Using the dual weights $\xi^\vee$, which are also i.i.d.\;$\Exp(1)$ (by the third and last properties of the Busemann function), we define `downward semi-infinite geodesics'.
For any $u\in \Z^2$, we let $\Gamma_u^{\vee}$ be the semi-infinite geodesic from $u$ in direction $-\brho=(-(1-\rho)^2,-\rho^2)$, under the weights $\xi^{\vee}$.
Below we work under the almost sure event that such $\Gamma_u^{\vee}$ exists and is unique, and $\Gamma_u^\vee$ and $\Gamma_v^\vee$ coalesce, for any $u, v\in \Z^2$.
Such downward semi-infinite geodesics can also be constructed recursively using $\bG$.
More precisely, 
we let $\Gamma_u^{\vee}[1]=u$, and for each $i\in\N$ we let
\begin{equation}  \label{eq:veerecur}
\Gamma_u^{\vee}[i+1] = \argmax_{v \in \{\Gamma_u^{\vee}[i]-(1,0),\Gamma_u^{\vee}[i]-(0,1)\}} \bG(v).    
\end{equation}
By the definition of $\xi^\vee$ and using induction, it is straightforward to check that each finite segment of the path $\Gamma_u^\vee$ constructed from \eqref{eq:veerecur} is a geodesic under $\xi^\vee$. Also this path $\Gamma_u^\vee$ constructed from \eqref{eq:veerecur} has the same law as $-\Gamma_{-u}$ (since $\bG$ and $v\mapsto-\bG(-v)$ have the same law), so it has the desired asymptotic direction.

A quick observation is the following `non-crossing' property between semi-infinite geodesics and downward semi-infinite geodesics.
\begin{lemma}  \label{lem:non-cross}
For any $\Gamma_u$ and $\Gamma_v^{\vee}$ we cannot find $w \in \Z^2$ with $w,w-(1,0) \in \Gamma_u$ and $w,w+(0,1) \in \Gamma_v^{\vee}$ simultaneously, or $w,w-(0,1) \in \Gamma_u$ and $w,w+(1,0) \in \Gamma_v^{\vee}$ simultaneously.
This implies that the path $\Gamma_u+(1/2,1/2)$ divides $u+(\Z^2\setminus\Z_{\le 0}^2)$ into two parts, which are $\cup_{w\in \Gamma_u}(w+\Z_+\times \Z_{\le 0})$ and $\cup_{w\in \Gamma_u}(w+\Z_{\le 0}\times \Z_+)$, so that $\Gamma_v^\vee$ cannot intersect both of them.
Equivalently, the path $\Gamma_v^{\vee}-(1/2,1/2)$ divides $v+(\Z^2\setminus\Z_{\ge 0}^2)$ into two parts, which are $\cup_{w\in \Gamma_v^{\vee}}(w+\Z_-\times \Z_{\ge 0})$ and $\cup_{w\in \Gamma_v^{\vee}}(w+\Z_{\ge 0}\times \Z_-)$, so that $\Gamma_u$ cannot intersect both of them.
\end{lemma}
\begin{proof}
From the recursive constructions of $\Gamma_u$ and $\Gamma_v^{\vee}$, the event $w,w-(1,0) \in \Gamma_u$ implies that $\bG(w)< \bG(w+(-1,1))$, while $w,w+(0,1) \in \Gamma_v^{\vee}$ implies that $\bG(w)> \bG(w+(-1,1))$.
Thus the first statement holds.
The second statement follows similarly.
\end{proof}

\subsection{Growth process}  \label{ssec:gianddgeo}

The function $\bG$ can also be used to describe an up-right growth process.
For each $t\in\R$, we let $I_t:=\{u\in\Z^2: \bG(u)\le t\}$ be the set of vertices occupied by time $t$.
Then for any $u\in\Z^2$, the waiting time for it to be occupied (since the first time when both $u-(1,0)$ and $u-(0,1)$ are occupied) is $\xi^\vee(u)$, which is i.i.d.\;$\Exp(1)$ for all $u$.
Thus $(I_t)_{t\in\R}$ is a Markov process, such that given $I_t$, 
each vertex $u\not\in I_t$ with $u-(0,1), u-(1,0)\in I_t$ becomes occupied with rate $1$ independently.

We next define several objects that will be useful in proofs in later sections.
For any $t\in\R$ and $u\in\Z^2$, we denote  
\[
\xi^{\vee,t}(u):= \bG(u)\vee t - \bG(u-(1,0))\vee \bG(u-(0,1))\vee t.
\]
This can be understood as the waiting time for $u$ to be occupied, starting from $I_t$.
Note that for any $u$ such that $\{u-(1,0), u-(0,1)\} \not\subset I_t$, we have $\xi^{\vee}(u)=\xi^{\vee,t}(u)$.
A key property for $\xi^{\vee,t}$ is that it is still i.i.d.\;$\Exp(1)$ outside $I_t$.
\begin{lemma}  \label{lem:iidexpI}
For any $t\ge 0$, conditioned on $I_t$ and $\{\bG(u)\}_{u\in I_t}$, the random variables $\xi^{\vee,t}(u)$ are i.i.d.\;$\Exp(1)$ for all $u\not\in I_t$.
\end{lemma}
\begin{proof}
Take any down-right path $\cU=\{u_k\}_{k\in\Z}$, and denote $\cU_- = \{u_k-(a,a): k \in \Z, a \in \N\}$, $\cU_+ = \{u_k+(a,a): k \in \Z, a \in \N\}$. Let $\cU_c$ contain all $u\in\cU_+$ such that $\{u-(1,0),u-(0,1)\}\not\subset \cU_+$. Assume that $\boo \in \cU\cup \cU_-$.

We now consider the event $I_t=\cU\cup \cU_-$. It is equivalent to that $\bG(u_k)\le t$ for each $k\in\Z$, and $\bG(u)>t$ (or equivalently $\xi^{\vee}(u)>t-\bG(u-(1,0))\vee \bG(u-(0,1))$), for any $u\in\cU_c$.
We next study the distribution of $\{\xi^{\vee}(u)\}_{u\in \cU_+}$, conditioned on this event.

By the first two properties in Section \ref{ssec:buseman}, we know that $\{\bG(u)\}_{u\in \cU\cup \cU_-}$ determines $\{\bB(u_k,u_{k-1})\}_{k\in \Z}$ and $\{\xi(u)\}_{u\in \cU_-}$.
We next show that $\{\bG(u)\}_{u\in \cU\cup \cU_-}$ is also determined by $\{\bB(u_k,u_{k-1})\}_{k\in \Z}$ and $\{\xi(u)\}_{u\in \cU_-}$.
Indeed, by the first property in Section \ref{ssec:buseman}, for any $k \in \Z$ we have that $\bG(u_k)-\bG(u_0)=\bB(u_0,u_k)$ is determined by $\{\bB(u_k,u_{k-1})\}_{k\in \Z}$.
Then using the second property in Section \ref{ssec:buseman}, and the fact that $\boo \in \cU\cup \cU_-$, we have that for any $u\in\cU\cup \cU_-$, $\bG(u)-\bG(u_0)$ is determined by $\{\bB(u_k,u_{k-1})\}_{k\in \Z}$ and $\{\xi(u)\}_{u\in \cU_-}$, in particular for $u=\boo$. Since $\bG(\boo)=0$, we have that $\bG(u_0)$, thus $\bG(u)$ for any $u\in\cU\cup \cU_-$ is determined by $\{\bB(u_k,u_{k-1})\}_{k\in \Z}$ and $\{\xi(u)\}_{u\in \cU_-}$.

By the last three properties of the Busemann function $\bB$ in Section \ref{ssec:buseman}, $\{\bB(u_k,u_{k-1})\}_{k\in \Z}$, $\{\xi(u)\}_{u\in \cU_-}$, and $\{\xi^{\vee}(u)\}_{u\in \cU_+}$ are independent exponential random variables.
Thus conditioned on $\{\bG(u)\}_{u\in \cU\cup \cU_-}$ and the event $I_t=\cU\cup \cU_-$, we have
\begin{itemize}
    \item $\{\xi^{\vee}(u)\}_{u\in \cU_+}$ are independent random variables,
    \item $\xi^{\vee}(u)\sim \Exp(1)$ for each $u\in \cU_+\setminus \cU_c$,
    \item for each $u\in\cU_c$, $\xi^{\vee}(u)$ has the distribution of $\Exp(1)$ conditioned on $>t-\bG(u-(1,0))\vee \bG(u-(0,1))$.
\end{itemize}
Since an $\Exp(1)$ random variable conditioned on $>x$ for any $x\ge 0$ is just $x+\Exp(1)$, we have that \[\xi^{\vee}(u)-(t-\bG(u-(1,0))\vee \bG(u-(0,1)))\sim \Exp(1)\] for each $u\in\cU_c$.
We note that (still conditioned on $\{\bG(u)\}_{u\in \cU\cup \cU_-}$ and the event $I_t=\cU\cup \cU_-$) we have $\xi^{\vee,t}(u) = \xi^{\vee}(u)$ for any $u\in \cU_+\setminus \cU_c$ and $\xi^{\vee,t}(u)=\xi^{\vee}(u)-(t-\bG(u-(1,0))\vee \bG(u-(0,1)))$, so $\{\xi^{\vee,t}(u)\}_{u\in \cU_+}$ are i.i.d.\;$\Exp(1)$ random variables. Thus the conclusion follows.
\end{proof}

For any $t\in \R$ and $u\not\in I_t$, the path $\Gamma^{\vee}_u\setminus I_t$ can be constructed as the geodesic with boundary $I_t$, under the weights $\xi^{\vee,t}$.
For any $u\le v$, $u, v \not\in I_t$, let $T^{\vee,t}_{u,v}$ and $\Gamma^{\vee,t}_{u,v}$ denote the passage time and geodesic from $u$ to $v$ under the weights $\xi^{\vee,t}$.

\begin{lemma}  \label{lem:geo-part-cons}
For any $v\not\in I_t$ we have $\bG(v)-t = \max_{u\le v, u\not\in I_t} T^{\vee,t}_{u,v}$ and $\Gamma^\vee_v \setminus I_t = \Gamma^{\vee,t}_{u_*,v}$ for $u_*=\argmax_{u\le v, u\not\in I_t} T^{\vee,t}_{u,v}$.
\end{lemma}

The proof of this lemma is by a straightforward induction in $u$ in the up-right direction, and we omit the details here.

\subsection{The coupling and the competition interface}
\label{ssec:ci-hppair}
\begin{figure}[hbt!]
    \centering
\begin{tikzpicture}[line cap=round,line join=round,>=triangle 45,x=6cm,y=6cm]
\clip(-.5,-0.2) rectangle (1.19,1.49);

\fill[line width=0.pt,color=green,fill=green,fill opacity=0.2]
(-0.45,-0.05) -- (-0.15,-0.05) -- (-0.15,0.05)-- (-0.05,0.05) --(-0.05,0.25) --(0.05,0.25) --(0.15,0.25) --(0.15,0.45) --(0.25,0.45) --(0.25,0.55) -- (0.15,0.55) -- (0.15,0.85) -- (-0.05,0.85) -- (-0.05,0.95) -- (-0.15,0.95) -- (-0.15,1.15) -- (-0.45,1.15) -- cycle;
\fill[line width=0.pt,color=blue,fill=blue,fill opacity=0.2]
(-0.35,-0.15) -- (-0.35,-0.05) -- (-0.15,-0.05) -- (-0.15,0.05)-- (-0.05,0.05) --(-0.05,0.25) --(0.05,0.25) --(0.15,0.25) --(0.15,0.45) --(0.45,0.45) --(0.45,0.35) -- (0.65,0.35) -- (0.65,0.25) -- (0.75,0.25) -- (0.75,0.15) -- (0.95,0.15) -- (0.95,-0.05) -- (1.05,-0.05) -- (1.05,-0.15) -- cycle;

\draw [line width=.1pt, opacity=0.3] (-0.5,-0.1) -- (2.6,-0.1);
\draw [line width=.1pt, opacity=0.3] (-0.5,0.) -- (2.6,0.);
\draw [line width=.1pt, opacity=0.3] (-0.5,0.1) -- (2.6,0.1);
\draw [line width=.1pt, opacity=0.3] (-0.5,0.2) -- (2.6,0.2);
\draw [line width=.1pt, opacity=0.3] (-0.5,0.3) -- (2.6,0.3);
\draw [line width=.1pt, opacity=0.3] (-0.5,0.4) -- (2.6,0.4);
\draw [line width=.1pt, opacity=0.3] (-0.5,0.5) -- (2.6,0.5);
\draw [line width=.1pt, opacity=0.3] (-0.5,0.6) -- (2.6,0.6);
\draw [line width=.1pt, opacity=0.3] (-0.5,0.7) -- (2.6,0.7);
\draw [line width=.1pt, opacity=0.3] (-0.5,0.8) -- (2.6,0.8);
\draw [line width=.1pt, opacity=0.3] (-0.5,0.9) -- (2.6,0.9);
\draw [line width=.1pt, opacity=0.3] (-0.5,1.) -- (2.6,1.);
\draw [line width=.1pt, opacity=0.3] (-0.5,1.1) -- (2.6,1.1);
\draw [line width=.1pt, opacity=0.3] (-0.5,1.2) -- (2.6,1.2);
\draw [line width=.1pt, opacity=0.3] (-0.5,1.3) -- (2.6,1.3);
\draw [line width=.1pt, opacity=0.3] (-0.5,1.4) -- (2.6,1.4);
\draw [line width=.1pt, opacity=0.3] (-0.5,1.5) -- (2.6,1.5);
\draw [line width=.1pt, opacity=0.3] (-0.5,1.6) -- (2.6,1.6);
\draw [line width=.1pt, opacity=0.3] (-0.5,1.7) -- (2.6,1.7);
\draw [line width=.1pt, opacity=0.3] (-0.5,1.8) -- (2.6,1.8);
\draw [line width=.1pt, opacity=0.3] (-0.5,1.9) -- (2.6,1.9);
\draw [line width=.1pt, opacity=0.3] (-0.5,2.) -- (2.6,2.);
\draw [line width=.1pt, opacity=0.3] (-0.5,2.1) -- (2.6,2.1);
\draw [line width=.1pt, opacity=0.3] (-0.4,-0.2) -- (-0.4,2.2);
\draw [line width=.1pt, opacity=0.3] (-0.3,-0.2) -- (-0.3,2.2);
\draw [line width=.1pt, opacity=0.3] (-0.2,-0.2) -- (-0.2,2.2);
\draw [line width=.1pt, opacity=0.3] (-0.1,-0.2) -- (-0.1,2.2);
\draw [line width=.1pt, opacity=0.3] (0.,-0.2) -- (0.,2.2);
\draw [line width=.1pt, opacity=0.3] (0.1,-0.2) -- (0.1,2.2);
\draw [line width=.1pt, opacity=0.3] (0.2,-0.2) -- (0.2,2.2);
\draw [line width=.1pt, opacity=0.3] (0.3,-0.2) -- (0.3,2.2);
\draw [line width=.1pt, opacity=0.3] (0.4,-0.2) -- (0.4,2.2);
\draw [line width=.1pt, opacity=0.3] (0.5,-0.2) -- (0.5,2.2);
\draw [line width=.1pt, opacity=0.3] (0.6,-0.2) -- (0.6,2.2);
\draw [line width=.1pt, opacity=0.3] (0.7,-0.2) -- (0.7,2.2);
\draw [line width=.1pt, opacity=0.3] (0.8,-0.2) -- (0.8,2.2);
\draw [line width=.1pt, opacity=0.3] (0.9,-0.2) -- (0.9,2.2);
\draw [line width=.1pt, opacity=0.3] (1.,-0.2) -- (1.,2.2);
\draw [line width=.1pt, opacity=0.3] (1.1,-0.2) -- (1.1,2.2);
\draw [line width=.1pt, opacity=0.3] (1.2,-0.2) -- (1.2,2.2);
\draw [line width=.1pt, opacity=0.3] (1.3,-0.2) -- (1.3,2.2);
\draw [line width=.1pt, opacity=0.3] (1.4,-0.2) -- (1.4,2.2);
\draw [line width=.1pt, opacity=0.3] (1.5,-0.2) -- (1.5,2.2);
\draw [line width=.1pt, opacity=0.3] (1.6,-0.2) -- (1.6,2.2);
\draw [line width=.1pt, opacity=0.3] (1.7,-0.2) -- (1.7,2.2);
\draw [line width=.1pt, opacity=0.3] (1.8,-0.2) -- (1.8,2.2);
\draw [line width=.1pt, opacity=0.3] (1.9,-0.2) -- (1.9,2.2);
\draw [line width=.1pt, opacity=0.3] (2.,-0.2) -- (2.,2.2);
\draw [line width=.1pt, opacity=0.3] (2.1,-0.2) -- (2.1,2.2);
\draw [line width=.1pt, opacity=0.3] (2.2,-0.2) -- (2.2,2.2);
\draw [line width=.1pt, opacity=0.3] (2.3,-0.2) -- (2.3,2.2);
\draw [line width=.1pt, opacity=0.3] (2.4,-0.2) -- (2.4,2.2);

\draw [red] plot coordinates {(-0.4,-0.1) (-0.2,-0.1) (-0.2,0.) (-0.1,0.) (-0.1,0.2) (0.,0.2) (0.1,0.2) (0.1,0.3) (0.1,0.4) (0.2,0.4) (0.2,0.5) (0.3,0.5) (0.4,0.5) (0.4,0.6) (0.6,0.6) (0.6,0.7) (0.9,0.7) (0.9,0.9) (1.,0.9) (1.2,0.9) (1.2,1.) (1.3,1.) (1.3,1.3) (1.4,1.3) (1.4,1.5) (1.5,1.5) (1.5,1.7) (1.9,1.7) (1.9,1.8) (2.1,1.8) (2.1,2.) (2.3,2.) (2.3,2.1)  (2.4,2.1)};

\draw [blue] plot coordinates {(-0.35,-0.05) (-0.15,-0.05) (-0.15,0.05) (-0.05,0.05) (-0.05,0.25) (0.05,0.25) (0.15,0.25) (0.15,0.35) (0.15,0.45) (0.25,0.45) };

\draw [fill=uuuuuu] (-0.4,-0.1) circle (1.0pt);
\draw [fill=uuuuuu] (0.2,0.5) circle (1.0pt);
\draw (-0.4,-0.1) node[anchor=east]{$\boo$};
\draw (0.2,0.5) node[anchor=east]{$p_t$};
\draw (0.9,0.9) node[anchor=east, color=red]{$\Gamma_\boo$};
\draw (-0.05,0.25) node[anchor=south, color=blue]{$Z$};

\end{tikzpicture}
\caption{
An illustration of the growth process from LPP: the blue and green areas are the two clusters $C_1\cap I_t$ and $C_2\cap  I_t$ respectively, and the red curve is the semi-infinite geodesic $\Gamma_\boo$.
}  
\label{fig:interface}
\end{figure}

We now describe the coupling between LPP and TASEP (denoted as a Markov process on $\{0,1\}^\Z$).
In this subsection we let $(\eta_t)_{t\ge 0}$ denote TASEP with the following initial condition: let $\eta_0(0)=0$ and $\eta_0(1)=1$, and $\eta_0(x)$ be i.i.d.\;Bernoulli$(\rho)$ for all other $x$.
We label the holes by $\Z$ from left to right, with the one at site $0$ labeled $0$;
and label the particles by $\Z$ from right to left, with the one at site $1$ labeled $0$.
For any $(a, b) \in \Z^2$, if at time $0$ the particle labeled $b$ is to the right of the hole labeled $a$, we denote $L(a,b)=0$; otherwise, we denote $L(a,b)>0$ as the time when the particle switches with the hole. 
Then we have that $\{L(a,b)\}_{(a,b)\in \Z^2}$ has the same distribution as $\{\bG(a,b)\vee 0\}_{(a,b)\in \Z^2}$.
Indeed, using the last property of the Busemann function in Section \ref{ssec:buseman}, we can deduce that $I_0$ and $\{(a,b):L(a,b)=0\}$ have the same distribution; and given $\eta_0$, the random variables
\[
L(a,b)-L(a-1,b)\vee L(a,b-1)
\]
for all $(a,b)$ with $L(a,b)>0$ are i.i.d.\;$\Exp(1)$, because this is the waiting time for the particle labeled $b$ and the hole labeled $a$ to switch since the time they are next to each other.
Thus they have the same distribution as $\{\xi^{\vee,0}(u)\}_{u\not\in I_0}$ conditioned on $I_0$, according to Lemma \ref{lem:iidexpI}.
See e.g. \cite[Section 4.2]{ferrari2009phase} for more details on the equal in distribution between $L$ and $\bG\vee 0$.
We couple $(\eta_t)_{t\ge 0}$ with LPP so that $L=\bG\vee 0$ almost surely, and in the rest of this section we work under the event that this equality holds.
Then the TASEP configuration $\eta_t$ can be directly read from $I_t$ (see Figure \ref{fig:read}).
\begin{figure}[hbt!]
    \centering
\begin{tikzpicture}[line cap=round,line join=round,>=triangle 45,x=4.8cm,y=4.8cm]
\clip(-1.5,-0.4) rectangle (1.19,1.05);

\fill[line width=0.pt,color=yellow,fill=yellow,fill opacity=0.4]
(-0.05,0.25) --(0.15,0.25) --(0.15,0.45) --(0.25,0.45) --(0.25,0.55) -- (0.15,0.55) -- (0.15,0.85) -- (-0.05,0.85) -- cycle;
\fill[line width=0.pt,color=yellow,fill=yellow,fill opacity=0.4]
(-0.05,-0.15) -- (-0.05,0.05) --(-0.05,0.25) --(0.05,0.25) --(0.15,0.25) --(0.15,0.45) --(0.45,0.45) --(0.45,0.35) -- (0.65,0.35) -- (0.65,0.25) -- (0.75,0.25) -- (0.75,0.15) -- (0.95,0.15) -- (0.95,-0.05) -- (1.05,-0.05) -- (1.05,-0.15) -- cycle;

\foreach \i in {-1,...,19}
{
\draw [line width=.1pt, opacity=0.3] (-0.05,\i/10) -- (1.15,\i/10);
}

\foreach \i in {0,...,11}
{
\draw [line width=.1pt, opacity=0.3] (\i/10,-0.15) -- (\i/10,1.45);
}

\foreach \i in {-14,...,6}
{
\draw [line width=.05pt] [dotted] (\i/10+0.05,-0.5) -- (\i/10+2.05,1.5);
}

\draw [fill=uuuuuu] (0.1,0) circle (1.0pt);
\draw (0.1,0) node[anchor=east]{$\boo$};

\draw [line width=1.2pt, opacity=0.3] (-1.2,-0.3) -- (0.9,-0.3);
\draw [fill=white] (-1.15,-0.3) circle (2.8pt);
\draw [fill=white] (-1.05,-0.3) circle (2.8pt);
\draw [fill=uuuuuu] (-0.95,-0.3) circle (2.8pt);
\draw [fill=uuuuuu] (-0.85,-0.3) circle (2.8pt);
\draw [fill=uuuuuu] (-0.75,-0.3) circle (2.8pt);
\draw [fill=white] (-0.65,-0.3) circle (2.8pt);
\draw [fill=uuuuuu] (-0.55,-0.3) circle (2.8pt);
\draw [fill=white] (-0.45,-0.3) circle (2.8pt);
\draw [fill=white] (-0.35,-0.3) circle (2.8pt);
\draw [fill=uuuuuu] (-0.25,-0.3) circle (2.8pt);
\draw [fill=white] (-0.15,-0.3) circle (2.8pt);
\draw [fill=white] (-0.05,-0.3) circle (2.8pt);
\draw [fill=uuuuuu] (0.05,-0.3) circle (2.8pt);
\draw [fill=white] (0.15,-0.3) circle (2.8pt);
\draw [fill=uuuuuu] (0.25,-0.3) circle (2.8pt);
\draw [fill=white] (0.35,-0.3) circle (2.8pt);
\draw [fill=white] (0.45,-0.3) circle (2.8pt);
\draw [fill=uuuuuu] (0.55,-0.3) circle (2.8pt);
\draw [fill=uuuuuu] (0.65,-0.3) circle (2.8pt);
\draw [fill=white] (0.75,-0.3) circle (2.8pt);
\draw [fill=uuuuuu] (0.85,-0.3) circle (2.8pt);

\begin{tiny}

\foreach \i in {-9,...,11}
{
\draw (\i/10-0.25,-0.32) node[anchor=north]{$\i$};
}
\end{tiny}

\end{tikzpicture}

\caption{
An illustration of the correspondence between TASEP and the growth process.
}  
\label{fig:read}
\end{figure}

\begin{lemma}  \label{lem:Ideteta}
For any $t\ge 0$ and $x\in \Z$, $\eta_t(x)=0$ if and only if there is some $y \in \Z$ such that $(x+y,y)\in I_t$ and $(x+y,y+1)\not\in I_t$, and the hole at site $x$ has label $x+y$; and $\eta_t(x)=1$ if and only if there is some $y \in \Z$ such that $(x+y-1,y)\in I_t$ and $(x+y,y)\not\in I_t$, and the particle at site $x$ has label $y$.
Equivalently, if we let $f_t:\Z\to \Z$ be the function such that $f_t(x)$ is the largest integer with $(f_t(x)+x,f_t(x)) \in I_t$, then $f_t(x-1)-f_t(x)=\eta_t(x)$.
\end{lemma}
\begin{proof}
For simplicity of notations we denote $\cE_1$ as the event where there is $y \in \Z$ such that $(x+y,y)\in I_t$ and $(x+y,y+1)\not\in I_t$, and $\cE_2$ as the event where there is $y \in \Z$ such that $(x+y-1,y)\in I_t$ and $(x+y,y)\not\in I_t$.
Note that exactly one of $\cE_1$ and $\cE_2$ happens,
so it suffices to show that $\cE_1$ implies $\eta_t(x)=0$, since by symmetry we would have that $\cE_2$ implies $\eta_t(x)=1$, then the conclusion follows.

If $(x+y,y)\in I_t$ and $(x+y,y+1)\not\in I_t$, we have $L(x+y,y)\le t$ and $L(x+y,y+1)>t$ under the coupling.
This means that at time $t$, the hole labeled $x+y$ is to the left of the particle labeled $y$, but to the right of the particle labeled $y+1$.
Suppose that at time $0$, the hole labeled $x+y$ is at site $z$. 
Since the hole at site $0$ is labeled $0$, if $x+y>0$ we must have $z>1$, and there are $z-(x+y)$ particles between sites $0$ and $z$; and if $x+y<0$ we must have $z<0$, and there are $(x+y)-z$ particles between sites $z$ and $0$.
In either case, the nearest particle to the left of the hole labeled $x+y$ (at time $0$) must be labeled $x+y-z+1$ (since the particle at site $1$ is labeled $0$).
This means that at time $t$, the hole labeled $x+y$ has already swapped with $(y+1)-(x+y-z+1)=z-x$ particles. So at time $t$ it is at site $x$, and $\eta_t(x)=0$.
\end{proof}
We next consider the semi-infinite geodesic $\Gamma_\boo$ under this coupling.
It actually corresponds to the competition interface starting from $\boo$, which we describe now (see e.g. \cite{ferrari2005competition, ferrari2009phase}).
We define two clusters $C_1$ and $C_2$ for the growth process $(I_t)_{t\ge 0}$: let $\Z_+\times \{0\}\subset C_1$, and $\{0\} \times \Z_+ \subset C_2$.
For any $(a,b)\in\Z_+^2$ let its `parent' be either $(a-1,b)$ or $(a,b-1)$, whichever is occupied later;
then $(a,b)$ is in the same cluster as its parent.
Starting from any $u$ and by taking parent repeatedly, we can actually get $\Gamma_u^\vee\setminus I_0$, by \eqref{eq:veerecur}; thus we can equivalently define $C_1$ and $C_2$ such that for any $u\in \Z_{\ge 0}^2$, $u\neq \boo$, we let $u\in C_1$ if $\Gamma^\vee_u$ intersects $\Z_+\times \{0\}$, and $u\in C_2$ if $\Gamma^\vee_u$ intersects $\{0\}\times \Z_+$.
By Lemma \ref{lem:geo-part-cons}, such clusters are determined by $I_0$ and $\{\xi^{\vee,0}(u)\}_{u\not\in I_0}$, which are i.i.d.\;$\Exp(1)$ conditioned on $I_0$.
The competition interface $Z$ is defined to be the boundary of these clusters $C_1$ and $C_2$. Namely, we let $Z\subset (1/2,1/2)+\Z_{\ge 0}^2$, such that for any $v\in Z$, every vertex in $\Z_{\ge 0}^2$ to the upper-left of $v$ is in $C_2$, and every vertex in $\Z_{\ge 0}^2$ to the lower-right of $v$ is in $C_1$.
By Lemma \ref{lem:non-cross}, $Z=\Gamma_\boo+(1/2,1/2)$.
In words, the competition interface $Z$ defined from $I_0$ and $\{\xi^{\vee,0}(u)\}_{u\not\in I_0}$ is equivalent to the semi-infinite geodesic $\Gamma_\boo$ defined from $\{\xi(u)\}_{u\in\Z^2}$.
We also define the process $(p_t)_{t\ge 0}$, such that $p_t$ is the last vertex in $\Gamma_\boo\cap I_t$
(see Figure \ref{fig:interface}).

In the TASEP side, in $(\eta_t)_{t\ge 0}$ we keep track of a `hole-particle pair', which is a hole with a particle next to it in the right.
At $t=0$ it is the hole at site $0$ and particle at site $1$.
Whenever the particle is switched with a hole to the right, we move this pair to the right; and whenever the hole is switched with a particle to the left, we move this pair to the left (see Figure \ref{fig:pair} for an illustration).
We have the following lemma from \cite{ferrari2005competition}, which says that the trajectory of this `hole-particle pair' can be mapped to the competition interface.

\begin{figure}[hbt!]
    \centering
\begin{tikzpicture}[line cap=round,line join=round,>=triangle 45,x=4cm,y=4cm]
\clip(-2.7,-0.2) rectangle (-0.7,.63);

\begin{scriptsize}
\draw (-2.4,0.53) node[anchor=south]{$5$};
\draw (-2.3,0.53) node[anchor=south]{$-2$};
\draw (-2.2,0.53) node[anchor=south]{$4$};
\draw (-2.1,0.53) node[anchor=south]{$3$};
\draw (-2.,0.53) node[anchor=south]{$-1$};
\draw (-1.9,0.53) node[anchor=south]{$2$};
\draw (-1.8,0.53) node[anchor=south]{$1$};
\draw (-1.7,0.53) node[anchor=south]{$0$};
\draw (-1.6,0.53) node[anchor=south]{$0$};
\draw (-1.5,0.53) node[anchor=south]{$1$};
\draw (-1.4,0.53) node[anchor=south]{$2$};
\draw (-1.3,0.53) node[anchor=south]{$3$};
\draw (-1.2,0.53) node[anchor=south]{$-1$};
\draw (-1.1,0.53) node[anchor=south]{$4$};
\draw (-1.,0.53) node[anchor=south]{$-2$};
\draw (-.9,0.53) node[anchor=south]{$-3$};
\draw (-.8,0.53) node[anchor=south]{$5$};

\draw (-2.4,0.23) node[anchor=south]{$-2$};
\draw (-2.3,0.23) node[anchor=south]{$5$};
\draw (-2.2,0.23) node[anchor=south]{$4$};
\draw (-2.1,0.23) node[anchor=south]{$3$};
\draw (-2.,0.23) node[anchor=south]{$-1$};
\draw (-1.9,0.23) node[anchor=south]{$2$};
\draw (-1.8,0.23) node[anchor=south]{$1$};
\draw (-1.7,0.23) node[anchor=south]{$0$};
\draw (-1.6,0.23) node[anchor=south]{$1$};
\draw (-1.5,0.23) node[anchor=south]{$0$};
\draw (-1.4,0.23) node[anchor=south]{$2$};
\draw (-1.3,0.23) node[anchor=south]{$3$};
\draw (-1.2,0.23) node[anchor=south]{$-1$};
\draw (-1.1,0.23) node[anchor=south]{$4$};
\draw (-1.,0.23) node[anchor=south]{$-2$};
\draw (-.9,0.23) node[anchor=south]{$-3$};
\draw (-.8,0.23) node[anchor=south]{$5$};

\draw (-2.4,-0.07) node[anchor=south]{$-2$};
\draw (-2.3,-0.07) node[anchor=south]{$5$};
\draw (-2.2,-0.07) node[anchor=south]{$4$};
\draw (-2.1,-0.07) node[anchor=south]{$3$};
\draw (-2.,-0.07) node[anchor=south]{$-1$};
\draw (-1.9,-0.07) node[anchor=south]{$2$};
\draw (-1.8,-0.07) node[anchor=south]{$0$};
\draw (-1.7,-0.07) node[anchor=south]{$1$};
\draw (-1.6,-0.07) node[anchor=south]{$1$};
\draw (-1.5,-0.07) node[anchor=south]{$2$};
\draw (-1.4,-0.07) node[anchor=south]{$0$};
\draw (-1.3,-0.07) node[anchor=south]{$3$};
\draw (-1.2,-0.07) node[anchor=south]{$-1$};
\draw (-1.1,-0.07) node[anchor=south]{$4$};
\draw (-1.,-0.07) node[anchor=south]{$-2$};
\draw (-.9,-0.07) node[anchor=south]{$5$};
\draw (-.8,-0.07) node[anchor=south]{$-3$};
\end{scriptsize}

\fill[line width=0.pt,color=yellow,fill=yellow,fill opacity=0.8]
(-1.65,0.25) -- (-1.65,0.15) -- (-1.45,0.15) -- (-1.45,0.25) -- cycle;
\draw [line width=1.2pt, opacity=0.3] (-2.5,0.2) -- (-0.7,0.2);
\draw [fill=white] (-2.4,0.2) circle (3.0pt);
\draw [fill=uuuuuu] (-2.3,0.2) circle (3.0pt);
\draw [fill=uuuuuu] (-2.2,0.2) circle (3.0pt);
\draw [fill=uuuuuu] (-2.1,0.2) circle (3.0pt);
\draw [fill=white] (-2.,0.2) circle (3.0pt);
\draw [fill=uuuuuu] (-1.9,0.2) circle (3.0pt);
\draw [fill=uuuuuu] (-1.8,0.2) circle (3.0pt);
\draw [fill=white] (-1.7,0.2) circle (3.0pt);
\draw [fill=white] (-1.6,0.2) circle (3.0pt);
\draw [fill=uuuuuu] (-1.5,0.2) circle (3.0pt);
\draw [fill=white] (-1.4,0.2) circle (3.0pt);
\draw [fill=white] (-1.3,0.2) circle (3.0pt);
\draw [fill=uuuuuu] (-1.2,0.2) circle (3.0pt);
\draw [fill=white] (-1.1,0.2) circle (3.0pt);
\draw [fill=uuuuuu] (-1.,0.2) circle (3.0pt);
\draw [fill=uuuuuu] (-0.9,0.2) circle (3.0pt);
\draw [fill=white] (-0.8,0.2) circle (3.0pt);

\fill[line width=0.pt,color=yellow,fill=yellow,fill opacity=0.8]
(-1.55,-.15) -- (-1.55,-.05) -- (-1.35,-.05) -- (-1.35,-.15) -- cycle;
\draw [line width=1.2pt, opacity=0.3] (-2.5,-0.1) -- (-0.7,-0.1);
\draw [fill=white] (-2.4,-0.1) circle (3.0pt);
\draw [fill=uuuuuu] (-2.3,-0.1) circle (3.0pt);
\draw [fill=uuuuuu] (-2.2,-0.1) circle (3.0pt);
\draw [fill=uuuuuu] (-2.1,-0.1) circle (3.0pt);
\draw [fill=white] (-2.,-0.1) circle (3.0pt);
\draw [fill=uuuuuu] (-1.9,-0.1) circle (3.0pt);
\draw [fill=white] (-1.8,-0.1) circle (3.0pt);
\draw [fill=uuuuuu] (-1.7,-0.1) circle (3.0pt);
\draw [fill=white] (-1.6,-0.1) circle (3.0pt);
\draw [fill=white] (-1.5,-0.1) circle (3.0pt);
\draw [fill=uuuuuu] (-1.4,-0.1) circle (3.0pt);
\draw [fill=white] (-1.3,-0.1) circle (3.0pt);
\draw [fill=uuuuuu] (-1.2,-0.1) circle (3.0pt);
\draw [fill=white] (-1.1,-0.1) circle (3.0pt);
\draw [fill=uuuuuu] (-1.,-0.1) circle (3.0pt);
\draw [fill=white] (-0.9,-0.1) circle (3.0pt);
\draw [fill=uuuuuu] (-0.8,-0.1) circle (3.0pt);

\fill[line width=0.pt,color=yellow,fill=yellow,fill opacity=0.8]
(-1.75,0.55) -- (-1.75,0.45) -- (-1.55,0.45) -- (-1.55,0.55) -- cycle;
\draw [line width=1.2pt, opacity=0.3] (-2.5,0.5) -- (-0.7,0.5);
\draw [fill=uuuuuu] (-2.4,0.5) circle (3.0pt);
\draw [fill=white] (-2.3,0.5) circle (3.0pt);
\draw [fill=uuuuuu] (-2.2,0.5) circle (3.0pt);
\draw [fill=uuuuuu] (-2.1,0.5) circle (3.0pt);
\draw [fill=white] (-2.,0.5) circle (3.0pt);
\draw [fill=uuuuuu] (-1.9,0.5) circle (3.0pt);
\draw [fill=uuuuuu] (-1.8,0.5) circle (3.0pt);
\draw [fill=white] (-1.7,0.5) circle (3.0pt);
\draw [fill=uuuuuu] (-1.6,0.5) circle (3.0pt);
\draw [fill=white] (-1.5,0.5) circle (3.0pt);
\draw [fill=white] (-1.4,0.5) circle (3.0pt);
\draw [fill=white] (-1.3,0.5) circle (3.0pt);
\draw [fill=uuuuuu] (-1.2,0.5) circle (3.0pt);
\draw [fill=white] (-1.1,0.5) circle (3.0pt);
\draw [fill=uuuuuu] (-1.,0.5) circle (3.0pt);
\draw [fill=uuuuuu] (-0.9,0.5) circle (3.0pt);
\draw [fill=white] (-0.8,0.5) circle (3.0pt);

\draw (-2.5,0.5) node[anchor=east]{$\eta_0$};
\draw (-2.5,0.2) node[anchor=east]{$\eta_1$};
\draw (-2.5,-0.1) node[anchor=east]{$\eta_2$};

\end{tikzpicture}
\caption{
An illustration of the evolution of a hole-particle pair in $(\eta_t)_{\ge 0}$: the numbers above the particles/holes are the labels, which increase from left to right for holes, and decrease from left to right for particles. The yellow boxes indicate the tracked hole-particle pairs.
}  
\label{fig:pair}
\end{figure}

\begin{lemma}  \label{lem:compete-inter-2nd-class}
Under the above coupling between LPP and TASEP, for the hole-particle pair at time $t$, let $b_t$ be the label of the particle and $a_t$ be the label of the hole. Then $p_t=(a_t, b_t)$.
\end{lemma}

We note that this hole-particle pair can also be replaced by a second-class particle.
For this, note that $a_t$ is also the number of times the pair moved to the right up to time $t$, and $b_t$ is the number of times that pair moved to the left up to time $t$. Thus at time $t$ the hole-particle pair is at sites $a_t-b_t$ and $a_t-b_t+1$.
If we take $\eta_t^*(x)=\eta_t(x)$ for $x<a_t-b_t$, $\eta_t^*(x)=\eta_t(x+1)$ for $x>a_t-b_t$, and $\eta_t^*(a_t-b_t)=*$, we then have that $(\eta_t^*)_{t\ge 0}$ is TASEP with a second-class particle, starting from i.i.d.\;Bernoulli$(\rho)$ on $\Z\setminus\{0\}$.
So far we have seen that this process $(\eta_t^*)_{t\ge 0}$ contains the same information as $\bG\vee 0$ (by Lemma \ref{lem:Ideteta}), thus the same information as $I_0$ and $\{\xi^{\vee,0}(u)\}_{u\not\in I_0}$, and the trajectory of the second-class particle gives the semi-infinite geodesic $\Gamma_\boo$ (Lemma \ref{lem:compete-inter-2nd-class}).
Recall from Section \ref{ssec:2cp-sta-def} that $\Phi_t$ is the law of $\eta_t^*(a_t-b_t+\cdot)$, and $\Psi$ is the stationary distribution of TASEP as seen from an isolated second-class particle.
In light of the convergence of $\Phi_t$ to $\Psi$ as $t\to\infty$, stated in Theorem \ref{thm:cov-phi} or Proposition \ref{prop:converge-2nd-class-tasep}, the LPP limiting environment measure $\nu$ should be constructed from $\Psi$.
We give such construction in the next section.

\section{The LPP limiting environment}
\label{sec:limit-dist}

We are now ready to define $\nu$.
As before, we use $(\zeta_t^*)_{t\in\R}$ to denote the process of TASEP as seen from an isolated second-class particle, such that each $\zeta_t^*\sim\Psi$, the stationary distribution defined in Section \ref{ssec:2cp-sta-def}. 
The idea is to construct a growth process from $(\zeta_t^*)_{t\in\R}$, then take the environment around the origin. This would give the limiting environment along the geodesic $\Gamma_\boo$, as seen at a uniform time; i.e.\;it is the environment as seen from $p_t$ for a uniform $t$, where (recall that) $p_t$ is the last vertex in $\Gamma_\boo \cap I_t$. To get the environment $\nu$, which is as seen from a uniformly chosen vertex, we would do an extra reweighting.

We first replace the second-class particle in $(\zeta_t^*)_{t\in\R}$ by a hole-particle pair.
Namely, we let $(\zeta_t)_{t\in \R}$ be the process such that $\zeta_t(x)=\zeta_t^*(x)$ for $x<0$, $\zeta_t(x)=\zeta_t^*(x-1)$ for $x>1$, and $\zeta_t(0)=0$, $\zeta_t(1)=1$.
The process $(\zeta_t)_{t\in\R}$ is then the stationary process of TASEP as seen from a hole-particle pair.
We use $\hbPsi$ to denote the law of this process.

We next describe the procedure of obtaining the environment from $(\zeta_t)_{t\in\R}$.
We give the growth process in terms of the occupation time function, which we also denote by $\oL$ as a slight abuse of notation.
Similar to the i.i.d.\;Bernoulli initial setting in Section \ref{ssec:ci-hppair}, we label the particles from right to left, and the holes from left to right, such that at time $0$ the particle at site $1$ and the hole at site $0$ are both labeled $0$. 
Let $\oL(a,b)$ be the time when the particle labeled $b$ is switched with the hole labeled $a$.
Unlike the i.i.d.\;Bernoulli initial setting, here $(\zeta_t)_{t\in\R}$ is a stationary process and evolves from time $-\infty$ to $\infty$, so $\oL(a,b)$ may be negative and is well-defined for all $(a,b)\in\Z^2$.
We then use $\oL$ to define the limiting weights and path, which we denote by $\oxi$ and $\oga$ by slightly abusing these notations within this section.
We define $\xi$ via $\oxi(a,b)=\oL(a+1,b)\wedge \oL(a,b+1)-\oL(a,b)$, and define $\oga \subset \Z^2$ as the collection of all $(a,b)$, such that there is a time $t$ when the particle labeled $b$ is at site $1$ and the hole labeled $a$ is at site $0$ in $\eta_t$.
We let $\tilde{\nu}$ be the measure given by the law of such $(\xi, \oga)$ constructed from $(\zeta_t)_{t\in \R}\sim \hbPsi$.

We next do the reweighting.
We let $\bPsi$ be the measure $\hbPsi$ conditioned on $\oL(\boo)=0$, i.e.\;we let $d\bPsi = \lim_{\epsilon \to 0_+} \frac{\don[\oL(\boo)>-\epsilon]d\hbPsi}{\PP_{\hbPsi}[\oL(\boo)>-\epsilon]}$.
As $(\zeta_t)_{t\in\R}$ under $\hbPsi$ is a Markov process, the limit could be computed as $\hbPsi$ conditioned on that there is a jump of the hole-particle pair at time $0$; i.e.\;we first reweight $\hbPsi$ by $\don[\zeta_{0_-}(2)=0] + \don[\zeta_{0_-}(-1)=1]$, the events where a jump is allowed, then let the jump happen at time $0$.
More precisely, we can describe $\bPsi$ as follows. We have
\[
\bPsi = \frac{\PP_{\hbPsi}[\zeta_{0_-}(2)=0]\bPsi^{(1)} + \PP_{\hbPsi}[\zeta_{0_-}(-1)=1]\bPsi^{(2)}}{\PP_{\hbPsi}[\zeta_{0_-}(2)=0] + \PP_{\hbPsi}[\zeta_{0_-}(-1)=1]}
=\frac{(1-\rho)^2\bPsi^{(1)}+\rho^2\bPsi^{(2)}}{(1-\rho)^2+\rho^2}
,\]
where $\bPsi^{(1)}$ (resp.\;$\bPsi^{(2)}$) is $\hbPsi$ conditioned on that a jump of the hole-particle pair to the right (resp.\;to the left) happens at time $0$. 
More precisely, we define these measures as follows.
Let $(\zeta_t)_{t\in \R} \sim \bPsi^{(1)}$, then the negative time part $(\zeta_t)_{t<0}$ has distribution given by \[\frac{\don[\zeta_{0_-}(2)=0] d\hbPsi}{\PP_{\hbPsi}[\zeta_{0_-}(2)=0]};\]
and given $\zeta_{0_-}$ we let $\zeta_0$ be that $\zeta_0(-1)=\zeta_0(0)=0$, $\zeta_0(1)=1$, and $\zeta_0(x)=\zeta_{0_-}(x+1)$ for any $x\not\in \{-1,0,1\}$; and let $(\zeta_t)_{t\ge 0}$ be the Markov process of TASEP as seen from a hole-particle pair starting from $\zeta_0$.
Similarly, for $(\zeta_t)_{t\in \R} \sim \bPsi^{(2)}$, the negative part $(\zeta_t)_{t<0}$ has distribution given by \[\frac{\don[\zeta_{0_-}(-1)=1] d\hbPsi}{\PP_{\hbPsi}[\zeta_{0_-}(-1)=1]};\]
and given $\zeta_{0_-}$, we have $\zeta_0(0)=0$, $\zeta_0(1)=\zeta_0(2)=1$, and $\zeta_0(x)=\zeta_{0_-}(x-1)$ for any $x\not\in \{0,1,2\}$; and $(\zeta_t)_{t\ge 0}$ is the Markov process of TASEP as seen from a hole-particle pair starting from $\zeta_0$.

From this construction, the laws of $\zeta_0$ under $\bPsi^{(1)}$ and $\bPsi^{(2)}$ can also be described as follows.
Let $\Psi_{+}$ be the law of $\{\zeta^*(x)\}_{x\in\N}$ and $\Psi_{-}$ be the law of $\{\zeta^*(-x)\}_{x\in\N}$, for $\zeta^*\sim \Psi$.
Under $\bPsi^{(1)}$, there is $\zeta_0(-1)=\zeta_0(0)=0$, $\zeta_0(1)=1$, and $\{\zeta_0(x+1)\}_{x\in\N}\sim \Psi_+$ and $\{\zeta_0(-x-1)\}_{x\in\N}\sim \Psi_-$, and they are independent.
Under $\bPsi^{(2)}$, there is $\zeta_0(0)=0$, $\zeta_0(1)=\zeta_0(2)=1$, and $\{\zeta_0(x+2)\}_{x\in\N}\sim \Psi_+$,  $\{\zeta_0(-x)\}_{x\in\N}\sim \Psi_-$, and they are independent.

We define $\nu$ as the measure given by the law of $(\oxi, \oga)$, obtained using the procedure above from $(\zeta_t)_{t\in \R}\sim\bPsi$.
By Lemma \ref{lem:psi-L-comp} below we can see that $\oxi(\boo)$ has exponential tail under $\nu$, so $\E_{\nu}[\oxi(\boo)] < \infty$. We then show that $\tilde{\nu}$ is $\nu$ reweighted by $\oxi(\boo)$.
\begin{lemma}  \label{lem:reweight}
We have $d\tilde{\nu} = \frac{\oxi(\boo)d\nu}{\E_{\nu}[\oxi(\boo)]}$.
\end{lemma}
Let's explain why such a relation is expected.
Consider the sequence of times $\{\oL(u)\}_{u\in \oga}$ when the hole-particle pair jumps. Under $\hbPsi$ this is a stationary point process in $\R$.
Then $\nu$ corresponds to the environment
as seen from the hole-particle at a typical jump time. 
On the other hand, 
$\tilde{\nu}$ corresponds to the environment
as seen from the hole-particle at time $0$. 
Because of the `inspection effect', this is biased by the length of the interval in the point process containing time $0$, which is $\oxi(\boo)$.
\begin{proof}[Proof of Lemma \ref{lem:reweight}]
For each $s>0$, we let $\bPsi_{-s}$ be the measure of $\hbPsi$ conditioned on $\oL(\boo)=-s$, i.e.\;let $d\bPsi_{-s} = \lim_{\epsilon \to 0_+} \frac{\don[-s-\epsilon<\oL(\boo)<-s]d\hbPsi}{\PP_{\hbPsi}[-s-\epsilon<\oL(\boo)<-s]}$.
Note that under $\hbPsi$, almost surely $\oL(0,1), \oL(1,0) > 0$ and $\oL(\boo)<0$, since at time $0$ the following objects are ordered from left to right: the particle labeled $1$, the hole labeled $0$, the particle labeled $0$, and the hole labeled $1$. So $\don[-s-\epsilon<\oL(\boo)<-s] = \don[-s-\epsilon<\oL(\boo)<-s]\don[\oxi(\boo)>s]$.
Then since $\hbPsi$ is stationary, we have \[\don[-s-\epsilon<\oL(\boo)<-s]d\hbPsi = (\don[\oL(\boo)>-\epsilon]\don[\oxi(\boo)>s] d\hbPsi)\circ \sT_{-s},\] where $\sT_{-s}$ is the time translation operator: for any process $P=(P_w)_{w\in\R}$, we denote $\sT_{-s} P$ as the process $(P_{-s+w})_{w\in\R}$.
By multiplying $\epsilon^{-1}$ and sending $\epsilon\to 0_+$, we have
\begin{equation} \label{eq:hbpsid}
\PP_{\hbPsi}[\oL(\boo)=-s]d\bPsi_{-s} = \PP_{\hbPsi}[\oL(\boo)=0](\don[\oxi(\boo)>s] d\bPsi) \circ \sT_{-s},    
\end{equation}
where \[\PP_{\hbPsi}[\oL(\boo)=-s]=\lim_{\epsilon\to 0_+} \epsilon^{-1}\PP_{\hbPsi}[-s-\epsilon<\oL(\boo)<-s],\] \[\PP_{\hbPsi}[\oL(\boo)=0]=\lim_{\epsilon\to 0_+} \epsilon^{-1}\PP_{\hbPsi}[\oL(\boo)>-\epsilon]\] are the probability densities.
By integrating the left-hand side of \eqref{eq:hbpsid} over $s>0$ we get $d\hbPsi$, under which the law of $(\oxi, \oga)$ is $\tilde{\nu}$.
For the right-hand side of \eqref{eq:hbpsid}, we note that the laws of $(\oxi, \oga)$ are the same under $(\don[\oxi(\boo)>s]d\bPsi) \circ \sT_{-s}$ or $\don[\oxi(\boo)>s]d\bPsi$. So by integrating over $s>0$ and taking the law of $(\oxi, \oga)$, we get $\PP_{\hbPsi}[\oL(\boo)=0]\oxi(\boo)d\nu$. Thus we conclude that $d\tilde{\nu} = \PP_{\hbPsi}[\oL(\boo)=0]\oxi(\boo)d\nu$. Since $\tilde{\nu}$ and $\nu$ are probability measures, by integrating both sides we get $\PP_{\hbPsi}[\oL(\boo)=0]\E_{\nu}[\oxi(\boo)] = 1$, so the conclusion follows.
\end{proof}

The above construction allows us to explicitly compute finite dimensional distributions of $\nu$ and thus local geodesic statistics (assuming the main results of this paper).
For this rest of this section we illustrate such computations, and prove Propositions \ref{prop:lawofone} and \ref{prop:turning}.

We start with the following computations on the next jump times.
\begin{lemma}  \label{lem:psi-L-comp}
For any $h \ge 0$ we have
\[
\begin{split}
&\PP_{\bPsi^{(1)}}[\oL(1,0) > h] = (1+\rho(1-\rho) h)e^{-(1-\rho)h},\\
&\PP_{\bPsi^{(1)}}[\oL(0,1) > h] = (1+\rho h)e^{-\rho h},\\
&\PP_{\bPsi^{(2)}}[\oL(1,0) > h] = (1+(1-\rho)h)e^{-(1-\rho)h},\\
&\PP_{\bPsi^{(2)}}[\oL(0,1) > h] = (1+\rho(1-\rho) h)e^{-\rho h}.
\end{split}
\]
\end{lemma}
\begin{proof}
Let $D_+=\min\{x\geq 1: \zeta_0(x+1)=0\}$, the number of 
particles between the origin and the leftmost hole at a positive site.
Similarly let $D_-=\min\{x\geq 1: \zeta_0(-x)=1\}$, 
the number of holes to the right of the rightmost particle at a negative site, up to and including the origin. 

The distribution of $D_+$ under $\bPsi^{(1)}$
is that of $X_+$ given by
(\ref{eq:Xplus}), while the distribution of $D_+$
under $\bPsi^{(2)}$ is that of $X_++1$
(which is the distribution of the sum of two independent Geometric($1-\rho$) 
random variables). 

Similarly the distribution of $D_-$ under $\bPsi^{(2)}$
is that of $X_-$ at (\ref{eq:Xminus}), while the distribution 
of $D_-$ under $\bPsi^{(1)}$ is that of $X_-+1$. 

In order for the particle which is at site $1$ at time $0$ 
to jump, the hole starting at site $D_++1$ must switch
with each of the $D_+$ particles starting in $\llbracket 1,D_+\rrbracket$.
So given $D_+$, the distribution of $\oL(1,0)$ is 
the sum of $D_+$ independent $\Exp(1)$ random variables; 
that is, a Gamma($D_+, 1)$ distribution. A random 
variable $V$ with Gamma($k, 1$) distribution has $\E[e^{-sV}]=(1+s)^{-k}$,
and from this we obtain, for any $s>-1+\rho$, 
\[
\E_{\bPsi^{(1)}}[e^{-s(\oL(1,0))}] = 
\sum_{k=1}^\infty k(1-\rho)^2\rho^{k-1}(1+s)^{-k}
=\frac{(1+s)(1-\rho)^2}{(1+s-\rho)^2},
\]
which can be shown to match the expression for $\PP_{\bPsi^{(1)}}[\oL(1,0) > h]$ given in the statement.

Similarly, in order for the hole which is at site $0$ at time $0$ to jump,
the particle starting at site $-D_-$ must switch with each
of the $D_-$ holes starting in $\llbracket -D_-+1, 0\rrbracket$.
One obtains 
\[
\E_{\bPsi^{(1)}}[e^{-s(\oL(0,1))}] = 
\sum_{k=1}^\infty k\rho^2(1-\rho)^{k-1}(1+s)^{-(k+1)}
=\frac{\rho^2}{(\rho+s)^2},
\]
which matches the desired expression for 
$\PP_{\bPsi^{(1)}}[\oL(0,1) > h]$.

Analogous calculations give the probabilities under $\bPsi^{(2)}$.
\end{proof}
Now we compute the law of the weights on geodesics.
\begin{proof}[Proof of Proposition \ref{prop:lawofone}]
It suffices to compute the law of $\oL(1,0)\wedge \oL(0,1)$, under the measure $\bPsi=\frac{(1-\rho)^2\bPsi^{(1)}+\rho^2\bPsi^{(2)}}{(1-\rho)^2+\rho^2}$.
Note that under either $\bPsi^{(1)}$ or  $\bPsi^{(2)}$, the random variables $\oL(1,0)$ and $\oL(0,1)$ are independent.
Thus by Lemma \ref{lem:psi-L-comp} we get that \[\PP_{\bPsi^{(1)}}[\oL(1,0)\wedge \oL(0,1) > h] = (1+\rho h)(1+\rho(1-\rho) h)e^{-h},\] and \[\PP_{\bPsi^{(2)}}[\oL(1,0)\wedge \oL(0,1) > h] = (1+(1-\rho) h)(1+\rho(1-\rho) h)e^{-h}.\]
Thus the conclusion follows.
\end{proof}
Assuming Theorem \ref{thm:finite}, we can also compute the proportion of `corners' in geodesics.
\begin{proof}[Proof of Proposition \ref{prop:turning}]
Assuming Theorem \ref{thm:finite}, we have \[\frac{N_{n,\rho}}{2n} \to \PP_{\nu}[\{(0,0),(0,1),(-1,0)\}\subset \oga] + \PP_{\nu}[\{(0,0),(0,-1),(1,0)\}\subset \oga],\]
almost surely as $n\to\infty$.
From the construction of $\nu$, this equals
\[
\frac{(1-\rho)^2\PP_{\bPsi^{(1)}}[\oL(1,0)>\oL(0,1)]+\rho^2\PP_{\bPsi^{(2)}}[\oL(1,0)<\oL(0,1)]}{(1-\rho)^2+\rho^2}.
\]
Using that $\oL(1,0)-\oL(0,0)$ and $\oL(0,1)-\oL(0,0)$ are independent under either $\bPsi^{(1)}$ or $\bPsi^{(2)}$, by Lemma \ref{lem:psi-L-comp} we have
\[
\begin{split}
&\PP_{\bPsi^{(1)}}[\oL(1,0)>\oL(0,1)] = \rho^2(1+2\rho-2\rho^2),\\
&\PP_{\bPsi^{(2)}}[\oL(1,0)<\oL(0,1)] = (1-\rho)^2(1+2\rho-2\rho^2).
\end{split}
\]
Thus the conclusion follows.
\end{proof}

\noindent\textbf{An alternative representation of the weights on geodesics.} We also give an outline of alternative derivation of the 
formulae in Proposition \ref{prop:lawofone} and Proposition \ref{prop:turning},
which also leads to representations of the type mentioned after the statement
of Proposition \ref{prop:lawofone}. 

Note that under $\bPsi^{(2)}$, $D_+$ takes values in $\{2,3,\dots\}$ and has the distribution of the sum of two independent Geometric random variables 
with parameter $1-\rho$. Given $D_+$, the random variable $\oL(1,0)$ is the sum of $D_+$ independent $\Exp(1)$ random variables.
From this, $\oL(1,0)$ has the same distribution as
the sum of two $\Exp(1-\rho)$ random variables, or equivalently of $\frac{1}{1-\rho}(E_1+E_2)$ for $E_1, E_2$ i.i.d.\;$\sim\Exp(1)$.

Meanwhile under $\bPsi^{(2)}$, $D_-$ takes values in $\{1,2,\dots\}$ 
and has the distribution of the sum of two independent Geometric($\rho$) random variables minus $1$. Note that if $X\sim\text{Geometric}(\rho)$, then
$X-1\isd BX$ where $B\sim\text{Bernoulli}(\rho)$ independently of $X$. We obtain that $\oL(0,1)$ has the distribution of $\frac{1}{\rho}(E_3+B E_4)$, for 
$B\sim\text{Bernoulli}(\rho)$ and $E_3, E_4$ i.i.d.\;$\sim\Exp(1)$ independently of $B$.

Note $\oL(0,1)$ and $\oL(1,0)$ are independent under $\bPsi^{(2)}$.
So we can combine the previous two paragraphs to get that 
the distribution of $\oxi(\boo)=\oL(0,1) \wedge \oL(1,0)$ 
under $\bPsi^{(2)}$ is that of 
\[
\frac{1}{1-\rho}(E_1+E_2)\wedge \frac{1}{\rho}(E_3+B E_4),
\]
for $B\sim\text{Bernoulli}(\rho)$ and $(E_i)_{1\leq 1\leq 4}$ i.i.d.\;$\sim\Exp(1)$ independently of $B$.

We continue in the particular case $\rho=1/2$. Then the distribution of $\oxi(\boo)$ is the same under $\bPsi^{(1)}$ as under $\bPsi^{(2)}$,
and so its distribution under $\bPsi$ is again the same, that of 
$2((E_1+E_2)\wedge (E_3+B E_4))$ for $B\sim\text{Bernoulli}(1/2)$ and $(E_i)_{1\leq 1\leq 4}$ i.i.d.\;$\sim\Exp(1)$ independently of $B$.

By elementary arguments involving the memoryless property of exponentials, 
this distribution can be seen to be a $(1/4, 1/2, 1/4)$ mixture
of Gamma($1,1$), Gamma($2,1$) and Gamma($3,1$) distributions. 

A similar but slightly more involved argument can be made for the case of general $\rho$, to give that the distribution of $\oxi(\boo)$ is again a mixture of Gamma($1,1$), Gamma($2,1$) and Gamma($3,1$) distributions,
now with weights
\[
\left(
\frac{\rho^4+(1-\rho)^4}{\rho^2+(1-\rho)^2},
2\rho(1-\rho),
\frac{2\rho^2(1-\rho)^2}{\rho^2+(1-\rho)^2}
\right).
\]
As a function of $\rho\in (0,1)$,
this distribution is stochastically increasing 
on $(0,1/2]$, and symmetric around $1/2$.\\

\noindent\textbf{The path as a competition interface.} 
As $\Gamma_\boo$ in the i.i.d.\;$\Exp(1)$ random field, the path $\oga$ under $\nu$ can also be described as a competition interface.
For $\oL$ under $\bPsi$, by slightly abusing the notations we let $\oI_0:=\{u\in \Z^2: \oL(u)\le 0\}$ and \[
\oxi^{\vee,0}(u):= \oL(u)\vee 0 - \oL(u-(1,0))\vee \oL(u-(0,1))\vee 0,
\]
for each $u\in\Z^2$.
Then like Lemma \ref{lem:Ideteta}, we can show that $\oI_0$ contains the same information as $\zeta_0$ (whose law under $\bPsi$ is explicitly described using $\Psi_+$ and $\Psi_-$ above). Namely, we have $(0,0)\in\oI_0$ and $(0,1), (1,0)\not\in \oI_0$; and for any $x\in \Z$, $\zeta_0(x)=0$ if and only if there is some $y \in \Z$ such that $(x+y,y)\in \oI_0$ and $(x+y,y+1)\not\in \oI_0$, and $\zeta_0(x)=1$ if and only if there is some $y \in \Z$ such that $(x+y-1,y)\in \oI_0$ and $(x+y,y)\not\in \oI_0$.

Under $\bPsi$ and conditioned on $\oI_0$, the weights $\{\oxi^{\vee,0}(u)\}_{u\not\in \oI_0}$ are i.i.d.\;$\Exp(1)$. This is because, for any $(a, b)\in\Z^2$, $\oxi^{\vee,0}(a,b)$ is the waiting time for the particle labeled $b$ and the hole labeled $a$ to switch, since they are next to each other; and that is i.i.d.\;$\Exp(1)$ for all $(a, b)\not\in \oI_0$, given $\zeta_0$.

From $\oI_0$ and $\oxi^{\vee,0}$ under $\bPsi$, we define a competition interface, similar to how the competition interface is defined in Section \ref{ssec:ci-hppair}.
Specifically, for any $u\le v$, $u, v \not\in \oI_0$, let $T^{\vee,0}_{u,v}$ and $\Gamma^{\vee,0}_{u,v}$ be the passage time and geodesic from $u$ to $v$ under the weights $\oxi^{\vee,0}$.
For any $v\in \Z_{\ge 0}^2\setminus \{\boo\}$, we consider the vertex $u_*\not\in I_0$ with the maximum $T^{\vee,0}_{u_*,v}$.
If $\Gamma^{\vee,0}_{u_*,v}$ intersects $\Z_+\times \{0\}$ we let $v\in C_1$; otherwise $\Gamma^{\vee,0}_{u_*,v}$ intersects $\{0\}\times \Z_+$ and we let $v\in C_2$.
We then have that $(\oga\cap \Z_{\ge 0}^2)+(1/2,1/2)$  is the boundary between $C_1$ and $C_2$, using analogues of Lemmas \ref{lem:non-cross} and \ref{lem:geo-part-cons}.

Using this formulation of $\oga$ and the explicit description of $\zeta_0$ under $\bPsi$, and passage time estimates (e.g. Theorem \ref{t:onepoint} below) or the convergence of the passage time point-to-line profile to the so-called Airy$_2$ process (see Theorem \ref{thm:profile-a2} below), one can possibly show that $\oga[i]$ has transversal fluctuation in the order of $i^{2/3}$ for large $i$ (here $\oga[i]$ denotes the $i$-th vertex in $\oga\cap \Z_{\ge 0}^2$), and even obtain exact formulae for the distribution of its scaling limit. We leave these to future explorations.

\section{Geometric estimates for LPP}  \label{ssec:elpps}
While so far most arguments are on TASEP and use interacting particle system techniques, for the rest of this paper we will mainly use various LPP geometric arguments.
In this section we do some preparations, by providing some useful tools.
Most results in this section have appeared in the literature.

In this section, we do not fix $\rho\in (0,1)$, and all constants are not assumed to depend on $\rho$, unless otherwise stated.
For $a,b\in\Z$ and $\rho \in (0,1)$, we denote \[\langle a, b\rangle_\rho := 
\left(\left\lfloor\frac{2(1-\rho)^2a}{\rho^2+(1-\rho)^2}\right\rfloor+b, \left\lceil\frac{2\rho^2a}{\rho^2+(1-\rho)^2}\right\rceil-b\right).\]
Then recall that $\bn^\rho=\langle n, 0\rangle_\rho$ for any $n\in \Z$.
We also write $\langle 0, b\rangle := (b,-b)$ for any $b\in\Z$.

We start with a basic geometric property called `ordering of geodesics'.
Note that for any $\Z^2$ vertices $u\le v$, if $u'\le v'$ and $u',v'\in\Gamma_{u,v}$, we must have that $\Gamma_{u',v'}$ is the part of $\Gamma_{u,v}$ between $u'$ and $v'$ (including $u', v'$).
This immediately leads to the following result.
\begin{lemma}  \label{l:ordering}
For any $(a_1,b_1)$ and $(a_2,b_2)$, we say $(a_1,b_1)\preceq(a_2,b_2)$, if $a_1\le a_2$ and $b_1\ge b_2$. 
For any $u_1$, $u_2$ and $v_1$, $v_2$ such that $u_1\le v_1$, $u_2\le v_2$ and $u_1\preceq u_2$, $v_1\preceq v_2$,
and any $w_1\in\Gamma_{u_1,v_1}$, $w_2\in\Gamma_{u_2,v_2}$, we cannot have $w_2\preceq w_1$ unless $w_1=w_2$.
\end{lemma}

We next give estimates on passage times.
We have that $T_{\boo,(m,n)}$ has the same law as the largest eigenvalue of $X^*X$ where $X$ is an $(m+1)\times (n+1)$ matrix of i.i.d.\;standard complex Gaussian entries (see \cite[Proposition 1.4]{johansson2000shape}).
Hence we get the following one-point estimates from \cite[Theorem 2]{LR10}. 
\begin{theorem}
\label{t:onepoint}
There exist constants $c,C>0$, such that for any $m\geq n \geq 1$ and $x>0$, we have
\begin{equation}  \label{e:wslope}
\PP[T_{\boo, (m,n)}-(\sqrt{m}+\sqrt{n})^{2} \geq xm^{1/2}n^{-1/6}] \leq Ce^{-cx}.    
\end{equation}
In addition, for each $\psi>1$, there exist $C',c'>0$ depending on $\psi$ such that if $\frac{m}{n}< \psi$, we have
\begin{equation}  \label{e:slope}
\begin{split}
&\PP[T_{\boo, (m,n)}-(\sqrt{m}+\sqrt{n})^{2} \geq xn^{1/3}] \leq C'e^{-c'(x^{3/2}\wedge xn^{1/3})},\\
&\PP[T_{\boo, (m,n)}-(\sqrt{m}+\sqrt{n})^{2} \leq -xn^{1/3}] \leq C'e^{-c'x^3},
\end{split}
\end{equation}
and as a consequence
\begin{equation}
\label{e:mean}
|\E[T_{\boo, (m,n)}] -(\sqrt{m}+\sqrt{n})^2|\leq C'n^{1/3}.
\end{equation}
\end{theorem}
Below we will frequently use parallelograms in $\R^2$.
For simplicity of notations, in the rest of this section, for any vertices $u\le v$ and $x>0$, we let $U_{u,v}^x$ denote the parallelogram whose one pair of opposite sides have length $2x$, parallel to the anti-diagonal, and have midpoints $u$ and $v$ respectively.
Formally, we let
\[
U_{u,v}^x=\{ u+\alpha(v-u) + (y,-y): \alpha\in [0,1], y\in [-x, x] \}.
\]
We next state the following parallelogram estimate.
\begin{prop}  [\protect{\cite[Theorem 4.2]{basu2019temporal}}]
\label{t:seg-to-seg}
Let $U_1, U_2$ be the part of $U_{(-m,m),(n,n)}^{n^{2/3}}$ below $\LL_{\lfloor n/3 \rfloor}$ and above $\LL_{\lceil 2n/3 \rceil}$ respectively.
For each $\psi<1$, there exist constants $c,C>0$ depending only on $\psi$, such that when $|m|<\psi n$,
\[
\PP\Big[\sup_{u\in U_1, v\in U_2} |T_{u,v}-\E[T_{u,v}]| \geq x n^{1/3}\Big]\leq Ce^{-c(x^{3/2}\wedge xn^{1/3})}.
\]
\end{prop}
Such a result is first proved as \cite[Proposition 10.1, 10.5]{basu2014last}, in the setting of Poissonian LPP.
In the setting of exponential LPP a proof is given in \cite[Appendix C]{basu2019temporal}, following the ideas in \cite{basu2014last}.

We will also need the following estimate on the coalescing probability of two geodesics, for finite and semi-infinite geodesics respectively.
\begin{prop} [\protect{\cite{zhang2019optimal}}] \label{prop:coalesce}
For each $\psi \in (0,1)$, there exists $C>0$, such that 
\[
\PP[\Gamma_{\boo,\langle n,b_1\rangle_{1/2}} \cap \LL_{n-m} = \Gamma_{\boo,\langle n,b_2\rangle_{1/2}} \cap \LL_{n-m}] > 1 - C m^{-2/3}|b_1-b_2| 
\]
for any $n,m\in\N, b_1, b_2 \in \Z$, such that $m<n/3$, $|b_1|, |b_2|<\psi n$.
\end{prop}
\begin{prop}[\protect{\cite[Theorem 2]{basu2019coalescence}}]  \label{prop:coal-semi-inf-1}
For any $\rho\in(0,1)$, there is a constant $C>0$, such that for any $r\in\N$, and $k>1$, we have $\PP[\Gamma_\boo^\rho\cap \LL_{\lfloor r^{3/2}k\rfloor} \neq \Gamma_{\langle 0, r \rangle}^\rho\cap \LL_{\lfloor r^{3/2}k\rfloor}] < Ck^{-2/3}$.
\end{prop}
\begin{figure}[hbt!]
     \centering
     \begin{subfigure}[t]{0.45\textwidth}
         \centering
\begin{tikzpicture}[line cap=round,line join=round,>=triangle 45,x=5cm,y=5cm]
\clip(-0.15,-0.15) rectangle (1.15,0.8);

\fill[line width=0.pt,color=green,fill=green,fill opacity=0.35]
(0.19,0.21) -- (0.29,0.11) -- (0.05,-0.05) -- (-0.05,0.05) -- cycle;

\draw (0.5,-0.04) node[anchor=north east]{$\LL_r$};

\draw [line width=.4pt] (-0.35,0.75) -- (0.75,-0.35);

\draw (0,0) node[anchor=north]{$\boo$};
\draw (.24,.16) node[anchor=west]{$\langle r,0\rangle_\rho$};
\draw (0.78,0.73) node[anchor=north west]{$\Gamma_\boo^\rho$};

\draw [red] plot [smooth] coordinates {(0.9,.85) (0.87,0.82) (0.84,0.79) (0.78,0.73) (0.71,0.71) (0.68,0.68) (0.66,0.64) (0.6,0.59) (0.58,0.58) (0.52,0.56) (0.46,0.51) (0.39,0.45) (0.35,0.39) (0.31, 0.36) (0.27,0.34) (0.25,0.27) (0.23,0.19) (0.18,0.14) (0.14,0.09) (0.07,0.04) (0,0)};
\draw [fill=uuuuuu] (0.,0.) circle (1.0pt);
\draw [fill=uuuuuu] (.24,.16) circle (1.0pt);
\end{tikzpicture}

         \caption{For the semi-infinite geodesic $\Gamma_\boo^\rho$: Lemma \ref{lem:semi-inf-trans} stats that with probability $>1-Ce^{-cx^3}$, its intersection with $\LL_r$ is within distance $xr^{2/3}$ to $\langle r,0\rangle_\rho$; Corollary \ref{cor:trans-semi-inf} stats that with probability $>1-Ce^{-cx^3}$, below $\LL_r$ it is contained in $U_{\boo,\langle r,0\rangle_\rho}^{xr^{2/3}}$.}
         \label{fig:62p}
     \end{subfigure}
     \hfill
     \begin{subfigure}[t]{0.5\textwidth}
         \centering
\begin{tikzpicture}[line cap=round,line join=round,>=triangle 45,x=5cm,y=5cm]
\clip(-0.15,-0.15) rectangle (1.25,0.8);

\fill[line width=0.pt,color=green,fill=green,fill opacity=0.35]
(0.19,0.21) -- (0.29,0.11) -- (0.05,-0.05) -- (-0.05,0.05) -- cycle;

\draw (0.5,-0.06) node[anchor=north east]{$\LL_r$};

\draw [line width=.4pt] (-0.35,0.75) -- (0.75,-0.35);

\draw [line width=.3pt] [dotted] (0,0) -- (0.9,0.6);

\draw (0,0) node[anchor=north]{$\boo$};
\draw (0.9,.71) node[anchor=north west]{$\langle n,b\rangle_{1/2}$};
\draw (.24,.16) node[anchor=west]{$\langle r,b'\rangle_{1/2}$};

\draw (0.6,.46) node[anchor=south east]{$\Gamma_{\boo, \langle n,b\rangle_{1/2}}$};

\draw [red] plot [smooth] coordinates {(0.9,.6) (0.87,0.57) (0.84,0.56) (0.78,0.48) (0.71,0.47) (0.68,0.455)  (0.6,0.44) (0.58,0.38) (0.46,0.34) (0.39,0.33) (0.35,0.25) (0.31, 0.22) (0.27,0.2) (0.25,0.15) (0.23,0.11) (0.18,0.1) (0.14,0.05) (0.07,0.04) (0,0)};
\draw [fill=uuuuuu] (0.,0.) circle (1.0pt);
\draw [fill=uuuuuu] (.24,.16) circle (1.0pt);
\draw [fill=uuuuuu] (0.9,.6) circle (1.0pt);
\end{tikzpicture}

         \caption{For the infinite geodesic $\Gamma_{\boo, \langle n,b\rangle_{1/2}}$: Corollary \ref{cor:trans-fluc-comb} stats that with probability $>1-Ce^{-cx^3}$, below $\LL_r$ it is contained in $U_{\boo,\langle r,b'\rangle_{1/2}}^{xr^{2/3}}$.}
         \label{fig:62f}
     \end{subfigure}
        \caption{Illustrations of the transversal fluctuation estimates.}
        \label{fig:62}
\end{figure}
Similar coalescence estimates have also been obtained in various other papers, such as \cite{seppalainen2020coalescence, balazs2021local}.

We next give some estimates on transversal fluctuations of geodesics (see Figure \ref{fig:62}).
Such geodesic fluctuation estimates have been proved using various methods in the literature \cite{basu2014last,basu2021time, basu2019temporal,basu2019coalescence, hammond2020modulus, zhang2019optimal, busani2022universality, emrah2020right, bhatia2020moderate}.
We start with an estimate for semi-infinite geodesics, which is illustrated by Figure \ref{fig:62p}.
\begin{lemma}  \label{lem:semi-inf-trans}
For any $\psi\in(0,1)$, there exist $c,C>0$ such that the following is true.
Let $\rho\in (\psi, 1-\psi)$, and $r, b_r\in\Z$ such that $\Gamma_\boo^\rho[2r+1]=\langle r,b_r\rangle_\rho$. Then $\PP[|b_r|>xr^{2/3}] < Ce^{-cx^3}$ for any $x>0$.
\end{lemma}
This bound can be quickly deduced from \cite[Theorem 3.1]{emrah2020right} or \cite[Theorem 2.4]{bhatia2020moderate}.
Here we give a proof using the above passage time estimates, and properties of the Busemann function.
\begin{proof}[Proof of Lemma \ref{lem:semi-inf-trans}]
In this proof we let $c,C>0$ denote small and large constants that depend on $\psi$, and the values can change from line to line.
We assume that $r$ and $x$ are large enough (depending on $\psi$), since otherwise the conclusion follows obviously.

For simplicity of notations we denote $T_{u,v}^\bu = T_{u,v}-\xi(v)$ for any vertices $u\le v$.
Let $\bB$ be the Busemann function in direction $\brho$, as defined in Section \ref{ssec:buseman}.
By Lemma \ref{lem:buse-opti}, the event $|b_r|>xr^{2/3}$ implies that there exists $b\in\Z$ such that $|b|>xr^{2/3}$, and $T_{\boo,\langle r,b\rangle_\rho}^\bu + \bB(\langle r,b\rangle_\rho,\br^\rho) > T_{\boo,\br^\rho}^\bu$.
To bound this event, we denote $L_j:=\{\langle r,b\rangle_\rho: |b-2j\lfloor r^{2/3} \rfloor|\le r^{2/3}\}$ for $j\in\Z$.
For each $j$ such that $L_j$ intersects $\Z_{\ge 0}^2$, we have
\begin{equation}
\begin{split}  \label{eq:thr-term}
&\PP\big[\max_{u\in L_j\cap \Z_{\ge 0}^2} T_{\boo,u}^\bu + \bB(u,\br^\rho) > T_{\boo,\br^\rho}^\bu\big] \\ <& \PP\big[\max_{u\in L_j\cap \Z_{\ge 0}^2} T_{\boo,u}^\bu - \E[T_{\boo,u}^\bu] > c_0j^2r^{1/3}\big] 
+ \PP\big[T_{\boo,\br^\rho}^\bu -\E[T_{\boo,\br^\rho}^\bu] < -c_0j^2r^{1/3}\big]
\\
&+ \PP\big[\max_{u\in L_j\cap \Z_{\ge 0}^2}\bB(u,\br^\rho) + \E[T_{\boo,u}^\bu] - \E[T_{\boo,\br^\rho}^\bu] > -2c_0j^2r^{1/3}\big],
\end{split}    
\end{equation}
where $c_0>0$ is a small constant depending only on $\psi$, and satisfies several conditions to be specified below. 
We next show that each term in the right-hand side of \eqref{eq:thr-term} is bounded by $Ce^{-c|j|^3}$; then by summing over $j\in \Z$ such that $2|j|+1>x$ and $L_j$ intersects $\Z_{\ge 0}^2$ (note that the later implies that $|j|\le 2r (2\lfloor r^{2/3} \rfloor)^{-1} < 2r^{1/3}$) we get the conclusion.
\begin{itemize}
    \item For the first term, we let $\psi'>0$ be a small number (depending on $c_0$ and $\psi$ and to be determined). When $L_j$ is contained in $\{\langle r,b\rangle_{1/2}: |b|<(1-\psi')r\}$, we consider the parallelogram $U_{\boo,\langle r, 2j\lfloor r^{2/3} \rfloor \rangle_\rho}^{r^{2/3}}$.
    Using Proposition \ref{t:seg-to-seg} with this parallelogram we get the desired bound.
    When $L_j$ is not contained in $\{\langle r,b\rangle_{1/2}: |b|<(1-\psi')r\}$ we cannot directly apply Proposition \ref{t:seg-to-seg}, since the above parallelogram may not satisfy the slope condition there.
    Instead, we take some small $\alpha>0$ (depending on $c_0$ and $\psi$ and to be determined), and consider the parallelogram $U_{-\langle \lfloor \alpha r\rfloor, 0\rangle_{1/2}, \langle r, 2j\lfloor r^{2/3} \rfloor \rangle_\rho}^{r^{2/3}}$.
    Using Proposition \ref{t:seg-to-seg} with this parallelogram we get
    \begin{equation}  \label{eq:weak-1}
    \PP\big[\max_{u\in L_j\cap \Z_{\ge 0}^2} T_{-\langle \lfloor \alpha r\rfloor, 0\rangle_{1/2},u}^\bu - \E[T_{-\langle \lfloor \alpha r\rfloor, 0\rangle_{1/2},u}^\bu] > 2^{-1}c_0j^2r^{1/3}\big] < Ce^{-c|j|^3}.    
    \end{equation}
    For any $u\in L_j\cap \Z_{\ge 0}^2$ we have $T_{\boo,u}^\bu \le T_{\langle -\lfloor \alpha r\rfloor, 0\rangle_{1/2},u}^\bu$, and
    \[
    \E[T_{\boo,u}^\bu] > \E[T_{\langle -\lfloor \alpha r\rfloor, 0\rangle_{1/2},u}^\bu] - 200^{-1}c_0 \psi^2 r > \E[T_{\langle -\lfloor \alpha r\rfloor, 0\rangle_{1/2},u}^\bu] - 2^{-1}c_0j^2r^{1/3},
    \]
    where the two inequalities are due to the following reasons.
    The first inequality is by $\E[T_{\boo,u}^\bu]\ge 2r$ and $\E[T_{\langle -\lfloor \alpha r\rfloor, 0\rangle_{1/2},u}^\bu]<2r+200^{-1}c_0 \psi^2 r$, which is due to \eqref{e:mean} and the fact that $L_j$ is not contained in $\{\langle r,b\rangle_{1/2}: |b|<(1-\psi')r\}$, and taking $\psi'$ and $\alpha$ small enough (depending on $\psi$ and $c_0$). The second inequality is by $|j|>0.1\psi r^{1/3}$, which is implied by the fact that $L_j$ is not contained in $\{\langle r,b\rangle_{1/2}: |b|<(1-\psi')r\}$.
    
    Thus we have 
    \[
    \max_{u\in L_j\cap \Z_{\ge 0}^2} T_{-\langle \lfloor \alpha r\rfloor, 0\rangle_{1/2},u}^\bu - \E[T_{-\langle \lfloor \alpha r\rfloor, 0\rangle_{1/2},u}^\bu] > \max_{u\in L_j\cap \Z_{\ge 0}^2} T_{\boo,u}^\bu - \E[T_{\boo,u}^\bu] - 2^{-1}c_0j^2r^{1/3},\]
    so the first term in the right-hand side of \eqref{eq:thr-term} is bounded as desired by \eqref{eq:weak-1}.
    \item For the second term we apply Theorem \ref{t:onepoint}.
    \item For the last term, by \eqref{e:wslope} and \eqref{e:mean} from Theorem \ref{t:onepoint}, we have $\E[T_{\boo,\langle r, b\rangle_\rho}^\bu] - \E[T_{\boo,\br^\rho}^\bu] \le Cr^{1/3}-b(\rho^{-1}-(1-\rho)^{-1}) - c_1b^2 r^{-1}$ for any $\langle r,b\rangle_\rho\in\Z_{\ge 0}^2$,
    where $c_1>0$ is determined by $\psi$. 
    By taking $c_0<c_1$, and assuming that both $r$ and $|j|$ are large enough, we have
    $c_1b^2r^{-1}-Cr^{1/3}-2c_0j^2r^{1/3}>c_0j^2r^{1/3}$ when $|b-2j\lfloor r^{2/3} \rfloor|\le r^{2/3}$.
    Then the last term in the right-hand side of \eqref{eq:thr-term} is bounded by
    \[
    \PP\big[\max_{ |b-2j\lfloor r^{2/3} \rfloor|\le r^{2/3} }\bB(\langle r, b\rangle_\rho,\br^\rho) -b(\rho^{-1}-(1-\rho)^{-1}) > c_0j^2r^{1/3} \big].
    \] 
    Note that $b\mapsto \bB(\langle r,b\rangle_\rho, \br^\rho)-b(\rho^{-1}-(1-\rho)^{-1})$ is a (two-sided) centered random walk, where each step has exponential tail determined by $\rho$ (see Section \ref{ssec:buseman}).
    We can apply concentration inequalities to get the desired bound.\\
    (For example, we can do a Chernoff type estimate as follows. Take any $c_2>0$, small enough depending on $\psi$. Without loss of generality we assume $j>0$. Write the random walk as $b\mapsto \sum_{i=1}^b X_i$ for $b> 0$, where each $X_i$ has exponential tail determined by $\rho$. Consider $e^{c_2jr^{-1/3}\sum_{i=1}^b X_i}$, which is a supermartingale in $b$.
    Let $\tau$ be the first time after $(2j-1)\lfloor r^{2/3}\rfloor$ when this supermartingale is at least $e^{c_2^{3/2}j^3}$, or $(2j+1)\lfloor r^{2/3}\rfloor+1$, whichever is smaller.
    Then we have
    \[\begin{split}
    &\PP\bigg[\max_{|b-2j\lfloor r^{2/3}\rfloor|\le r^{2/3}}\sum_{i=1}^b X_i > c_2^{1/2}j^2r^{1/3}\bigg]
    \\
    \le &\PP\bigg[\sum_{i=1}^{\tau} X_i > c_2^{1/2}j^2r^{1/3}\bigg] = \PP[ e^{c_2jr^{-1/3}\sum_{i=1}^{\tau} X_i} > e^{c_2^{3/2}j^3} ] \\
    \le & e^{-c_2^{3/2}j^3}\E[e^{c_2jr^{-1/3}\sum_{i=1}^{\tau} X_i}] \\
    \le & e^{-c_2^{3/2}j^3}\E[e^{c_2jr^{-1/3}\sum_{i=1}^{(2j+1)\lfloor r^{2/3}\rfloor+1} X_i}] = e^{-c_2^{3/2}j^3}\E[e^{c_2jr^{-1/3}X_1}]^{(2j+1)\lfloor r^{2/3}\rfloor+1},    
    \end{split}\]
    and this is bounded by $e^{-c_2^{3/2}j^3/2}$ when $c_2$ is small enough, since $\E[e^{c_2jr^{-1/3}X_1}] < e^{Cc_2^2j^2r^{-2/3}}$.)
\end{itemize}
By these bounds the conclusion follows.
\end{proof}

In addition to the above one-point bound, we also quote the following uniform bound on transversal fluctuations of geodesics.
\begin{lemma} [\protect{\cite[Proposition C.9]{basu2019temporal}}] \label{lem:trans-fluc-uni}
For each $\psi\in(0,1)$ there exist constants $c,C>0$ such that the following is true.
For $x>0$, $n\in\N$, and $|b|<\psi n$, the probability that the geodesic $\Gamma_{\boo,\langle n, b\rangle_{1/2}}$ exits $U_{\boo,\langle n,b\rangle_{1/2}}^{xn^{2/3}}$ is at most $Ce^{-cx^3}$.
\end{lemma}
The following result for semi-infinite geodesics follows immediately by combining Lemma \ref{lem:semi-inf-trans} and Lemma \ref{lem:trans-fluc-uni}. See also Figure \ref{fig:62p}.
\begin{cor} \label{cor:trans-semi-inf}
For each $\psi\in(0,1)$, there exist constants $c,C>0$ such that the following is true.
Take any $r\in\N$ large enough, any $x>0$, and any $\rho\in (\psi, 1-\psi)$.
Then with probability at least $1-Ce^{-cx^3}$, the part of the geodesic $\Gamma_\boo^\rho$ below $\LL_r$ is contained in $U_{\boo,\langle r,0\rangle_\rho}^{xr^{2/3}}$.
\end{cor}
We also have such near-end transversal fluctuation estimate for finite geodesics (see Figure \ref{fig:62f}).
\begin{cor} \label{cor:trans-fluc-comb}
For each $\psi\in(0,1)$, there exist constants $c,C>0$ such that the following is true.
Take any integers $0<r<n$ that are large enough, any $x>0$, and any integer $b$ with $|b|<\psi n$.
Let $\langle r, b'\rangle_{1/2}$ be the vertex in $\LL_r$ that is closest to the straight line connecting $\boo$ and $\langle n, b\rangle_{1/2}$.
Then with probability at least $1-Ce^{-cx^3}$, the geodesic $\Gamma_{\boo,\langle n, b\rangle_{1/2}}$ below $\LL_r$ is contained in $U_{\boo,\langle r,b'\rangle_{1/2}}^{xr^{2/3}}$.
\end{cor}

\begin{proof}
Let $c,C>0$ denote small and large constants depending only on $\psi$, and the values can change from line to line.
When $x>2r^{1/3}$ the conclusion follows obviously, so we can assume that $x\le 2r^{1/3}$.
We now take $\rho_-, \rho_+ \in (0,1)$, such that $\bn^{\rho_-}=\langle n, b-\lfloor cxn^{1/3} \rfloor\rangle_{1/2}$ and $\bn^{\rho_+}=\langle n, b+\lfloor cxn^{1/3} \rfloor\rangle_{1/2}$.
Take $b_-, b_+\in\Z$ such that $\langle n, b_-\rangle_{1/2} \in \Gamma_\boo^{\rho_-}$ and $\langle n, b_+\rangle_{1/2} \in \Gamma_\boo^{\rho_+}$.
By Lemma \ref{lem:semi-inf-trans}, with probability at least $1-Ce^{-cx^3}$ we have $b_-<b<b_+$, thus $\Gamma_{\boo,\langle n, b\rangle_{1/2}}$ is sandwiched between $\Gamma_\boo^{\rho_-}$ and $\Gamma_\boo^{\rho_+}$ below $\LL_r$ by ordering of geodesics (Lemma \ref{l:ordering}).
By Corollary \ref{cor:trans-semi-inf}, with probability at least $1-Ce^{-cx^3}$, $\Gamma_\boo^{\rho_-}$ and $\Gamma_\boo^{\rho_+}$ below $\LL_r$ are both contained in $U_{\boo,\langle r,b'\rangle_{1/2}}^{xr^{2/3}}$, so the conclusion follows.
\end{proof}

Finally, we have the following estimate on the passage time along a semi-infinite geodesic.
\begin{figure}[hbt!]
\centering
\begin{tikzpicture}[line cap=round,line join=round,>=triangle 45,x=7cm,y=7cm]
\clip(-0.15,-0.18) rectangle (1.1,0.8);

\fill[line width=0.pt,color=yellow,fill=yellow,fill opacity=0.5]
(0.64,0.51) -- (0.74,0.41) -- (0.11,-0.01) -- (0.01,0.09) -- cycle;

\draw (0.2,-0.06) node[anchor=north east]{$\LL_{\lfloor cl \rfloor}$};
\draw (0.9,0.3) node[anchor=north east]{$\LL_{\lfloor Cl \rfloor}$};
\draw (0.55,-0.06) node[anchor=north east]{$\HH_l^\rho$};

\draw [line width=.4pt] (-0.35,0.45) -- (0.45,-0.35);
\draw [line width=.4pt] (0.15,1) -- (1,.15);

\draw [line width=.4pt] (0.7,-0.2) -- (-0.2,.55);
\draw [line width=1pt] [green] (0.31,0.125) -- (0.19+0.012,.225-0.01);

\draw (0,0) node[anchor=north]{$\boo$};
\draw (0.25+0.006,0.17) node[anchor=east]{$u_*$};

\draw (0.45,.36) node[anchor=south east]{$\Gamma_\boo^\rho$};

\draw [red] plot [smooth] coordinates {(0.9,.6) (0.87,0.57) (0.84,0.56) (0.78,0.48) (0.71,0.47) (0.68,0.455)  (0.6,0.44) (0.58,0.38) (0.46,0.34) (0.39,0.33) (0.35,0.25) (0.31, 0.22) (0.27,0.2) (0.25,0.15) (0.23,0.11) (0.18,0.1) (0.14,0.05) (0.07,0.04) (0,0)};
\draw [fill=uuuuuu] (0.,0.) circle (1.0pt);
\draw [fill=uuuuuu] (0.25+0.006,0.17) circle (1.0pt);
\end{tikzpicture}

         \caption{An illustration of Lemma \ref{lem:semi-inf-passt} and its proof: the yellow region is the parallelogram $U=U_{\langle \lfloor cl\rfloor,0\rangle_\rho, \langle \lfloor Cl\rfloor,0\rangle_\rho}^{cx^{1/2}l^{2/3}}$, and $V$ is the set of vertices within distance $1$ from the green segment. When $c$ is small and $C$ is large (depending on $\rho$), if $\Gamma_\boo^\rho$ between $\LL_{\lfloor cl\rfloor}$ and $\LL_{\lfloor Cl\rfloor}$ is contained in $U$, it must intersect the line $\HH_l^\rho$ inside $U$.}
         \label{fig:217}
\end{figure}
For simplicity of notations, below we denote (recall that $\brho=((1-\rho)^2,\rho^2)$)
\[\HH_x^\rho:=\{x\brho+y((1-\rho),-\rho):y\in\R\},\]
for any $\rho\in(0,1)$ and $x\in \R$. Note that $\HH_x$ intersect the axes at $(0, x\rho)$ and $(x(1-\rho), 0)$.
\begin{lemma}  \label{lem:semi-inf-passt}
For each $\psi\in(0,1)$, there exist constants $c,C>0$ such that the following is true.
Take any $\rho\in (\psi, 1-\psi)$ and $l>0$.
Let $u_*$ be the first vertex in $\Gamma_\boo^\rho$ above the line $\HH_l^\rho$.
Then $\PP[|T_{\boo,u_*}-l|>xl^{1/3}]< Ce^{-cx}$ for any $0<x<cl^{2/3}$.
\end{lemma}
\begin{proof}
Let $c,C>0$ denote small and large constants depending only on $\psi$, and below the values can change from line to line.
Let $U=U_{\langle \lfloor cl\rfloor,0\rangle_\rho, \langle \lfloor Cl\rfloor,0\rangle_\rho}^{cx^{1/2}l^{2/3}}$.
Let $V$ be the set of all $v\in U$ that is within distance $1$ to the line $\HH_l^\rho$.
By Corollary \ref{cor:trans-semi-inf}, with probability at least $1-Ce^{-cx^{3/2}}$, the geodesic $\Gamma_\boo^\rho$ between $\LL_{\lfloor cl\rfloor}$ and $\LL_{\lfloor Cl\rfloor}$ is contained in $U$, thus $u_*\in U$ and $u_* \in V$, since (when $c$ is small and $C$ is large) we must have that $\LL_{\lfloor cl\rfloor}\cap \Z_{\ge 0}^2$ is below $\HH_l^\rho$ and $\LL_{\lfloor Cl\rfloor}\cap \Z_{\ge 0}^2$ is above $\HH_l^\rho$ (see Figure \ref{fig:217}).

By \eqref{e:mean} in Theorem \ref{t:onepoint}, we have $|\E[T_{\boo,v}] - l|<cxl^{1/3}$ for any $v\in V$.
It remains to show that
\begin{equation} \label{eq:semi-inf-pass}
\PP\bigg[\max_{v\in V} |T_{\boo,v}-\E[T_{\boo,v}]|>cxl^{1/3}\bigg]< Ce^{-cx}.    
\end{equation}
For this, we split $V$ into $\lceil x^{1/2} \rceil$ sets $V_1, \ldots, V_{\lceil x^{1/2} \rceil}$, each with diameter $<cl^{2/3}$. Since $cx^{1/2}l^{2/3}<cl$, the slope of any line passing through $\boo$ and intersecting $V$ is bounded away from $0$ and $\infty$. So we can apply Proposition \ref{t:seg-to-seg} to each $V_i$ and conclude that
\[
\PP\bigg[\max_{v\in V_i} |T_{\boo,v}-\E[T_{\boo,v}]|>cxl^{1/3}\bigg]< Ce^{-c(x^{3/2}
\wedge xl^{2/3})}.
\]
By a union bound over $i$ we get \eqref{eq:semi-inf-pass}, so the conclusion follows.
\end{proof}
Combining Corollary \ref{cor:trans-semi-inf} and Lemma \ref{lem:semi-inf-passt} we get the following (see Figure \ref{fig:218}).
\begin{cor}  \label{cor:lem:semi-inf-loc}
For each $\psi\in(0,1)$, there exist constants $c,C>0$ such that the following is true.
Take any $\rho\in (\psi, 1-\psi)$ and $l>0$, and let $u_*$ be the last vertex in $\Gamma_\boo^\rho$ with $T_{\boo,u_*}\le l$.
Then for any $0<x<cl^{2/9}$, with probability $>1-Ce^{-cx^3}$ the vertex $u_*$ is between the lines $\HH_{l-x^3l^{1/3}}^\rho$ and $\HH_{l+x^3l^{1/3}}^\rho$, and $\Gamma_{\boo,u_*}\subset U_{\boo,\langle l, 0\rangle_\rho}^{xl^{2/3}}$.
\end{cor}

\begin{figure}[hbt!]
\centering
\begin{tikzpicture}[line cap=round,line join=round,>=triangle 45,x=9cm,y=9cm]
\clip(-0.15,-0.1) rectangle (0.6,0.58);

\fill[line width=0.pt,color=yellow,fill=yellow,fill opacity=0.5]
(0.42,0.43) -- (0.56,0.29) -- (0.07,-0.07) -- (-0.07,0.07) -- cycle;

\draw (0.5,-0.09) node[anchor=south]{$\HH_{l-x^3l^{1/3}}^\rho$};
\draw (0.5,0.07) node[anchor=south]{$\HH_{l+x^3l^{1/3}}^\rho$};
\draw (0.35,.5) node[anchor=east]{$\LL_l$};

\draw [line width=.4pt] (0.8,-0.2) -- (-0.1,.5);
\draw [line width=.4pt] (0.6,-0.2) -- (-0.3,.5);
\draw [line width=.4pt] (0.8,0.05) -- (0.3,.55);

\draw (0,0) node[anchor=north]{$\boo$};
\draw (0.25+0.006,0.17) node[anchor=east]{$u_*$};

\draw (0.55,.36) node[anchor=south east]{$\Gamma_\boo^\rho$};

\draw [red] plot [smooth] coordinates {(0.9,.6) (0.87,0.57) (0.84,0.56) (0.78,0.48) (0.71,0.47) (0.68,0.455)  (0.6,0.44) (0.58,0.38) (0.46,0.34) (0.39,0.33) (0.35,0.25) (0.31, 0.22) (0.27,0.2) (0.25,0.15) (0.23,0.11) (0.18,0.1) (0.14,0.05) (0.07,0.04) (0,0)};
\draw [fill=uuuuuu] (0.,0.) circle (1.0pt);
\draw [fill=uuuuuu] (0.25+0.006,0.17) circle (1.0pt);
\end{tikzpicture}

         \caption{An illustration of Corollary \ref{cor:lem:semi-inf-loc}: the yellow region is the parallelogram $U_{\boo,\langle l, 0\rangle_\rho}^{xl^{2/3}}$. When $x<cl^{2/9}$, the intersections between $U_{\boo,\langle l, 0\rangle_\rho}^{xl^{2/3}}$ and the lines $\HH_{l-x^3l^{1/3}}^\rho$ and $\HH_{l+x^3l^{1/3}}^\rho$ are strictly below $\LL_l$. Thus if $\Gamma_\boo^\rho$ below $\LL_l$ is contained in $U_{\boo,\langle l, 0\rangle_\rho}^{xl^{2/3}}$, the part of $\Gamma_\boo^\rho$ between $\HH_{l-x^3l^{1/3}}^\rho$ and $\HH_{l+x^3l^{1/3}}^\rho$ must also be contained in $U_{\boo,\langle l, 0\rangle_\rho}^{xl^{2/3}}$; and if in addition $u_*$ is between $\HH_{l-x^3l^{1/3}}^\rho$ and $\HH_{l+x^3l^{1/3}}^\rho$, we must have $\Gamma_{\boo,u_*}\subset U_{\boo,\langle l, 0\rangle_\rho}^{xl^{2/3}}$.}
         \label{fig:218}
\end{figure}

\section{Convergence of TASEP as seen from an isolated second-class particle}   \label{sec:cov-phi}
Starting from this section, we again always fix $\rho\in (0,1)$, and the choice of all other parameters and constants can depend on $\rho$ unless otherwise stated.

Using geometric arguments and estimates from Section \ref{ssec:elpps}, in this section we upgrade Proposition \ref{prop:converge-2nd-class-tasep} to Theorem \ref{thm:cov-phi}.
The general idea is to show that $\Phi_t$ and $\Phi_{t+s}$ are close when $s$ is much smaller than $t$.
\begin{prop}\label{prop:phi-close}
For any $N\in\N$, there is a constant $C>0$ such that the following is true.
Take any $s,t>C$ with $t<s^{1.01}$, and any continuous function $f:\{0,1,*\}^{\llbracket -N, N \rrbracket}\to [0,1]$, regarded as a function on $\{0,1,*\}^\Z$, we have
$|\Phi_t(f)-\Phi_{t+s}(f)| < C(s/t)^{1/30}$.
\end{prop}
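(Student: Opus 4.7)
The plan is to build a coupling between the LPP environments at times $t$ and $t+s$ by shifting the starting point of the underlying semi-infinite geodesic. Via the LPP-TASEP correspondence (Section \ref{ssec:ci-hppair}), $f(\eta_u')$ depends only on the local LPP structure in a bounded neighborhood of the hole-particle position $p_u$, so it suffices to couple these local structures across the two times.

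First I would pick a deterministic reference point $v_0=\lfloor((1-\rho)^2s,\rho^2 s)\rfloor$ lying along the characteristic direction from $\boo$, so that $\E G^\rho(v_0)=s$ (via properties (3)--(4) of the Busemann function) and, by the concentration of Theorem \ref{t:onepoint}, $G^\rho(v_0)-s$ fluctuates at KPZ scale $s^{1/3}$. Consider the LPP rooted at $v_0$ with the same weights $\xi$; by spatial translation invariance of the i.i.d.\ weights, the shifted semi-infinite geodesic $\tilde\Gamma^\rho:=\Gamma_{v_0}^\rho$, its analog $\tilde p_u$ (the last vertex of $\tilde\Gamma^\rho$ with $T_{v_0,\tilde p_u}\le u$), and the corresponding centered TASEP $\tilde\eta_u'$ have the same joint law as their $\boo$-based counterparts; in particular $\tilde\eta_u'\sim\Phi_u^\rho$.

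The main coupling step uses Proposition \ref{prop:coal-semi-inf-1} together with the transversal-fluctuation bounds of Lemma \ref{lem:semi-inf-trans} and Corollary \ref{cor:trans-semi-inf} to show that, on a good event of probability at least $1-C(s/t)^{2/3}$, the geodesics $\Gamma_\boo^\rho$ and $\tilde\Gamma^\rho$ coalesce at some vertex $\bc$ lying before $p_{t+s}$. On this event, additivity of passage times along the shared geodesic past $\bc$ gives $T_{\boo,p_{t+s}}-T_{v_0,p_{t+s}}=G^\rho(v_0)$, hence $p_{t+s}=\tilde p_{t+s-G^\rho(v_0)}$ and $f(\eta_{t+s}')=f(\tilde\eta_{t+s-G^\rho(v_0)}')$. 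Taking expectations yields the key identity
\[
\Phi_{t+s}^\rho(f)=\E\bigl[\Phi_{t+s-G^\rho(v_0)}^\rho(f)\bigr]+O\bigl((s/t)^{2/3}\bigr).
\]

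The main obstacle is then converting this into a bound on $|\Phi_t^\rho(f)-\Phi_{t+s}^\rho(f)|$: the random argument $t+s-G^\rho(v_0)$ is concentrated at $t$ only at scale $s^{1/3}$, which is too coarse for a direct recursion. My plan is a self-consistent bootstrap: assuming the target bound $|\Phi_u^\rho(f)-\Phi_{u+y}^\rho(f)|\le C(y/u)^{1/30}$ at smaller scales of $y$, substituting into the identity and integrating against the concentration of $G^\rho(v_0)-s$ (together with a trivial bound on the exponentially improbable large-deviation tail) recovers the same bound at scale $s$. The exponent $1/30$ is chosen so that the inner contribution $(s^{1/3}/t)^{1/30}=(s/t)^{1/30}\cdot s^{-2/90}$ is strictly smaller than the target $(s/t)^{1/30}$ as $s\to\infty$, while the coalescence error $(s/t)^{2/3}$ is also controlled since $2/3>1/30$ and $s<t$; the hypothesis $t<s^{1.01}$ ensures the scales involved stay within the valid regime, with a base case for small $s$ handled by a direct argument.
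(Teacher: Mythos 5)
Your plan has the right high-level intuition (use coalescence of semi-infinite geodesics to shift the time origin), and indeed the paper's proof also uses coalescence. But there are several genuine gaps, and the argument as sketched would not close.

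\textbf{The TASEP rooted at $v_0$ is the wrong process.} You define $\tilde p_u$ via the passage time $T_{v_0,\cdot}$, but the TASEP process $\Phi_u^\rho$ is built from the level sets of the \emph{Busemann} function $G^\rho$, not from $T_{\boo,\cdot}$ (see Section \ref{ssec:ci-hppair}: $L(a,b)=G^\rho(a,b)$, and $p_t$ is the last vertex of $\Gamma_\boo^\rho$ with $G^\rho(p_t)\le t$). The corner-growth process defined by $T_{v_0,\cdot}$ corresponds to step (narrow-wedge) initial data, not the Bernoulli$(\rho)$-plus-pair initial condition; hence $\tilde\eta_u'$ as you have defined it does \emph{not} have law $\Phi_u^\rho$. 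This is fixable by replacing $T_{v_0,\cdot}$ with $B^\rho(v_0,\cdot)=G^\rho(\cdot)-G^\rho(v_0)$, which by translation invariance does give a process with the right marginals.

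\textbf{The random time shift is not decoupled from the shifted process.} After the Busemann fix, set $\tilde u = t+s-G^\rho(v_0)$. On the coalescence event you correctly get $\eta_{t+s}'=\tilde\eta_{\tilde u}'$ as environments. But to write $\E[f(\tilde\eta_{\tilde u}')]=\E[\Phi_{\tilde u}^\rho(f)]$ you need $\tilde u$ to be independent of the process $\tilde\eta_\cdot'$, and it is not: $G^\rho(v_0)$ is a functional of the same Busemann field whose level sets generate $\tilde\eta_\cdot'$ (indeed, $\tilde\eta_{\tilde u}'$ is \emph{literally} $\eta_{t+s}'$ on the good event, so the ``identity'' is circular without an additional decorrelation step). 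This is precisely the place where the paper expends most of its effort: Lemma \ref{lem:couple-s} constructs an explicit coupling between $\eta_0'$ and a time-$s$ snapshot that passes through the \emph{stationary} Bernoulli process $\tau_s$, which \emph{is} spatially decoupled at large distances, and Lemma \ref{lem:phi-close-event} then uses coalescence from both $\partial I_0^-$ and $\partial I_s^+$ to match the two second-class particles. Your plan skips the hard decorrelation step.

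\textbf{The bootstrap is out of range.} Even granting the identity, you propose to apply the target bound at scale $y\approx s^{1/3}$ with the same $u\approx t$. But the proposition (and hence any self-consistent induction hypothesis) carries the constraint $u<y^{1.01}$, which for $y\approx s^{1/3}$ becomes $t<s^{0.337}$; this is incompatible with $t\in(s,s^{1.01})$, the only regime in which $(s/t)^{1/30}$ is a nontrivial bound. So the induction cannot close. The paper has no such bootstrap: the bound is obtained directly from the coupling and coalescence estimates of Lemmas \ref{lem:couple-s}--\ref{lem:phi-close-event}, with probabilities of order $(rs^{-2/3})^{-1/10}$ and $rm^{-2/3}$ optimized over the free parameters $r,m$ to yield $(s/t)^{1/30}$.
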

Using this we can deduce Theorem \ref{thm:cov-phi}.
\begin{proof}[Proof of Theorem \ref{thm:cov-phi}]
Take any $N\in\N$ and $f:\{0,1,*\}^{\llbracket -N, N \rrbracket}\to [0,1]$, regarded as a function on $\{0,1,*\}^\Z$, it suffices to show that
\begin{equation}  \label{eq:cov-phi-pf}
\lim_{t\to\infty}\Phi_t(f) = \Psi(f).
\end{equation}
Take any $\delta>0$.
By Proposition \ref{prop:converge-2nd-class-tasep} we have that $(\delta t)^{-1}\int_0^{\delta t}\Phi_{t+s}(f)ds \to \Psi(f)$ as $t\to\infty$.
By Proposition \ref{prop:phi-close} we have for any $t>C$,
\begin{multline*}
\left|\Phi_t(f)-(\delta t)^{-1}\int_0^{\delta t}\Phi_{t+s}(f)ds\right| \le (\delta t)^{-1}\int_{t^{1/1.01}}^{\delta t}|\Phi_t(f)-\Phi_{t+s}(f)|ds + (\delta t)^{-1}t^{1/1.01} \\ < C\delta^{1/30} + (\delta t)^{-1}t^{1/1.01},    
\end{multline*}
where $C$ is a constant depending on $N$.
Thus $\limsup_{t\to\infty}|\Phi_t(f) - \Psi(f)| \le C\delta^{1/30}$, and by sending $\delta \to 0$ we get \eqref{eq:cov-phi-pf}.
\end{proof}
To prove Proposition \ref{prop:phi-close}, we construct a coupling between $\Phi_t$ and $\Phi_{t+s}$.
For this we recall the setup of TASEP as seen from a hole-particle pair (or equivalently a second-class particle).

Let $(\eta_t^-)_{t\ge 0}$ and $(\eta_t^+)_{t\ge 0}$ be two copies of TASEP, with $\eta_0^-(0)=\eta_0^+(0)=0$ and $\eta_0^-(1)=\eta_0^+(1)=1$; and all $\eta_0^-(x), \eta_0^+(x)$ for $x\in\Z\setminus\{0,1\}$ are i.i.d.\;Bernoulli$(\rho)$.
In both copies, we label the holes by $\Z$ from left to right, with the hole at site $0$ at time $0$ labeled $0$;
and label the particles by $\Z$ from right to left, with the particle at site $1$ at time $0$ labeled $0$.
Keeping track of the hole-particle pair starting from sites $0$ and $1$, as described in Section \ref{ssec:ci-hppair}, we denote $p_t^-=(a_t^-, b_t^-)$ and  $p_t^+=(a_t^+, b_t^+)$ as the labels of the tracked hole and particle at time $t$ (in $(\eta_t^-)_{t\ge 0}$ and $(\eta_t^+)_{t\ge 0}$ respectively). 

For notational convenience, we also denote $\heta^-_t=\eta^-_t(x+a_t^--b_t^-+\cdot)$, $\heta^+_t=\eta^+_t(x+a_t^+-b_t^++\cdot)$ for any $t\ge 0$.
Then $(\heta_t^-)_{t\ge 0}$ and $(\heta_t^+)_{t\ge 0}$ are TASEPs as seen from a hole-particle pair, and by replacing the hole-particle pair in $\heta_t^-$ or $\heta_t^+$ by a second-class particle we get the distribution $\Phi_t$ (defined in Section \ref{ssec:cv-avg-tasep}).

Below we fix $s>0$.
Our general strategy to couple the processes $(\heta_t^-)_{t\ge 0}$ and $(\heta_{t+s}^+)_{t\ge 0}$ so that they evolve with the same set of waiting times (to be explained shortly).
We implement this via coupling TASEP and LPP as described at the beginning of Section \ref{ssec:ci-hppair}, and coupling the corresponding LPP models.
For this, let's set up some useful notations.
\begin{itemize}
    \item For any $a,b\in\Z$, if in ${\eta_0^-}$ (resp.\;${\eta_0^+}$) the particle with label $b$ is to the left of the hole with label $a$, we denote $L^-(a,b)$ (resp.\;$L^+(a,b)$) as the time when they switch; otherwise we set $L^-(a,b)=0$ (resp.\;$L^+(a,b)=0$).
Let $\{\xi^-(u)\}_{u\in \Z^2}$ (resp.\;$\{\xi^+(u)\}_{u\in \Z^2}$) be i.i.d.\;$\Exp(1)$ weights, and below we work under the almost sure event that there is a unique geodesic between any $\Z^2$ vertices $u\le v$ under these weights, and from any $u\in\Z^2$ there is a unique semi-infinite geodesic in direction $\brho$ under these random weights, and all these semi-infinite geodesics coalesce.
We let $\bG^-$ (resp.\;$\bG^+$) be the LPP Busemann function in direction $\brho$ under these random weights (defined like $\bG$ in Section \ref{ssec:buseman}).
We couple $(\eta_t^-)_{t\ge 0}$ (resp.\;$(\eta_t^+)_{t\ge 0}$) with $\{\xi^-(u)\}_{u\in \Z^2}$ (resp.\;$\{\xi^+(u)\}_{u\in \Z^2}$) so that $\bG^- \vee 0=L^-$ (resp.\;$\bG^+ \vee 0=L^+$) almost surely, and below we work under the event that this equality holds.
\item We use $I_t^-$, $\xi^{-,\vee}$, $\xi^{-,\vee,t}$, $T_{u,v}^-$, $\Gamma_{u,v}^-$, $\Gamma_u^-$, $\Gamma_u^{-,\vee}$ (resp.\;$I_t^+$, $\xi^{+,\vee}$, $\xi^{+,\vee,t}$, $T_{u,v}^+$, $\Gamma_{u,v}^+$, $\Gamma_u^+$, $\Gamma_u^{+,\vee}$) to denote the objects $I_t$ (growth process), $\xi^{\vee}$ (dual weights), $\xi^{\vee,t}$ (dual weights from $I_t$), $T_{u,v}$ (passage time), $\Gamma_{u,v}$ (finite geodesic), $\Gamma_u$ (semi-infinite geodesic), $\Gamma_u^{\vee}$ (downward semi-infinite geodesic) defined in the introduction, and Section \ref{ssec:buseman}, Section \ref{ssec:gianddgeo}, under the weights $\xi^-$ (resp.\;$\xi^+$). 
We shall also work under the almost sure event that these downward semi-infinite geodesics (under these weights) exist and are uniqueness and coalescence.
In addition, for any $t\in\R$ we let
\[
\begin{split}
& \partial I_t^+:=\{u\in I_t^+:\bG^+(u+(1,0))\vee \bG^+(u+(0,1))> t\},\\
& \partial I_t^-:=\{u\in I_t^-:\bG^-(u+(1,0))\vee \bG^-(u+(0,1))> t\}.
\end{split}
\]
    \item For $t\ge 0$,
$p_t^-=(a_t^-, b_t^-)$ (resp.\;$p_t^+=(a_t^+, b_t^+)$) is the last vertex in $\Gamma_\boo^-\cap I_t^-$ (resp.\;$\Gamma_\boo^+\cap I_t^+$), by Lemma \ref{lem:compete-inter-2nd-class}.
\end{itemize}

We first describe the coupling between $\heta_0^-$ and $\heta_s^+$.
Take $r\in \N$. For any coupling between $\heta_0^-$ and $\heta_s^+$, we denote $\cA$ as the event where
\[
\heta_0^-(x)=\heta_s^+(x), \forall x\in\Z, |x|> r;\quad \sum_{x=-r}^r \heta_0^-(x)=\sum_{x=-r}^r\heta_s^+(x).
\]
By Lemma \ref{lem:Ideteta}, under $\cA$ we can find a (unique) $p^*\in\Z^2$, such that $I_0^-\cap \{u\in\Z^2:|ad(u)|>r\} = (I_s^+-p^*)\cap \{u\in\Z^2:|ad(u)|>r\}$, and $ad(p^*)=ad(p_s^+)$.
In particular, this implies that 
\begin{equation} \label{eq:soutsi}
I_0^- \setminus \{u\in\Z^2: |ad(u)|, |d(u)| \le r\}
=(I_s^+-p^*) \setminus \{u\in\Z^2: |ad(u)|, |d(u)| \le r\}.    
\end{equation}
\begin{lemma}  \label{lem:couple-s}
There is a coupling of $\heta_0^-$ and $\heta_s^+$ such that $\PP[\cA]>1-C(rs^{-2/3})^{-1/10}$ when $Cs^{2/3}<r<s^{2/3+0.01}$ and  $s>C$, where $C>0$ is a constant.
\end{lemma}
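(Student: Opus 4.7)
The plan is to construct the coupling in the last-passage percolation picture, using the identification of the tracked hole-particle pair with the competition interface and the Busemann function structure from Section \ref{ssec:ci-hppair}. Let $\xi^+$ be the i.i.d.\ $\Exp(1)$ field driving $(\eta_t^+)$ via property (4) of the Busemann function, so that the hole-particle trajectory coincides with the semi-infinite geodesic $\Gamma_\boo^\rho$ and $p_s^+$ is the last vertex on it with $G^\rho(p_s^+) \le s$. Introduce a parameter $x$ to be optimized and let $R_x$ denote the parallelogram of dimensions of order $s \times x s^{2/3}$ around the $\rho$-direction line through $\boo$ supplied by Corollary \ref{cor:lem:semi-inf-loc} at $l = s$; with probability at least $1 - Ce^{-cx}$ the full geodesic segment from $\boo$ to $p_s^+$ lies inside $R_x$ and $p_s^+$ sits near the far end. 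Define $\xi^-$ to agree with $\xi^+$ on $\Z^2 \setminus R_x$ and be an independent i.i.d.\ $\Exp(1)$ field inside $R_x$; this yields a valid joint coupling with the correct marginals, since each marginal is i.i.d.\ $\Exp(1)$.

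Outside $R_x$, properties (3)--(4) of the Busemann function together with the coalescence estimate of Proposition \ref{prop:coal-semi-inf-1} imply that $G^{\rho,+}$ and $G^{\rho,-}$ coincide up to an additive constant with polynomial-high probability, because the relevant geodesics in the two environments depend on the LPP field only through its restriction to $\Z^2 \setminus R_x$. Translating through the identification $L(a,b) = G^\rho(a,b)$, the TASEP states read off from Busemann increments along any antidiagonal segment outside $R_x$ coincide in the two copies. Combined with the transversal fluctuation of Lemma \ref{lem:semi-inf-trans}, for $|y|>r$ the centered positions of interest in ${\eta_s^+}'$ correspond to $(a,b)$-indices outside $R_x$, so ${\eta_0^-}'(y) = {\eta_s^+}'(y)$ on this region, with the correct choice of shift $p^* = p_s^+$. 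The rate $(r s^{-2/3})^{-1/10}$ comes from choosing $x \asymp (r s^{-2/3})^a$ for a suitable $a \in (0,1)$ and balancing the exponential cost $C e^{-cx}$ of transversal fluctuation against the polynomial coalescence cost, which worsens as $x$ grows.

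The main obstacle I anticipate is twofold. First, one must translate the equality of Busemann functions outside $R_x$ into a clean statement about TASEP configurations in centered coordinates, which requires carefully matching the two reference points $\boo$ and $p_s^+$ and verifying that the correct antidiagonal through $p_s^+$ is being compared with $\LL_0$. Second, the particle-count equality $\sum_{y=-r}^r {\eta_0^-}'(y) = \sum_{y=-r}^r {\eta_s^+}'(y)$ that ensures $p^*$ is well-defined does not follow automatically from configuration agreement outside $[-r,r]$; it must be extracted from the equality of Busemann function values at the two boundary antidiagonal vertices $\pm r$ of the window, which on the good event forces the telescoping sum of Busemann increments inside the window --- hence the integer particle count --- to match in the two copies.
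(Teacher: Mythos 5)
Your proposal takes a genuinely different route from the paper, but it contains two serious gaps that the paper's argument is specifically designed to overcome.

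The paper does \emph{not} couple the two TASEPs by directly coupling their LPP weight fields. Instead, it introduces an intermediate stationary TASEP $(\tau_t)_{t\geq 0}$ started from plain i.i.d.\ Bernoulli($\rho$) (no hole-particle pair), couples $({\eta_t^+}')$ to $(\tau_t)$ at time~$s$, and then couples $\tau_s$ to ${\eta_0^-}'$ at the Bernoulli level. The reason for this detour is precisely the two difficulties you partially identify but do not resolve.

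First, the shift. You write ``with the correct choice of shift $p^*=p_s^+$'', but $p_s^+$ depends on the entire evolution up to time $s$, so it is not available when you try to lay down the coupling of $\xi^-$ against $\xi^+$: you would be conditioning the region where you randomize on an event measurable with respect to the whole field, which destroys the claimed marginal. The paper's fix is the quantity $p^P$ (and $M=ad(p^P)$), a ``restricted'' version of $p_s^+$ that is measurable with respect to $I_0^+\cap P$ and $\{\xi^{+,\vee}(u)\}_{u\in P}$ for a tube $P=\VV_{(-r_1,r_1)}$ of width $r_1=\alpha s^{2/3}$; Lemma~\ref{lem:prob-restrict-ab} then shows $p^P=p_s^+$ with high probability. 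This step is entirely absent from your sketch and cannot be waved away: without it there is no consistent sampling order for the coupling.

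Second, the particle-count matching and the rate. You correctly flag that agreement of configurations outside $[-r,r]$ does not force $\sum_{|x|\le r}{\eta_0^-}'(x)=\sum_{|x|\le r}{\eta_s^+}'(x)$, but you then assert it ``must be extracted from the equality of Busemann function values at the two boundary antidiagonal vertices''. That is not how the paper gets it, and it does not obviously work: Busemann-function equality on $\Z^2\setminus R_x$ constrains the shapes far out, not the interior particle count, and in any case $I_0^-$ is an independent Bernoulli boundary, not a deterministic function of $\xi^-$ outside $R_x$. The paper instead builds the Bernoulli-to-Bernoulli coupling explicitly in stages (steps 3--4 of the construction, and again in the final paragraph of the proof of Lemma~\ref{lem:couple-s}), using a symmetric-random-walk hitting-time argument to match partial sums at the boundary. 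It is exactly these hitting-time estimates --- $\PP[x_*\le -r_3]\lesssim r_2^{1/2}r_3^{-1/2}=\alpha^{-1/2}$ (event $\cE_5$) and $\PP[\overline{x}_*\le -r]\lesssim r^{-1/2}$ --- that produce the dominant $(rs^{-2/3})^{-1/10}$ term after setting $\alpha=(rs^{-2/3})^{1/5}$. Your attribution of the polynomial cost to a ``coalescence cost'' that ``worsens as $x$ grows'' is incorrect for this lemma: coalescence estimates (Proposition~\ref{prop:coal-semi-inf-1}) enter later in the proof of Proposition~\ref{prop:phi-close}, not inside Lemma~\ref{lem:couple-s}, and the genuine polynomial bottleneck here is the random-walk boundary matching. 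Without that mechanism your proposal does not achieve the stated probability bound.
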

We leave the construction of this coupling to the next subsection, and proceed to couple $(\heta_t^-)_{t\ge 0}$ and $(\heta_{s+t}^+)_{t\ge 0}$.
The idea is actually straightforward: we just couple the exponential waiting times. Namely, we note that for any $(a,b)\not \in I_0^-$, $\xi^{-,\vee,0}(a,b)$ is precisely the waiting time for the hole labeled $a$ to switch with the particle labeled $b$ since they are next to each other; and conditioned on $I_0^-$, $\{\xi^{-,\vee,0}(u)\}_{u\in \Z^2\setminus I_0^-}$ are i.i.d.\;$\Exp(1)$ (see also Lemma \ref{lem:iidexpI}). The same is true for $\{\xi^{+,\vee,s}(u)\}_{u\in \Z^2\setminus I_s^+}$ conditioned on $I_s^+$.
So we just couple these two sets of waiting times (as much as possible), up to a translation by $p^*$.

We note that, for $(\heta_t^-)_{t\ge 0}$ and $(\heta_{s+t}^+)_{t\ge 0}$, we need to couple them conditioned on $\heta_0^-$ and $\heta_s^+$.
We next show that, under $\cA$, $\heta_0^-$ and $\heta_s^+$ and $p^*$ contain precisely the same information as $I_0^-$ and $I_s^+$. (Then conditioned on $\heta_0^-$ and $\heta_s^+$ and $p^*$, the waiting times $\{\xi^{-,\vee,0}(u)\}_{u\in \Z^2\setminus I_0^-}$ and $\{\xi^{+,\vee,s}(u)\}_{u\in \Z^2\setminus I_s^+}$ are i.i.d.\;$\Exp(1)$.)
Indeed, $\heta_0^-$ is just $\eta_0^-$, which determines $I_0^-$ according to Lemma \ref{lem:Ideteta}.
Using Lemma \ref{lem:Ideteta} we also get that $\heta_s^+$ determines $I_s^+$, up to a translation of $\Z^2$; and the translation can be uniquely determined using $p^*$ and the fact that $I_0^-$ and $I_s^+-p^*$ are the same outside a compact set.
In the other direction, given $I_s^+$ and $I_0^-$ we can uniquely find $p^*$, and (by Lemma \ref{lem:Ideteta}) $\eta_0^-=\heta_0^-$ is determined by $I_0^-$, and $\eta_s^+$ is determined by $I_s^+$. To get $\heta_s^+$ we just shift $\eta_s^+$ by $ad(p^*)$.

We now couple $(\heta_t^-)_{t\ge 0}$ and $(\heta_{s+t}^+)_{t\ge 0}$. We let $\heta_0^-$ and $\heta_s^+$ be coupled using the coupling from Lemma \ref{lem:couple-s}. If $\cA$ does not hold, we just couple $(\heta_t^-)_{t\ge 0}$ and $(\heta_{s+t}^+)_{t\ge 0}$ (conditioned on $\heta_0^-$ and $\heta_s^+$) arbitrarily. 
If $\cA$ holds, we couple $(\heta_t^-)_{t\ge 0}$ and $(\heta_{s+t}^+)_{t\ge 0}$ (conditioned on $\heta_0^-$ and $\heta_s^+$ and $p^*$) so that the event
\begin{equation}  \label{eq:couplexi}
\xi^{-,\vee,0}(u)=\xi^{+,\vee,s}(u+p^*),\quad \forall u\in \Z^2\setminus (I_0^-\cup (I_s^+-p^*))
\end{equation}
holds with probability $1$. Below we work under this event whenever $\cA$ holds.

We bound the total variation distance between the $\{0,1,*\}^{\llbracket -N ,N\rrbracket}$ marginals of $\Phi_t$ and $\Phi_{t+s}$ (thus prove Proposition \ref{prop:phi-close}), by bounding the probability that $\heta_t^-$ and $\heta_{t+s}^+$ are different in $\llbracket -N ,N+1\rrbracket$ under this coupling. 

In the LPP setting, this is to show that with high probability, for any $u$ around $p^-_t$ we have $L^-(u)=L^+(u+p^*)-s$.
By Lemma \ref{lem:geo-part-cons}, this is implied by that, for such $u$ the paths $\Gamma^{-,\vee}_u\setminus I_0^-$ and  $\Gamma^{+,\vee}_{u+p^*}\setminus I_s^- - p^*$ are the same, and is contained in the area $\Z^2\setminus (I_0^-\cup (I_s^+-p^*))$ where the weights are coupled.
Using the non-crossing property (Lemma \ref{lem:non-cross}), this is ensured by coalescence of (upward) semi-finite geodesics.
More precisely we consider the following events (which are also illustrated in Figure \ref{fig:lem:phi-close-event}).

Take $m, r\in\N$ with $r<m$.
\begin{itemize}
    \item Let $\cB_-$ be the event where 
\begin{multline*}
\exists u_{-,1}, u_{-,2} \in \partial I_0^-,\quad ad(u_{-,1})<-r, ad(u_{-,2})>r, \; d(u_{-,1}), d(u_{-,2}) < 2m,\\  \Gamma_{u_{-,1}}^-\cap \LL_m = \Gamma_{u_{-,2}}^-\cap \LL_m.    
\end{multline*}
\item Let $\cB_+$ be the event where $\cA$ happens (with the same $r$), and in addition
\begin{multline*}
\exists u_{+,1}, u_{+,2} \in \partial I_s^+,\quad ad(u_{+,1}-p^*)<-r,\; ad(u_{+,2}-p^*)>r,\; d(u_{+,1}-p*), d(u_{+,2}-p*) < 2m,\\ (\Gamma_{u_{+,1}}^- - p^*)\cap \LL_m = (\Gamma_{u_{+,2}}^- - p^*)\cap \LL_m.
\end{multline*}
\item For any $t>0$ we let $\cF_t$ be the event where $d(p_t^-)>2m+2N+2$. 
\end{itemize}

\begin{lemma}  \label{lem:phi-close-event-0}
For any $t>0$, under $\cB_-\cap\cB_+\cap\cF_t$ we have that $\heta_t^-$ equals $\heta_{t+s}^+$ in $\llbracket -N, N+1\rrbracket$.
\end{lemma}
To prove this, we mainly need to establish the following result.
\begin{lemma}  \label{lem:phi-close-event}
Under $\cB_-\cap\cB_+$, we have 
\begin{enumerate}
    \item $L^-(u)=L^+(u+p^*)-s$ for any $u\in\Z^2$ with $d(u)>2m$, $u\not\in I_0^-$, $u+p^*\not\in I_s^+$.
    \item $p_t^-=p_{t+s}^+-p^*$ for any $t>0$ with $d(p_t^-)\ge 2m$.
\end{enumerate}
\end{lemma}
\begin{figure}[hbt!]
    \centering
\begin{tikzpicture}[line cap=round,line join=round,>=triangle 45,x=4cm,y=4cm]
\clip(-1.1,-0.6) rectangle (2.4,2.3);

\fill[line width=0.pt,color=red,fill=red,fill opacity=0.15]
(-0.9,-0.6) -- (1.55,-0.6) -- (1.55,-0.35) -- (1.35,-0.35) -- (1.35,-0.15) -- (1.05,-0.15) -- (1.05,-0.05) -- (0.95,-0.05) -- (0.95,0.15) -- (0.75,0.15) -- (0.75,0.25) -- (0.65,0.25) -- (0.65,0.35) -- (0.45,0.35) -- (0.45,0.45) -- (0.15,0.45) --(0.25,0.45) --(0.25,0.55) -- (0.15,0.55) -- (0.15,0.85) -- (-0.05,0.85) -- (-0.05,0.95) -- (-0.15,0.95) -- (-0.15,1.15) -- (-0.55,1.15) -- (-0.55,1.25) -- (-0.75,1.25) -- (-0.75,1.45) -- (-0.9,1.45) -- cycle;
\fill[line width=0.pt,color=blue,fill=blue,fill opacity=0.1]
(-0.9,-0.6) -- (1.55,-0.6) -- (1.55,-0.35) -- (1.35,-0.35) -- (1.35,-0.15) -- (1.05,-0.15) -- (1.05,-0.05) -- (0.95,-0.05) -- (0.95,0.15) -- (0.55,0.15) -- (0.55,0.45) --(0.45,0.45) -- (0.45,0.55) -- (0.35,0.55) -- (0.35,0.65) -- (0.15,0.65) -- (0.15,0.85) -- (-0.05,0.85) -- (-0.05,0.95) -- (-0.15,0.95) -- (-0.15,1.15) -- (-0.55,1.15) -- (-0.55,1.25) -- (-0.75,1.25) -- (-0.75,1.45) -- (-0.9,1.45) -- cycle;

\draw [line width=.1pt, opacity=0.3] (-0.9,-0.5) -- (2.6,-0.5);
\draw [line width=.1pt, opacity=0.3] (-0.9,-0.4) -- (2.6,-0.4);
\draw [line width=.1pt, opacity=0.3] (-0.9,-0.3) -- (2.6,-0.3);
\draw [line width=.1pt, opacity=0.3] (-0.9,-0.2) -- (2.6,-0.2);
\draw [line width=.1pt, opacity=0.3] (-0.9,-0.1) -- (2.6,-0.1);
\draw [line width=.1pt, opacity=0.3] (-0.9,0.) -- (2.6,0.);
\draw [line width=.1pt, opacity=0.3] (-0.9,0.1) -- (2.6,0.1);
\draw [line width=.1pt, opacity=0.3] (-0.9,0.2) -- (2.6,0.2);
\draw [line width=.1pt, opacity=0.3] (-0.9,0.3) -- (2.6,0.3);
\draw [line width=.1pt, opacity=0.3] (-0.9,0.4) -- (2.6,0.4);
\draw [line width=.1pt, opacity=0.3] (-0.9,0.5) -- (2.6,0.5);
\draw [line width=.1pt, opacity=0.3] (-0.9,0.6) -- (2.6,0.6);
\draw [line width=.1pt, opacity=0.3] (-0.9,0.7) -- (2.6,0.7);
\draw [line width=.1pt, opacity=0.3] (-0.9,0.8) -- (2.6,0.8);
\draw [line width=.1pt, opacity=0.3] (-0.9,0.9) -- (2.6,0.9);
\draw [line width=.1pt, opacity=0.3] (-0.9,1.) -- (2.6,1.);
\draw [line width=.1pt, opacity=0.3] (-0.9,1.1) -- (2.6,1.1);
\draw [line width=.1pt, opacity=0.3] (-0.9,1.2) -- (2.6,1.2);
\draw [line width=.1pt, opacity=0.3] (-0.9,1.3) -- (2.6,1.3);
\draw [line width=.1pt, opacity=0.3] (-0.9,1.4) -- (2.6,1.4);
\draw [line width=.1pt, opacity=0.3] (-0.9,1.5) -- (2.6,1.5);
\draw [line width=.1pt, opacity=0.3] (-0.9,1.6) -- (2.6,1.6);
\draw [line width=.1pt, opacity=0.3] (-0.9,1.7) -- (2.6,1.7);
\draw [line width=.1pt, opacity=0.3] (-0.9,1.8) -- (2.6,1.8);
\draw [line width=.1pt, opacity=0.3] (-0.9,1.9) -- (2.6,1.9);
\draw [line width=.1pt, opacity=0.3] (-0.9,2.) -- (2.6,2.);
\draw [line width=.1pt, opacity=0.3] (-0.9,2.1) -- (2.6,2.1);

\draw [line width=.1pt, opacity=0.3] (-0.8,-0.6) -- (-0.8,2.2);
\draw [line width=.1pt, opacity=0.3] (-0.7,-0.6) -- (-0.7,2.2);
\draw [line width=.1pt, opacity=0.3] (-0.6,-0.6) -- (-0.6,2.2);
\draw [line width=.1pt, opacity=0.3] (-0.5,-0.6) -- (-0.5,2.2);
\draw [line width=.1pt, opacity=0.3] (-0.4,-0.6) -- (-0.4,2.2);
\draw [line width=.1pt, opacity=0.3] (-0.3,-0.6) -- (-0.3,2.2);
\draw [line width=.1pt, opacity=0.3] (-0.2,-0.6) -- (-0.2,2.2);
\draw [line width=.1pt, opacity=0.3] (-0.1,-0.6) -- (-0.1,2.2);
\draw [line width=.1pt, opacity=0.3] (0.,-0.6) -- (0.,2.2);
\draw [line width=.1pt, opacity=0.3] (0.1,-0.6) -- (0.1,2.2);
\draw [line width=.1pt, opacity=0.3] (0.2,-0.6) -- (0.2,2.2);
\draw [line width=.1pt, opacity=0.3] (0.3,-0.6) -- (0.3,2.2);
\draw [line width=.1pt, opacity=0.3] (0.4,-0.6) -- (0.4,2.2);
\draw [line width=.1pt, opacity=0.3] (0.5,-0.6) -- (0.5,2.2);
\draw [line width=.1pt, opacity=0.3] (0.6,-0.6) -- (0.6,2.2);
\draw [line width=.1pt, opacity=0.3] (0.7,-0.6) -- (0.7,2.2);
\draw [line width=.1pt, opacity=0.3] (0.8,-0.6) -- (0.8,2.2);
\draw [line width=.1pt, opacity=0.3] (0.9,-0.6) -- (0.9,2.2);
\draw [line width=.1pt, opacity=0.3] (1.,-0.6) -- (1.,2.2);
\draw [line width=.1pt, opacity=0.3] (1.1,-0.6) -- (1.1,2.2);
\draw [line width=.1pt, opacity=0.3] (1.2,-0.6) -- (1.2,2.2);
\draw [line width=.1pt, opacity=0.3] (1.3,-0.6) -- (1.3,2.2);
\draw [line width=.1pt, opacity=0.3] (1.4,-0.6) -- (1.4,2.2);
\draw [line width=.1pt, opacity=0.3] (1.5,-0.6) -- (1.5,2.2);
\draw [line width=.1pt, opacity=0.3] (1.6,-0.6) -- (1.6,2.2);
\draw [line width=.1pt, opacity=0.3] (1.7,-0.6) -- (1.7,2.2);
\draw [line width=.1pt, opacity=0.3] (1.8,-0.6) -- (1.8,2.2);
\draw [line width=.1pt, opacity=0.3] (1.9,-0.6) -- (1.9,2.2);
\draw [line width=.1pt, opacity=0.3] (2.,-0.6) -- (2.,2.2);
\draw [line width=.1pt, opacity=0.3] (2.1,-0.6) -- (2.1,2.2);
\draw [line width=.1pt, opacity=0.3] (2.2,-0.6) -- (2.2,2.2);
\draw [line width=.1pt, opacity=0.3] (2.3,-0.6) -- (2.3,2.2);

\draw [line width=.3pt, color=red] (2.4,1.3) -- (1.5,2.2);

\draw [red] plot coordinates {(0.2,0.5) (0.3,0.5) (0.4,0.5) (0.4,0.6) (0.6,0.6) (0.6,0.7) (0.9,0.7) (0.9,0.9) (1.,0.9) (1.2,0.9) (1.2,1.) (1.3,1.) (1.3,1.3) (1.4,1.3) (1.4,1.5) (1.5,1.5) (1.5,1.7) (1.9,1.7) (1.9,1.8) (2.1,1.8) (2.1,2.) (2.3,2.) (2.3,2.1)  (2.4,2.1)};

\draw [red] plot coordinates {(0.1,0.8) (0.4,0.8) (0.4,0.9) (0.7,0.9) (0.7,1.0) (0.9,1.0) (0.9,1.1) (1.3,1.1) (1.3,1.3) (1.4,1.3) (1.4,1.5) (1.5,1.5) (1.5,1.7) (1.9,1.7) (1.9,1.8) (2.1,1.8) (2.1,2.) (2.3,2.) (2.3,2.1)  (2.4,2.1)};

\draw [red] plot coordinates {(0.9,0.1) (0.9,0.5) (1.2,0.5) (1.2,1.) (1.3,1.) (1.3,1.3) (1.4,1.3) (1.4,1.5) (1.5,1.5) (1.5,1.7) (1.9,1.7) (1.9,1.8) (2.1,1.8) (2.1,2.) (2.3,2.) (2.3,2.1)  (2.4,2.1)};

\draw [blue] plot coordinates {(-0.7,-0.4) (-0.7,-0.2) (-0.5,-0.2) (-0.5,-0.1) (-0.2,-0.1) (-0.2,0.) (-0.1,0.) (-0.1,0.2) (0.2,0.2) (0.2,0.3) (0.3,0.3) (0.3,0.6) (0.8,0.6) (0.8,0.8) (0.9,0.8) (1.1,0.8) (1.1,1.2) (1.2,1.2) (1.2,1.3) (1.3,1.3) (1.4,1.3) (1.4,1.5) (1.5,1.5) (1.5,1.7) (1.9,1.7) (1.9,1.8) (2.1,1.8) (2.1,2.) (2.3,2.) (2.3,2.1)  (2.4,2.1)};

\draw [blue] plot coordinates {(0.8,0.1) (1.0,0.1) (1.0,0.6) (1.1,0.6) (1.1,0.7) (1.3,0.7) (1.3,0.8) (1.4,0.8) (1.4,1.5) (1.5,1.5) (1.5,1.7) (1.9,1.7) (1.9,1.8) (2.1,1.8) (2.1,2.) (2.3,2.) (2.3,2.1)  (2.4,2.1)};

\draw [blue] plot coordinates {(-0.1,0.9) (0.3,0.9) (0.3,1.0) (0.5,1.0) (0.5,1.2) (1.,1.2) (1.,1.3) (1.1,1.3) (1.1,1.4) (1.4,1.4) (1.4,1.5) (1.5,1.5) (1.5,1.7) (1.9,1.7) (1.9,1.8) (2.1,1.8) (2.1,2.) (2.3,2.) (2.3,2.1)  (2.4,2.1)};

\draw [purple] plot coordinates {(1.3,1.3) (1.4,1.3) (1.4,1.5) (1.5,1.5) (1.5,1.7) (1.9,1.7) (1.9,1.8) (2.1,1.8) (2.1,2.) (2.3,2.) (2.3,2.1)  (2.4,2.1)};
\draw [purple] plot coordinates {(1.3,1.3) (1.4,1.3) (1.4,1.5) (1.5,1.5) (1.5,1.7) (1.9,1.7) (1.9,1.8) (2.1,1.8) (2.1,2.) (2.3,2.) (2.3,2.1)  (2.4,2.1)};
\draw [purple] plot coordinates {(1.3,1.3) (1.4,1.3) (1.4,1.5) (1.5,1.5) (1.5,1.7) (1.9,1.7) (1.9,1.8) (2.1,1.8) (2.1,2.) (2.3,2.) (2.3,2.1)  (2.4,2.1)};

\draw [fill=blue,color=blue] (-0.7,-0.4) circle (1.0pt);
\draw [fill=blue,color=blue] (0.8,0.1) circle (1.0pt);
\draw [fill=blue,color=blue] (-0.1,0.9) circle (1.0pt);
\draw [fill=red,color=red] (0.2,0.5) circle (1.0pt);
\draw [fill=red,color=red] (0.9,0.1) circle (1.0pt);
\draw [fill=red,color=red] (0.1,0.8) circle (1.0pt);
\draw (-0.7,-0.4) node[anchor=east,color=blue]{$\boo$};
\draw (0.8,0.1) node[anchor=east,color=blue]{$u_{+,2}$};
\draw (-0.1,0.9) node[anchor=east,color=blue]{$u_{+,1}$};
\draw (0.2,0.5) node[anchor=east,color=red]{$\boo$};
\draw (0.1,0.8)
node[anchor=east,color=red]{$u_{-,1}$};
\draw (0.9,0.1) node[anchor=north,color=red]{$u_{-,2}$};
\draw (2.3,1.38) node[anchor=north,color=red]{$\LL_m$};

\draw (-0.7,1.0) node[anchor=east,color=blue]{$I_s^+$};
\draw (-0.55,1.0) node[anchor=east,color=red]{$I_0^-$};

\end{tikzpicture}
\caption{An illustration of the events $\cB_-$ and $\cB_+$, translated and superposed together.
The red objects are for $\cB_-$, and are constructed from $({\eta_t^-})_{t\ge 0}$; and the blue objects are for $\cB_+$ and are constructed from $({\eta_t^+})_{t\ge 0}$. The difference between the red $\boo$ and blue $\boo$ is $p^*$.
}  
\label{fig:lem:phi-close-event}
\end{figure}
\begin{proof}
Since $r<m$, under $\cA$ we have $\{u\in\Z^2:d(u)>2m\} \cap I_0^- = \{u\in\Z^2:d(u)>2m\} \cap (I_s^+-p^*)$ by \eqref{eq:soutsi}.
Denote $U=\Z^2 \setminus ( I_0^- \cup ( I_s^+ -p^*))$.

We first show that, under $\cA\cap\cB_-$, we must have $\Gamma_u^{-,\vee}\setminus I_0^- \subset U$ for any $u\in U$ with $d(u)>2m$.
Indeed, by the non-crossing property (Lemma \ref{lem:non-cross}), the path $\Gamma_{u_{-,1}}^-+(1/2,1/2)$ divides $u_{-,1}+(\Z^2\setminus\Z_{\le 0}^2)$ into two parts $P_{1,\downarrow}$ (the lower-right part) and $P_{1,\uparrow}$ (the upper-left part) such that $\Gamma_u^{-,\vee}$ intersects at most one of them, and the path $\Gamma_{u_{-,2}}^-+(1/2,1/2)$ divides $u_{-,2}+(\Z^2\setminus\Z_{\le 0}^2)$ into two parts $P_{2,\downarrow}$ (the lower-right part) and $P_{2,\uparrow}$ (the upper-left part) such that $\Gamma_u^{-,\vee}$ intersects at most one of them.
Since $\Gamma_{u_{-,1}}^-\cap \LL_m = \Gamma_{u_{-,2}}^-\cap \LL_m$, we must have
\[\{u:d(u)>2m\}\cap P_{1,\downarrow}=\{u:d(u)>2m\}\cap P_{2,\downarrow},\; \{u:d(u)>2m\}\cap P_{1,\uparrow}=\{u:d(u)>2m\}\cap P_{2,\uparrow}.\]
For any $u\in U$ with $d(u)>2m$, depending on whether $u\in P_{1,\uparrow}, P_{2,\uparrow}$ or $u\in P_{1,\downarrow}, P_{2,\downarrow}$, we must have that $\Gamma_u^{-,\vee}\setminus I_0^- \subset P_{1,\uparrow}$ or $\Gamma_u^{-,\vee}\setminus I_0^- \subset P_{2,\downarrow}$.
Denote the lower endpoint of $\Gamma_u^{-,\vee}\setminus I_0^-$ as $v_0$.
By Lemma \ref{lem:geo-part-cons}, we have $v_0-(0,1), v_0-(1,0) \in I_0^-$, so $v_0\in u_{-,1}+\Z_{\le 0}\times \Z_+$ (if $v_0\in P_{1,\uparrow}$) or $v_0\in u_{-,2}+\Z_+\times \Z_{\le 0}$ (if $v_0\in P_{2,\downarrow}$).
Thus we have $v_0\not\in I_s^+-p^*$, by \eqref{eq:soutsi} and the fact that $ad(u_{-,1})<-r$, $ad(u_{-,2})>r$.
So $v_0\in U$, which implies that $\Gamma_u^{-,\vee}\setminus I_0^- \subset U$.

Similarly, under $\cB_+$ we have that $\Gamma_{u+p^*}^{+,\vee}\setminus I_s^+ -p^*\subset U$ for any $u\in U$ with $d(u)>2m$.

According to \eqref{eq:couplexi} we have $\xi^{-,\vee,0}(u)=\xi^{+,\vee,s}(u+p^*)$ for any $u\in U$.
Then by Lemma \ref{lem:geo-part-cons}, for any $u\in U$ with $d(u)>2m$, since we have shown that $\Gamma_u^{-,\vee}\setminus I_0^- \subset U$ and $\Gamma_{u+p^*}^{+,\vee} \setminus I_s^+ -p^*\subset U$, we must have that 
\begin{equation} \label{eq:gammaeq1}
\Gamma_u^{-,\vee}\setminus I_0^- = \Gamma_{u+p^*}^{+,\vee}\setminus I_s^+-p^*,    
\end{equation}
and $L^-(u)=L^+(u+p^*)-s$.
Thus the first statement holds.

We next prove the second statement. Below we always assume $\cB_-\cap\cB_+$.
Using that $p_t^-$ is the last vertex in $\Gamma_\boo^-\cap I_t^-$ and $p_{t+s}^+$ is the last vertex in $\Gamma_\boo^+\cap I_{t+s}^+$, and the first statement,
it suffices to show that 
\begin{equation}  \label{eq:phi-close-event2}
\Gamma_\boo^-\cap \{u\in\Z^2:d(u)>2m\}+p^*= \Gamma_\boo^+\cap \{u\in\Z^2:d(u)>2m+d(p^*)\}.    
\end{equation}
By the non-crossing property (Lemma \ref{lem:non-cross}), $\Gamma_\boo^-\cap\{u\in\Z^2:d(u)>2m\}$ is determined by $\Gamma_u^{-,\vee}$ for all $u\in U$ with $d(u)\ge 2m$.
More precisely, for all $u\in U$ with $d(u)>2m$, we divide them into two parts, depending on whether the lower endpoint of $\Gamma_u^{-,\vee}\setminus I_0^-$ is in $\Z_{\le 0}\times \Z_+$ or $\Z_+\times\Z_{\le 0}$, and $\Gamma_\boo^-\cap\{u\in\Z^2:d(u)\ge 2m\}+(1/2,1/2)$ is the boundary of these two parts.
Similarly, we can also divide all $u\in U$ with $d(u)>2m$ into two parts, depending on whether the lower endpoint of $\Gamma_{u+p^*}^{+,\vee}\setminus I_s^+$ is in $p_s^++\Z_{\le 0}\times \Z_+$ or $p_s^++\Z_+\times\Z_{\le 0}$, and $(\Gamma_\boo^+-p^*)\cap\{u\in\Z^2:d(u)\ge 2m\}+(1/2,1/2)$ is the boundary of these two parts.

To prove \eqref{eq:phi-close-event2}, it then remains to show that, for any $u\in U$ with $d(u)>2m$, 
\begin{equation}  \label{eq:path-d-equiv}
\begin{split}
&\text{the lower endpoint of } \Gamma_u^{-,\vee}\setminus I_0^- \text{ is in }  \Z_{\le 0}\times \Z_+ \\ &\text{if and only if  } \\ &\text{the lower endpoint of }  \Gamma_{u+p^*}^{+,\vee}\setminus I_s^+ \text{ is in }  p_s^++\Z_{\le 0}\times \Z_+.
\end{split}
\end{equation}
For this, we denote the lower endpoint of $\Gamma_u^{-,\vee}\setminus I_0^-$ as $v_0$.
By \eqref{eq:gammaeq1}, $v_0+p^*$ is the lower endpoint of $\Gamma_{u+p^*}^{+,\vee}\setminus I_s^+$.
Recall that we have $ad(p^+_s-p^*)=0$, so $v_0$ is either to in both $\Z_{\le 0}\times \Z_+$ and $p^+_s-p^*+\Z_{\le 0}\times \Z_+$, or in both $\Z_+\times\Z_{\le 0}$ and $p^+_s-p^*+\Z_+\times\Z_{\le 0}$.
Thus we get \eqref{eq:path-d-equiv}, which implies \eqref{eq:phi-close-event2} and that the conclusion follows.
\end{proof}

\begin{proof}[Proof of Lemma \ref{lem:phi-close-event-0}]
The event that $\heta_t^-$ equals $\heta_{t+s}^+$ in $\llbracket -N, N+1\rrbracket$ can be written as
\[
\{\eta_t^- (x+ad(p_t^-))\}_{x\in \llbracket -N, N+1\rrbracket} = \{\eta_{t+s}^+(x+ad(p_{t+s}^+))\}_{x\in \llbracket -N, N+1\rrbracket}.
\]
By Lemma \ref{lem:Ideteta}, this is implied by
\begin{equation} \label{eq:phi-close-equa}
(I_t^- - p_t^-) \cap \llbracket -N-1, N+1\rrbracket^2 = (I_{t+s}^+ - p_{t+s}^+) \cap \llbracket -N-1, N+1\rrbracket^2.    
\end{equation}
Below we assume $\cB_-\cap\cB_+\cap\cF_t$. For any $u\in \llbracket -N-1, N+1\rrbracket^2 + p_t^-$, we have $d(u)>2m$ by $\cF_t$.
Since under $\cA$ the sets $I_0^-$ and $I_s^+-p^*$ are the same outside $\{u\in\Z^2: |ad(u)|, |d(u)| \le r\}$ (as stated by \eqref{eq:soutsi}), and $m>r$, we have either $u\in I_0^-\cap(I_s^+-p^*)$ or $u\not\in I_0^-\cup(I_s^+-p^*)$.
In the later case we have $L^-(u)=L^+(u+p^*)-s$, by the first statement of Lemma \ref{lem:phi-close-event}.
Thus we always has that either $u\in I_t^-\cap(I_{t+s}^+-p^*)$, or $u\not\in I_t^-\cup(I_{t+s}^+-p^*)$.
Thus we have
\[
(\llbracket -N-1, N+1\rrbracket^2 + p_t^- ) \cap I_t^- = (\llbracket -N-1, N+1\rrbracket^2 + p_t^- ) \cap (I_{t+s}^+-p^*).
\]
By the second statement of Lemma \ref{lem:phi-close-event} we have $p_t^-=p_{t+s}^+-p^*$, so we get \eqref{eq:phi-close-equa}, and the conclusion follows.
\end{proof}

To prove Proposition \ref{prop:phi-close}, it remains to lower bound $\PP[\cB_-]$, $\PP[\cB_+]$, and $\PP[\cF_t]$.
For this we set up some additional notations.
Recall that $\brho=((1-\rho)^2,\rho^2)$. As in Section \ref{ssec:elpps} (but omitting $\rho$ from the notation), we denote
\[\HH_x:=\{x\brho+y((1-\rho),-\rho):y\in\R\},\]
for any $x\in\R$.
We also denote
\[\VV_x:=\{(x,-x)+y\brho:y\in\R\},\]
and for any set $A\subset \R$ we denote $\VV_A:=\cup_{x\in A}\VV_x$ and $\HH_A:=\cup_{x\in A}\HH_x$.

\begin{figure}[hbt!]
     \centering
     \begin{subfigure}[t]{0.4\textwidth}
         \centering
\begin{tikzpicture}[line cap=round,line join=round,>=triangle 45,x=0.05cm,y=0.05cm]
\clip(0,0) rectangle (135,150);

\fill[line width=0.pt,color=yellow,fill=yellow,fill opacity=0.45]
(40,50) --(50,40) --(40,30) -- (30,40) -- cycle;

\fill[line width=0.pt,color=yellow,fill=yellow,fill opacity=0.45]
(0,70) -- (40,70) --(40,300) -- (0,300) -- cycle;
\fill[line width=0.pt,color=yellow,fill=yellow,fill opacity=0.45]
(90,0) -- (90,40) --(300,40) -- (300,0) -- cycle;

\fill[line width=0.pt,color=blue,fill=blue,fill opacity=0.25]
(0,70) --(5,68) --(10,62) --(15,58) --(20,57) --(25,52) --(30,48) --(35,41) --(40,40) --(45,37) --(50,29) --(55,22) --(60,19) --(65,16) --(70,14) --(75,10) --(80,5) --(85,2) --(90,0) -- (0,0) -- cycle;

\draw [line width=.4pt] (220,0) -- (0,220);
\draw [red] plot [smooth] coordinates {(10,40) (25,52) (55,62) (85,80) (100,82) (115,88) (145,112) };
\draw [red] plot [smooth] coordinates {(40,10) (55,22) (70,28) (95,65) (100,80) (115,88) (145,112) };

\draw [fill=uuuuuu] (40,40) circle (1.0pt);
\draw [fill=uuuuuu] (10,40) circle (1.0pt);
\draw [fill=uuuuuu] (40,10) circle (1.0pt);
\draw [fill=uuuuuu] (25,52) circle (1.0pt);
\draw [fill=uuuuuu] (55,22) circle (1.0pt);

\draw [fill=uuuuuu] (10,40) circle (1.0pt);
\draw [fill=uuuuuu] (40,10) circle (1.0pt);

\begin{scriptsize}

\draw (40,40) node[anchor=north]{$\boo$};
\draw (10,40) node[anchor=north]{$v_1$};
\draw (40,10) node[anchor=north]{$v_2$};
\draw (25,52) node[anchor=south]{$u_{-,1}$};
\draw (55,22) node[anchor=south]{$u_{-,2}$};
\draw (55,8) node[anchor=west]{$I_0^-$};
\draw (70,150) node[anchor=north]{$\LL_m$};
\draw (55,62) node[anchor=south]{$\Gamma_{v_1}^-$};
\draw (90,55) node[anchor=west]{$\Gamma_{v_2}^-$};
\end{scriptsize}

\end{tikzpicture}

         \caption{Estimating $\PP[\cB_-]$: by Corollary \ref{cor:trans-semi-inf}, with probability $>1-Ce^{-cr}$, the geodesics $\Gamma_{v_1}^-$ and $\Gamma_{v_2}^-$ are disjoint from the sets $\{u\in\Z^2: |d(u)|, |ad(u)|\le r\}$ and $\Z_{\le 0}\times \llbracket 2r(1-\rho)^{-4},\infty\rrbracket$ and $\llbracket 2r\rho^{-4},\infty\rrbracket \times \Z_{\le 0}$.}
         \label{fig:p61m}
     \end{subfigure}
     \hfill
     \begin{subfigure}[t]{0.55\textwidth}
         \centering
\begin{tikzpicture}[line cap=round,line join=round,>=triangle 45,x=0.05cm,y=0.05cm]
\clip(-40,-20) rectangle (135,150);

\fill[line width=0.pt,color=green,fill=green,fill opacity=0.5]
(43,50) --(53,43) --(40,33) -- (30,40) -- cycle;
\fill[line width=0.pt,color=green,fill=green,fill opacity=0.2]
(43,55) --(53,48) -- (58,43) -- (53,38) --(40,28) -- (30,35) -- (25,40) -- (30,45) -- cycle;

\fill[line width=0.pt,color=green,fill=green,fill opacity=0.2]
(-100,73) -- (40,73) -- (53,83) --(40,300) -- (-100,300) -- cycle;
\fill[line width=0.pt,color=green,fill=green,fill opacity=0.2]
(90,-100) -- (90,40) -- (103,50) --(300,50) -- (300,-100) -- cycle;

\fill[line width=0.pt,color=blue,fill=blue,fill opacity=0.25]
(-40,96) -- (-30,88) -- (-18,82) -- (-11,74) -- (0,70) --(5,68) --(10,62) --(15,58) --(20,57) --(25,55) --(30,48) --(35,41) --(40,40) --(45,37) --(50,29) --(55,22) --(60,19) --(65,16) --(70,14) --(75,10) --(80,5) --(85,2) --(90,0) -- (98,-5) -- (110,-12) -- (125,-20) -- (-40,-20) -- cycle;

\draw [line width=.4pt] (220,0) -- (0,220);
\draw [line width=.4pt] (160,0) -- (0,160);
\draw [red] plot [smooth] coordinates {(-30,20) (-10,34) (10,40) (25,55) (55,62) (77,71) (100,82) (115,92) (145,106) };
\draw [red] plot [smooth] coordinates {(0,20) (20,31) (40,40) (55,50) (62,63) (77,71) (100,82) (115,92) (145,106) };
\draw [red] plot [smooth] coordinates {(0,-10) (18,2) (40,10) (55,22) (70,51) (77,70) (100,82) (115,92) (145,106) };

\draw [fill=uuuuuu] (40,40) circle (1.0pt);
\draw [fill=uuuuuu] (25,55) circle (1.0pt);
\draw [fill=uuuuuu] (55,22) circle (1.0pt);
\draw [fill=uuuuuu] (-30,20) circle (1.0pt);
\draw [fill=uuuuuu] (0,-10) circle (1.0pt);
\draw [fill=uuuuuu] (0,20) circle (1.0pt);

\begin{scriptsize}
\draw (60,43) node[anchor=east]{$A'$};
\draw (51,43) node[anchor=east]{$A$};
\draw (40,40) node[anchor=north]{$p_s^+$};
\draw (0,20) node[anchor=north]{$\boo$};
\draw (-30,20) node[anchor=north]{$v_1$};
\draw (0,-10) node[anchor=north]{$v_2$};
\draw (25,55) node[anchor=south]{$u_{+,1}$};
\draw (55,22) node[anchor=south]{$u_{+,2}$};
\draw (55+30,8-20) node[anchor=west]{$I_s^+$};
\draw (60,150) node[anchor=north]{$\LL_m+p^*$};
\draw (10,150) node[anchor=north]{$\LL_m$};
\draw (50,60) node[anchor=south]{$\Gamma_{v_1}^+$};
\draw (70,55) node[anchor=west]{$\Gamma_{v_2}^+$};
\draw (20,30) node[anchor=north]{$\Gamma_{\boo}^+$};
\end{scriptsize}

\end{tikzpicture}

         \caption{Estimating $\PP[\cA\setminus\cB_+]$: by Corollary \ref{cor:trans-semi-inf} and Corollary \ref{cor:lem:semi-inf-loc}, with probability $>1-Ce^{-cr^3s^{-2}}$ we have $p_s^+\in A$, and the geodesics $\Gamma_{v_1}^+$ and $\Gamma_{v_2}^+$ are disjoint from the sets $A'$ and $A+\Z_{\le 0}\times \llbracket 2r(1-\rho)^{-4},\infty\rrbracket$ and $A+\llbracket 2r\rho^{-4},\infty\rrbracket \times \Z_{\le 0}$.
         }
         \label{fig:p61p}
     \end{subfigure}
        \caption{Illustrations of the proof of Proposition \ref{prop:phi-close}. The probabilities of the coalescence events are estimated using Proposition \ref{prop:coal-semi-inf-1}.}
        \label{fig:p61}
\end{figure}

\begin{proof}[Proof of Proposition \ref{prop:phi-close}]
In this proof we let $c,C>0$ be small and large constants which depend on $N$, and the values can change from line to line.

We will show that $\heta_t^-$ equals $\heta_{t+s}^+$ in $\llbracket -N, N+1\rrbracket$ with probability $>1-C(s/t)^{1/30}$, assuming that $t,s$ are large enough depending on $N$. We could also assume that $t/s$ is large enough depending on $N$, since otherwise we would have $1-C(s/t)^{1/30} < 0$.
For the parameters in the definition of the events $\cA$, $\cB_-$, $\cB_+$, and $\cF_t$, we take $m=\lfloor t/10 \rfloor$ and $r=\lfloor s^{1/3}t^{1/3}\rfloor$. 
Denote $v_{1}=\left(-\lfloor r\rho^{-2}(1-\rho)^{-2}\rfloor, 0\right)$ and $v_{2}=\left(0,-\lfloor r\rho^{-2}(1-\rho)^{-2}\rfloor\right)$.

We first lower bound $\PP[\cB_-]$.
We take $u_{-,1}$ to be the last vertex in $\Gamma_{v_{1}}^-\cap I_0^-$, and $u_{-,2}$ to be the last vertex in $\Gamma_{v_{2}}^-\cap I_0^-$.
Then $u_{-,1}\in\Z_{\le 0}\times \Z_{\ge 0}$, and $u_{-,2}\in\Z_{\ge 0}\times \Z_{\le 0}$.
By Corollary \ref{cor:trans-semi-inf}, we have 
\[\PP[ad(u_{-,1})<-r],\; \PP[ad(u_{-,2})>r]>1-Ce^{-cr},\]
and
\[
\PP[u_{-,1}\in \Z_{\le 0}\times \llbracket 0, 2r(1-\rho)^{-4} \rrbracket],\; \PP[u_{-,2}\in \llbracket 0, 2r\rho^{-4} \rrbracket \times \Z_{\le 0}] >1-Ce^{-cr}.
\]
See Figure \ref{fig:p61m} for an illustration.
Since $2r(1-\rho)^{-4}, 2r\rho^{-4} < 2m$ (as $t, s, t/s$ are large enough), we have $\PP[d(u_{-,1}), d(u_{-,2}) < 2m]>1-Ce^{-cr}$.
By Proposition \ref{prop:coal-semi-inf-1} we have $\PP[\Gamma_{v_{1}}^- \cap \LL_m = \Gamma_{v_{2}}^- \cap \LL_m]>1-Crm^{-2/3}$.
Thus we conclude that $\PP[\cB_-]>1-Crm^{-2/3}-Ce^{-cr}$.

We next bound $\PP[\cA\setminus\cB_+]$ (see Figure \ref{fig:p61p} for several sets in $\Z^2$ to be defined). We take $u_{+,1}$ to be the last vertex in $\Gamma_{v_{1}}^+\cap I_s^+$, and $u_{+,2}$ to be the last vertex in $\Gamma_{v_{2}}^+\cap I_s^+$.
Then by ordering of geodesics (Lemma \ref{l:ordering}), and that all of $p_s^+$, $u_{+,1}$, and $u_{+,2}$ are in $\partial I_s^+$, we must have that $u_{+,1} \in p_s^++\Z_{\le 0}\times\Z_{\ge 0}$ and $u_{+,2} \in p_s^++\Z_{\ge 0}\times\Z_{\le 0}$.

Let $A=\VV_{(-r,r)}\cap \HH_{(s-(r^3s^{-2})s^{1/3}, s+(r^3s^{-2})s^{1/3})}$.
Note that $p_s^+$ is the last vertex in $\{u\in \Gamma_\boo^+:\bG^+(u)\le s\}$, so by Lemma \ref{lem:buse-opti} we have that $p_s^+-(1,0)$ or $p_s^+-(0,1)$ is the last vertex in $\{u\in \Gamma_\boo^+:T_{\boo,u}^+\le s\}$.
Then by Corollary \ref{cor:lem:semi-inf-loc} we have $\PP[p_s^+\in A]>1-Ce^{-cr^3s^{-2}}$.

\begin{itemize}
    \item 
When $p_s^+\in A$, we must have $ad(u_{+,1})<ad(p_s^+)-r$ and $ad(u_{+,2})>ad(p_s^+)+r$, unless $u_{+,1}\in A'$ or $u_{+,2}\in A'$, where \[A'=A+\{u\in\R^2: |d(u)|, |ad(u)| \le r\} \subset \VV_{(-2r,2r)}\cap \HH_{(s-Cr, s+Cr)}.\]
By Corollary \ref{cor:trans-semi-inf}, we have 
$\PP[u_{+,1}\in A'], \PP[u_{+,2}\in A']<Ce^{-cr^3s^{-2}}$.
Recall that under $\cA$ we have $ad(p_s^+)=ad(p^*)$, we now conclude that
\[
\PP[\{ad(u_{+,1})>ad(p^*)-r\}\cap\cA],\; \PP[\{ad(u_{+,2})<ad(p^*)+r\}\cap\cA]<Ce^{-cr^3s^{-2}}.
\]
\item
When $p_s^+\in A$, we must also have $d(u_{+,1})<d(p_s^+)+2r(1-\rho)^{-4}$, unless $u_{+,1}\in A+\Z_{\le 0}\times \llbracket 2r(1-\rho)^{-4},\infty\rrbracket$.
By Corollary \ref{cor:trans-semi-inf} this happens with probability $<Ce^{-cr^3s^{-2}}$, so we have
\[ \PP[d(u_{+,1}-p_s^+)\ge 2r(1-\rho)^{-4}]<Ce^{-cr^3s^{-2}}.\] 
When $\cA$ holds, we can find some $w_1, w_2\in \Z^2$, $|ad(w_1)|, |ad(w_2)|\le r+1$, such that $w_1\in (\Z_-\times\Z_+) \cap (p_s^+-p^*+\Z_-\times\Z_+)$, and $w_2\in (\Z_+\times\Z_-) \cap (p_s^+-p^*+\Z_+\times\Z_-)$. This implies that $|d(p_s^+-p^*)|\le 2r$.
Thus we conclude that
\[
\PP[\{d(u_{+,1}-p^*)\ge 2m\}\cap \cA]\le  \PP[d(u_{+,1}-p_s^+)\ge 2r(1-\rho)^{-4}] <  Ce^{-cr^3s^{-2}}.
\]
Here the first inequality is by $2m-2r\ge 2r(1-\rho)^{-4}$ (due to taking $s,t,t/s$ large).
Similarly we have $\PP[\{d(u_{+,2}-p^*)\ge 2m\}\cap \cA] <  Ce^{-cr^3s^{-2}}$.
\item
We have shown that $\cA$ implies $|d(p_s^+-p^*)|\le 2r$. If in addition $p_s^+\in A$, we must have that $d(p^*)\ge d(p_s^+)-2r>0$ (since $s, t, t/s$ are taken to be large). So using Proposition \ref{prop:coal-semi-inf-1} we have 
\begin{multline*}
\PP[\{(\Gamma_{v_{1}}^+-p^*) \cap \LL_m \neq (\Gamma_{v_{2}}^+-p^*) \cap \LL_m\}\cap \cA]
\le
\PP[\Gamma_{v_{1}}^+ \cap \LL_m \neq \Gamma_{v_{2}}^+ \cap \LL_m] + \PP[p_s^+\not\in A] \\ <Crm^{-2/3}+Ce^{-cr^3s^{-2}}.    
\end{multline*}
\end{itemize}
Thus we conclude that $\PP[\cA\setminus \cB_+]<Crm^{-2/3}+Ce^{-cr^3s^{-2}}$.

Finally we consider $\PP[\cF_t]$. 
Since $p_t^-$ is the last vertex in $\{u\in \Gamma_\boo^-:\bG^-(u)\le t\}$, by Lemma \ref{lem:buse-opti} we have that $p_t^--(1,0)$ or $p_t^--(0,1)$ is the last vertex in $\{u\in \Gamma_\boo^-:T_{\boo,u}^-\le t\}$.
Then by Corollary \ref{cor:lem:semi-inf-loc}, with probability $>1-Ce^{-ct^{2/3}}$ we have that $p_t^-\in \HH_{(t/2,2t)}\cap \VV_{(-t^{8/9}, t^{8/9})}$, thus $d(p_t^-)\ge ((1-\rho)^2+\rho^2)t/2-t^{8/9}\ge t/4-t^{8/9}$.
Note that since $m=\lfloor t/10 \rfloor$ and $t$ is taken large enough depending on $N$, we have $t/4-t^{8/9}>2m+2N+2$. So we have $\PP[\cF_t]>1-Ce^{-ct^{2/3}}$.

By Lemma \ref{lem:phi-close-event-0}, $\heta_t^-$ equals $\heta_{t+s}^+$ in $\llbracket -N, N+1\rrbracket$ with probability at least
\begin{multline*}
\PP[\cB_-]+\PP[\cA]-\PP[\cA\setminus \cB_+] + \PP[\cF_t] - 2 \\
>
1-(Crm^{-2/3}+Ce^{-cr})-C(rs^{-2/3})^{-1/10}-(Crm^{-2/3}+Ce^{-cr^3s^{-2}})-Ce^{-ct^{2/3}} > 1-C(s/t)^{1/30},
\end{multline*}
where the first inequality uses the estimates of $\PP[\cB_-]$, $\PP[\cA\setminus \cB_+]$, and $\PP[\cA]$ above, and the estimates on $\PP[\cA]$ from Lemma \ref{lem:couple-s}.
Thus the conclusion follows.
\end{proof}

\subsection{The initial step coupling}
This subsection is devoted to proving Lemma \ref{lem:couple-s}.

We define $(\tau_t)_{t\in\R}$ as the process of stationary TASEP with density $\rho$, i.e.\;for any $t\in\R$, we have $\tau_t(x)$ being Bernoulli$(\rho)$ for each $x \in \Z$ independently.
Our strategy is to construct a coupling between the processes $({\eta_t^+})_{t \ge 0}$ and $(\tau_t)_{t\ge 0}$, where (with high probability) $\heta_s^+$ and $\tau_s$ are identical outside $\llbracket -r, r\rrbracket$, and have the same number of particles in $\llbracket -r, r\rrbracket$.
It would be straightforward to couple $\heta_0^-$ and $\tau_s$ since both are Bernoulli$(\rho)$ on $\Z\setminus\{0,1\}$.

We denote $\alpha = (rs^{-2/3})^{1/5}$, and $r_i=\alpha^i s^{2/3}$, for $i=1,2,3,4$.
Below we assume that $\alpha$ and $s$ are large enough, and also $\alpha < r^{0.01}$.
Recall the notations $L^-$, $\bG^-$, $I_t^-$, $\partial I_t^-$, $\xi^{-,\vee}$, $\xi^{-,\vee,t}$, $T_{u,v}^-$, $\Gamma_{u,v}^-$, $\Gamma_u^-$, $\Gamma_u^{-,\vee}$, $p_t^-$ (resp.\;$L^+$, $\bG^+$, $I_t^+$, $\partial I_t^+$, $\xi^{+,\vee}$, $\xi^{+,\vee,t}$, $T_{u,v}^+$, $\Gamma_{u,v}^+$, $\Gamma_u^+$, $\Gamma_u^{+,\vee}$, $p_t^+$) for LPP with weights $\xi^-$ (resp.\;$\xi^+$).
Also recall the notations $\VV_x$, $\HH_x$ and $\VV_A$, $\HH_A$, for $x\in\R$ and $A\subset \R$.

We now explain the coupling between the processes $({\eta_t^+})_{t \ge 0}$ and $(\tau_t)_{t\ge 0}$.
One straightforward way is to couple $\eta_0^+$ and $\tau_0$ so that they are the same outside a compact interval, and let them evolve using the same exponential waiting times (just like how $({\eta_t^-})_{t \ge 0}$ and $({\eta_{s+t}^+})_{t \ge 0}$ are coupled).
One can show that under this coupling, with high probability $\eta_s^+$ and $\tau_s$ are the same, like how Proposition \ref{prop:phi-close} is proved assuming Lemma \ref{lem:couple-s}.
However, we need to compare $\heta_s^+$ and $\tau_s$ instead.
For this, we first shift ${\eta_0^+}$ by $ad(p_s^+)$, and then couple it with $\tau_0$, so that they are the same outside a compact interval, and then let them evolve using the same exponential waiting times.
One problem is that the number $ad(p_s^+)$ depends on the evolution of $(\eta_t^+)_{t\ge 0}$.
To solve this, we exploit the fact that $ad(p_s^+)$ mostly depends on the evolution around the hole-particle pair.
Specifically, we take the following approach: we first sample the evolution of $({\eta_t^+})_{t \ge 0}$ around the pair (which corresponds to sampling the waiting times $\xi^{+,\vee,0}$ in a tube $\VV_{(-r_1,r_1)}$) to get a proxy of $p_s^+$, which equals $p_s^+$ with high probability.
Using that we could shift $\tau_0$ and couple the rest of the waiting times $\xi^{+,\vee,0}$ with the waiting times of $(\tau_t)_{t\ge 0}$.

We start by defining the proxy of $p_s^+$.
Denote $P=\VV_{(-r_1,r_1)}\cap\Z^2$.
First we define $L^P$, by letting $L^P(u)=0$ for $u\in I_0^+\cup (\Z^2\setminus P)$, and setting $L^P(u)=L^P(u-(1,0))\vee L^P(u-(0,1)) + \xi^{+,\vee,0}(u)$ recursively for each $u \in P\setminus I_0^+$.
Like Lemma \ref{lem:geo-part-cons}, $L^P(u)$ can also be defined as the maximum passage time in $\Z^2\setminus I_0^+$ to $u$, under the weights $\{\xi^{+,\vee,0}(v)\don[v\in P]\}_{v\in\Z^2\setminus I_0^+}$.
Then with probability $1$ we have that $L^P(u)$ for $u\in P\setminus I_0^+$ are mutually different, and below we work under this event.
Analogue to the inductive construction of $\Gamma^+_\boo$ (see Lemma \ref{lem:reconssemiinf}), we define $\Gamma^P_\boo$ by letting $\Gamma^P_\boo[1]=\boo$ and 
\begin{equation}  \label{eq:ind-gp}
\Gamma^P_\boo[i+1]=\argmin_{v\in\{\Gamma^P_\boo[i]+(1,0),\Gamma^P_\boo[i]+(0,1)\}\cap P}L^P(v)    
\end{equation}
for each $i\in\N$.
Recall that $p_s^+$ is the last vertex in $\Gamma^+_\boo\cap I_s^+$.
We let $p^P$ be the last vertex in $\{u\in\Gamma^P_\boo: L^P(u)\le s\}$.
Denote $M=ad(p^P)$. Then $M$ is determined by $I_0^+\cap P$ and $\{\xi^{+,\vee,0}(u)\}_{u\in P\setminus I_0^+}$.

We next show that this proxy $p^P$ equals $p_s^+$ with high probability.
\begin{lemma}  \label{lem:prob-restrict-ab}
$\PP[p^P=p_s^+]>1-Ce^{-c\alpha^3}$ for some constants $c,C>0$.
\end{lemma}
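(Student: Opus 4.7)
The plan is to bound the low-probability event that the restricted computation on $P$ disagrees with the unrestricted one, by showing that all relevant forward and backward LPP geodesics stay deep inside $P$. I will run the argument in three steps.

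First, I would localize the upward semi-infinite geodesic. Applying Corollary \ref{cor:lem:semi-inf-loc} at the passage-time level $s$ with parameter $x = c_0 \alpha$ for a sufficiently small constant $c_0 > 0$ depending only on $\rho$, gives that with probability at least $1 - Ce^{-c\alpha}$ the vertex $p_s^+$ sits between the lines $\HH_{s - c_0^2 \alpha^2 s^{1/3}}$ and $\HH_{s + c_0^2 \alpha^2 s^{1/3}}$, and the geodesic $\Gamma_{\boo, p_s^+}$ is contained in the parallelogram of transverse half-width $c_0 \alpha s^{2/3}$ around the segment joining $\boo$ to $p_s^+$. Since $r_1 = \alpha s^{2/3}$ and $c_0 < 1/2$, this parallelogram sits strictly inside $P$ with buffer of order $\alpha s^{2/3}$. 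Call this good event $\cA_1$; every vertex of $\Gamma_{\boo,p_s^+}$ and of its two immediate forward neighbours lies deep inside $P$ on $\cA_1$.

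Second, I would control the downward geodesics. For each $v$ on $\Gamma_{\boo,p_s^+}$, the value $L^+(v)$ is produced by the downward geodesic $\Gamma_v^{+,\vee}$ from $v$ to $\partial I_0^+$ in the i.i.d.\ $\Exp(1)$ environment $\xi^{+,\vee}$, and $L^+$ coincides with $L^P$ at $v$ exactly when $\Gamma_v^{+,\vee} \subset P$. The backward analogue of Corollary \ref{cor:lem:semi-inf-loc} (which has the same proof by combining the passage-time estimates in Theorem \ref{t:onepoint} and Proposition \ref{t:seg-to-seg} along the reversed parallelogram) gives, with probability at least $1 - Ce^{-c\alpha}$, that the single downward geodesic $\Gamma_{p_s^+}^{+,\vee}$ also lies in a parallelogram of transverse half-width $c_0 \alpha s^{2/3}$ around the segment from $\boo$ to $p_s^+$. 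A naive union bound over $v \in \Gamma_{\boo,p_s^+}$ would be too lossy here; instead I would invoke the non-crossing property of geodesics in a common LPP environment (analogous to the one recalled for $\Gamma_u^-$ and $\Gamma_v^{-,\vee}$ in the proof of Lemma \ref{lem:phi-close-event}) to sandwich every $\Gamma_v^{+,\vee}$ between $\Gamma_{\boo, p_s^+}$ and $\Gamma_{p_s^+}^{+,\vee}$. Hence controlling the two extremal curves controls the entire family simultaneously. Call the resulting good event $\cA_2$.

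Third, on $\cA_1 \cap \cA_2$ I would check by induction on $i$ that $\Gamma^P_\boo[i] = \Gamma_\boo^+[i]$ and $L^P(\Gamma^P_\boo[i]) = L^+(\Gamma^P_\boo[i])$ for as long as $\Gamma_\boo^+[i] \le p_s^+$: at each step both $\Gamma_\boo^+[i] + (1,0)$ and $\Gamma_\boo^+[i] + (0,1)$ lie in $P$, and their downward geodesics lie in $P$, so $L^P$ equals $L^+$ at both candidates and the $\argmin$ yields the same choice. Running this induction to the first $i$ with $L^P > s$ gives $p^P = p_s^+$. Since $\PP[\cA_1 \cap \cA_2] \ge 1 - Ce^{-c\alpha}$, the lemma follows.

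The main obstacle is the second step: turning a pointwise transverse-fluctuation bound into a uniform statement along the whole family of downward geodesics rooted on $\Gamma_{\boo,p_s^+}$. The non-crossing sandwich described above is the cleanest way to avoid a prohibitive union-bound factor, but one has to verify carefully that both bounding curves $\Gamma_{\boo,p_s^+}$ and $\Gamma_{p_s^+}^{+,\vee}$ are controlled within the same tolerance $c_0 \alpha s^{2/3}$ that fits inside $P$.
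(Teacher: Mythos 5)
Your step 1 is fine and lines up with the paper's events $\cD_2,\cD_3$ via Corollary \ref{cor:lem:semi-inf-loc}, and step 3 would be correct \emph{given} step 2. The genuine gap is in step 2: the sandwich ``every $\Gamma_v^{+,\vee}$ lies between $\Gamma_{\boo,p_s^+}$ and $\Gamma_{p_s^+}^{+,\vee}$'' is false. For $v$ on the forward geodesic $\Gamma_\boo^+$, the downward geodesic $\Gamma_v^{+,\vee}$ starts at $v$, and at each vertex where it separates from $\Gamma_\boo^+$ it chooses the predecessor with the \emph{larger} $L^+$, while $\Gamma_\boo^+$ arrived along one specific predecessor; which side $\Gamma_v^{+,\vee}$ peels off to therefore depends on whether $\Gamma_\boo^+$ entered that vertex horizontally or vertically, and so the family $\{\Gamma_v^{+,\vee}: v\in\Gamma_{\boo,p_s^+}\}$ in general contains geodesics exiting on \emph{both} sides of $\Gamma_\boo^+$. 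A single downward geodesic $\Gamma_{p_s^+}^{+,\vee}$ can only bound one of the two sides, so the non-crossing property does not confine the whole family. The issue is compounded by your own step 3, which requires controlling $\Gamma_{v+(1,0)}^{+,\vee}$ and $\Gamma_{v+(0,1)}^{+,\vee}$ as well; these two live on opposite sides of $\Gamma_\boo^+$ by construction, so a one-sided bound cannot suffice.

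The paper avoids this by choosing the two bounding curves to be \emph{forward} geodesics $\Gamma_{u_1}^+,\Gamma_{u_2}^+$ launched from points $u_1,u_2\in\partial I_0^+$ on opposite sides of $\boo$ (event $\cD_1$, realized via $\cD_4$ launching from $u_1',u_2'$ and picking $u_i\in\Gamma_{u_i'}^+\cap\partial I_0^+$). Two forward geodesics from opposite sides of $\boo$ coalesce above and, together with $\partial I_0^+$, enclose a region $S$ containing $\Gamma_\boo^+\cap I_s^+$ on both sides. Non-crossing then shows that for every $u\in S$ the maximizing predecessor in the recursion $L^+(u)=L^+(u-(1,0))\vee L^+(u-(0,1))+\xi^{+,\vee}(u)$ again lies in $S$, so $L^+\equiv L^P$ on $S$ by induction, with no need to control the downward tree explicitly. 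If you want to keep the spirit of your argument you would need to replace $\Gamma_{p_s^+}^{+,\vee}$ by a \emph{pair} of bounding curves (for instance two forward geodesics from $\partial I_0^+$ on either side of $\boo$, as in the paper, or a pair of carefully chosen downward geodesics whose roots straddle the entire forward-geodesic neighborhood), and then establish transversal-fluctuation control for both. The secondary concern you raised --- whether a ``backward analogue'' of Corollary \ref{cor:lem:semi-inf-loc} applies when the lower endpoint ranges over the random set $\partial I_0^+$ --- is also real but is sidestepped entirely by the forward-geodesic sandwich, since the estimates needed ($\cD_4$) are just Corollary \ref{cor:trans-semi-inf} applied to forward semi-infinite geodesics from fixed lattice points.
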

\begin{proof}
According to Lemma \ref{lem:geo-part-cons} and as above stated, $L^+(u)$ and $L^P(u)$ are the maximum passage times from a vertex in $\Z^2\setminus I_0^+$ to $u$, under the weights $\{\xi^{+,\vee,0}(v)\}_{v\in\Z^2\setminus I_0^+}$ and $\{\xi^{+,\vee,0}(v)\don[v\in P]\}_{v\in\Z^2\setminus I_0^+}$ respectively. Also Lemma \ref{lem:geo-part-cons} states that the path with the maximum passage time to $u$ under the weights $\{\xi^{+,\vee,0}(v)\}_{v\in\Z^2\setminus I_0^+}$ is precisely $\Gamma_u^{+,\vee}\setminus I_0^+$.
Then we have that $L^+(u)=L^P(u)$ for any $u$ with $\Gamma_u^{+,\vee}\setminus I_0^+ \subset P$. 

We take 
\[u_1=\left(\left\lfloor-\frac{3((1-\rho)^2+\rho^2)r_1}{4\rho^2}\right\rfloor,0\right),\quad u_2=\left(0, \left\lfloor-\frac{3((1-\rho)^2+\rho^2)r_1}{4(1-\rho)^2}\right\rfloor\right).\]
We let $\cD_1$ be the event where 
\[
\Gamma_{u_1}^+\cap \HH_{(-\infty,2s)} \subset \VV_{(-r_1,-r_1/2)},\;\; \Gamma_{u_2}^+\cap \HH_{(-\infty,2s)} \subset \VV_{(r_1/2,r_1)}.
\]
Assuming that $\cD_1$ holds, we denote
\[
S=\big(\cup_{v\in \Gamma_{u_1}^+} (v+\Z_+\times\Z_{\le 0})\big) \cap \big(\cup_{v\in \Gamma_{u_2}^+} (v+\Z_{\le 0}\times\Z_+)\big) \cap (\HH_{(-\infty,2s)}\setminus I_0^+).
\]
In other words, $S$ is the set consisting of vertices in $\HH_{(-\infty,2s)}\setminus I_0^+$ between $\Gamma_{u_1}^+$ and $\Gamma_{u_2}^+$.
Take any $u\in S$, by the non-crossing property (Lemma \ref{lem:non-cross}), we must have that $\Gamma_u^{+,\vee}$ is disjoint from $\cup_{v\in \Gamma_{u_1}^+} (v+\Z_{\le 0}\times\Z_+)$.
As $u_1+\Z_{\le 0}^2$, $\cup_{v\in \Gamma_{u_1}^+} (v+\Z_{\le 0}\times\Z_+)$, $\cup_{v\in \Gamma_{u_1}^+} (v+\Z_+\times\Z_{\le 0})$ is a disjoint partition of $\Z^2$, and $u_1+\Z_{\le 0}^2\subset I_0^+$, we must have that $\Gamma_u^{+,\vee}\setminus I_0^+ \subset \cup_{v\in \Gamma_{u_1}^+} (v+\Z_+\times\Z_{\le 0})$.
Since $u\in\HH_{(-\infty,2s)}$, we further have that $\Gamma_u^{+,\vee}\setminus I_0^+ \subset \cup_{v\in \Gamma_{u_1}^+\cap \HH_{(-\infty,2s)}} (v+\Z_+\times\Z_{\le 0})$.
Similarly we have $\Gamma_u^{+,\vee}\setminus I_0^+ \subset \cup_{v\in \Gamma_{u_2}^+\cap \HH_{(-\infty,2s)}} (v+\Z_{\le 0}\times\Z_+)$.
Then by $\cD_1$ we must have $\Gamma_u^{+,\vee}\setminus I_0^+ \subset P$, so $L^+(u)=L^P(u)$.

Let $\cD_2$ be the event where $\Gamma_\boo^+ \cap I_s^+ \subset \VV_{(-r_1/3,r_1/3)}\cap \HH_{(-\infty,3s/2)}$.
Under $\cD_1$ we have $\VV_{(-r_1/3-1,r_1/3+1)}\cap \HH_{(-\infty,2s)}\setminus I_0^+ \subset S$, so under $\cD_1\cap\cD_2$ we have $(\Gamma_\boo^+\cap I_s^+) + \{(1,0), (0,1)\} \subset S$.
Then by the inductive constructions of $\Gamma^+_\boo$ and $\Gamma_\boo^P$ (Lemma \ref{lem:reconssemiinf} and \eqref{eq:ind-gp}), one can inductively show that $\Gamma_\boo^+[i]=\Gamma_\boo^P[i]$ and $L^+(\Gamma_\boo^+[i]+(1,0))=L^P(\Gamma_\boo^P[i]+(1,0))$, $L^+(\Gamma_\boo^+[i]+(0,1))=L^P(\Gamma_\boo^P[i]+(0,1))$, for all $i\in\N$ such that $L^+(\Gamma_\boo^+[i]) \le s$.
By considering the largest such $i$ we conclude that $p^P=p_s^+$.

Now we have $\PP[p^P=p_s^+] \ge \PP[\cD_1\cap \cD_2]$, and it remains to lower bound $\PP[\cD_1]$ and $\PP[\cD_2]$.
By Corollary \ref{cor:trans-semi-inf} we have $\PP[\cD_1]>1 -  Ce^{-cr_1^3s^{-2}}$.
For $\cD_2$, as $p_s^+$ is the last vertex in $\{u\in \Gamma_\boo^+:\bG^+(u)\le s\}$, by Lemma \ref{lem:buse-opti} we have that $p_s^+-(1,0)$ or $p_s^+-(0,1)$ is the last vertex in $\{u\in \Gamma_\boo^+:T_{\boo,u}^+\le s\}$.
Then Corollary \ref{cor:lem:semi-inf-loc} implies that $\PP[\cD_2]>1-Ce^{-cr_1^3s^{-2}}$ (noting that $3s/2>s+C(r_1^3s^{-2})s^{1/3}$ by our choice of the parameters). Thus the conclusion follows.
\end{proof}

We next couple $\tau_0$ and $({\eta_t^+})_{t\ge 0}$.
We state the coupling by constructing $\tau_0$ conditioned on $({\eta_t^+})_{t\ge 0}$, using the following steps.
\begin{enumerate}
\item
Let $\tau_0(x-M)$ be i.i.d.\;Bernoulli$(\rho)$ for each $x\in \llbracket -r_2,r_2\rrbracket$.
\item
For each $x=\lfloor - r_2 \rfloor, \lfloor - r_2 \rfloor - 1, \ldots$, we take $\tau_0(x-M)$ to be i.i.d.\;Bernoulli$(\rho)$, until the first $x_*\in\Z$ with $\sum_{x=x_*}^{0}{\eta_0^+}(x) - \tau_0(x-M) = 0$.
Then for each $x<x_*$ we take $\tau_0(x-M)={\eta_0^+}(x)$.
\item
For each $x=\lceil r_2 \rceil, \lceil r_2 \rceil+1, \ldots$, we take $\tau_0(x-M)$ to be i.i.d.\;Bernoulli$(\rho)$, until the first $x^*\in\Z$ with $\sum_{x=1}^{x^*}{\eta_0^+}(x) - \tau_0(x-M) = 0$.
Then for each $x>x^*$ we take $\tau_0(x-M)={\eta_0^+}(x)$.
\end{enumerate}
As $\alpha=r_2/r_1$ is large enough, the set $P\cap(\Z_{\ge 0}\times \Z_{\le 0}\cup \Z_{\le 0}\times \Z_{\ge 0})$ is contained in $\{u\in\Z^2:|ad(u)| < r_2\}$.
Also note that $I_0^+\cap P$ is determined by $I_0^+\cap P \cap(\Z_{\ge 0}\times \Z_{\le 0}\cup \Z_{\le 0}\times \Z_{\ge 0})$, as $P\cap \Z_-^2 \subset I_0^+$ and $P\cap \Z_+^2$ is disjoint from $I_0^+$.
Then $I_0^+\cap P$ is determined by $I_0^+\cap\{u\in\Z^2:|ad(u)| < r_2\}$, which (by Lemma \ref{lem:Ideteta}) is determined by $\{{\eta_0^+}(x)\}_{|x|< r_2}$, thus is independent of $\{{\eta_0^+}(x)\}_{|x|\ge r_2}$.
Since $M$ is determined by $I_0^+\cap P$ and $\{\xi^{+,\vee,0}(u)\}_{u\in P\setminus I_0^+}$, we have that $M$ is also independent of $\{{\eta_0^+}(x)\}_{|x|\ge r_2}$. Then from the construction of $\tau_0$, and that $\{{\eta_0^+}(x)\}_{|x|\ge r_2}$ are i.i.d.\;Bernoulli$(\rho)$, we have that $\{\tau_0(x-M)\}_{x\in\Z}$ are i.i.d.\;Bernoulli$(\rho)$ conditioned on $M$.
Thus $\{\tau_0(x)\}_{x\in\Z}$ are i.i.d.\;Bernoulli$(\rho)$ and independent of $M$.
Conditioned on $I_0^+$, both $\tau_0$ and $M$ are independent of $\{\xi^{+,\vee,0}(u)\}_{u\in \Z^2\setminus (P\cup I_0^+)}$.

We would let $(\tau_t)_{t\ge 0}$ evolve using the waiting times $\{\xi^{+,\vee,0}(u)\}_{u\in \Z^2\setminus (P\cup I_0^+)}$.
For this, in $\tau_0$ we label the holes by $\Z$ from left to right, and the particles by $\Z$ from right to left, such that for any $x \in \Z\setminus \llbracket x_*, x^* \rrbracket$, the particle (or hole) at site $x-M$ has the same label as the particle (or hole) at site $x$ in ${\eta_0^+}$.
This can be achieved since that $\tau_0$ and $\eta_0^+$ are the same outside $\llbracket x_*, x^* \rrbracket$, and they have the same number of particles in $\llbracket x_*, 0 \rrbracket$ and $\llbracket 1, x^* \rrbracket$, respectively.

Let $L^\tau(a,b)$ be the time when particle labeled $b$ switches with the hole labeled $a$ if in $\tau_0$ this particle is to the left of this hole, and let $L^\tau(a,b)=0$ otherwise. Note that unlike $L^+$, this function $L^\tau$ does not have the same distribution as the Busemann function in LPP. However, we can still define a growth process from it. For each $t\ge 0$ denote $I_t^\tau:=\{u\in \Z^2:L^\tau(u)\le t\}$, and $\partial I_t^\tau:=\{u\in I_t^\tau:L^\tau(u+(1,0))\vee L^\tau(u+(0,1))> t\}$.
Then $I_0^\tau$ is the same as $I_0^+$ outside a compact set.
\begin{lemma}  \label{lem:adcons}
For any $u\in (I_0^\tau\setminus I_0^+)\cup (I_0^+\setminus I_0^\tau)$ we have $ad(u)\in \llbracket x_*, x^* \rrbracket$
\end{lemma}
\begin{proof}
Write $u=(a,b)$.
If $u\not\in I_0^+$, there is some $x\le 0$ such that $(a-x,b-x)\not\in I_0^+$, $(a-x-1,b-x)\in I_0^+$, or $(a-x-1,b-x)\not\in I_0^+$, $(a-x-1,b-x-1)\in I_0^+$.
Then by Lemma \ref{lem:Ideteta}, either $\eta_0^+(a-b)=1$ and the particle at site $a-b$ (in $\eta_0^+$) has label $b-x$, or $\eta_0^+(a-b)=0$ and the hole at site $a-b$ (in $\eta_0^+$) has label $a-x-1$.
If $u\in I_0^\tau$, we can similar deduce that there is some $y\ge 0$ such that $(a+y+1,b+y+1)\not\in I_0^\tau$, $(a+y,b+y+1)\in I_0^\tau$, or $(a+y,b+y+1)\not\in I_0^\tau$, $(a+y,b+y)\in I_0^\tau$.
Then by (an analogue of) Lemma \ref{lem:Ideteta}, either $\tau_0(a-b-M)=1$ and the particle at site $a-b-M$ (in $\tau_0$) has label $b+y+1$, or $\tau_0(a-b-M)=0$ and the hole at site $a-b-M$ (in $\tau_0$) has label $a+y$.

Thus if $u\in I_0^\tau\setminus I_0^+$, we must have $\eta_0^+(a-b)\neq \tau_0(a-b-M)$ or the particles/holes do not have the same label.
So from the coupling between $\eta_0^+$ and $\tau_0$ we have that $a-b\in \llbracket x_*, x^*\rrbracket$, and the conclusion follows. The case where $u\in I_0^+\setminus I_0^\tau$ follows from similar arguments.
\end{proof}

We can also define the waiting times, by letting
\[
\xi^{\tau,\vee}(u)=L^\tau(u) - L^\tau(u-(1,0))\vee L^\tau(u-(0,1))
\]
for any $u\in\Z^2\setminus I_0^\tau$.
Given $I_0^\tau$ (equivalently, $\tau_0$ and the labels), we have that $\{\xi^{\tau,\vee}(u)\}_{u\not\in I_0^\tau}$ are i.i.d.\;$\Exp(1)$, since they are precisely the waiting times for certain particles and holes to switch.
Then almost surely $L^\tau(u)$ are mutually different for all $u \in \Z^2\setminus I_0^\tau$, and below we assume this event.

We now couple $\{\xi^{\tau,\vee}(u)\}_{u\not\in I_0^\tau}$ with $(\eta_t^+)_{\ge 0}$, such that conditioned on $\tau_0$ and $(\eta_t^+)_{t\ge 0}$, we have $\xi^{\tau,\vee}(u)=\xi^{+,\vee,0}(u)$ for any $u\in \Z^2\setminus (I^+_0\cup I_0^\tau \cup P)$ and $\xi^{\tau,\vee}(u)$ for $u\in (P\cup I^+_0)\setminus I_0^\tau$ are i.i.d.\;$\Exp(1)$.
Under this coupling, we denote $\cE_1$ as the event where for any $x<-r$, $\tau_s(x)=\heta_s^+(x)=\eta_s^+(x+ad(p_s^+))$,
and the particle or hole at site $x$ has the same label for $\tau_s$ and $\heta_s^+$; denote $\cE_2$ as the event where the same is true for any $x>r$.
\begin{lemma} \label{lem:bdce12}
We have $\PP[\cE_1], \PP[\cE_2]>1-C\alpha^{-1/2}$ when $Cs^{2/3}<r<s^{2/3+0.01}$ and  $s>C$, where $C>0$ is a constant.
\end{lemma}
We can now prove Lemma \ref{lem:couple-s} assuming Lemma \ref{lem:bdce12}.
\begin{proof}[Proof of Lemma \ref{lem:couple-s}]
Under $\cE_1\cap\cE_2$, we have $\tau_s(x)=\heta_s^+(x)$ for any $x\in\Z, |x|> r$.
Also, note $1+\sum_{x=-r}^r \heta_s^+(x)$ and $1+\sum_{x=-r}^r\tau_s(x)$ are precisely the difference between the label of the first particle to the left of $-r$ and the label of the first particle to the right of $r$ in $\heta_s^+$ and $\tau_s$ respectively, so we have that $\sum_{x=-r}^r \heta_s^+(x)=\sum_{x=-r}^r\tau_s(x)$ under $\cE_1\cap\cE_2$.

We can couple $\heta_0^-$ with $\tau_s$ as follows.
Conditioned on $\heta_0^-$, we let $\tau_s(x)$ for $x=1,2,\ldots,$ be i.i.d.\;Bernoulli$(\rho)$, until some $y^*\in\Z$ such that $\sum_{x=1}^{y^*}\tau_s(x) - \heta_0^-(x) = 0$, and for any $x>y^*$ we let $\tau_s(x)=\heta_0^-(x)$; we also let 
$\tau_s(x)$ for $x=0,-1,\ldots,$ be i.i.d.\;Bernoulli$(\rho)$, until some $y_*\in\Z$ such that $\sum_{x=y_*}^{0}\tau_s(x) - \heta_0^-(x) = 0$, and for any $x<y_*$ we let $\tau_s(x)=\heta_0^-(x)$.
Let $\cE_*$ be the event where $|y_*|, |y^*|\le r$.
Then we have $\PP[\cE_*] > 1-Cr^{-1/2}$ for some constant $C>0$, since $\heta_0^-$ is Bernoulli$(\rho)$ on $\Z\setminus\{0,1\}$.
On the other hand, under $\cE_*$ we have $\heta_0^-(x)=\tau_s(x)$ for any $x\in\Z, |x|> r$ and $\sum_{x=-r}^r \heta_0^-(x)=\sum_{x=-r}^r\tau_s(x)$.
Thus $\cE_*\cap \cE_1 \cap \cE_2$ implies $\cA$, and $\PP[\cA] > \PP[\cE_1]+\PP[\cE_2]+\PP[\cE_*]-2$.
Using $\PP[\cE_*] > 1-Cr^{-1/2}$ and Lemma \ref{lem:bdce12}, the conclusion follows.
\end{proof}

For the rest of this section we prove Lemma \ref{lem:bdce12}.

For any $u\in (\Z^2\setminus I_0^\tau) \cup \partial I_0^\tau$, we also define the `semi-infinite geodesic' $\Gamma_u^\tau$ recursively, by letting $\Gamma_u^\tau[1]=u$, and for each $i\in\N$ letting $\Gamma_u^\tau[i+1] = \argmin_{v \in \{\Gamma_u^\tau[i]+(1,0),\Gamma_u^\tau[i]+(0,1)\}} L^\tau(v)$.
Note that since $L^\tau$ is not coupled with the LPP Busemann function, these $\Gamma_u^\tau$ are not actual geodesics.

We consider the following events (see Figure \ref{fig:2event-inc} for an illustration of the geometric objects).
\begin{itemize}
    \item[$\cE_3:$] there exists a vertex $u_+\in \partial I_0^+$, such that $ad(u_+)<x_*$ and $\Gamma_{u_+}^+\cap I_s^+ \subset \VV_{(-6r_3,-r_1)}$, and $a_+'>(1-\rho)^2s-r_4$ for $u_+'=(a_+',b_+')$ being the last vertex in $\Gamma_{u_+}^+\cap I_s^+$.
    \item[$\cE_3^\tau:$] there exists a vertex $u_\tau\in \partial I_0^\tau$ such that $ad(u_\tau)<x_*$ and $\Gamma_{u_\tau}^\tau \cap I_s^\tau\subset \VV_{(-6r_3,-r_1)}$, and $a_\tau'
>(1-\rho)^2s-r_4$ for $u_\tau'=(a_\tau',b_\tau')$ being the last vertex in $\Gamma_{u_\tau}^+\cap I_s^\tau$.
    \item[$\cE_4:$] for each $u=(a,b)\in \partial I_s^+$ with $ad(u)\le M-r$, we have $a<(1-\rho)^2s-r_4-1$, and $u+(1,0)\in \VV_{(-\infty,-6r_3)}$.
\end{itemize}
\begin{figure}[hbt!]
    \centering
\begin{tikzpicture}[line cap=round,line join=round,>=triangle 45,x=0.10cm,y=0.088cm]
\clip(0.0,10) rectangle (125.,125.);

\fill[line width=0.pt,color=uuuuuu,fill=uuuuuu,fill opacity=0.2]
(0,40) -- (51,80.5) --(51,125) -- (0,125) -- cycle;

\fill[line width=0.pt,color=blue,fill=blue,fill opacity=0.25]
(0,70) --(5,68) --(10,62) --(15,58) --(20,57) --(25,52) --(30,48) --(35,41) --(40,37) --(45,34) --(50,29) --(55,22) --(60,19) --(65,16) --(70,14) --(75,10) --(80,5) --(85,2) --(90,0) -- (0,0) -- cycle;
\fill[line width=0.pt,color=red,fill=red,fill opacity=0.25]
(0,70) --(5,68) --(10,62) --(15,58) --(20,57) --(25,52) --(30,48) --(35,38) --(40,34) --(45,32) --(50,32) --(55,26) --(60,19) --(65,16) --(70,14) --(75,10) --(80,5) --(85,2) --(90,0) -- (0,0) -- cycle;

\draw [red] plot coordinates {(30,48) (35,51) (40,55) (45,64) (50,67) (55,69) (60,73) (63,79) (67,83) (75,88) (80, 91) (85,97) (90,100) (95,104) (100,110) (105,113) (110,117) (118,125)};
\draw [color=red,fill=red] (30,48) circle (1.0pt);
\draw [color=red,fill=red] (63,79) circle (1.0pt);

\draw [blue] plot coordinates {(27.5,50) (35,53) (40,56) (45,61) (50,64) (55,71) (60,72) (67.5,74) (71,83) (75,88) (80, 91) (85,97) (90,100) (95,104) (100,110) (105,113) (110,117) (118,125)};

\draw [color=blue,fill=blue] (27.5,50) circle (1.0pt);
\draw [color=blue,fill=blue] (67.5,74) circle (1.0pt);
\draw [color=blue,fill=blue] (43.5,34.9) circle (1.0pt);

\draw (43.5,34.9) node[anchor=east,blue]{$\boo$};
\draw (27.5,50) node[anchor=east,blue]{$u_+$};
\draw (67.5,74) node[anchor=west,blue]{$u_+'$};
\draw (30,48) node[anchor=west,red]{$u_\tau$};
\draw (63,79) node[anchor=east,red]{$u_\tau'$};

\draw (54,10) node[anchor=south,red]{$I_0^\tau$};
\draw (60,10) node[anchor=south,blue]{$I_0^+$};
\draw (110,10) node[anchor=south,red]{$I_s^\tau$};
\draw (116,10) node[anchor=south,blue]{$I_s^+$};

\draw (0,100) node[anchor=north west]{$Q$};

\begin{scriptsize}
\draw (62,125) node[anchor=north east]{$(a,b):a=(1-\rho)^2s-r_4$};
\draw (66,125) node[anchor=north west]{$u:ad(u)=M-r$};
\draw (15,107) node[anchor=west,purple]{$\partial I_s^+ \cap \partial I_s^\tau$};
\end{scriptsize}

\fill[line width=0.pt,color=blue,fill=blue,fill opacity=0.1]
(3,125) -- (8,120)-- (15,113) -- (20,110)-- (25, 106)-- (30,102) -- (35,100) --(40,96) --(45,91) --(50,89) --(55,86) --(60,82) --(65,77) --(70,71) --(75,68) --(80,64) --(85,62) --(90,59) --(95,58) --(100,51) --(105,49) --(110,46) --(115,40) --(120,35) --(125,32) -- (125,0) -- (0,0) -- (0,125) -- cycle;
\fill[line width=0.pt,color=red,fill=red,fill opacity=0.1]
(3,125) -- (8,120)-- (15,113) -- (20,110)-- (25, 106)-- (30,102) -- (35,100) --(40,96) --(45,91) --(50,89) --(55,86) --(60,82) --(65,77) --(70,67) --(75,67) --(80,66) --(85,64) --(90,59) --(95,58) --(100,51) --(105,49) --(110,46) --(115,40) --(120,35) --(125,32) -- (125,0) -- (0,0) -- (0,125) -- cycle;

\draw [purple] plot coordinates {(3,125) (8,120) (15,113) (20,110) (25, 106) (30,102) (35,100) (40,96) (43,93) };

\draw [line width=.1pt,dashed] (0,40) -- (125,140);
\draw [line width=.1pt,dashed] (0,10) -- (125,110);
\draw [line width=.1pt,dashed] (0,-10) -- (125,90);

\draw [line width=.1pt,dashed] (51,0) -- (51,125);

\draw [line width=.1pt,dashed] (-38,0) -- (74,128);

\draw (127,89) node[anchor=north east]{$\VV_{r_1}$};
\draw (126,106) node[anchor=north east]{$\VV_{-r_1}$};
\draw (110,125) node[anchor=north east]{$\VV_{-6r_3}$};
\end{tikzpicture}
\caption{The events $\cE_3, \cE_3^\tau,\cE_4$, assuming $p^P=p_s^+$. The shaded region is $Q$ in the proof of Lemma \ref{lem:2event-inc}.}   \label{fig:2event-inc}
\end{figure}

The purpose of these events is as follows. For $\cE_3$ and $\cE_3^\tau$, they ensure that for $u$ in a certain region (around $\{u:\Z^2:ad(u)\le M-r\}\cap \partial I_s^+$), the downward geodesics $\Gamma_u^{+,\vee}$ and $\Gamma_u^{\tau,\vee}$ are disjoint from $P$.
Thus using Lemma \ref{lem:adcons} and the coupling between $\xi^{\tau,\vee}$ and $\xi^{+,\vee,0}$ we have $L^+(u)=L^\tau(u)$ for these $u$.
Then we can deduce that $\{u:\Z^2:ad(u)\le M-r\}\cap \partial I_s^+$ is the same as $\{u:\Z^2:ad(u)\le M-r\}\cap \partial I_s^\tau$.
Then using $p^P=p_s^+$ and Lemma \ref{lem:Ideteta}, we have that $\cE_1$ holds.
The event $\cE_4$ is to define this `certain region'.
In summary, we have the following statement.
\begin{lemma}   \label{lem:2event-inc}
$\{p^P=p_s^+\}\cap\cE_3\cap \cE_3^\tau\cap \cE_4  \subset \cE_1$.
\end{lemma}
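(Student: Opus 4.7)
The plan is to use $\Gamma_{u_+}^+$ and $\Gamma_{u_\tau}^\tau$ as a common barrier separating the left-halves of the two TASEPs, and to argue that in the region to the left of this barrier the $+$ and $\tau$ systems evolve identically.

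First, I unpack what $\cE_1$ says in LPP terms. Because $p^P=p_s^+$ we have $M=ad(p_s^+)$, so ${\eta_s^+}'(x)=\eta_s^+(x+M)$; combined with the labeling convention from the coupling, $\cE_1$ is equivalent to
\[
\partial I_s^+ \cap \{u:ad(u) < M-r+1\} \;=\; \partial I_s^\tau \cap \{u:ad(u) < M-r+1\}.
\]
At time zero, Step 3 of the coupling gives $\partial I_0^+ \cap \{ad<x_*\} = \partial I_0^\tau\cap \{ad<x_*\}$. Outside $P=\VV_{(-r_1,r_1)}$, Step 6 identifies the weights $\xi^{+,\vee}$ used by the two systems, so the recursion $L(u)=L(u-(1,0))\vee L(u-(0,1))+\xi^{+,\vee}(u)$ is shared. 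One checks that on the events considered $x_* > M-r+1$, because $|x_*|\lesssim r_2$ and $|M|\leq r_1$ while $r\gg r_1,r_2$.

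Next, I will produce a common barrier path. By $\cE_3\cap\cE_3^\tau$ the starting points $u_+$ and $u_\tau$ both lie in $\partial I_0^+\cap\{ad<x_*\}$, and both $\Gamma_{u_+}^+$ and $\Gamma_{u_\tau}^\tau$ remain in $\VV_{(-r_4,-r_1)}$, which is disjoint from $P$ up to a unit-size buffer. The non-crossing property for upward geodesics of the same type (provable exactly as the $\Gamma^-/\Gamma^{-,\vee}$ non-crossing stated in the excerpt), together with the shared recursion in the coupled region, forces $\Gamma_{u_+}^+$ and $\Gamma_{u_\tau}^\tau$ to coalesce after a finite initial segment into a single path $\Gamma^\star$ on which $L^+\equiv L^\tau$. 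The endpoints $u_+'$ and $u_\tau'$ both sit on the common tail of $\Gamma^\star$, because $\cE_3\cap\cE_3^\tau$ place them above the line $\{a=(1-\rho)^2 s-r_4\}$ while $\cE_4\cap\cE_4^\tau$ confine $\partial I_s^+$ and $\partial I_s^\tau$ in $\{ad<M-r+1\}$ strictly below that line.

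Finally I will use $\Gamma^\star$ as a barrier. For any $v$ with $ad(v)<M-r+1$, $\cE_4$ (resp.\ $\cE_4^\tau$) together with the non-crossing between upward and downward geodesics stated in the excerpt forces the downward geodesic $\Gamma_v^{+,\vee}$ (resp.\ $\Gamma_v^{\tau,\vee}$) to stay on the left side of $\Gamma^\star$, hence entirely outside $P$, and to terminate on $\partial I_0^+\cap\{ad<x_*\}=\partial I_0^\tau\cap\{ad<x_*\}$. Since both downward geodesics then live in the coupled region with shared weights and a common initial boundary, an induction on $d(v)$ using the passage-time recursion gives $L^+(v)=L^\tau(v)$ for all such $v$. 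Consequently $v\in\partial I_s^+$ if and only if $v\in\partial I_s^\tau$ throughout $\{ad<M-r+1\}$, which is exactly $\cE_1$. The main obstacle is the barrier step: certifying that the downward geodesics from $v$ with $ad(v)<M-r+1$ really do remain to the left of $\Gamma^\star$ and outside $P$. This requires a careful case analysis using the non-crossing inequality from the excerpt together with the quantitative location of $\partial I_s^+\cap\{ad<M-r+1\}$ provided by $\cE_4$, and is the central geometric content of the proof.
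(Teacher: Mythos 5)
Your high-level picture — that $\Gamma_{u_+}^+$ and $\Gamma_{u_\tau}^\tau$ act as barriers that confine the relevant recursions to a region where the weights and initial boundaries of the $+$ and $\tau$ systems are shared — is the correct intuition, and it is indeed what the paper exploits. However, there is a real gap in the central step.

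You claim that $\Gamma_{u_+}^+$ and $\Gamma_{u_\tau}^\tau$ coalesce into a single path $\Gamma^\star$ on which $L^+\equiv L^\tau$, invoking ``shared recursion in the coupled region.'' This does not follow. The two paths follow $\argmin L^+$ and $\argmin L^\tau$ respectively, and while the \emph{weights} $\xi^{+,\vee}$ and $\xi^{\tau,\vee}$ agree outside $P$, the \emph{values} $L^+$ and $L^\tau$ at a vertex in $\VV_{(-r_4,-r_1)}$ still depend on the global recursion, which passes through $P$ where $I_0^+\neq I_0^\tau$ and where the weights may differ. Two distinct Busemann-like functions built from agreeing weights but different boundary data can order $L(u+(1,0))$ versus $L(u+(0,1))$ differently, so the two geodesics can split immediately even if they meet. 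Consequently there is no common barrier $\Gamma^\star$, and the subsequent induction aiming to conclude $L^+(v)=L^\tau(v)$ is unsupported. The paper sidesteps exactly this issue: it never tries to show $L^+=L^\tau$. It instead gets two \emph{one-sided} inequalities, $L^+\le L^\tau$ on $S_+$ (the region above $\Gamma_{u_+}^+$) and $L^+\ge L^\tau$ on $S_\tau$ (the region above $\Gamma_{u_\tau}^\tau$), each proved by showing that the LPP recursion for a point in $S_+$ (resp.\ $S_\tau$) never needs to look outside $S_+$ (resp.\ $S_\tau$). These two one-sided inequalities are then deployed in a site-by-site contradiction argument: given a particle at site $x<-r$ in one system, the events $\cE_3,\cE_4,\cE_3^\tau,\cE_4^\tau$ place the relevant boundary vertices $v$ and $v^\tau$ inside $S_+$ and $S_\tau$ respectively, so one inequality forces $d(v)>d(v^\tau)$ and the other forces $d(v^\tau)\ge d(v)$ unless the configurations and labels agree. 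No global equality of $L^+$ and $L^\tau$ is established, and none is needed.

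Two smaller issues: your justification ``$|x_*|\lesssim r_2$'' is false — by construction $x_*\le -r_2$, so $|x_*|\ge r_2$, and under $\cE_5$ one only has $|x_*|\le r_3$; the conclusion $x_*>M-r+1$ is still fine because $r=\alpha^5 s^{2/3}\gg r_3=\alpha^3 s^{2/3}$, but the stated bound is in the wrong direction. Also, the reformulation of $\cE_1$ in terms of $\partial I_s^+\cap\{ad<M-r+1\}=\partial I_s^\tau\cap\{ad<M-r+1\}$ is essentially right but must also record that the same vertex serves as the boundary corner (so that labels match), which is exactly the content of the paper's careful identification of $v$ with $v^\tau$.
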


\begin{proof}
Below we assume that $\cE_3 \cap \cE_3^\tau\cap \cE_4$ holds and $p^P=p_s^+$.
Denote
\[
Q=\{(a,b) \in \Z^2: a<(1-\rho)^2s-r_4, (a,b) \in \VV_{(-\infty,-6r_3)}\}.
\]
See Figure \ref{fig:2event-inc}.
Then we must have that $Q\setminus I_0^+=Q\setminus I_0^\tau$. Otherwise we can find some $u\in Q$ with $u\in \partial I_0^\tau$ and $u\not\in I_0^+$, or $u\in \partial I_0^+$ and $u\not\in I_0^\tau$.
In the first case, $u\in u_\tau+\Z_{\le 0}\times\Z_+$ since $u\in\VV_{(-\infty,-6r_3)}$, $u_\tau \in \VV_{(-6r_3,-r_1)}$, and both $u, u_\tau\in \partial I_0^\tau$. So we must have $ad(u)<ad(u_\tau)<x_*$. But $ad(u)\in \llbracket x_*, x^*\rrbracket$ by Lemma \ref{lem:adcons}, so we get a contradiction.
A similar contradiction can be obtained in the second case.

Now take any $u\in Q\setminus I_0^+=Q\setminus I_0^\tau$, and we next show that $L^+(u)\le L^\tau(u)$.
By Lemma \ref{lem:geo-part-cons} we have that $L^+(u)=\sum_{v\in \Gamma^{+,\vee}_u\setminus I_0^+} \xi^{+,\vee,0}(v)$, and this is the maximum passage time to $u$ from a vertex in $\Z^2\setminus I_0^+$, under the weights $\xi^{+,\vee,0}$.
Analogously, $L^\tau(u)$ equals the maximum passage time to $u$ from a vertex in $\Z^2\setminus I_0^\tau$, under the weights $\xi^{\tau,\vee}$.
It then suffices to show that $\Gamma^{+,\vee}_u\setminus I_0^+$ is disjoint from $P$ and $I_0^\tau$, since then $\Gamma^{+,\vee}_u\setminus I_0^+$ is an up-right path from a vertex in $\Z^2\setminus I_0^\tau$ to $u$, and for any $v \in \Gamma^{+,\vee}_u\setminus I_0^+$ we have $\xi^{\tau,\vee}(v)=\xi^{+,\vee,0}(v)$, thus
\[
L^+(u)=\sum_{v\in \Gamma^{+,\vee}_u\setminus I_0^+} \xi^{+,\vee,0}(v) =\sum_{v\in \Gamma^{+,\vee}_u\setminus I_0^+} \xi^{\tau,\vee}(v) \le L^\tau(u).
\]
We show that $(\Gamma^{+,\vee}_u\setminus I_0^+)\cap (P\cup I_0^\tau)=\emptyset$, using the following steps.
\begin{itemize}
    \item[\textbf{Step 1.}] 
    By Lemma \ref{lem:non-cross}, the path $\Gamma_{u_+}^++(1/2,1/2)$ divides $u_++(\Z^2\setminus\Z_{\le 0}^2)$ into two parts, such that $\Gamma^{+,\vee}_u$ intersects at most one of them.
    By $\cE_3$ and $u\in Q\setminus I_0^+$, we know that $u$ must be in the upper-left part, so $\Gamma^{+,\vee}_u$ cannot intersect the lower-right part.
    In particular, $\Gamma^{+,\vee}_u$ is disjoint from $u_++\Z_+\times \Z_{\le 0}$.
    Also $u_++\Z_{\le 0}^2\subset I_0^+$ since $u_+\in I_0^+$, so $\Gamma^{+,\vee}_u\setminus I_0^+ \subset u_++\Z\times \Z_+$.
    \item[\textbf{Step 2.}] If $\Gamma^{+,\vee}_u\setminus I_0^+$ is not disjoint from $I_0^\tau$, take any $v\in (\Gamma^{+,\vee}_u\setminus I_0^+) \cap I_0^\tau$. Then $v\in u_++\Z\times \Z_+$ according to the previous step. By Lemma \ref{lem:adcons} we have $ad(v)\ge x_*$, and $\cE_3$ states that $ad(u_+)<x_*$. So $ad(v)>ad(u_+)$, thus $v-u_+\in \Z_+^2$. But this implies that $u_++(1,1)\in I_0^\tau\setminus I_0^+$, which contradicts with Lemma \ref{lem:adcons} since $ad(u_++(1,1))=ad(u_+)<x_*$.
    \item[\textbf{Step 3.}] Since $u_+\subset \VV_{(-6r_3,-r_1)}$ (by $\cE_3$), we have $P\cap (u_+ + \Z_{\le 0}\times \Z_{\ge 0})=\emptyset$. We also have that $u_++\Z_{\le 0}^2\subset I_0^+$ since $u_+\in I_0^+$. Thus we have $P\setminus I_0^+\subset u_+ + \Z_+\times \Z$.
    Take any $(a,b) \in P\setminus I_0^+$.
    If $a\ge (1-\rho)^2s-r_4$, we cannot have $(a,b) \in \Gamma^{+,\vee}_u$ since $u\in Q$.
    If $a< (1-\rho)^2s-r_4$, by $\cE_3$ we must have that $(a,b)$ is in the lower-right part from Step 1, so still we must have that $(a,b) \not\in \Gamma^{+,\vee}_u$.
    So $P\setminus I_0^+$ is disjoint from $\Gamma^{+,\vee}_u$, and equivalently $\Gamma^{+,\vee}_u\setminus I_0^+$ is disjoint from $P$.
\end{itemize}
So far we have shown $L^+(u)\le L^\tau(u)$. We can also show $L^+(u)\le L^\tau(u)$ with essentially verbatim arguments, using $\cE_3^\tau$ instead of $\cE_3$.
We then conclude that $L^+(u)=L^\tau(u)$ for any $u\in Q\setminus I_0^+=Q\setminus I_0^\tau$.

We then show that $\cE_1$ holds, using Lemma \ref{lem:Ideteta}.
Specifically, take any $x\in \Z$ with $x<-r$, we next show that $\heta_s^+(x)=\tau(x)$, and the particles (or holes) have the same labels.

We first assume that $\heta_s^+(x)=1$.
Then ${\eta_s^+}(x+ad(p_s^+))=1$.
By $p^P=p_s^+$ we have $M=ad(p_s^+)$, so ${\eta_s^+}(x+M)=1$.
By Lemma \ref{lem:Ideteta}, there is some $y\in\Z$, such that $(M+x+y-1,y)\in I_s^+$ and $(M+x+y,y)\not\in I_s^+$, and the particle at $x$ in $\heta_s^+$ has label $y$.
Since $ad(M+x+y-1,y)=M+x-1\le M-r$ and $(M+x+y-1,y)\in \partial I_s^+$, by $\cE_4$ we have $M+x+y<(1-\rho)^2s-r_4$ and $(M+x+y,y)\in \VV_{(-\infty,-6r_3)}$. 
Thus $(M+x+y,y), (M+x+y-1,y)\in Q$, and
\[
L^\tau(M+x+y,y)= L^+(M+x+y,y) > s \ge L^+(M+x+y-1,y)= L^\tau(M+x+y-1,y).
\]
This implies that $(M+x+y-1,y) \in I_s^\tau$ and $(M+x+y,y) \not\in I_s^\tau$.
Then by (an analogue of) Lemma \ref{lem:Ideteta} we have $\tau_s(x)=1$, and the particle at $x$ in $\tau_s$ has label $y$.

Similarly, if we assume that $\heta_s^+(x)=0$, we can deduce that $\tau_s(x)=0$, and the holes have the same label.
By taking $x$ over all integers $<-r$, we conclude that $\cE_1$ holds assuming $p^P=p_s^+$ and $\cE_3\cap \cE_4 \cap \cE_3^\tau$.
\end{proof}

It now suffices to lower bound the probabilities of the events $\cE_3$, $\cE_3^\tau$, $\cE_4$.
\begin{lemma}   \label{lem:lbd-2event}
$\PP[\cE_3], \PP[\cE_3^\tau]>1-C\alpha^{-1/2}$ for constants $c,C>0$.
\end{lemma}

\begin{lemma}   \label{lem:lbd-2event4}
$\PP[\cE_4]>1-Ce^{-c\alpha}$ for constants $c,C>0$.
\end{lemma}

Using Lemma \ref{lem:2event-inc} and Lemmas \ref{lem:lbd-2event}, \ref{lem:lbd-2event4}, we get lower bound for $\PP[\cE_1]$. We can lower bound $\PP[\cE_2]$ similarly. Thus Lemma \ref{lem:bdce12} follows.

To prove these estimates we introduce some other setups. 
For the convenience of notations, we extend $\xi^{+,\vee,0}$ from $\Z^2\setminus I_0^+$ to $\Z^2$, so that conditioned on $I_0^+$, $\{\xi^{+,\vee,0}(v)\}_{v\in I_0^+}$ are i.i.d.\;$\Exp(1)$ and are independent of everything else.
For each $u\le v$, we let $T^{+,\vee,0}_{u,v}$ and $\Gamma^{+,\vee,0}_{u,v}$ be the passage time and geodesic from $u$ to $v$ under the weights $\xi^{+,\vee,0}$.
For any $v\not\in I_0^+$ we denote $\Gamma^{+,\vee,0}_{I,v}=\Gamma^{+,\vee,0}_{u_*,v}$ and $T^{+,\vee,0}_{I,v}=T^{+,\vee,0}_{u_*,v}$, where $u_*=\argmax_{u\le v,u\not\in I_0^+}T^{+,\vee,0}_{u,v}$.
In words, $\Gamma^{+,\vee,0}_{I,v}$ and $T^{+,\vee,0}_{I,v}$ are the geodesic and passage time from boundary $I_0^+$ to $v$, under the weights $\xi^{+,\vee,0}$. By Lemma \ref{lem:geo-part-cons} we have $\Gamma^{+,\vee,0}_{I,v}=\Gamma^{+,\vee}_v\setminus I_0^+$ and $T^{+,\vee,0}_{I,v}=L^+(v)$.

\begin{proof}[Proof of Lemma \ref{lem:lbd-2event}]
We shall write the proof for the estimates of $\PP[\cE_3]$, and the approach we take here applies to $\PP[\cE_3^\tau]$ essentially verbatim.
We will use $c,C>0$ to denote small and large enough constants, and their values can change from line to line.

We consider the following events (see Figure \ref{fig:lbd-2event}).
\begin{enumerate}
\item[$\cE_5:$] $x_*>-r_3$.
\item[$\cE_6:$] $\VV_{(-jr_4,jr_4)}\cap \partial I_0^+\subset \HH_{(-j\alpha r_4^{1/2},j\alpha r_4^{1/2})}$ for each $j\in\N$.
\item[$\cE_7:$] $\VV_{(-6r_3,-r_3)}\cap \partial I_s^+\subset \HH_{(s-2\alpha r_4^{1/2},s+2\alpha r_4^{1/2})}$.
\item[$\cE_8:$] Let $u_1$ be the intersection of $\HH_{2s}$ with $\VV_{-5r_3}$ and $u_2$ be the intersection of $\HH_{2s}$ with $\VV_{-2r_3}$ (rounded to the nearest lattice vertex).
Then $\Gamma_{I,u_1}^{+,\vee,0}\subset \VV_{(-6r_3,-4r_3)}$ and $\Gamma_{I,u_2}^{+,\vee,0}\subset \VV_{(-3r_3,-r_3)}$.
\end{enumerate}
The events $\cE_6$ and $\cE_7$ just say that $\partial I_0^+$ and $\partial I_s^+$ behave `typically' in certain regions. The event $\cE_8$ is to bound the transversal fluctuation of $\Gamma_{u_+}^+$ (for some $u_+\in\partial I_0^+$), using the non-crossing property of downward and upward semi-infinite geodesics (Lemma \ref{lem:non-cross}).
\begin{figure}[hbt!]
    \centering
\begin{tikzpicture}[line cap=round,line join=round,>=triangle 45,x=0.08cm,y=0.066cm]
\clip(-10,5) rectangle (180.,128.);

\fill[line width=0.pt,color=yellow,fill=yellow,fill opacity=0.6]
(5,60) -- (11.25,65) -- (81.25,9) -- (75,4) -- cycle;
\fill[line width=0.pt,color=yellow,fill=yellow,fill opacity=0.6]
(1.875,57.25) -- (14.375,67.5) -- (14.375-100,67.5+80) -- (1.875-100,57.25+80) -- cycle;
\fill[line width=0.pt,color=yellow,fill=yellow,fill opacity=0.6]
(71.725+100,1.5-80) -- (84.375+100,11.25-80) -- (84.375,11.25) -- (71.725,1.5) -- cycle;

\fill[line width=0.pt,color=yellow,fill=yellow,fill opacity=0.6]
(43.75,83) -- (56.25,93) -- (81.25,73) -- (68.75,63) -- cycle;

\draw [line width=.04pt,dashed] (-30,0) -- (220,200);
\draw [line width=.04pt,dashed] (-40,0) -- (210,200);
\draw [line width=.04pt,dashed] (-10,0) -- (240,200);
\draw [line width=.04pt,dashed] (10,0) -- (260,200);
\draw [line width=.04pt,dashed] (-60,0) -- (190,200);

\draw [line width=.9pt,color=blue] (-20,85) --(-15,79) --(-10,77)--
(-5,73) --(0,70)-- (5,68)-- (10,62)-- (15,58)-- (20,57)-- (25,52) --(30,48)-- (35,41) --(40,37) --(45,34) --(50,29)-- (55,22) --(60,19) --(65,16)-- (70,14)-- (75,10)-- (80,5)-- (85,2)-- (90,0);

\draw [line width=.9pt,color=blue] (3,125)-- (8,120)-- (15,113)-- (20,110)-- (25, 106)-- (30,102)-- (35,100)-- (40,96)-- (45,91)-- (50,89)-- (55,86)-- (60,82)-- (65,77)-- (70,71)-- (75,68)-- (80,64)-- (85,62)-- (90,59)-- (95,58)-- (100,51)-- (105,49)-- (110,46)-- (115,40)-- (120,35)-- (125,32);

\draw [line width=.4pt,color=blue] (27.5,50)-- (35,53)-- (40,56)-- (45,61)-- (50,64)-- (55,71)-- (60,72)-- (67.5,74)-- (71,83)-- (75,88)-- (80, 91)-- (85,99)-- (90,102)-- (95,110)-- (100,114)-- (105,116)-- (110,118)-- (118,123) -- (123,128);

\draw [color=blue,fill=blue] (27.5,50) circle (1.0pt);
\draw [color=blue,fill=blue] (67.5,74) circle (1.0pt);
\draw [color=blue,fill=blue] (43.5,34.9) circle (1.0pt);

\draw [line width=.6pt,color=brown] (20,57) -- (25,62) -- (30,64)-- (35,67)-- (40,71)-- (45,74)-- (50,78)-- (55,81)-- (60,88)-- (65,92)-- (70,98)-- (75,99)-- (80, 102)-- (85,105) -- (90,112);

\draw [line width=.6pt,color=brown] (35,41)-- (40,45)-- (45,48)-- (50,54)-- (55,57)-- (60,62)-- (65,67)-- (70,69)-- (75,74)-- (80,77)-- (85,85) -- (90,88) -- (95,96) -- (100,98) -- (105,100);

\draw [color=brown,fill=brown] (90,112) circle (1.0pt);
\draw [color=brown,fill=brown] (105,100) circle (1.0pt);
\draw (90,112) node[anchor=east,brown]{$u_1$};
\draw (105,100) node[anchor=west,brown]{$u_2$};

\draw (43.5,34.9) node[anchor=north east,blue]{$\boo$};
\draw (27.5,50) node[anchor=north,blue]{$u_+$};
\draw (67.5,74) node[anchor=west,blue]{$u_+'$};

\draw (60,6) node[anchor=south,blue]{$\partial I_0^+$};
\draw (110,44) node[anchor=east,blue]{$\partial I_s^+$};

\begin{scriptsize}
\draw (175,128) node[anchor=north east]{$\VV_{r_3}$};
\draw (155,128) node[anchor=north east]{$\VV_{-r_3}$};
\draw (135,128) node[anchor=north east]{$\VV_{-3r_3}$};
\draw (115,128) node[anchor=north east]{$\VV_{-4r_3}$};
\draw (95,128) node[anchor=north east]{$\VV_{-6r_3}$};
\draw (120,120) node[anchor=north east, color=blue]{$\Gamma_{u_+}^+$};
\draw (77,107) node[anchor=north east, color=brown]{$\Gamma_{I,u_1}^{+,\vee,0}$};
\draw (92,95) node[anchor=north west, color=brown]{$\Gamma_{I,u_2}^{+,\vee,0}$};
\draw (80,65) node[anchor=west]{$\VV_{(-6r_3,-r_3)}\cap \HH_{(s-2\alpha r_4^{1/2},s+2\alpha r_4^{1/2})}$};
\draw (-5,75) node[anchor=west]{$S_*\setminus S^*$};
\end{scriptsize}

\end{tikzpicture}
\caption{The events to lower bound $\PP[\cE_3]$.}   \label{fig:lbd-2event}
\end{figure}

We next show that $\cE_5\cap\cE_6\cap\cE_7\cap\cE_8\subset\cE_3$.
For this, we take any $u_+ \in \partial I_0^+\cap \VV_{(-4r_3,-3r_3)}$, and let $u_+'=(a_+',b_+')$ be the last vertex in $\Gamma_{u_+}^+\cap I_s^+$.
Then we need to show that $ad(u_+)<x_*$, $\Gamma_{u_+}^+\cap I_s^+ \subset \VV_{(-6r_3,-r_3)}$, and $a_+'>(1-\rho)^2s-r_4$, assuming $\cE_5\cap\cE_6\cap\cE_7\cap\cE_8$.
\begin{itemize}
    \item By $\cE_6$, and note that $r_3>C\alpha r_4^{1/2}$ by our choice of the parameters, we have $ad(u_+)<-r_3$. So under $\cE_5\cap\cE_6$ we have $ad(u_+)<x_*$.
    \item Under $\cE_8$, the path $\Gamma_{I,u_1}^{+,\vee,0}-(1/2,1/2)$ divides $(u_1+(\Z^2\setminus \Z_{\ge 0}^2)) \setminus I_s^+$ into two parts: $(\cup_{v\in \Gamma_{I,u_1}^{+,\vee,0}} v+\Z_-\times \Z_{\ge 0}) \setminus I_s^+$ and $(\cup_{v\in \Gamma_{I,u_1}^{+,\vee,0}} v+\Z_{\ge 0}\times \Z_-) \setminus I_s^+$.
    Then $u_+$ must be in the second part, so by Lemma \ref{lem:non-cross} $\Gamma_{u_+}^+$ must be disjoint from the first part, thus $\Gamma_{u_+}^+\cap \VV_{(-\infty, -6r_3]}\subset u_1+\Z_{\ge 0}^2$. 
    If $\Gamma_{u_+}^+\cap \VV_{(-\infty, -6r_3]}\cap I_s^+$ is not empty, we must have $u_1 \in I_s^+$, which contradicts $\cE_7$.
    So under $\cE_7\cap \cE_8$ we must have that $\Gamma_{u_+}^+ \cap I_s^+$ is disjoint from  $\VV_{(-\infty, -6r_3]}$, and similarly it is also disjoint from $\VV_{[-r_3,\infty)}$. These mean that $\Gamma_{u_+}^+\cap I_s^+ \subset \VV_{(-6r_3,-r_3)}$.
    \item $\{\Gamma_{u_+}^+\cap I_s^+ \subset \VV_{(-6r_3,-r_3)}\}\cap\cE_7$ implies that $u_+'\in \HH_{(s-2\alpha r_4^{1/2},s+2\alpha r_4^{1/2})}\cap \VV_{(-6r_3,-r_3)}$. 
    Thus we get $a_+'>(1-\rho)^2s-r_4$ since $r_4>Cr_3, C\alpha r_4^{1/2}$ by our choice of the parameters.
\end{itemize}

It remains to estimate the probabilities of these events and take a union bound.\\

\noindent\textbf{Bound $\PP[\cE_5]$.} By the coupling between $\tau_0$ and $({\eta_t^+})_{t\ge 0}$ (stated after the proof of Lemma \ref{lem:prob-restrict-ab}), the number $-x_*$ is just the time of a symmetric random walk hitting $0$ after $r_2$. Thus $\PP[\cE_5]\ge 1-Cr_2^{1/2}r_3^{-1/2}=1-C\alpha^{-1/2}$.\\

\noindent\textbf{Bound $\PP[\cE_6]$.} The event $\cE_6$ is again on the hitting probability of a random walk. 
Indeed, by Lemma \ref{lem:Ideteta}, if we let $f(x)$ be the largest integer with $(f(x)+x,f(x))\in I_0^+$, we must have $f(0)=0$, $f(x)=\sum_{y=1}^x -\eta_0^+(x)$ for any $x\ge 1$, and $f(x)=\sum_{y=x+1}^0 \eta_0^+(x)$ for any $x\le -1$;
and $\{\eta_0^+(x)\}_{x\in\Z\setminus\{0,1\}}$ are i.i.d.\;Bernoulli$(\rho)$.
Thus for each $j\in\N$ we have $\PP[\VV_{(-jr_4,jr_4)}\cap \partial I_0^+\subset \HH_{(-j\alpha r_4^{1/2},j\alpha r_4^{1/2})}] > 1-Ce^{-cj\alpha^2}$, so when $\alpha > C$ we have $\PP[\cE_6] \ge 1-Ce^{-c\alpha^2}$.\\

For $\PP[\cE_7]$ and $\PP[\cE_8]$, we reduce them to estimates on last-passage times and geodesic transversal fluctuations under the weights $\xi^{+,\vee,0}$, and use results from Section \ref{ssec:elpps}.\\

\noindent\textbf{Bound $\PP[\cE_7]$.}
We note that $\cE_7$ is implied by the following two events:
\begin{itemize}
    \item $T^{+,\vee,0}_{I,v}=L^+(v)>s$ whenever $v \in \VV_{(-6r_3,-r_3)} \cap \HH_{[s+2\alpha r_4^{1/2},\infty)}\cap \Z^2$,
    \item $T^{+,\vee,0}_{I,v}=L^+(v)\le s$ whenever $v-(1,1) \in \VV_{(-6r_3,-r_3)} \cap \HH_{(-\infty, s-2\alpha r_4^{1/2}]}\cap \Z^2$.
\end{itemize}
These two events imply that $\VV_{(-6r_3,-r_3)} \cap \HH_{(-\infty, s-2\alpha r_4^{1/2}] \cup [s+2\alpha r_4^{1/2},\infty)}$ is disjoint from $\partial I_s^+$, so $\cE_7$ holds.

To estimate the probabilities of these events, we need to bound the passage times under $\xi^{+,\vee,0}$.
For this, we set up the following notations.
For each $j\in\N$ we let
\[
S_j=\VV_{(-jr_4,-(j-1)r_4]\cup [(j-1)r_4,jr_4)}\cap \HH_{(-j\alpha r_4^{1/2},\infty)},\]
\[S^j=\VV_{(-jr_4,-(j-1)r_4]\cup [(j-1)r_4,jr_4)}\cap \HH_{[j\alpha r_4^{1/2},\infty)}.\]
Let $S_*=\cup_{j\in\N}S_j$ and $S^*=\cup_{j\in\N}S^j$.
Then the event $\cE_6$ precisely says that $\partial I_0^+\subset S_*\setminus S^*$ (see Figure \ref{fig:lbd-2event}), and implies that $S^*\subset \Z^2\setminus I_0^+ \subset S_*$.

We consider the following events:
\begin{enumerate}
\item[$\cE_7':$] $T^{+,\vee,0}_{u,v} \le s$ for any vertices $u\in S_*$ and $v\in \VV_{(-6r_3,-r_3)} \cap \HH_{(-\infty,s-2\alpha r_4^{1/2}]}$ with $u\le v$.
\item[$\cE_7'':$] For any $v \in \VV_{(-6r_3,-r_3)} \cap \HH_{[s+2\alpha r_4^{1/2},\infty)}\cap \Z^2$, there exists $u\in \VV_{(-6r_3,-r_3)} \cap \HH_{(\alpha r_4^{1/2},\infty)}\cap \Z^2$ such that $T^{+,\vee,0}_{u,v}>s$.
\end{enumerate}
Then under $\cE_7'\cap\cE_7''\cap\cE_6$, the two events above hold, thus $\cE_7$ holds.

\begin{figure}[hbt!]
     \centering
     \begin{subfigure}[t]{0.45\textwidth}
         \centering
\begin{tikzpicture}[line cap=round,line join=round,>=triangle 45,x=0.037cm,y=0.029cm]
\clip(-55,-45) rectangle (145.,140.);

\fill[line width=0.pt,color=yellow,fill=yellow,fill opacity=0.4]
(140,-56) -- (105,-28) -- (107.5,-26) -- (72.5,2) -- (75,4) -- (5,60) -- (2.5,58) -- (-32.5,86) -- (-35,84) -- (-70,112) -- (-70,300) -- (300,300) -- (300,-56) -- cycle;

\fill[line width=0.pt,color=yellow,fill=yellow,fill opacity=1]
(155,-44) -- (120,-16) -- (117.5,-18) -- (82.5,10) -- (80,8) -- (10,64) -- (12.5,66) -- (-22.5,94) -- (-20,96) -- (-55,124) -- (-70,300) -- (300,300) -- (300,-56) -- cycle;

\draw [line width=.8pt] [red] (5,60) -- (75,4);
\draw [line width=.8pt] [red] (140,-56) -- (105,-28);
\draw [line width=.8pt] [red] (72.5,2) -- (107.5,-26);
\draw [line width=.8pt] [red] (2.5,58) -- (-32.5,86);
\draw [line width=.8pt] [red] (-70,112) -- (-35,84);
\draw [line width=.8pt] [blue] (20+6.5,50+5.2) -- (40+4,34+7.2) ;

\draw [line width=.8pt] [blue] (120,130) -- (140-2.5,114+2) ;
\draw [line width=.8pt] [red] (110,122) -- (130-2.5,106+2) ;

\draw [fill=uuuuuu] (42.5,34) circle (1.0pt);

\begin{scriptsize}

\draw (42.5,34) node[anchor=west]{$\boo$};
\draw (30,42) node[anchor=north east] [red]{$P_1$};
\draw (85,-6) node[anchor=north east] [red]{$P_2$};
\draw (-5,66) node[anchor=north east] [red]{$P_2$};
\draw (110,-30) node[anchor=north east] [red]{$P_3$};
\draw (-35,90) node[anchor=north east] [red]{$P_3$};
\draw (115,125) node[anchor=north east] [red]{$P_*$};

\draw (-35,105) node[anchor=north] {$S_*\setminus S^*$};
\draw (-35,135) node[anchor=north] {$S^*$};
\end{scriptsize}

\end{tikzpicture}

         \caption{$\PP[\cE_7]$: assuming $\cE_6$, the event $\cE_7$ is implied by $\cE_7'\cap \cE_7''$, about passage times under the weights $\xi^{+,\vee,0}$. For $\PP[\cE_7']$, we need to upper bound the passage times from $\cup_{j\in\N}P_j$ to $P_*$; for $\PP[\cE_7'']$, we need to lower bound the passage times from around $\VV_{(-6r_3,-r_3)} \cap \HH_{3\alpha r_4^{1/2}/2}$ to $\VV_{(-6r_3,-r_3)} \cap \HH_{s+2\alpha r_4^{1/2}}$ (the blue segments).}
         \label{fig:ce7}
     \end{subfigure}
     \hfill
     \begin{subfigure}[t]{0.5\textwidth}
         \centering
\begin{tikzpicture}[line cap=round,line join=round,>=triangle 45,x=0.18cm,y=0.1cm]
\clip(5,24) rectangle (53,83);

\fill[line width=0.pt,color=yellow,fill=yellow,fill opacity=0.6]
(8-50,64+40) -- (12-50,64+40) -- (82+50,8-40) -- (78+50,8-40) -- cycle;

\draw [line width=.04pt,dashed] (-42,0) -- (208,200);
\draw [line width=.04pt,dashed] (-58,0) -- (192,200);
\draw [line width=.04pt,dashed] (-74,0) -- (176,200);
\draw [line width=.04pt,dashed] (-26,0) -- (224,200);

\draw [line width=.04pt,dashed] (14-50,64+40) -- (84+50,8-40);

\draw [line width=.9pt,color=brown] (20,57) -- (25,62) -- (30,64)-- (35,67)-- (40,71)-- (45,76) -- (50,80);
\draw [line width=.9pt,color=brown] (11,61.6) -- (16,63) -- (21,64.5) -- (27,67)-- (35,68)-- (40,71)-- (45,76) -- (50,80);
\draw [line width=.9pt,color=brown] (27,48.8) -- (29,53) -- (32,56) -- (35,61)-- (37,64)--  (40,71)-- (45,76) -- (50,80);

\draw [line width=.9pt,color=blue] (-20,85) --(-15,81) --(-10,79)--
(-5,75) --(0,70)-- (5,69)-- (10,64)-- (15,59)-- (20,57)-- (25,52) --(30,49)-- (35,43) --(40,40) --(45,35) --(50,32)-- (55,27) --(60,21) --(65,18)-- (70,14)-- (75,10)-- (80,5)-- (85,2)-- (90,0);

\draw [fill=uuuuuu] (50,32) circle (1.0pt);
\draw [color=brown,fill=brown] (50,80) circle (1.0pt);
\draw [color=brown,fill=brown] (20,57) circle (1.0pt);
\draw [color=brown,fill=brown] (22,57.6) circle (1.0pt);
\draw [color=brown,fill=brown] (11,61.6) circle (1.0pt);
\draw [color=brown,fill=brown] (27,48.8) circle (1.0pt);

\begin{scriptsize}

\draw (50,32) node[anchor=south]{$\boo$};
\draw (50,80) node[anchor=south,brown]{$u_1$};
\draw (22,57.6) node[anchor=west,brown]{$u_1'$};
\draw (20,57) node[anchor=north,brown]{$u_3$};
\draw (11,61.6) node[anchor=north,brown]{$u_4$};
\draw (27,48.8) node[anchor=north,brown]{$u_5$};

\draw (40,40) node[anchor=north,blue]{$\partial I_0^+$};
\draw (5,66) node[anchor=north west]{$\VV_{-5r_3-3r_2}$};
\draw (5,66-12.8) node[anchor=north west]{$\VV_{-5r_3-r_2}$};
\draw (5,66-25.6) node[anchor=north west]{$\VV_{-5r_3+r_2}$};
\draw (5,66-37.4) node[anchor=north west]{$\VV_{-5r_3+3r_2}$};
\draw (45,36) node[anchor=south west]{$\HH_{2\alpha r_4^{1/2}}$};
\draw (50,32) node[anchor=north, orange]{$S_*\setminus S^*$};
\end{scriptsize}

\end{tikzpicture}

         \caption{$\PP[\cE_8]$: under $\cE_8'\cap \cE_6$, we must have that $u_3$ (the lower endpoint of $\Gamma_{I,u_1}^{+,\vee,0}$) is in $\VV_{(-5r_3-r_2, -5r_3+r_2)}$. Then if $\Gamma^{+,\vee,0}_{u_4,u_1}$ and $\Gamma^{+,\vee,0}_{u_5,u_1}$ below $\HH_{2\alpha r_4^{1/2}}$ are contained in $\VV_{(-5r_3-3r_2,-5r_3-r_2)}$ and $\VV_{(-5r_3+r_2,-5r_3+3r_2)}$, respectively, $\Gamma_{I,u_1}^{+,\vee,0}$ is sandwiched between them, and the transversal fluctuation of $\Gamma_{I,u_1}^{+,\vee,0}$ is controlled by $\Gamma^{+,\vee,0}_{u_4,u_1}$ and $\Gamma^{+,\vee,0}_{u_5,u_1}$.
         }
         \label{fig:ce8}
     \end{subfigure}
        \caption{Illustrations of bounding $\PP[\cE_7]$ and $\PP[\cE_8]$ in the proof of Lemma \ref{lem:lbd-2event}.}
        \label{fig:ce78}
\end{figure}

We next lower bound the probabilities $\PP[\cE_7']$ and $\PP[\cE_7'']$. These bounds are deduced from the estimates of Theorem \ref{t:onepoint} and Proposition \ref{t:seg-to-seg}.

We first consider $\PP[\cE_7']$. For each $j\in\N$, we let $P_j$ be the collection of all vertices in $\Z^2$ that are within distance $1$ from $\VV_{(-jr_4,-(j-1)r_4]\cup [(j-1)r_4,jr_4)}\cap \HH_{-j\alpha r_4^{1/2}}$, and let $P_*$ be the collection of all vertices in $\Z^2$ that are within distance $1$ from $\VV_{(-6r_3,-r_3)} \cap \HH_{s-2\alpha r_4^{1/2}}$.
To lower bound $\PP[\cE_7']$, we just need to consider $T^{+,\vee,0}_{u,v}$, for all $u\in \cup_{j\in\N}P_j$ and $v \in P_*$ (see Figure \ref{fig:ce7}).
We note that for any $j\in\N$ and any $u\in P_j$, $v\in P_*$ with $u\le v$, if we write $(a,b)=v-u$ we have 
\begin{equation}  \label{eq:bdee7}
(\sqrt{a}+\sqrt{b})^2<s-c\alpha r_4^{1/2}-c(j-1)^2r_4^2s^{-1}.
\end{equation}
For $j>csr_4^{-1}+1$, we apply \eqref{e:wslope} in Theorem \ref{t:onepoint} to each $u\in P_j$ and $v\in P_*$ and take a union bound, to conclude that
\[
\PP[T^{+,\vee,0}_{u,v} \le s,\; \forall u\in P_j, v\in P_*, u\le v, j>csr_4^{-1}+1] > 1- Csr_3e^{-c\sqrt{s}}.
\]
For any $j\le csr_4^{-1}+1$, the slope of $v-u$ for any $u\in P_j$ and $v\in P_*$ is bounded away from $0$ and $\infty$.
Thus we can split $P_j$ and $P_*$ into $Cr_4s^{-2/3}$ and $Cr_3s^{-2/3}$ segments of length $<Cs^{2/3}$, and apply Proposition \ref{t:seg-to-seg}.
Note that using \eqref{e:mean} from Theorem \ref{t:onepoint} and \eqref{eq:bdee7}, we have that $\E[T^{+,\vee,0}_{u,v}]<s-cj^2\alpha^3 s^{1/3}$ for any $u\in P_j$ and $v\in P_*$.
We then conclude that
\[
\PP[T^{+,\vee,0}_{u,v} \le s,\; \forall u\in P_j, v\in P_*] > 1-C(r_3s^{-2/3})(r_4s^{-2/3})e^{-cj^2\alpha^3}.
\]
Thus we have that \[\PP[\cE_7']>1- Csr_3e^{-c\sqrt{s}}-Cr_3r_4s^{-4/3}\sum_{j\in \N}e^{-cj^2\alpha^3}=1- Cs^{5/3}\alpha^3 e^{-c\sqrt{s}}-C\alpha^7\sum_{j\in \N}e^{-cj^2\alpha^3}.\]

For $\PP[\cE_7'']$, we need to consider $T^{+,\vee,0}_{u,v}$, for all $v\in \Z^2\cap \VV_{(-6r_3,-r_3)}$ within distance $1$ from $\HH_{s+2\alpha r_4^{1/2}}$, and $u\in \VV_{(-6r_3,-r_3)} \cap \HH_{(\alpha r_4^{1/2},\infty)}\cap \Z^2$ within distance $1$ from $v-(s+\alpha r_4^{1/2}/2)\brho$ (see Figure \ref{fig:ce7}).
For such $u$ and $v$, the slope of $v-u$ is bounded away from $0$ and $\infty$.
By \eqref{e:mean} we have $\E[T^{+,\vee,0}_{u,v}] > s+c\alpha^3 s^{1/3}$.
We then apply Proposition \ref{t:seg-to-seg} by covering all such $u,v$ with $Cr_3s^{-2/3}$ parallelograms of size $Cs\times Cs^{2/3}$, and we conclude that $\PP[\cE_7'']>1-Cr_3s^{-2/3}e^{-c\alpha^3}=1-C\alpha^3e^{-c\alpha^3}$.

In summary and using the fact that $Cs^{2/3}<r<s^{2/3+0.01}$ from the statement of Lemma \ref{lem:couple-s} (thus $\alpha>C$ and $\alpha<s^{0.002}$), we have $\PP[\cE_6\setminus \cE_7]<Ce^{-c\alpha^3}$.
\\

\noindent\textbf{Bound $\PP[\cE_8]$.} We denote $u_3$ as the lower endpoint of $\Gamma_{I,u_1}^{+,\vee,0}$.
Consider the event $\cE_8'$, where 
\begin{itemize}
    \item for any $u\in (S_* \setminus \VV_{(-5r_3-r_2, -5r_3+r_2)})\cap \Z^2$, there is $T^{+,\vee,0}_{u,u_1} < 2s-4\alpha r_4^{1/2}$,
    \item $T^{+,\vee,0}_{u_1',u_1} > 2s-4\alpha r_4^{1/2}$, where $u_1'$ is the intersection of $\HH_{2\alpha r_4^{1/2}}$ with $\VV_{-5r_3}$ (rounded to the nearest lattice vertex). Note that $u_1'\in S^* \cap \VV_{(-5r_3-r_2, -5r_3+r_2)}$.
\end{itemize}
Under $\cE_8'\cap \cE_6$ we must have $u_3 \in \VV_{(-5r_3-r_2, -5r_3+r_2)}$, since $T_{u_3,u_1}^{+,\vee,0}$ is the maximum passage time from $I_0^+$ to $u_1$ (see Figure \ref{fig:ce8}).
We can deduce that $\PP[\cE_8']>1-Ce^{-c\alpha^3}$ similar to how $\PP[\cE_7']$ and $\PP[\cE_7'']$ are bounded above using Theorem \ref{t:onepoint} and Proposition \ref{t:seg-to-seg}, and we omit the details.

Now we take $u_4, u_5$ as the intersection of $\HH_{-\alpha r_4^{1/2}}$ with $\VV_{-5r_3-2r_2}$ and $\VV_{-5r_3+2r_2}$, respectively (rounded to the nearest lattice vertex, see Figure \ref{fig:ce8}).
Consider $\Gamma^{+,\vee,0}_{u_4,u_1}$ and $\Gamma^{+,\vee,0}_{u_5,u_1}$.
By Corollary \ref{cor:trans-fluc-comb} we have
\begin{equation}  \label{eq:trfc1}
\PP[\Gamma^{+,\vee,0}_{u_4,u_1} \cap \HH_{(-\alpha r_4^{1/2}, 2\alpha r_4^{1/2})} \subset \VV_{(-5r_3-3r_2,-5r_3-r_2)}] > 1-Ce^{-c r_2^3 \alpha^{-2}r_4^{-1}},    
\end{equation}
\begin{equation}  \label{eq:trfc2}
\PP[\Gamma^{+,\vee,0}_{u_5,u_1} \cap \HH_{(-\alpha r_4^{1/2}, 2\alpha r_4^{1/2})} \subset \VV_{(-5r_3+r_2,-5r_3+3r_2)}] > 1-Ce^{-c r_2^3 \alpha^{-2}r_4^{-1}},    
\end{equation}
and by Lemma \ref{lem:trans-fluc-uni} we have
\begin{equation}  \label{eq:trfc3}
\PP[\Gamma^{+,\vee,0}_{u_4,u_1} \subset \VV_{(-6r_3,-4r_3)}], \;\PP[\Gamma^{+,\vee,0}_{u_5,u_1} \subset \VV_{(-6r_3,-4r_3)}] > 1-Ce^{-c r_3^3 s^{-2}}. \end{equation}
When $\cE_8'\cap \cE_6$ happens, we have $u_3 \in \VV_{(-5r_3-r_2, -5r_3+r_2)} \cap \HH_{(-\alpha r_4^{1/2}, 2\alpha r_4^{1/2})}$.
If the events in the left-hand side of \eqref{eq:trfc1} and \eqref{eq:trfc2} also happen, 
we must have that $\Gamma_{I,u_1}^{+,\vee,0}=\Gamma^{+,\vee,0}_{u_3,u_1}$ is between $\Gamma^{+,\vee,0}_{u_4,u_1}$ and $\Gamma^{+,\vee,0}_{u_5,u_1}$, by ordering of geodesics (Lemma \ref{l:ordering}).
If in addition the event in the left-hand side of \eqref{eq:trfc3} happens, we have
$\Gamma_{I,u_1}^{+,\vee,0}=\Gamma^{+,\vee,0}_{u_3,u_1} \subset \VV_{(-6r_3,-4r_3)}$.
We can use similar arguments to study the event  $\Gamma_{I,u_2}^{+,\vee,0} \subset \VV_{(-3r_3,-r_3)}$.
Then with $\PP[\cE_8']>1-Ce^{-c\alpha^3}$ we conclude that $\PP[\cE_6\setminus\cE_8]<Ce^{-c\alpha^3}+Ce^{-c r_2^3 \alpha^{-2}r_4^{-1}} + Ce^{-c r_3^3 s^{-2}} = Ce^{-c\alpha^3}+Ce^{-c s^{4/3}} + Ce^{-c \alpha^9}$.
Using $Cs^{2/3}<r<s^{2/3+0.01}$ (from the statement of Lemma \ref{lem:couple-s}), this is bounded by $Ce^{-c\alpha^3}$.
\\

Putting together the bounds for $\PP[\cE_5], \PP[\cE_6], \PP[\cE_6\setminus \cE_7], \PP[\cE_6\setminus\cE_8]$ we conclude that $\PP[\cE_3]>1-C\alpha^{-1/2}$.
\end{proof}

\begin{proof}[Proof of Lemma \ref{lem:lbd-2event4}]
We again use $c,C>0$ to denote small and large enough constants, and their values can change from line to line.
We consider three events.
\begin{enumerate}
\item[$\cE_9:$] $|M-(1-2\rho)s|<r_2$.
\item[$\cE_{10}:$] for any $u=(a,b)$ with $ad(u)=a-b<(1-2\rho)s+r_2-r$ and $a\ge (1-\rho)^2s-r_4-1$, we have $u\not\in I_s^+$.
\item[$\cE_{11}:$] for any $u=(a,b)$ with $a< (1-\rho)^2s-r_4$ and $u-(0,1)\in \VV_{[-6r_3,\infty)}$, we have $u\in I_s^+$.
\end{enumerate}
We have that $\cE_9\cap\cE_{10}\cap\cE_{11}\subset \cE_4$.
Indeed, $\cE_9\cap \cE_{10}$ implies that $\partial I_s^+$ is disjoint from
\[
\{(a,b)\in\Z^2:\; a-b\le M-r,\; a\ge (1-\rho)^2s-r_4-1\};
\]
and $\cE_{11}$ implies that $\partial I_s^+$ is disjoint from
\[
\{(a,b)\in\Z^2:\; (a+1,b) \in \VV_{[-6r_3,\infty)}, \; a< (1-\rho)^2s-r_4-1\},
\]
since this set shifted by $(1,1)$ is contained in $I_s^+$ by $\cE_{11}$.
See Figure \ref{fig:2event-inc} for an illustration of these regions.
Thus under $\cE_9\cap\cE_{10}\cap\cE_{11}$, for any  $u=(a,b)\in\partial I_s^+$ with $ad(u)\le M-r$ we must have $a< (1-\rho)^2s-r_4-1$ and $u+(1,0) \in \VV_{(-\infty, -6r_3)}$.
So we conclude that $\cE_9\cap\cE_{10}\cap\cE_{11}\subset \cE_4$, and it remains to lower bound $\PP[\cE_9]$, $\PP[\cE_{10}]$,  and $\PP[\cE_{11}]$.
\\

\noindent\textbf{Bound $\PP[\cE_9]$.}
By Lemma \ref{lem:buse-opti}, $p_s^+-(1,0)$ or $p_s^+-(0,1)$ is the last vertex in $\{u\in \Gamma_\boo^+:T_{\boo,u}^+\le s\}$ since $p_s^+$ is the last vertex in $\{u\in \Gamma_\boo^+:\bG^+(u)\le s\}$.
So by Corollary \ref{cor:lem:semi-inf-loc} we have $\PP[|ad(p_s^+)-(1-2\rho)s|<r_2]>1-Ce^{-cr_2^3s^{-2}}$.
Then by Lemma \ref{lem:prob-restrict-ab} we have that $\PP[\cE_9]>1-Ce^{-cr_2^3s^{-2}}-Ce^{-c\alpha^3}=1-Ce^{-c\alpha^6}-Ce^{-c\alpha^3}$.\\

To bound $\PP[\cE_{10}]$ and $\PP[\cE_{11}]$, we just need to bound the function $L^+$ at certain vertices.
For this, we recall the event $\cE_6$ and sets $S_*$, $S^*$ from the proof of Lemma \ref{lem:lbd-2event}.\\

\noindent\textbf{Bound $\PP[\cE_{10}]$.}
We take $u^*=(a^*,b^*)$ where $a^*=\lceil (1-\rho)^2s-r_4-1\rceil$ and $b^*=a^*-\lceil (1-2\rho)s+r_2-r \rceil$, then $\cE_{10}$ is equivalent to $L^+(u^*)=T^{+,\vee,0}_{I,u^*}>s$.
Denote ${u^*_-}=(\lfloor r_4 \rfloor,\lfloor r_4\rfloor)$. As $u^*_-\in S^*$, under $\cE_6$ we have $u_-^* \not\in I_0^+$.
Thus under $\cE_6\setminus \cE_{10}$ we have 
$T^{+,\vee,0}_{{u^*_-},u^*}\le s$. Then $\PP[\cE_6\setminus \cE_{10}] \le \PP[T^{+,\vee,0}_{{u^*_-},u^*}\le s] < Ce^{-cr^3/s}$, where the last inequality is by the fact that $(\sqrt{a_*-\lfloor r_4 \rfloor}+\sqrt{b_*-\lfloor r_4 \rfloor})^2>s+cr$ and \eqref{e:slope} in Theorem \ref{t:onepoint}.\\

\noindent\textbf{Bound $\PP[\cE_{11}]$.}
Let $u_*=(a_*,b_*)$ where $a_*=\lceil (1-\rho)^2s-r_4-1\rceil$, and $b_*$ is the largest integer such that $u_*-(0,1)\in \VV_{[-6r_3,\infty)}$. Then $\cE_{11}$ is equivalent to that $L^+(u)=T^{+,v,0}_{I,u_*}\le s$.
Under $\cE_6\setminus \cE_{11}$ we have that there is some $u\in S_*$, $u\le u_*$, such that $T^{+,v,0}_{u,u_*}>s$.
We note that for any $u\in S_*$ with $u\le u_*$, if we let $(a,b)=u_*-u$, there is $(\sqrt{a}+\sqrt{b})^2<s-cr_4$.
Then by \eqref{e:wslope} in Theorem \ref{t:onepoint} and a union bound (over all $u\in S_*$ such that $u\in u_*+\Z_{\le 0}^2$ and $u-(1,1)\not\in S_*$), we have $\PP[\cE_6\setminus \cE_{11}]<Cse^{-cr_4s^{-1/2}}$.\\

Putting together the bounds for $\PP[\cE_9], \PP[\cE_6\setminus \cE_{10}], \PP[\cE_6\setminus \cE_{11}]$ and the bound for $\PP[\cE_6]$ in the proof of Lemma \ref{lem:lbd-2event}, and using the fact that $Cs^{2/3}<r<s^{2/3+0.01}$ from the statement of Lemma \ref{lem:couple-s} (thus $\alpha>C$ and $ Ce^{-cr^3/s}, Cse^{-cr_4s^{-1/2}}<Ce^{-c\alpha}$), we conclude that $\PP[\cE_4]>1-Ce^{-c\alpha}$.
\end{proof}

\section{In probability convergence of empirical environments}  \label{sec:weak-cov}
In this section we prove in probability convergence versions of the main results Theorem \ref{thm:finite} and Theorem \ref{thm:semi-infinite}.
The semi-infinite geodesic one (Theorem \ref{thm:semi-infinite-in-prob} below) follows quickly from the convergence of TASEP as seen from an isolated second-class particle (Proposition \ref{prop:converge-2nd-class-tasep} or Theorem \ref{thm:cov-phi}), and ergodicity of the stationary process (Proposition \ref{prop:ergodic-2nd-class-tasep}).
The finite geodesic one (Theorem \ref{thm:finite-slope}) is via geometric arguments, specifically, covering finite geodesics by semi-infinite geodesics.

\subsection{Semi-infinite geodesics}  \label{ssec:semi-inf-weak}

We start with convergence along semi-infinite geodesics and giving a weak version of Theorem \ref{thm:semi-infinite}.
\begin{theorem}  \label{thm:semi-infinite-in-prob}
For any bounded continuous function $f:\R^{\Z^2}\times \{0,1\}^{\Z^2}\to \R$, we have $\mu_{\boo;r}(f) \to \nu(f)$ in probability as $r\to \infty$.
\end{theorem}
We let $(\eta_t^*)_{t\ge 0}$ be the process of TASEP starting from i.i.d.\;Bernoulli$(\rho)$ on $\Z\setminus\{0\}$, and $\eta_0^*(0)=*$.
Then recall (from Section \ref{ssec:cv-avg-tasep}) that $\eta_t^*(l_t+\cdot)\sim \Phi_t$, for $l_t$ being the location of the second-class particle at time $t$.
We also let $\boeta^*=(\zeta^*_t)_{t\in\R}$ to be the stationary process of TASEP as seen from an isolated second-class, i.e.\;for each $t$ we have $\zeta^*_t\sim \Psi$ (defined in Section \ref{ssec:2cp-sta-def}).

For any process $P=(P_w)_{w\in\R}$ and $t\in\R$, we denote $\sT_t P$ as the process $(P_{t+w})_{w\in\R}$.
By Lemmas \ref{lem:compete-inter-2nd-class} and \ref{lem:reweight}, we can deduce Theorem \ref{thm:semi-infinite-in-prob} from the following result.
To make things well-defined, we let $\bfeta^*=(\eta_t^*(l_t+\cdot))_{t\in\R}$, such that  $\eta_t^* = \eta_0^*$ and $l_t=0$ for each $t<0$.
Let $\{0,1,*\}^{\Z\times\R}$ be equipped with the product topology.
\begin{prop}  \label{prop:tasep-in-prob}
For any bounded and continuous function $f:\{0,1,*\}^{\Z\times\R}\to\R$, we have
\[
T^{-1}\int_0^T f(\sT_t \bfeta^*) dt \to \E[f(\boeta^*)]
\]
in probability as $T\to\infty$.
\end{prop}
By Birkhoff's Ergodic Theorem, this proposition follows from Proposition \ref{prop:converge-2nd-class-tasep} or Theorem \ref{thm:cov-phi}, and Proposition \ref{prop:ergodic-2nd-class-tasep}.
\begin{proof} [Proof of Proposition \ref{prop:tasep-in-prob}]
Without loss of generality we assume that $0\le f \le 1$, and for some $s>0$ it is measurable with respect to the $\sigma$-algebra generated by  $A\times \{0,1\}^{\Z\times (-\infty, -s)\cup (s,\infty)}$ for all measurable $A\subset \{0,1\}^{\Z\times [-s,s]}$.
Take any $\delta>0$, then by Birkhoff's Ergodic Theorem and Proposition \ref{prop:ergodic-2nd-class-tasep}, we can find $r$ large enough such that
$\PP\left[\left|r^{-1}\int_0^{r} f(\sT_t \boeta^*) dt -\E[f(\boeta^*)]\right| > \delta\right] <\delta$.

For each $t\ge 0$, denote $\chi_t = \don\left[\left|r^{-1}\int_{t+s}^{t+s+r} f(\sT_w \bfeta^*) dw -\E[f(\boeta^*)]\right| \ge \delta\right]$.
Let $F: \eta\mapsto \E[\chi_t \mid \eta_t^*=\eta]$, then this $F$ is the same for all $t\ge 0$, and is an upper semi-continuous function on the space $\{\eta: \eta(0)=*, \eta(x)\in\{0,1\}, \forall x\neq 0\}\subset \{0,1,*\}^\Z$ since $f$ is continuous.
Then by Theorem \ref{thm:cov-phi} we have
\begin{multline*}
\limsup_{N\to\infty}N^{-1}\sum_{i=0}^{N-1} \E[\chi_{ir}]
=\limsup_{N\to\infty}N^{-1}\sum_{i=0}^{N-1} \E[F(\eta_{ir}^*)] \le \E[F(\zeta_0^*)] \\
= \PP\left[\left|r^{-1}\int_0^{r} f(\sT_t \boeta^*) dt -\E[f(\boeta^*)]\right| \ge \delta\right] <\delta.    
\end{multline*}
This implies that for any $N$ large enough, we have $\PP[\sum_{i=0}^{N-1} \chi_{ir} > \sqrt{\delta}N] <\sqrt{\delta}$, thus
\[
\PP\left[\left| (Nr)^{-1}\int_s^{Nr+s} f(\sT_t \bfeta^*) dt -\E[f(\boeta^*)]\right| > \sqrt{\delta} + \delta \right] < \sqrt{\delta},
\]
which implies our conclusion since $\delta>0$ is arbitrary.
\end{proof}

\subsection{From semi-infinite geodesics to point-to-point geodesics}  \label{sec:finite-cov}
From the in probability convergence along semi-infinite geodesics (Theorem \ref{thm:semi-infinite-in-prob}), we deduce the following in probability convergence along finite geodesics. It can be viewed as a weak version of Theorem \ref{thm:finite}.

Recall (from Section \ref{ssec:elpps}) that we let $\langle a, b\rangle_\rho = \left(\left\lfloor\frac{2(1-\rho)^2a}{\rho^2+(1-\rho)^2}\right\rfloor+b, \left\lceil\frac{2\rho^2a}{\rho^2+(1-\rho)^2}\right\rceil-b\right)$.
Since $\rho$ is fixed, for the rest of this paper we also write $\langle a, b\rangle=\langle a, b\rangle_\rho$.
\begin{theorem}  \label{thm:finite-slope}
Let $\{b_n\}_{n\in\N}$ be a sequence of integers such that $\limsup_{n\to\infty} n^{-2/3}|b_n| < \infty$.
Then for any bounded continuous function $f:\R^{\Z^2}\times \{0,1\}^{\Z^2}\to \R$, we have $\mu_{\boo,\langle n, b_n\rangle_{}}(f) \to \nu(f)$ in probability as $n\to\infty$.
\end{theorem}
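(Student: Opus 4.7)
The plan is to deduce Theorem \ref{thm:finite-slope} from Theorem \ref{thm:semi-infinite-in-prob} by covering the finite geodesic $\Gamma_{\boo,\langle n,b_n\rangle_\rho}$ with initial segments of semi-infinite geodesics in direction $\rho$, as suggested by the roadmap (``covering a finite geodesic with an infinite one''). By a standard density argument it suffices to show $\mu_{\boo,\langle n,b_n\rangle_\rho}(f)\to\nu^\rho(f)$ in probability for each bounded measurable $f:\R^{\Z^2}\times\{0,1\}^{\Z^2}\to[0,1]$ whose value depends only on coordinates in a fixed finite window $B_R=\llbracket-R,R\rrbracket^2$ around the origin.

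I would fix a large integer $K$ and cut $\Gamma_{\boo,\langle n,b_n\rangle_\rho}$ at the (random) intersections $u_0=\boo,u_1,\ldots,u_K=\langle n,b_n\rangle_\rho$ with the anti-diagonals $\LL_{\lfloor jn/K\rfloor}$, obtaining $K$ sub-geodesics $\Gamma_{u_j,u_{j+1}}$ of length about $2n/K$. For each $j$, I would compare the sub-geodesic with the initial $\lfloor n/K\rfloor$ anti-diagonals of the semi-infinite geodesic $\Gamma_{u_j}^\rho$. By translation invariance of the i.i.d.\ weights, the empirical distribution along this initial portion of $\Gamma_{u_j}^\rho$ has the same law as $\mu_{\boo;\lfloor n/(2K)\rfloor}^\rho$, which converges to $\nu^\rho$ in probability by Theorem \ref{thm:semi-infinite-in-prob} as $n/K\to\infty$. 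Boundary contributions to $\mu_{\boo,\langle n,b_n\rangle_\rho}(f)$ from vertices within distance $R$ of the $u_j$'s are bounded by $O(KR/n)\to 0$.

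The heart of the argument is to show that the empirical distribution along $\Gamma_{u_j,u_{j+1}}$ is close in probability to that along the corresponding initial segment of $\Gamma_{u_j}^\rho$. This is a local coincidence statement: for an interior vertex $w\in\Gamma_{u_j,u_{j+1}}$ at distance $\gg R$ from $u_j$ and $u_{j+1}$, the value of $f$ at $w$ depends only on $\Gamma_{u_j,u_{j+1}}\cap B_w(R)$, which equals $\Gamma_{u_j}^\rho\cap B_w(R)$ whenever the two geodesics coincide in the $R$-window around $w$. Proposition \ref{prop:coalesce} (adapted to direction $\rho$), together with the transversal fluctuation estimates (Lemmas \ref{lem:trans-fluc-one-pt}, \ref{lem:semi-inf-trans}, Corollary \ref{cor:trans-fluc-comb}), would then control the proportion of $w$ at which the local $R$-window on $\Gamma_{u_j,u_{j+1}}$ disagrees with that on $\Gamma_{u_j}^\rho$; summing over blocks and sending $n\to\infty$ before $K\to\infty$ yields the desired convergence.

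\textbf{Main obstacle.} The hardest part is the local coincidence inside each block. Since the endpoint $u_{j+1}$ and the exit point of $\Gamma_{u_j}^\rho$ from the block both fluctuate transversally at scale $(n/K)^{2/3}$, exactly at the KPZ coalescence scale, Proposition \ref{prop:coalesce} only gives coincidence with probability bounded away from $0$ (but not tending to $1$) on macroscopic portions of the block. Overcoming this likely requires either iterating the mesoscopic decomposition at a sub-block scale inside each $\Gamma_{u_j,u_{j+1}}$, or exploiting that the ``divergent'' segment of the sub-geodesic is itself a finite geodesic in direction $\rho$ whose empirical environment also concentrates on $\nu^\rho$ (so that mismatch between $\Gamma_{u_j,u_{j+1}}$ and $\Gamma_{u_j}^\rho$ does not spoil the limit even though they genuinely differ on a positive fraction of the block).
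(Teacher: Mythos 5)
You correctly identify the key obstacle to the block-decomposition strategy, and that obstacle is fatal: within a block of length $n/K$, the finite geodesic $\Gamma_{u_j,u_{j+1}}$ and the semi-infinite geodesic $\Gamma_{u_j}^\rho$ coalesce by distance $\ell$ only with probability $1-O(\ell^{-2/3}(n/K)^{2/3})$, so they genuinely diverge on a macroscopic fraction of each block with probability bounded away from zero. The two repairs you sketch do not close the gap: iterating at sub-block scales reproduces the same problem at every scale, and using concentration of the empirical distribution on the ``divergent'' finite sub-geodesic is precisely what Theorem \ref{thm:finite-slope} is supposed to establish, so it is circular (the paper does make such a uniform-over-blocks statement in Proposition \ref{prop:uniform-conv}, but it is derived \emph{after} and \emph{from} Theorem \ref{thm:finite-slope}).

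The paper's proof avoids the block decomposition altogether by using \emph{one} semi-infinite geodesic $\Gamma_{\boo}^\rho$ and steering it with a rare but positive-probability event on the Busemann field near the far endpoint. Concretely, let $\langle n,b_n'\rangle_\rho$ be where $\Gamma_{\boo}^\rho$ exits $\LL_n$. The event $\cE_{h,n}'$ (a translated copy of $\cE_{h,n}$ from Lemma \ref{lem:randomwalk-shape-lowbd}) prescribes a favorable V-shape of the Busemann function $G^\rho$ centered at $\langle n,b_n\rangle_\rho$; its probability is bounded below uniformly in $n$. Combined with the passage-time shape event $\cA_{h,n}$ and the coalescence event $\cB_{h,n}$ (both of probability $\to 1$ as $h\to\infty$, and both \emph{independent} of $\cE_{h,n}'$ because they live on $\LL_{\le n}$ while $\cE_{h,n}'$ lives strictly above $\LL_n$), one deduces that $|b_n-b_n'|<h^{-1}n^{2/3}$ and that $\Gamma_{\boo,\langle n,b_n\rangle_\rho}$ agrees with $\Gamma_{\boo}^\rho$ below $\LL_{(1-h^{-1})n}$. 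The crucial point your proposal does not anticipate is that one can condition on $\cE_{h,n}'$ (which forces the semi-infinite geodesic to end near $b_n$) \emph{without disturbing} the law of the empirical distribution on the bulk of $\Gamma_{\boo,\langle n,b_n\rangle_\rho}$, because $\cE_{h,n}'$ depends only on the environment strictly above $\LL_n$. This converts Theorem \ref{thm:semi-infinite-in-prob}, applied to $\mu_{\boo,\langle n,b_n'\rangle_\rho}$, into convergence for $\mu_{\boo,\langle n,b_n\rangle_\rho}$, at the cost of an $O(h^{-1})$ boundary error; sending $h=h_n\to\infty$ slowly enough finishes the argument. Your high-level intuition (``cover the finite geodesic by a semi-infinite one'') is correct, but the mechanism needed is global steering via a conditioned Busemann profile at the endpoint, not a local block-by-block matching.
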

We now explain the strategy of proving this theorem.
The general idea is to cover the finite geodesic $\Gamma_{\boo,\langle n, b_n\rangle_{}}$ with a semi-infinite geodesic. More precisely, for any $\epsilon>0$, we construct an event that depends only on the i.i.d.\;random weights $\xi$ on or above $\LL_n$, such that (1) this event happens with positive probability (lower bounded uniformly in $n$) and (2) assuming this event, with high probability a $1-\epsilon$ portion of $\Gamma_{\boo,\langle n, b_n\rangle_{}}$ is contained in $\Gamma_\boo$.
Then by Theorem \ref{thm:semi-infinite-in-prob}, conditioned on this event the empirical environment $\mu_{\boo,\langle n, b_n\rangle_{}}$ would be `$\epsilon$-close' to $\nu$, with high probability for $n$ large enough. 
On the other hand, since $\mu_{\boo,\langle n, b_n\rangle_{}}$ depends mainly on the random i.i.d.\;weights $\xi$ below $\LL_n$, it is roughly `independent' of the constructed event, so it would always be close to $\nu$, with high probability for $n$ large enough. 

We start by describing the event. Recall $\bB$ (and also $\bG$), the Busemann function in direction $\brho$. The event basically says that the Busemann function $\bB(\langle n, b_n+b \rangle_{}, \langle n,b_n\rangle_{})$ decays fast when $b$ is slightly away from $0$. By Lemma \ref{lem:buse-opti} this can force $\Gamma_\boo$ to intersect $\LL_n$ near $\langle n, b_n+b \rangle_{}$, and that $\Gamma_{\boo,\langle n, b_n\rangle_{}}$ overlaps with $\Gamma_\boo$ can be deduced using coalescence and 
ordering of geodesics (Proposition \ref{prop:coalesce} and Lemma \ref{l:ordering}).

We now formally define this event and study its probability. For simplicity of notations, we shift it by $-\langle n,b_n\rangle_{}$ and look at the Busemann function on $\LL_0$.
Let $\cE_{h,n}$ denote the following event: for any $b \in \Z$ with $h^{-1}n^{2/3}<|b|<hn^{2/3}$, there is $\bG(\langle 0,b\rangle) + b(\rho^{-1}-(1-\rho)^{-1}) > hn^{1/3}$;
and for $b\in\Z$ with $|b|\ge hn^{2/3}$, there is
$\bG(\langle 0,b\rangle) + b(\rho^{-1}-(1-\rho)^{-1}) > -|b|n^{-1/3}$.
We show that its probability is lower bounded uniformly in $n$,
\begin{lemma}\label{lem:randomwalk-shape-lowbd}
For any $h>1$, there is $\delta>0$ such that $\PP[\cE_{h,n}]>\delta$ for all $n$ large enough.
\end{lemma}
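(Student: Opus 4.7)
The plan is to recognize $S_b:=G^\rho(\langle 0,b\rangle)+b(\rho^{-1}-(1-\rho)^{-1})$ as a centered two-sided random walk with i.i.d.\ increments, then apply Donsker's invariance principle on a compact rescaled window together with a finite-$n$ random-walk tail bound on the complement. Using property 3 of the Busemann function along a bi-infinite down-right path $\{u_k\}_{k\in\Z}$ with $u_{2b}=\langle 0,b\rangle=(b,-b)$ and intermediate vertex $u_{2b+1}=(b+1,-b)$, so that each pair of consecutive antidiagonal points is traversed by one right step followed by one down step, one obtains
\[
S_b-S_{b-1}\;=\;E_b-E_b'+\bigl(\rho^{-1}-(1-\rho)^{-1}\bigr),
\]
with $E_b\sim\Exp(1-\rho)$ and $E_b'\sim\Exp(\rho)$ all mutually independent across $b\in\Z$. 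These increments are centered with variance $\sigma^2:=\rho^{-2}+(1-\rho)^{-2}$, making $(S_b)_{b\in\Z}$ a centered two-sided random walk.

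By Donsker's invariance principle on any fixed compact window $[-A,A]$, the rescaled process $x\mapsto n^{-1/3}S_{\lfloor n^{2/3}x\rfloor}$ converges in distribution to $\sigma B(x)$ for a two-sided standard Brownian motion $B$. The middle constraint of $\cE_{h,n}$, namely $S_b>hn^{1/3}$ on $h^{-1}n^{2/3}<|b|<hn^{2/3}$, becomes in the limit the open event $\{\sigma B(x)>h$ for every $h^{-1}\le|x|\le h\}$. This limiting event has some probability $\delta_0>0$: the two pieces of the path on $[h^{-1},h]$ and $[-h,-h^{-1}]$ are conditionally independent given the endpoint values $B(\pm h^{-1})$ and $B(\pm h)$, and each bridge stays uniformly above $h/\sigma$ with positive probability whenever its endpoints do. By the Portmanteau theorem, the middle portion of $\cE_{h,n}$ holds with probability at least $3\delta_0/4$ for all $n$ large.

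For the outer range $|b|\ge An^{2/3}$, Donsker does not apply on a non-compact window, so I would instead split dyadically into scales $2^kAn^{2/3}\le|b|<2^{k+1}An^{2/3}$, $k\ge 0$, and apply Kolmogorov's $L^2$ maximal inequality on each scale. The scale-$k$ linear lower bound is of order $2^kAn^{1/3}$, while the maximum fluctuation of $|S_b|$ over the scale is controlled in $L^2$ by order $2^{k/2}A^{1/2}n^{1/3}$, so the failure probability on scale $k$ is at most $\sigma^2\cdot 2^{k+1}An^{2/3}/(2^kAn^{1/3})^2=2\sigma^2/(2^kA)$. Summing in $k\ge 0$ and across both signs of $b$ yields a total tail bound of order $\sigma^2/A$, uniformly in $n$; taking $A$ large enough that this is below $\delta_0/4$ and combining with the middle estimate gives $\PP(\cE_{h,n})\ge\delta_0/2$ for all sufficiently large $n$. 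The only real difficulty is this last step: Donsker by itself converges only on compact intervals, so the contribution of $|b|$ much larger than $n^{2/3}$ has to be controlled by a finite-$n$ random-walk argument rather than by the Brownian limit.
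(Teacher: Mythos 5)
Your setup is correct: $S_b$ is a centered two-sided random walk with i.i.d.\ increments by property~3 of the Busemann function, and the broad strategy (Donsker for the bulk, a maximal-inequality argument for the far tail) matches the paper's in spirit. However, the way you \emph{combine} the two estimates has a genuine gap.

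You bound $\PP(\cE_{h,n}) \ge \PP[\text{middle}] - \PP[\text{tail fails}]$, where ``middle'' covers the constraint on $h^{-1}n^{2/3}<|b|<hn^{2/3}$ via Donsker and ``tail'' covers $|b|\ge An^{2/3}$ via Kolmogorov. The intermediate range $hn^{2/3}\le|b|<An^{2/3}$, on which $\cE_{h,n}$ still imposes the linear constraint on $S_b$, is never controlled, and since $A$ is a free parameter taken large this range is nontrivial. Moreover the gap is not simply cosmetic: if one tried to control the full outer range $|b|\ge hn^{2/3}$ by the same dyadic/Kolmogorov decomposition (starting the scales at $hn^{2/3}$), the scale-zero term contributes an error of order $\sigma^2/h$, independent of any free parameter. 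The Donsker probability $\delta_0$ of the middle event decays exponentially in $h$ (it requires the rescaled walk to exceed $h$ as early as $x=h^{-1}$), whereas $\sigma^2/h$ decays only polynomially, so for $h$ at all large the subtraction $\PP[\text{middle}]-\PP[\text{tail fails}]$ is negative. The additive union bound $\PP[A\cap B]\ge\PP[A]-\PP[B^c]$ is too weak here.

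The paper avoids this by \emph{factorizing} rather than subtracting. Writing $F(b)=-S_b$, it lower bounds $\PP(\cE_{h,n})$ by the product $p_1p_2p_3$ of the probabilities of three mutually independent events: the middle event $\{\max_{h^{-1}n^{2/3}<|b|<hn^{2/3}}F(b)<-hn^{1/3}\}$, and two events written in terms of the \emph{increments} $F(b)-F(\pm\lfloor hn^{2/3}\rfloor)$ over the half-lines $b\ge hn^{2/3}$ and $b\le -hn^{2/3}$ respectively. Because each factor involves a disjoint set of increments, independence gives the product bound, and each factor is then shown to be bounded below uniformly in $n$ — the first by Donsker, the other two by precisely the Donsker-plus-dyadic-tail argument you used, but applied to a \emph{fresh} random walk so that the scale-zero term is no longer tied to $h$. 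To repair your argument you could either adopt this multiplicative decomposition, or else extend your Donsker window to $[-A,A]$ with the constraint $\sigma B(x)>-|x|$ added on $h\le|x|\le A$ (and observe that the resulting limiting probability stays bounded away from zero as $A\to\infty$) before peeling off the tail $|b|\ge An^{2/3}$.
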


\begin{proof}
Denote $F(b)= -\bG(\langle 0,b\rangle)-b(\rho^{-1}-(1-\rho)^{-1})$, then $F$ is a (two-sided) random walk, where each step is centered with exponential tail.
By independence of all the steps, we have
\[
\begin{split}
\PP[\cE_{h,n}]
\ge & 
\PP\left[\max_{h^{-1}n^{2/3}<|b|<hn^{2/3}} F(b) < -hn^{1/3}\right]\\
&\times\PP\left[\max_{b\ge hn^{2/3}}F(b) -F(\lfloor hn^{2/3}\rfloor) - bn^{-1/3}<hn^{1/3}\right]
\\
&\times\PP\left[\max_{b\le -hn^{2/3}}F(b) -F(-\lfloor hn^{2/3}\rfloor) + bn^{-1/3}<hn^{1/3}\right].
\end{split}
\]
As the process $F$ converges to a (two-sided) Brownian motion (weakly in the uniform topology) in compact sets, the first factor in the right-hand side is lower bounded by a positive constant.
We next lower bound the factor in the second line, and the third line could be lower bounded in a similar way.
The second line is at least
\[
\begin{split}
\PP\left[\max_{b\in\N}F(b)- bn^{-1/3}<hn^{1/3}\right]
\ge & \PP\left[\max_{b \in \llbracket 0, In^{2/3}\rrbracket}F(b) - bn^{-1/3}<hn^{1/3}\right]
\\
&- \sum_{i=I}^\infty 
\PP\left[\max_{b\in\llbracket in^{2/3}, (i+1)n^{2/3}\rrbracket}F(b) \ge (i+h)n^{1/3}\right].    
\end{split}
\]
where $I$ is a large integer.
As $n\to\infty$, the first term in the right-hand side converges to the probability that a Brownian motion is bounded below a (sloped) line in $[0,I]$, and such probability is lower bounded uniformly in $I$.
For the sum in the second line, the term for each $i$ is upper bounded by
\[
\begin{split}
&
\PP\left[F(\lceil in^{2/3}\rceil) \ge (i+h)n^{1/3}/2\right] + \PP\left[\max_{b \in\llbracket 0,  n^{2/3}\rrbracket}F(b) \ge (i+h)n^{1/3}/2\right]
\\
\le &
\PP\left[F(\lceil in^{2/3}\rceil) \ge (i+h)n^{1/3}/2\right] + 2\PP\left[F(\lfloor n^{2/3}\rfloor) \ge (i+h)n^{1/3}/2\right],
\end{split}
\]
where the inequality is by the reflection principle.
By a Bernstein type estimate for the sum of independent random variables with exponential tails, this could be bounded by $Ce^{-ci}$ for some $c,C>0$, independent of $n$. Thus by taking $I$ large enough the conclusion follows.
\end{proof}

\begin{figure}[hbt!]
    \centering
\begin{tikzpicture}[line cap=round,line join=round,>=triangle 45,x=6cm,y=6cm]
\clip(-0.55,-0.15) rectangle (1.55,1.55);

\fill[line width=0.pt,color=yellow,fill=yellow,fill opacity=0.6]
(0.15,1.55) -- (1.55,0.15) -- (1.55,1.55) -- cycle;

\draw [line width=.4pt] (0.15,1.55) -- (1.55,0.15);
\draw [line width=.4pt] (-0.05,1.35) -- (1.35,-0.05);

\draw (1.,1.55) node[anchor=north east]{$\cE_{h,n}'$};
\draw (-0.15,0.7) node[anchor=west]{$\cA_{h,n}, \cB_{h,n}$};

\begin{scriptsize}
\draw (1.25,0.1) node[anchor=north east]{$\LL_{\lfloor (1-h^{-1})n\rfloor}$};
\draw (1.45,0.3) node[anchor=north east]{$\LL_n$};

\draw (0,0) node[anchor=east]{$\boo$};

\draw (0.85,0.85) node[anchor=west]{$\langle n, b_n\rangle_{}$};

\draw (0.73,0.9) node[anchor=east]{$\Gamma_{\boo,\langle n, \lfloor b_n-h^{-1}n^{2/3} \rfloor\rangle_{}}$};
\draw (0.77,0.7) node[anchor=west]{$\Gamma_{\boo,\langle n, \lceil b_n+h^{-1}n^{2/3} \rceil\rangle_{}}$};
\draw (1.05,1.05) node[anchor=south east]{$\Gamma_\boo$};
\end{scriptsize}

\draw [red] plot [smooth] coordinates {(1.56,1.48) (1.44,1.42) (1.34,1.37) (1.26,1.29) (1.17,1.17) (1.09,1.14) (1.05,1.05) (0.97,1.02) (0.91,0.97) (0.85,0.94) (0.78,0.89) (0.71,0.85) (0.68,0.78) (0.66,0.75) (0.6,0.69) (0.58,0.64)};
\draw [red] plot [smooth] coordinates {(0.85,0.85) (0.79,0.82) (0.73,0.81) (0.68,0.78) (0.66,0.75) (0.6,0.69) (0.58,0.64)};

\draw [blue] plot [smooth] coordinates {(0.77,0.93) (0.73,0.89) (0.7,0.86) (0.66,0.78) (0.65,0.75) (0.6,0.69) (0.58,0.64) (0.52,0.6) (0.46,0.57) (0.39,0.55) (0.35,0.49) (0.31, 0.41) (0.27,0.36) (0.25,0.27) (0.23,0.19) (0.18,0.14) (0.14,0.09) (0.07,0.04) (0,0)};
\draw [blue] plot [smooth] coordinates {(0.93,0.77) (0.82,0.76) (0.78,0.75) (0.7,0.75) (0.65,0.74) (0.6,0.69) (0.58,0.64) (0.52,0.6) (0.46,0.57) (0.39,0.55) (0.35,0.49) (0.31, 0.41) (0.27,0.36) (0.25,0.27) (0.23,0.19) (0.18,0.14) (0.14,0.09) (0.07,0.04) (0,0)};
\draw [fill=uuuuuu] (0.,0.) circle (1.4pt);
\draw [fill=uuuuuu] (0.85,0.85) circle (1.4pt);

\draw [fill=uuuuuu] (0.93,0.77) circle (1.4pt);
\draw [fill=uuuuuu] (0.77,0.93) circle (1.4pt);

\end{tikzpicture}
\caption{An illustration of the proof of Theorem \ref{thm:finite-slope}. The event $\cE_{h,n}'$ is on the spiky behaviour of the Busemann function, the event $\cA_{h,n}$ is on passage times from $\boo$ to $\LL_n$, and the event $\cB_{h,n}$ is on coalescence of geodesics.
Under their intersection, most of $\Gamma_{\boo,\langle n,b_n\rangle_{}}$ is also in $\Gamma_\boo$.
The events $\cA_{h,n}$ and $\cB_{h,n}$ happen with high probability, and $\cE_{h,n}'$ happens with positive probability lower bounded uniformly in $n$.
The event $\cE_{h,n}'$ depends only on $\xi$ in the yellow region, while $\cA_{h,n}$ and $\cB_{h,n}$ depend only on $\xi$ in the remaining region (and roughly so does $\mu_{\boo,\langle n,b_n\rangle_{}}$).
}  \label{fig:fin-slo}
\end{figure}

\begin{proof}[Proof of Theorem \ref{thm:finite-slope}]
It suffices to show that, for any $s\in\N$ and any continuous $f:\R^{\llbracket -s,s \rrbracket^2}\times \{0,1\}^{\llbracket -s,s \rrbracket^2} \to [0,1]$, regarded as a function on $\R^{\Z^2}\times \{0,1\}^{\Z^2}$, there is $\mu_{\boo,\langle n, b_n\rangle_{}}(f) \to \nu(f)$ in probability.

In this proof we use $c, C>0$ to denote small and large enough constants, whose values may change from line to line.
We then have that $|b_n|<Cn^{2/3}$ for any $n\in\N$.
For simplicity of notations we denote $T_{u,v}^\bu = T_{u,v}-\xi(v)$ for any vertices $u\le v$.

We denote $\cE_{h,n}'$ as $\cE_{h,n}$ translated by $\langle n, b_n \rangle_{}$, i.e.\;$\cE_{h,n}'$ is the event where 
\[
\begin{split}
   \bB(\langle n, b \rangle_{}, \langle n,b_n\rangle_{}) <& (b-b_n)(\rho^{-1}-(1-\rho)^{-1}) - hn^{1/3}, \; \text{ for any } h^{-1}n^{2/3}<|b-b_n|<hn^{2/3}, \\
   \bB(\langle n, b \rangle_{}, \langle n,b_n\rangle_{}) <& (b-b_n)(\rho^{-1}-(1-\rho)^{-1}) + |b-b_n|n^{-1/3}, \; \text{ for any } |b-b_n|\ge hn^{2/3}.
\end{split}
\]
Denote $\langle n, b_n'\rangle_{}$ as the intersection of $\Gamma_\boo$ with $\LL_{n}$.
Take any $\epsilon>0$.
By Theorem \ref{thm:semi-infinite-in-prob}, for any $n$ large enough (depending on $\epsilon, f$), we have
\[
\PP[|\mu_{\boo,\langle n, b_n'\rangle_{}}(f) - \nu(f)| < \epsilon] > 1-\epsilon.
\]
By Lemma \ref{lem:randomwalk-shape-lowbd}, when $\epsilon$ is taken small enough depending on $h$, we have
\begin{equation}  \label{eq:nlgep}
\PP[|\mu_{\boo,\langle n, b_n'\rangle_{}}(f) - \nu(f)| < \epsilon
\mid \cE_{h,n}'] > 1-\sqrt{\epsilon}    
\end{equation}
for any $n$ large enough (depending on $h, \epsilon, f$).

We next study the overlap between $\Gamma_\boo$ and $\Gamma_{\boo,\langle n, b_n\rangle_{}}$, under the event $\cE_{h,n}'$.
We denote $\cA_{h,n}$ as the following event:
for any $b\in\Z$, we have
\begin{itemize}
    \item $T_{\boo,\langle n, b\rangle_{}}^\bu + b(\rho^{-1}-(1-\rho)^{-1}) > \E[T_{\boo,\langle n, 0\rangle_{}}]-hn^{1/3}/2$, if $|b-b_n|\le h^{-1}n^{2/3}$;
    \item $T_{\boo,\langle n, b\rangle_{}}^\bu + b(\rho^{-1}-(1-\rho)^{-1}) < \E[T_{\boo,\langle n, 0\rangle_{}}]+hn^{1/3}/2$, if $h^{-1}n^{2/3}<|b-b_n|< hn^{2/3}$;
    \item $T_{\boo,\langle n, b\rangle_{}}^\bu + b(\rho^{-1}-(1-\rho)^{-1}) < \E[T_{\boo,\langle n, 0\rangle_{}}]-hn^{1/3}/2-|b-b_n|n^{-1/3}$, if $|b-b_n|\ge hn^{2/3}$.
\end{itemize}
We have $\PP[\cA_{h,n}]>1 - e^{-ch}$ for $n$ and $h$ large enough.
This can be deduced by applying \eqref{e:wslope} in Theorem \ref{t:onepoint} to $T_{\boo,\langle n, b\rangle_{}}$ for each $b\in \llbracket -n, n\rrbracket$ with $|b|>(\rho^2\wedge (1-\rho)^2)n$, and splitting $\{\langle n,b\rangle : b \in \llbracket -(\rho^2\wedge (1-\rho)^2)n, (\rho^2\wedge (1-\rho)^2)n\rrbracket\}$ into segments of length $n^{2/3}$ and using Proposition \ref{t:seg-to-seg} with each one of them.

We also denote $\cB_{h,n}$ as the following event:
\[
\Gamma_{\boo,\langle n, \lfloor b_n-h^{-1}n^{2/3} \rfloor\rangle_{}} \cap \LL_{\lfloor (1-h^{-1})n\rfloor} = \Gamma_{\boo,\langle n, \lceil b_n+h^{-1}n^{2/3} \rceil\rangle_{}} \cap \LL_{\lfloor (1-h^{-1})n\rfloor}.
\]
By Proposition \ref{prop:coalesce}, we have $\PP[\cB_{h,n}]>1-Ch^{-1/3}$, for $h< cn^{2/3}$ and $h$ large enough.

Note that $\cA_{h,n}$ and $\cB_{h,n}$ only depend on the i.i.d.\;random weights $\xi$ below $\LL_n$, and $\cE_{h,n}'$ only depends on $\xi$ on or above $\LL_n$, so the events $\cA_{h,n}, \cB_{h,n}$ are independent of $\cE_{h,n}'$ (see Figure \ref{fig:fin-slo}).
Using that $\PP[\cA_{h,n}]>1 - e^{-ch}$, $\PP[\cB_{h,n}]>1-Ch^{-1/3}$, and \eqref{eq:nlgep}, for $n$ large enough (depending on $h, \epsilon, f$) we have
\[
\PP[\cA_{h,n}, \cB_{h,n}, |\mu_{\boo,\langle n, b_n'\rangle_{}}(f) - \nu(f)| < \epsilon 
\mid \cE_{h,n}']  > 1-\sqrt{\epsilon} - e^{-ch} -Ch^{-1/3}.
\]
Under $\cA_{h,n}\cap \cE_{h,n}'$,
we have
\[
T_{\boo,\langle n, b\rangle_{}}^\bu + \bB(\langle n, b \rangle_{}, \langle n,b_n\rangle_{}) < T_{\boo,\langle n, b_n\rangle_{}}^\bu,
\]
for any $b\in \Z$, $|b-b_n|>h^{-1}n^{2/3}$.
Thus there must be $|b_n-b_n'|\le h^{-1}n^{2/3}$ by Lemma \ref{lem:buse-opti}.
Then under $\cA_{h,n}\cap \cB_{h,n}\cap \cE_{h,n}'$, we must have $\Gamma_{\boo,\langle n, b_n\rangle_{}} \cap \LL_{\lfloor (1-h^{-1})n\rfloor} = \Gamma_{\boo,\langle n, b_n'\rangle_{}} \cap \LL_{\lfloor (1-h^{-1})n\rfloor}$ by ordering of geodesics (Lemma \ref{l:ordering}), and $|\mu_{\boo,\langle n, b_n'\rangle_{}}(f) - \nu(f)|<\epsilon$ implies that
\[
|\mu_{\boo,\langle n, b_n\rangle_{}}(f) - \nu(f)| < \epsilon + h^{-1}.
\]
So we have
\[
\PP[|\mu_{\boo,\langle n, b_n\rangle_{}}(f) - \nu(f)| < \epsilon + h^{-1}
\mid \cE_{h,n}'] > 1-\sqrt{\epsilon} - e^{-ch} -Ch^{-1/3}.
\]
Note that $\Gamma_{\boo,\langle n, b_n\rangle_{}}$ is determined by the weights $\xi$ below $\LL_n$, so it is independent of $\cE_{h,n}'$. For each $v \in \Gamma_{\boo,\langle n, b_n\rangle_{}}$ with $d(v)<2n-2s$, $f(v)$ is determined by the weights $\xi$ in $v+\llbracket -s, s\rrbracket^2$, so it is also independent of $\cE_{h,n}'$.
Thus we conclude that \[\PP[|\mu_{\boo,\langle n, b_n\rangle_{}}(f) - \nu(f)| < \epsilon + h^{-1}+s/n] > 1-\sqrt{\epsilon} - e^{-ch} -Ch^{-1/3}\]
for any $n$ large enough (depending on $h, \epsilon, f$). Since $h$ can be taken arbitrarily large and $\epsilon$ is any number small enough depending on $h$,
we conclude that $\mu_{\boo,\langle n, b_n\rangle_{}}(f) \to \nu(f)$ in probability.
\end{proof}

\section{Parallelogram uniform covering}  \label{sec:paracov}
The goal of this section is to prove the following upgraded version of Theorem \ref{thm:finite-slope}. It will be the key input for the next two sections.
\begin{prop}  \label{prop:uniform-conv}
For any $h>0$, $s\in\N$, and any bounded continuous continuous $f:\R^{\llbracket -s,s \rrbracket^2}\times \{0,1\}^{\llbracket -s,s \rrbracket^2} \to \R$, regarded as a function on $\R^{\Z^2}\times \{0,1\}^{\Z^2}$, we have
\[
\max_{a,b\in\Z,|a|,|b|<hn^{2/3}} \mu_{\langle 0,a \rangle_{}, \langle n,b \rangle_{}}(f), \min_{a,b\in\Z,|a|,|b|<hn^{2/3}} \mu_{\langle 0,a \rangle_{}, \langle n,b \rangle_{}}(f) \to \nu(f),\]
in probability.
\end{prop}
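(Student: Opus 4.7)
The strategy is a sandwich-discretization argument: approximate the two-parameter continuum of endpoint pairs by a finite ($n$-independent) reference grid, apply Theorem \ref{thm:finite-slope} at each grid pair, and use Proposition \ref{prop:coalesce} together with the planar monotonicity of LPP geodesics to show that every geodesic in the family coincides with a nearby reference geodesic on all but an $O(\delta)$-fraction of its vertices, making the associated empirical measures differ by $O(\delta\|f\|_\infty)$.

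Fix $\delta>0$, to be sent to $0$ at the end. Take the grid $a_i=i\lfloor\delta n^{2/3}\rfloor$ for $|i|\le\lceil h/\delta\rceil$, and analogously $\{b_j\}$, of cardinality $O((h/\delta)^2)$ independent of $n$. By translation invariance of the i.i.d.\ weights (shifting the origin to $\langle 0,a_i\rangle_\rho$ conjugates $\mu_{\langle 0,a_i\rangle_\rho,\langle n,b_j\rangle_\rho}$ to $\mu_{\boo,\langle n,b_j-a_i\rangle_\rho}$, where $|b_j-a_i|=O(hn^{2/3})$ still satisfies the hypothesis of Theorem \ref{thm:finite-slope}), each $\mu_{\langle 0,a_i\rangle_\rho,\langle n,b_j\rangle_\rho}(f)$ converges to $\nu^\rho(f)$ in probability; a union bound over the finite grid yields uniform convergence over the reference pairs.

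For arbitrary $(a,b)$ with $|a|,|b|<hn^{2/3}$, locate the cell $(i,j)$ containing it, and set $u_\pm=\langle 0,a_i\pm\lfloor\delta n^{2/3}\rfloor\rangle_\rho$, $v_\pm=\langle n,b_j\pm\lfloor\delta n^{2/3}\rfloor\rangle_\rho$. Applying Proposition \ref{prop:coalesce} to the pair $(v_-,v_+)$ with common start $u_-$, and its lattice-reversed version (fixed endpoint, varying start) to $(u_-,u_+)$ with common end $v_+$, with probability at least $1-C\delta^{1/3}$ both corner geodesics $\Gamma_{u_-,v_-}$ and $\Gamma_{u_+,v_+}$ coincide with the central reference $\Gamma_{\langle 0,a_i\rangle_\rho,\langle n,b_j\rangle_\rho}$ on all anti-diagonals $\LL_k$ with $\lfloor\delta n\rfloor\le k\le n-\lfloor\delta n\rfloor$. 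By the planar non-crossing of LPP geodesics, $\Gamma_{\langle 0,a\rangle_\rho,\langle n,b\rangle_\rho}$ is sandwiched between the two corner geodesics, hence coincides with the reference on the same range. On this event the two empirical distributions agree except through the two end-slabs of total length $O(\delta n)$, yielding
\[
\bigl|\mu_{\langle 0,a\rangle_\rho,\langle n,b\rangle_\rho}(f)-\mu_{\langle 0,a_i\rangle_\rho,\langle n,b_j\rangle_\rho}(f)\bigr|\le C\delta\|f\|_\infty.
\]

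The main obstacle is making this cell-coincidence event hold simultaneously for all $O(1/\delta^2)$ cells: a direct union bound of per-cell failure $O(\delta^{1/3})$ yields total $O(h^2\delta^{-5/3})$, which diverges as $\delta\to 0$. This is handled by combining the transversal-fluctuation estimates (Lemma \ref{lem:trans-fluc-uni} and Corollary \ref{cor:trans-fluc-comb}), which with exponentially high probability confine the entire family of geodesics to a single $O(n^{2/3})$-wide tube, together with the planar monotonicity that reduces the required ``fresh'' coalescences to a one-dimensional sweep of length $O(h/\delta)$ rather than the full two-dimensional grid. After this technical step, the joint failure probability is a vanishing function of $\delta$; combining with the uniform grid convergence (whose failure is $o_n(1)$ for fixed $\delta$) and sending $\delta\to 0$, both $\max$ and $\min$ converge in probability to $\nu^\rho(f)$.
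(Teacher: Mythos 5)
Your discretization-and-sandwich framing shares the paper's starting point — pick a finite reference family of endpoint pairs, apply Theorem \ref{thm:finite-slope} plus a union bound to those, and use planar geodesic ordering to propagate control to nearby pairs — but the mechanism you propose for showing that the reference geodesics actually \emph{cover} the whole family does not close, and the gap you yourself flag in the last paragraph is genuine and not repairable along the lines sketched.

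The quantitative problem is this: Proposition \ref{prop:coalesce} gives per-cell coalescence failure of order $(\delta n)^{-2/3}\cdot \delta n^{2/3}=\delta^{1/3}$, and this rate is sharp (it reflects the $m^{-2/3}$ decay of coalescence-at-distance-$m$). Whatever reduction you achieve using planar monotonicity — from the $O(\delta^{-2})$ two-dimensional grid to an $O(\delta^{-1})$ one-dimensional sweep — still leaves a union bound of order $\delta^{-1}\cdot\delta^{1/3}=\delta^{-2/3}$, which diverges as $\delta\to 0$. The transversal-fluctuation/tube estimates (Lemma \ref{lem:trans-fluc-uni}, Corollary \ref{cor:trans-fluc-comb}) cost only an exponentially small additional error, so they cannot rescue the polynomially diverging main term. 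The assertion ``After this technical step, the joint failure probability is a vanishing function of $\delta$'' is therefore unsupported; the missing idea is precisely the hard part of the proposition.

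The paper escapes this trap by \emph{not} trying to force uniform coalescence of grid geodesics. Instead it argues by contradiction from the failure of coverage: if $\Gamma_{\langle 0,a\rangle_\rho,\langle n,b\rangle_\rho}$ is not covered by any reference geodesic, then reference geodesics started one grid step to its right must track along it without meeting it, and the event that a path with near-maximal weight stays within $c_0 l^{2/3}$ of a geodesic over $M$ successive $l$-blocks without intersecting it is exponentially unlikely in $M$ (Lemma \ref{lem:disjoint-paths}, via Lemma \ref{lem:max-small-seg} and FKG). Iterating over a hierarchy of scales $\delta_1<\cdots<\delta_5$, non-coverage then forces, at some fixed mesolevel $\LL_{\alpha_0 n}$, five well-separated near-optimal crossing points of the point-to-line profile — the ``multiple peaks'' event $\cM$ — which is bounded by comparison of the Airy$_2$ process to Brownian motion (Lemmas \ref{lem:multi-peaks}, \ref{lem:appa:1}, via Theorem \ref{thm:a2-comp-br}). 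Each of $\cT,\cF,\cD,\cH,\cM$ carries a tail strong enough to beat the cardinality of $\fP_1\times\fP_2$, so the union bound is over global events with favorable decay, not over per-cell coalescence events with an irreducible $\delta^{1/3}$ rate. Your proposal contains none of this machinery, and I do not see how to produce a valid proof of Proposition \ref{prop:uniform-conv} from Proposition \ref{prop:coalesce} alone.
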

For simplicity of notations, below we write the proof for $\rho=1/2$, while the general $\rho$ case follows essentially verbatim.

We now explain our strategy.
We will take two families of vertices, $\fP_1$ and $\fP_2$, around the segment connecting $\langle 0,-hn^{2/3} \rangle$ and $\langle 0,hn^{2/3} \rangle$ and the segment connecting $\langle n,-hn^{2/3} \rangle$ and $\langle n,hn^{2/3} \rangle$, respectively.
Both $\fP_1$ and $\fP_2$ are finite, in the sense that their sizes do not increase as $n\to\infty$.
Then by Theorem \ref{thm:finite-slope}, when $n$ is large enough, with high probability, for any $u\in \fP_1$ and $v\in\fP_2$, $\mu_{u, v}(f)$ is close to $\nu(f)$.
We will show that with high probability, for any $|a|, |b| < hn^{2/3}$, the geodesic $\Gamma_{\langle 0, a\rangle, \langle n, b\rangle}$ is mostly covered by some $\Gamma_{u,v}$ with $u\in\fP_1$ and $v\in\fP_2$, thus $\mu_{\langle 0,a \rangle_{}, \langle n,b \rangle_{}}(f)$ is also close to $\nu(f)$.

The main task to do is to establish the covering statement.
To motivate our arguments, we start with the following attempt.
For $a^-<a^+$ and $b^-<b^+$, if the geodesics $\Gamma_{\langle 0, a^-\rangle, \langle n, b^-\rangle}$ and $\Gamma_{\langle 0, a^+\rangle, \langle n, b^+\rangle}$ coalescence near both ends, then they must mostly stay together; and by ordering of geodesics (Lemma \ref{l:ordering}), for any $a^-<a<a^+$ and $b^-<b<b^+$, the geodesic $\Gamma_{\langle 0, a\rangle, \langle n, b\rangle}$ must be covered by $\Gamma_{\langle 0, a^-\rangle, \langle n, b^-\rangle}$, except for a small portion.
By estimates on coalescence of geodesics (e.g. Proposition \ref{prop:coalesce}), if we let $b^+-b^-=a^+-a^-$ be in the order of $\delta_0 n^{2/3}$ (for some small $\delta_0>0$), the probability for $\Gamma_{\langle 0, a^-\rangle, \langle n, b^-\rangle}$ and $\Gamma_{\langle 0, a^+\rangle, \langle n, b^+\rangle}$ to stay disjoint within order $n$ distance from their endpoints is in the order of $\delta_0$.
Now we take $\fP_1$ and $\fP_2$ to be contained in the segment connecting $\langle 0,-hn^{2/3} \rangle$ and $\langle 0,hn^{2/3} \rangle$ and the segment connecting $\langle n,-hn^{2/3} \rangle$ and $\langle n,hn^{2/3} \rangle$, respectively.
Let these vertices split these two segments into $h\delta_0^{-1}$ many small segments, each of length $\delta_0 n^{-2/3}$.
By taking a union bound over all pairs of such small segments, we conclude that the probability of existing some $\Gamma_{\langle 0, a\rangle, \langle n, b\rangle}$ not being mostly covered (by one geodesic with two endpoints in $\fP_1$ and $\fP_2$) is upper bounded by $(\delta_0^{-1})^2\delta_0$, which is too large.

To resolve this issue, we need to get a better bound on the probability of the following event:
there exist some $a^-<a<a^+$ and $b^-<b<b^+$, such that the geodesic $\Gamma_{\langle 0, a\rangle, \langle n, b\rangle}$ is not mostly covered by any geodesic with endpoints in $\fP_1$ and $\fP_2$.
If this probability could be upper bounded by $\delta_0^{2+\epsilon}$ for some $\epsilon>0$ (rather than $\delta_0$), then by a union bound and sending $\delta_0\to 0$, the conclusion follows. 
Towards this, we need to take $\fP_1$ and $\fP_2$ larger (but still finite).
Instead of having them contained in $\LL_0$ and $\LL_n$, we let $\fP_1$ and $\fP_2$ have $h\delta_0^{-1} \times \delta_0^{-1}$ vertices in the rectangles $\{u: 0\le d(u) \le 2n/3, -2hn^{2/3}\le ad(u) \le 2hn^{2/3}\}$ and $\{u: 4n/3\le d(u) \le 2n, -2hn^{2/3}\le ad(u) \le 2hn^{2/3}\}$, respectively.
Fix some small $\kappa>0$.
Using ordering of geodesics (Lemma \ref{l:ordering}), and a union bound, the above task can roughly be reduced to proving the following statement.
For given $a^-, a^+$ and $b^-, b_+$ that are contained in $[-hn^{2/3}, hn^{2/3}]$ with $b^+-b^-=a^+-a^-$ in the order of $\delta_0 n^{2/3}$, 
the following event happens with probability in the order of at most $\delta_0^{2+\epsilon}$ for some $\epsilon>0$:
there exist $a^-<a<a^+$ and $b^-<b<b^+$, such that for any $u\in\fP_1$ and $v\in\fP_2$ in the same side of $\Gamma_{\langle 0, a\rangle, \langle n, b\rangle}$, $\Gamma_{\langle 0, a\rangle, \langle n, b\rangle}\cap \Gamma_{u,v}$ contains no vertex below $\LL_{2\kappa n}$.

Now let's consider the scenario where the above event happens.
Take any $v\in \fP_2$ that is within distance $\delta_0n^{2/3}$ to $\Gamma_{\langle 0, a\rangle, \langle n, b\rangle}$.
We find vertices $u_1, u_2, u_3, u_4, u_5$ in $\fP_1$, such that (1) they are between $\LL_{\kappa n}$ and $\LL_{2\kappa n}$; (2) these vertices are in the same side of $\Gamma_{\langle 0, a\rangle, \langle n, b\rangle}$ as $v$; (3) each is within distance $\delta_0n^{2/3}$ to $\Gamma_{\langle 0, a\rangle, \langle n, b\rangle}$.
Consider the geodesics from each of these vertices to $v$: these geodesics are disjoint from  $\Gamma_{\langle 0, a\rangle, \langle n, b\rangle}$ below $\LL_{2\kappa n}$, by the above event. We can show that (with high probability), any two geodesics cannot stay close to each other while being disjoint for a long distance. 
By choosing the vertices $u_5, u_4, u_3, u_2, u_1$ sequentially and in a multi-scale way (see Figure \ref{fig:uni-cov-pf} below for an illustration), we can actually find $\alpha_0$ with $\kappa<\alpha_0\in 2\kappa$, such that for $\Gamma_{u_i,v}$ with $i=1,2,3,4,5$ and $\Gamma_{\langle 0, a\rangle, \langle n, b\rangle}$, their intersections with $\LL_{\alpha_0 n}$ are far from each other (with distances in the order of at least $\delta_0^{1/150}n^{2/3}$).

However, using $\Gamma_{\langle 0, a\rangle, \langle n, b\rangle}$ and each $\Gamma_{u_i,v}$, (with high probability) one can construct a path from $\langle 0, a^-\rangle$ to $ \langle n, b^-\rangle$, and the difference between its passage time and $T_{\langle 0, a^-\rangle, \langle n, b^-\rangle}$ is at most in the order of $\delta_0^{1/2}n^{1/3}$.
Indeed, one can just mainly use the path of $\Gamma_{u_i, v}$, and switch to $\Gamma_{\langle 0, a\rangle, \langle n, b\rangle}$ only near $u_i$ and $v$, and switch to $\langle 0, a^-\rangle$ and $\langle n, b^-\rangle$ near the ends.
One can also just mainly use the path $\Gamma_{\langle 0, a\rangle, \langle n, b\rangle}$ and switch to $\langle 0, a^-\rangle$ and $\langle n, b^-\rangle$ near the ends.
This way we get in total $6$ paths from $\langle 0, a^-\rangle$ to $ \langle n, b^-\rangle$, each with total passage time at least $T_{\langle 0, a^-\rangle, \langle n, b^-\rangle}-\delta_0^{1/2}n^{1/3}$; and they intersect $\LL_{\alpha_0 n}$ at vertices far away from each other.
Now consider the optimal passage time from $\langle 0, a^-\rangle$ to $\langle n, b^-\rangle$ passing through $\langle \alpha_0 n, b'\rangle$, as a function of $b'$. This is roughly the sum of two independent point-to-line last-passage profiles (see Section \ref{ssec:multi-p} below).
Its scaling limit is known to behave like a Brownian motion, and the event that there are $6$ paths with near optimal passage times is reduced to that, for a Brownian motion in a compact interval one can find $6$ points such that their distances are at least in the order of $\delta_0^{1/150}$, and the Brownian motion values at these points are at least the maximum (of the Brownian motion) minus $\delta_0^{1/2}$.
This event has probability in the order of at most $(\delta_0^{1/2-1/300})^5=\delta_0^{5/2-1/60}$, which is smaller $\delta_0^{2+\epsilon}$ as needed (and this is also why we need to find $5$ alternative paths).

We now explain the organization of the remaining of this section.
We will first list some useful ingredients that will be useful in carrying out the above plan. The proofs of some of these ingredients are delayed to Section \ref{ssec:multi-p} and Section \ref{ssec:dis-path}.
Then we will define several events, each with a small probability. The main arguments are contained in the proof of Lemma \ref{lem:cov-geo} below, where we show that under the intersection of the complements of these events, every $\Gamma_{\langle 0, a\rangle, \langle n, b\rangle}$ is mostly covered by one geodesic in a finite family.
Finally we deduce Proposition \ref{prop:uniform-conv} using Lemma \ref{lem:cov-geo}.\\

\noindent\textbf{Ingredients:} 
The first one concerns continuity of the function $(a,b)\mapsto T_{\langle 0, a\rangle, \langle n, b\rangle}$.
\begin{lemma}  \label{lem:conti-passage-time}
There exist constants $c,C>0$ such that the following is true.
For $h>0$, $0<\theta < 1$, and $t>1$, we have
\[
\PP\left[ \max_{\substack{|a|, |a'|, |b|, |b'| < hn^{2/3} \\
|a-a'|,|b-b'|<\theta n^{2/3}}}
|T_{\langle 0, a\rangle, \langle n, b\rangle}-T_{\langle 0, a'\rangle, \langle n, b'\rangle}| > t\theta^{1/2-0.01}n^{1/3} + Ch\theta n^{1/3}
\right] < Che^{-ct}
\]
when $n$ is large enough (depending on $h,\theta,t$).
\end{lemma}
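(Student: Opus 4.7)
The plan is to decompose the difference into a deterministic mean contribution plus a centered fluctuation contribution, and to bound each separately.

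\textbf{Mean contribution.} Setting $m(a,b):=(\sqrt{n+(b-a)}+\sqrt{n-(b-a)})^{2}$, the estimate \eqref{e:mean} gives $|\E T_{\langle 0,a\rangle,\langle n,b\rangle}-m(a,b)|=O(n^{1/3})$, and a Taylor expansion yields $|\nabla m|=O(|b-a|/n)=O(h/n^{1/3})$ throughout the box $|a|,|b|<hn^{2/3}$. Hence for $|a-a'|,|b-b'|<\theta n^{2/3}$ we get the deterministic bound $|m(a,b)-m(a',b')|\le Ch\theta n^{1/3}$, which accounts for the $Ch\theta n^{1/3}$ term in the statement. The $O(n^{1/3})$ discrepancy between $m$ and the true mean is absorbed into the centered quantity and handled in the next step.

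\textbf{Fluctuation contribution.} Write $\tilde T_{a,b}:=T_{\langle 0,a\rangle,\langle n,b\rangle}-m(a,b)$. I will control $\max|\tilde T_{a,b}-\tilde T_{a',b'}|$ by $t\theta^{1/2-0.01}n^{1/3}$ via multi-scale chaining. Partition the $(a,b)$-box into nested grids of anti-diagonal mesh $2^{-k}\theta n^{2/3}$ for $k=0,1,\ldots,K$, with $K$ chosen so that $2^{-K}\theta\approx \theta^{1+\delta}$; here $\delta$ is a tiny positive constant that will produce the $-0.01$ slack after accounting for the logarithmic factor in the union bound. At each scale $s=2^{-k}\theta$, control the fluctuation of $\tilde T$ across neighboring grid points by a two-point estimate of the form $\PP[|\tilde T_{a,b}-\tilde T_{a',b'}|>ts^{1/2}n^{1/3}]\le Ce^{-ct^{c'}}$, union-bound over the $O((h/s)^{2})$ grid pairs, and telescope. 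Summing the geometric series over scales produces the $t\theta^{1/2-\delta}n^{1/3}$ bound; at the finest scale, the within-cell oscillation of $\tilde T$ on a cell of anti-diagonal width $\theta^{1+\delta}n^{2/3}$ is controlled directly by applying Proposition~\ref{t:seg-to-seg} to a parallelogram of natural KPZ aspect ratio at that scale, giving fluctuations of order $\theta^{(1+\delta)/2}n^{1/3}$ with exponential tails in $t$.

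\textbf{Main obstacle.} The crux is the two-point fluctuation estimate at scale $sn^{2/3}$ with the sharp rate $s^{1/2}n^{1/3}$, as Proposition~\ref{t:seg-to-seg} applied naively to the full $n\times hn^{2/3}$ parallelogram yields only an $O(n^{1/3})$ bound, missing the $s^{1/2}$ factor. The plan is to exploit coalescence: by Proposition~\ref{prop:coalesce}, the geodesics from $\langle 0,a\rangle$ and $\langle 0,a'\rangle$ to any endpoint on $\LL_{n/3}$ coincide with probability $1-O((s n^{2/3})\cdot (n/3)^{-2/3})=1-O(s)$, and similarly on the top. After this coalescence, the comparison $\tilde T_{a,b}-\tilde T_{a',b'}$ reduces to passage-time increments across two short boundary layers of diagonal length $\sim s^{3/2}n$ near $\LL_{0}$ and $\LL_{n}$; these parallelograms have natural KPZ aspect ratio ($s^{3/2}n$ diagonal by $sn^{2/3}$ anti-diagonal), so Proposition~\ref{t:seg-to-seg} applied within them produces the desired $s^{1/2}n^{1/3}$ fluctuation. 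Propagating the exponential tail bound for $t$ through the union bound over scales and patching the coalescence complement events are the most delicate steps, and the small $0.01$ loss in the Hölder exponent will absorb the logarithmic overhead of the chaining.
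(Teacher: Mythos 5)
Your approach is genuinely different from the paper's: the paper first reduces to the case of moving only one endpoint via the crossing inequality $T_{\langle 0,a\rangle,\langle n,b\rangle}-T_{\langle 0,a\rangle,\langle n,b'\rangle}\ge T_{\langle 0,a'\rangle,\langle n,b\rangle}-T_{\langle 0,a'\rangle,\langle n,b'\rangle}$ (which sandwiches the two-endpoint difference between four one-endpoint differences pinned at the corners $\langle 0,\pm\lceil hn^{2/3}\rceil\rangle$, $\langle n,\pm\lceil hn^{2/3}\rceil\rangle$), and then obtains the modulus of continuity in a single stroke from the weak convergence of the rescaled point-to-line profile to the Airy$_2$ process (Theorem \ref{thm:profile-a2}), stationarity, and the Brownian comparison theorem (Theorem \ref{thm:a2-comp-br}) — no chaining, no coalescence. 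This is possible precisely because the lemma only asserts the bound for $n$ large enough given $h,\theta,t$, so invoking the scaling limit is free.

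Your chaining-plus-coalescence plan has a concrete gap at the crux you yourself flag: the two-point bound $\PP[|\tilde T_{a,b}-\tilde T_{a',b'}|>ts^{1/2}n^{1/3}]\le Ce^{-ct^{c'}}$ at mesh $s$. From Proposition \ref{prop:coalesce}, the geodesics from $\langle 0,a\rangle$ and $\langle 0,a'\rangle$ with $|a-a'|\asymp sn^{2/3}$ fail to coalesce before $\LL_m$ with probability $\asymp m^{-2/3}\,sn^{2/3}$, so coalescence by $\LL_{n/3}$ holds with probability $1-O(s)$, as you say; but the \emph{first} coalescence height $m^*$ has a heavy polynomial tail, and $\PP[m^*>s^{3/2}n]\asymp(s^{3/2}n)^{-2/3}\cdot sn^{2/3}=1$, i.e.\ coalescence within your claimed boundary layer of diagonal length $s^{3/2}n$ happens only with probability bounded away from $1$. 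Consequently the difference does \emph{not} reduce to a passage-time increment across a parallelogram of natural aspect ratio at scale $s$: it involves fluctuations at the random scale $m^*$, and combining the tail of $m^*$ with Proposition \ref{t:seg-to-seg} yields only a polynomial tail of the form $\PP[|\tilde T_{a,b}-\tilde T_{a',b'}|>xn^{1/3}]\lesssim s(x/t)^{-2}+e^{-ct}$. Plugging this into the union bound over the $\asymp(h/s_k)^2$ grid pairs at scale $s_k=2^{-k}\theta$ produces a contribution $\asymp h^2s_k^{-2+0.01}$, which diverges as $s_k\downarrow 0$; in particular you cannot reach the stated $Che^{-ct}$ tail, which must hold for every fixed $\theta$. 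The missing ingredient is exactly the Brownian-type modulus of continuity (sub-Gaussian in $t$, $s^{1/2}$ in the spatial scale) of the point-to-line profile, which the paper imports directly from Theorem \ref{thm:a2-comp-br}; coalescence alone does not supply it.
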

The proof of this lemma will be given in Section \ref{ssec:multi-p}.

We next state a bound on transversal fluctuations of geodesics.
It actually immediately follows from the results in Section \ref{ssec:elpps}, and we state it here mainly for the convenience of the proof of Proposition \ref{prop:uniform-conv}.
For vertices $u\le v$, and $0\le l \le d(v)-d(u)$, $t>1$, let $\cT^{u,v}_{l,t}$ be the event where $\Gamma_{u,v}$ below $u+\LL_l$ is not contained in a rectangle of width $2tl^{2/3}$, or $\Gamma_{u,v}$ above $v-\LL_l$ is not contained in a rectangle of width $2tl^{2/3}$ (see Figure \ref{fig:trans}).
Formally, we let $\Gamma_{u,v}$ be the event where there exists $w\in\Gamma_{u,v}$ with $d(u)\le d(w) \le d(u)+2l$ and $|ad(w)-ad(u)|\ge 2tl^{2/3}$,
or with $d(v)-2l\le d(w) \le d(v)$ and $|ad(w)-ad(v)|\ge 2tl^{2/3}$.
\begin{figure}[hbt!]
    \centering
\begin{tikzpicture}[line cap=round,line join=round,>=triangle 45,x=5cm,y=5cm]
\clip(-0.15,-0.15) rectangle (1.15,1.3);

\fill[line width=0.pt,color=green,fill=green,fill opacity=0.35]
(0.15,0.25) -- (0.25,0.15) -- (0.05,-0.05) -- (-0.05,0.05) -- cycle;
\fill[line width=0.pt,color=green,fill=green,fill opacity=0.35]
(0.9,1.2) -- (1.,1.1) -- (0.8,0.9) -- (0.7,1.) -- cycle;

\draw (0.5,-0.06) node[anchor=north east]{$\LL_l$};
\draw (1.13,0.6) node[anchor=north east]{$\LL_{n-l}$};

\draw [line width=.4pt] (-0.35,0.75) -- (0.75,-0.35);
\draw [line width=.4pt] (0.05,1.65) -- (1.35,0.35);

\draw (0,0) node[anchor=north]{$\boo$};
\draw (.95,1.15) node[anchor=south]{$\langle n,b\rangle$};

\draw [red] plot [smooth] coordinates {(0.95,1.15) (0.9,1.1) (0.87,1.04) (0.84,0.99) (0.78,0.93) (0.71,0.85) (0.68,0.78) (0.66,0.75) (0.6,0.69) (0.58,0.64) (0.52,0.6) (0.46,0.57) (0.39,0.55) (0.35,0.49) (0.31, 0.41) (0.27,0.36) (0.25,0.27) (0.23,0.19) (0.18,0.14) (0.14,0.09) (0.07,0.04) (0,0)};
\draw [fill=uuuuuu] (0.,0.) circle (1.0pt);
\draw [fill=uuuuuu] (0.95,1.15) circle (1.0pt);
\end{tikzpicture}
\caption{The complement of the event $\cT_{l,t}^{\boo,\langle n,b\rangle}$: the geodesic $\Gamma_{\boo,\langle n,b\rangle}$ is restricted within the green boxes with width $tl^{2/3}$, below $\LL_l$ or above $\LL_{n-l}$.}   \label{fig:trans}
\end{figure}

\begin{lemma}  \label{lem:trans-fluc}
For $h>0$, there exist constants $c,C>0$ such that the following is true.
For any $0\le l \le n$ large enough, and $|b|<hn^{2/3}$, $t>1$, we have $\PP[\cT^{\boo,\langle n, b\rangle}_{l,t}] < Ce^{-ct^3}$.
\end{lemma}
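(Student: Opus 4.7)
The plan is to invoke Corollary \ref{cor:trans-fluc-comb} twice, once for the initial segment of the geodesic (below $\LL_l$) and once for the terminal segment (above $\LL_{n-l}$), and combine the two bounds by a union bound. I would first assume $t$ is large (say $t > 2h + 2$), since for smaller $t$ the conclusion $Ce^{-ct} \ge 1$ is trivial upon choosing $C$ large enough depending on $h$.

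For the initial segment, I would apply Corollary \ref{cor:trans-fluc-comb} with $r = l$ and $x = t - h - 1$. Let $\langle l, b' \rangle$ denote the lattice point on $\LL_l$ closest to the straight line from $\boo$ to $\langle n, b\rangle$. Since $|b| < hn^{2/3}$ and $l \le n$, this line at $\LL_m$ (for $0 \le m \le l$) has anti-diagonal $2mb/n$ with absolute value at most $2hln^{-1/3} \le 2hl^{2/3}$; in particular the parallelogram's anti-diagonal center at each such $\LL_m$ is bounded by $2hl^{2/3} + O(1)$. On the event (of probability $\ge 1 - Ce^{-c(t-h-1)}$) that $\Gamma_{\boo, \langle n, b\rangle}$ below $\LL_l$ lies in the parallelogram, any $w = \Gamma \cap \LL_m$ with $0 \le m \le l$ satisfies
\[
|ad(w)| \le 2hl^{2/3} + 2(t - h - 1)l^{2/3} + O(1) \le 2tl^{2/3}
\]
for $l$ sufficiently large. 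Absorbing $e^{c(h+1)}$ into the constant $C$ gives the initial-segment bound $Ce^{-ct}$.

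For the terminal segment, I would exploit the reflection symmetry of the LPP model: the involution $(a_1, a_2) \mapsto (n + b - a_1, n - b - a_2)$ preserves the joint distribution of the weights and exchanges $\boo$ with $\langle n, b\rangle$. Under this reflection, the terminal segment of $\Gamma_{\boo, \langle n, b\rangle}$ (above $\LL_{n-l}$) corresponds to the initial segment of an equidistributed geodesic, to which the previous argument applies verbatim. A union bound over the two segments yields the stated bound.

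The only place I anticipate some care is the edge case $l = n$, which Corollary \ref{cor:trans-fluc-comb} does not directly cover (it requires $r < n$). In this case, however, the statement is equivalent to a global transversal-fluctuation bound and follows from Lemma \ref{lem:trans-fluc-uni} with the even stronger estimate $Ce^{-ct^3}$. No serious obstacle arises; the main content is the straightforward anti-diagonal bookkeeping in the initial-segment step.
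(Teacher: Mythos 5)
Your proof is correct and follows exactly the route the paper indicates (the paper omits the details, saying only that the lemma follows "by applying Corollary~\ref{cor:trans-fluc-comb} twice"). You apply the corollary with $r=l$ and $x=t-h-1$ for the initial segment, verify the anti-diagonal bookkeeping (the crucial inequality $2hln^{-1/3}\le 2hl^{2/3}$ for $l\le n$, so the parallelogram's centerline stays within $2hl^{2/3}+O(1)$ of $ad=0$), use the point-reflection $(a_1,a_2)\mapsto(n+b-a_1,n-b-a_2)$ to transfer the bound to the terminal segment, handle $l=n$ via Lemma~\ref{lem:trans-fluc-uni}, and dispose of small $t$ by adjusting $C$; each step checks out and the reflection argument is a clean way to realize the second application of the corollary.
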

This lemma can be obtained by applying Corollary \ref{cor:trans-fluc-comb} twice, and we omit its proof.

Our next lemma establishes that, for a geodesic and a path with a `near-optimal' passage time, it is unlikely for them to stay together for a while but remain disjoint.

For any vertices $u\le v$, and $M,l\in\N, m\in\Z$ with $d(u)\le 2m < 2m+2Ml \le d(v)$, and a small enough parameter $c_0>0$, we denote $\cD_{M,l,m}^{u,v}$ as the following event (see Figure \ref{fig:disjoint}): there exists an up-right path $\gamma$ from $\LL_m$ to $\LL_{m+Ml}$,
such that 
\begin{itemize}
    \item $\gamma$ is disjoint from $\Gamma_{u,v}$.
    \item The passage time of $\gamma$ (i.e.\;$T(\gamma)$) is at least $4Ml-c_0Ml^{1/3}$.
    \item For each $i=0,1,\ldots,M$, $|ad(\Gamma_{u,v}\cap \LL_{m+il}) - ad(\gamma\cap \LL_{m+il})| < 2c_0l^{2/3}$.
\end{itemize}

\begin{lemma}  \label{lem:disjoint-paths}
There exist universal constants $c,C>0$ such that the following is true.
For any $M,l,n \in \N$ and $m,b\in \Z$ with $l>C$, $c_0<c$, $|b|\le n$, and $0\le m <m+Ml \le n$, we have $\PP[\cD_{M,l,m}^{\boo, \langle n, b\rangle}] < Ce^{-cM}$.
\end{lemma}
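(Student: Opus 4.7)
On the event $\cD_{M,l,m}^{u,v}$, both the geodesic restriction $\Gamma_{u,v}|_{[u_0, u_M]}$ (where $u_i = \Gamma_{u,v}\cap\LL_{m+il}$) and the alternate path $\gamma$ are disjoint up-right paths with combined passage time close to $8Ml$ in a narrow tube. The natural strategy is to decompose the strip $\{\LL_m,\ldots,\LL_{m+Ml}\}$ into $M$ sub-slabs $S_i$ of height $l$, and exploit an anti-concentration estimate for the gap between the best and second-best disjoint passage times in each sub-slab; one shows that at least a positive fraction of sub-slabs are ``good'' (witness a near-maximal disjoint pair), each good sub-slab contributes a factor $<1$ independently, and the product over the $\sim M$ independent sub-slab events gives the $e^{-cM}$ bound.

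\textbf{Execution.} First, I would use Lemma~\ref{lem:conti-passage-time} with $\theta = c_0 M^{-2/3}$ together with the geodesic optimality inequality $T_{\boo, v} \geq T_{\boo, u_0'} + T(\gamma) + T_{u_M', v}$ (valid up-right concatenation through the endpoints of $\gamma$) and the constraint $\|u_i - u_i'\|_1 \leq 2c_0 l^{2/3}$ to obtain $T_{u_0, u_M} \geq 4Ml - 2c_0 Ml^{1/3}$ with overwhelming probability, where the excess in transferring endpoints is controlled by the endpoint condition $|ad(u_0)-ad(u_M)| < 2M^{5/6}l^{2/3}$. Writing $W_i = T(\Gamma_{u,v}|_{S_i})$ and $W_i' = T(\gamma|_{S_i})$, one has $\sum_i W_i = T_{u_0, u_M}$ up to $O(M)$ endpoint corrections, while $\sum_i W_i' = T(\gamma) \geq 4Ml - c_0 Ml^{1/3}$; combined with the upper tail $T_{u_0, u_M} \leq 4Ml + O((Ml)^{1/3})$ from Theorem~\ref{t:onepoint}, we get $\sum_i (W_i - W_i') \leq c_0 Ml^{1/3} + O((Ml)^{1/3})$. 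A first-moment / Markov argument then locates at least $M/4$ good sub-slabs in which simultaneously $W_i, W_i' \geq 4l - O(l^{1/3})$ and $W_i - W_i' \leq O(l^{1/3})$, so that the best and second-best disjoint passage times $T^{(1)}_{u_{i-1},u_i}, T^{(2)}_{u_{i-1},u_i}$ between the sub-slab endpoints satisfy $T^{(1)} - T^{(2)} \leq O(l^{1/3})$.

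\textbf{Main obstacle.} The crux is an anti-concentration estimate of the form $\PP[T^{(1)} - T^{(2)} \leq \delta l^{1/3}] \leq f(\delta)$ with $f(\delta) \to 0$ as $\delta \to 0$, uniformly over sub-slab endpoint positions in the allowed range, saying that the gap between the best and second-best disjoint path has a non-degenerate distribution at the $l^{1/3}$ scale. This is the non-trivial input; a naive union bound over path pairs would lose since there are $\sim 16^l$ ordered pairs of up-right paths in a sub-slab while the Chernoff gain for a single disjoint pair is only $e^{-(1-\log 2)l}$. The anti-concentration should ultimately follow from the Tracy-Widom scaling of $T^{(1)}$ and a direct second-path gap argument, but quantifying it uniformly will be the hardest technical step. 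Given this estimate, conditioning on the boundary positions $(u_i, u_i')$ renders the sub-slab events independent, so the product over $\geq M/4$ good sub-slabs yields $\leq f(\sqrt{c_0})^{M/4}$; union-bounding over the $\leq 2^M$ possible identities of the good subset and over the polynomially many boundary configurations (controlled via Lemma~\ref{lem:trans-fluc}), and choosing $c_0$ small enough that $2\,f(\sqrt{c_0})^{1/4} < 1$, gives $\PP[\cD_{M,l,m}^{u,v}] \leq C e^{-cM}$.
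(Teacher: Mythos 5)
Your approach is genuinely different from the paper's, and the difference matters: the proof you sketch hinges on an anti-concentration estimate for the gap between the best and second-best disjoint passage times, which you yourself flag as the hardest technical step and do not establish. That estimate is not in the toolkit the paper develops, and proving it uniformly over the sub-slab endpoint positions (and at the $l^{1/3}$ scale) would be a substantial separate project, likely on par with the whole lemma. There is also a subtler problem with the independence claim at the end: you propose to condition on the boundary positions $u_i = \Gamma_{u,v}\cap\LL_{m+il}$ and then treat the sub-slab events as independent, but the $u_i$ are determined by the geodesic of the \emph{entire} configuration, so the conditional law of the weights inside one sub-slab given all the $u_j$'s is not the unconditioned product law. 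In short, the factorization you need is not available by conditioning on cross-section positions alone.

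The paper's proof avoids both obstructions by a cleverer use of the tube constraint. The decisive observation (Lemma~\ref{lem:max-small-seg}) is that constraining an up-right path to pass through windows of width $c_0 l^{2/3}$ --- strictly below the natural transversal scale $l^{2/3}$ --- already pushes the \emph{expectation} of the maximal passage time between consecutive windows below $4l$ by a fixed multiple of $l^{1/3}$, with no need for anti-concentration of any gap. These sub-slab maxima are genuinely independent (they are functions of disjoint, unconditioned weight fields), so a Bernstein bound on their sum gives $e^{-cM}$ directly for the probability that \emph{any} path through the prescribed windows has weight $\geq 4Ml - cMl^{1/3}$ (Lemma~\ref{lem:disjoint-path-fix}). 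To pass to the unconditioned statement, the paper fixes a realization $\Gamma$ of the geodesic and observes that $\{\Gamma_{u,v}=\Gamma\}$ is a decreasing event of the weights off $\Gamma$, while $\cD_\Gamma$ is an increasing event of the same weights; FKG then gives $\PP[\cD_\Gamma\mid\Gamma_{u,v}=\Gamma]\le\PP[\cD_\Gamma]$, which is the required decoupling without any conditioning on cross-section positions at all. Your first-moment/Markov preliminaries (locating many good sub-slabs with both $W_i$ and $W_i'$ near $4l$) are plausible, but they feed into the unproved anti-concentration step rather than replacing it.
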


\begin{figure}[hbt!]
    \centering
\begin{tikzpicture}[line cap=round,line join=round,>=triangle 45,x=8cm,y=8cm]
\clip(-0.2,-0.2) rectangle (1.2,1.2);

\draw (0.62,0.02) node[anchor=north east]{$\LL_m$};
\draw (0.77,0.12) node[anchor=north east]{$\LL_{m+l}$};
\draw (0.82,0.22) node[anchor=north east]{$\cdots$};
\draw (1.12,0.42) node[anchor=north east]{$\LL_{m+Ml}$};

\draw [line width=.4pt] (-0.15,0.65) -- (0.65,-0.15);
\draw [line width=.4pt] (-0.05,0.75) -- (0.75,-0.05);
\draw [line width=.4pt] (0.15,0.95) -- (0.95,0.15);
\draw [line width=.4pt] (0.25,1.05) -- (1.05,0.25);

\draw (0,0) node[anchor=east]{$u$};
\draw (1.05,1.05) node[anchor=south east]{$v$};
\draw (0.2,0.3) node[anchor=east]{$u'$};
\draw (0.605,0.695) node[anchor=east]{$v'$};
\draw (0.45,0.5) node[anchor=north]{$\gamma$};
\draw (0.85,0.9) node[anchor=north]{$\Gamma_{u,v}$};
\draw [line width=.75pt,color=green] (0.609,0.695) -- (0.64,0.66);
\draw [line width=.75pt,color=green] (0.545,0.555) -- (0.51,0.59);
\draw [line width=.75pt,color=green] (0.2,0.3) -- (0.26,0.24);
\draw [line width=.75pt,color=green] (0.3,0.4) -- (0.33,0.37);

\draw [red] plot [smooth] coordinates {(1.05,1.05) (0.97,1.02) (0.91,0.97) (0.85,0.94) (0.78,0.89) (0.71,0.85) (0.68,0.78) (0.66,0.75) (0.6,0.69) (0.58,0.64) (0.52,0.6) (0.46,0.57) (0.39,0.55) (0.35,0.49) (0.31, 0.41) (0.24,0.38) (0.2,0.3) (0.16,0.23) (0.13,0.15) (0.06,0.11) (0.04,0.05) (0,0)};
\draw [blue] plot [smooth] coordinates { (0.64,0.66) (0.62,0.6) (0.58,0.56) (0.49,0.54) (0.44,0.5) (0.39,0.45) (0.34, 0.38) (0.29,0.33) (0.26,0.24)};
\draw [fill=uuuuuu] (0.,0.) circle (1.0pt);
\draw [fill=uuuuuu] (0.2,0.3) circle (1.0pt);
\draw [fill=uuuuuu] (0.605,0.695) circle (1.0pt);
\draw [fill=uuuuuu] (1.05,1.05) circle (1.0pt);
\end{tikzpicture}
\caption{The event $\cD_{M,l,m}^{u,v}$: each green segment has length $<c_0l^{2/3}$, and $T(\gamma)\ge 4Ml-c_0Ml^{1/3}$.}   \label{fig:disjoint}
\end{figure}

The last ingredient we need is to bound the probability of multiple peaks in the sum of two independent point-to-line profiles.

As done in previous sections, we denote $T_{u,v}^\bu = T_{u,v}-\xi(v)$ for any vertices $u\le v$ (i.e.\;remove the weight of the last vertex).
For any vertices $u\le v$, and $m\in\Z$ with $d(u)\le 2m \le d(v)$, and $\lambda,t>0$, we denote $\cM_{\lambda,t,m,g}^{u,v}$ as the following event:
there exist $-g\le b_1<b_2<b_3<b_4<b_5<b_6\le g$, with $b_2-b_1, b_3-b_2, b_4-b_3, b_5-b_4, b_6-b_5 \ge \lambda$, such that $T_{u,v} = T_{u,\langle m,b_1\rangle}^\bu + T_{\langle m,b_1\rangle,v}$, and
\[
T_{u,\langle m,b_i\rangle}^\bu + T_{\langle m,b_i\rangle,v}>T_{u,v} - t\lambda^{1/2},\quad \forall i\in \{2,3,4,5,6\}.
\]
\begin{lemma}  \label{lem:multi-peaks}
For $h>0$ and $0<\kappa < 1/2$, there exists a constant $C>0$ such that the following is true.
For any $\theta>0$, $0<t<1$,  $\kappa < \alpha < 1-\kappa$, $|\beta| < h$, 
we have $\PP[\cM_{\theta n^{2/3},t,\lfloor\alpha n\rfloor,hn^{2/3}}^{\boo, \langle n, \lfloor\beta n^{2/3}\rfloor\rangle}] < Ct^{5-0.01}$,
for $n$ large enough depending on $h, \theta, t, \alpha, \beta$.
\end{lemma}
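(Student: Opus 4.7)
The target $Ct^{5-0.01}$ matches the Brownian-motion analog of the event $\cM$: for a Brownian motion on an interval, the probability of attaining the maximum at $6$ well-separated points within $t\eta^{1/2}$ of each other is of order $t^5$. This is seen via a Williams-type decomposition — on each side of the argmax the profile behaves like a Bessel-3 process, and the probability that a Bessel-3 process returns to within $t\eta^{1/2}$ of the origin after time $\geq \eta$ is of order $t$; iterating strong Markov five times yields $t^5$, with the $-0.01$ reserved for losses in the Brownian comparison at the LPP level.

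The plan is first to discretize the candidate positions $b_1,\dots,b_6$ to a grid of spacing $\delta n^{2/3}$ for a small polynomial $\delta=\delta(t)$. By Lemma~\ref{lem:conti-passage-time} applied with parameter on the order of $\log(1/t)$, with probability $1-O(t^{10})$ the profile $X_b := T^*_{\boo,\langle m,b\rangle}+T_{\langle m,b\rangle,\langle n,\beta n^{2/3}\rangle}$ changes by at most $\tfrac14 t\eta^{1/2}$ between adjacent grid points, so it suffices to control the grid version of $\cM$ at a slightly larger threshold. Writing $X_b=U_b+V_b$ with $U_b:=T^*_{\boo,\langle m,b\rangle}$ and $V_b:=T_{\langle m,b\rangle,\langle n,\beta n^{2/3}\rangle}$, the star removes the weight at $\langle m,b\rangle$ from $U_b$, so $U$ and $V$ depend on disjoint collections of i.i.d.\ weights and are independent. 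The key analytic input is then local anti-concentration: for grid points $b,b'$ with $|b-b'|\in[\eta,2g]$, the increments of $U$ and of $V$ each have density bounded by $C/|b-b'|^{1/2}$, and, crucially, a joint version holds for several disjoint increments. This is the mesoscopic scale at which the LPP profile is Brownian, and I would furnish it via a Brownian-comparison estimate for line-to-point profiles, absorbing a $t^{-0.01}$-type slack.

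Granting such anti-concentration, for any fixed 6-tuple $b_1<\cdots<b_6$ on the grid with separations $\Delta_i:=b_{i+1}-b_i\geq\eta$, the joint density of the five increments $(X_{b_{i+1}}-X_{b_i})_{i=1}^5$ is at most $\prod_i C/\Delta_i^{1/2}$, while the event forces the partial sums $(X_{b_{i+1}}-X_{b_1})_{i=1}^5$ to lie in a product of intervals of total volume $(t\eta^{1/2})^5$, giving per-6-tuple probability $\leq C(t\eta^{1/2})^5/\prod_i \Delta_i^{1/2}$. Summing over 6-tuples on the grid, the $\Delta$-dependence is controlled by $\sum_{\Delta\geq\eta}\Delta^{-1/2}\lesssim (g/\eta)^{1/2}/(\delta n^{2/3})$, and the resulting factors of $\eta$ and $g$ cancel against the volume, leaving a total bound of at most $C(h)\,t^{5-0.01}$ after absorbing the discretization tail.

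The main obstacle is the joint multi-increment anti-concentration: pairwise bounds are insufficient since the five increments share the same pair of profiles $U,V$, and upper-bounding their joint density requires either Hammond-style resampling for the underlying Airy line ensemble or a direct decoupling using Proposition~\ref{t:seg-to-seg} across disjoint sub-parallelograms. A more robust alternative I would keep in reserve is an iterative Bessel-3 hit-probability argument: conditional on having already located $k$ near-maximizers among the $b_i$'s, estimate the probability of adjoining a fresh $b_{k+1}$ at separation $\geq\eta$ within $t\eta^{1/2}$ of the current max as at most $C\,t^{1-O(\delta)}$ using a return-probability estimate for the LPP Busemann-like profile, and iterate five times to obtain the required $t^{5-0.01}$ bound without needing a full joint-density statement.
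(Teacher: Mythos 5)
Your ``reserve'' approach --- condition on the argmax, view the profile on either side as (Bessel-3-like) conditioned paths, bound the probability of a return to within $t\eta^{1/2}$ after time $\geq\eta$ by $Ct$, and iterate the strong Markov property five times --- is essentially the paper's argument. Where you are vague is exactly where the paper makes a decisive choice: rather than trying to furnish a return-probability estimate directly at the LPP level, the paper first sends $n\to\infty$ (which is legal, since $n$ is chosen last) and uses Theorem~\ref{thm:profile-a2} to pass to the limiting profile $x\mapsto\alpha^{1/3}\cL(\alpha^{-2/3}x)+(1-\alpha)^{1/3}\cL'((1-\alpha)^{-2/3}(x-2^{-2/3}\beta))$, a sum of two independent rescaled Airy$_2$-minus-parabola processes. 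It then invokes the quantitative Brownian comparison, Theorem~\ref{thm:a2-comp-br}, to replace each copy of $\cL$ by a Brownian motion with a multiplicative loss of $\exp(GM(\log a^{-1})^{5/6})$ --- this is precisely where the $-0.01$ in the exponent is spent. Finally Lemma~\ref{lem:appa:1} proves the $Ct^5$ bound for Brownian motion by the exact reflection-principle iteration you sketch: fix the argmax $T_1$, define stopping times $T_i=\min\{x\geq T_{i-1}+\theta: W(x)\geq W(x_1)-t\theta^{1/2}\}$, and show $\PP[T_i\leq 1\mid \cE, T_{i-1}\leq 1]<C't$.

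Your primary plan (discretize to a grid of spacing $\delta n^{2/3}$, prove joint density bounds for the five increments, and union-bound over grid $6$-tuples) has a problem beyond the joint anti-concentration issue you flag. Even granting the joint density bound $\prod_i C\Delta_i^{-1/2}$, the accounting does not close: summing $\Delta_i^{-1/2}$ over the grid in $[\eta,2g]$ yields $\sim g^{1/2}/(\delta n^{2/3})$ per step, and after absorbing $\eta=\theta n^{2/3}$, $g=2hn^{2/3}$, the per-step factor is $t(\theta h)^{1/2}/\delta$, so the product of five such factors carries a $\delta^{-5}$. But the modulus-of-continuity control from Lemma~\ref{lem:conti-passage-time} forces the grid spacing to satisfy $\delta^{1/2-0.01}\lesssim t\theta^{1/2}$, i.e.\ $\delta\lesssim t^{2+\epsilon}\theta$, so $\delta^{-5}\gtrsim t^{-10}$ and the final bound diverges as $t\to 0$. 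The underlying issue is that the naive union bound over $6$-tuples badly over-counts: the constraint that $b_1$ is the global argmax (and that the $b_i$ are near-maximizers, hence clustered around a scarce set) is exactly what prevents the entropy of the search from being $\sim(g/(\delta n^{2/3}))^6$. One would need either a chaining argument (counting at dyadic scales, contributing $\log(g/\eta)$ rather than $g/(\delta n^{2/3})$ per step) or, more cleanly, the conditioning-and-iteration argument, which is what the paper does. So the primary approach as written would not give $Ct^{5-0.01}$ even if the joint anti-concentration were available, and the reserve approach is not merely a fallback --- it is the route.
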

Lemma \ref{lem:disjoint-paths} will be proved in Section \ref{ssec:dis-path}, and Lemma \ref{lem:multi-peaks} will be proved in Section \ref{ssec:multi-p}.

Assuming all the lemmas above, we now prove Proposition \ref{prop:uniform-conv}.
We set up the events to be used in the proof of Proposition \ref{prop:uniform-conv}, for which we first define the parameters.\\

\noindent\textbf{Parameters:}
From now on we fix $h$ in the statement of Proposition \ref{prop:uniform-conv}.
As indicated above, we will choose vertices  $u_1, u_2, u_3, u_4, u_5$ in $\fP_1$, in a multi-scale way.
Thus we define the scales as follows. We take a  small number $\delta>0$, and let $\delta_i=\delta^{100^{6-i}}$ for $i=0,1,2,3,4,5$.
So we have $0<\delta_0<\delta_1<\delta_2<\delta_3<\delta_4<\delta_5<\delta$. We also take small $\kappa>0$ and large $\hh$, and we can assume that $\delta$ is small enough depending on $\kappa$ and $\hh$, and $\hh$ is large enough depending on $h$. The values of the parameters $\delta, \kappa, \hh$ are to be determined, but we always ensure that $\delta^{-1}, \kappa^{-1}, \hh$ are integers.
Then there exists some integer $N$, such that if we denote $\sN=\{Nk^3:k\in \N\}$, for any $n\in \sN$ we have $\delta_0n, \delta_0 n^{2/3}, \hh n^{2/3}, \kappa n, \delta^{-1}$, and each $\delta_i^{-1}\delta_{i+1}$ for $i\in\{0,1,2,3,4\}$ are integers.
From now on we assume that $n \in \sN$, and is large enough depending on all these parameters. Only inside the proof of Proposition \ref{prop:uniform-conv} will we treat general large $n$.

Below we use $c,C>0$ to denote small and large enough constants, which can only depend on $\hh$ and $\kappa$, and the values may change from line to line.\\

\noindent\textbf{Events:} We take the two families of vertices as $\fP_1=\{\langle i\delta_0 n, j\delta_0 n^{2/3} \rangle : i,j \in \Z, 0\le i \le \delta_0^{-1}/3, |j|<4\hh\delta_0^{-1} \}$ and $\fP_2= \bn - \fP_1$.
Note that here we take $\fP_1$ and $\fP_2$ to be in rectangles with width in the order of $\hh n^{2/3}$ rather than $h n^{2/3}$, because the geodesics (that we will study) can potentially have large transversal fluctuations. 
Consider the following events.
\begin{itemize}
\item Let $\cT$ be the union of $\cT^{u,v}_{l,\delta^{-1}}$, for all $u\in\fP_1, v\in\fP_2$, and $l\in \delta_0n\Z$, $0\le l<d(v)-d(u)$.
By Lemma \ref{lem:trans-fluc} we have $\PP[\cT]<C\delta_0^{-5}e^{-c\delta^{-3}}$.
\item Let $\cT_*=\cT^{\langle 0, \hh n^{2/3} \rangle, \langle n, \hh n^{2/3}  \rangle}_{n,\hh } \cup \cT^{\langle 0, - \hh n^{2/3} \rangle, \langle n, - \hh n^{2/3}  \rangle}_{n,\hh }\cup \cT^{\langle 0, 3\hh n^{2/3} \rangle, \langle n, 3\hh n^{2/3}  \rangle}_{n,\hh } \cup \cT^{\langle 0, - 3\hh n^{2/3} \rangle, \langle n, - 3\hh n^{2/3}  \rangle}_{n,\hh }$.
In other words, $\cT_*^c$ is just the event where for each $j\in\{-3,-1,1,3\}$ the geodesic $\Gamma_{\langle 0, j\hh n^{2/3} \rangle, \langle n, j\hh n^{2/3}  \rangle}$ is contained in $\{u \in \Z^2: 0\le d(u)\le 2n, |ad(u)-2j\hh n^{2/3}| \le 2\hh n^{2/3}\}$.
By Lemma \ref{lem:trans-fluc} we have that $\PP[\cT_*]\to 0$ as $\hh\to\infty$, uniformly in $n$.
\item Let $\cF$ be the event where
\[
|T_{\langle i\delta_1 n, a\rangle, \langle j\delta_1 n, b\rangle}^\bu-T_{\langle i\delta_1 n, a'\rangle, \langle j\delta_1 n, b'\rangle}^\bu| > \delta_0^{1/2-0.02}n^{1/3}
\]
for some integers $0\le i<j \le \delta_1^{-1}$, and $|a|,|a'|,|b|,|b'|\le 4\hh n^{2/3}$ with $|a-a'|,|b-b'|\le\delta_0n^{2/3}$.
By applying Lemma \ref{lem:conti-passage-time} to this event with each fixed $i, j$ and taking a union bound, 
we have $\PP[\cF]<C\delta_1^{-2-2/3}e^{-c\delta_0^{-0.01}}$.
\item Let $\cD$ be the union of $\cD^{u,v}_{\delta^{-7}, l, m}$ for all $u \in \fP_1$, $v\in\fP_2$, $l\in\{\delta_in:i=1,2,3,4,5\}$, $m\in \delta_0n\Z$, such that $d(u)\le 2m < 2m+2\delta^{-7}l \le d(v)$.
Here we take $c_0$ to be small enough as required by Lemma \ref{lem:disjoint-paths}.
Then by applying Lemma \ref{lem:disjoint-paths} to each $\cD^{u,v}_{\delta^{-7}, l, m}$ and taking a union bound, we have $\PP[\cD]<C\delta_0^{-5}e^{-c\delta^{-7}}$.
\item Let $\cH$ denote the event where there exists some $m\in\delta_0n\Z$, $0\le m \le n$, and $l\in\{\delta_in:i=1,2,3,4,5\}$, $|a|, |b| < 4\hh n^{2/3}$, $|a-b|<\delta^{-6}l^{2/3}$, such that
\[
T_{\langle m, a \rangle, \langle m+\delta^{-7}l, b\rangle} < 4\delta^{-7}l - c_0\delta^{-6}l^{1/3},
\]
where $c_0$ is the same as in the event $\cD$.
By applying Proposition \ref{t:seg-to-seg} via splitting the lines $\LL_m$ and $\LL_{m+\delta^{-7}l}$ into segments of length $\delta_0n^{2/3}$, we have $\PP[\cH]<C\delta_0^{-3}e^{-c\delta^{-11/2}}$.
\item Let $\cM$ be the union of $\cM^{u,v}_{c_0(\delta_1n)^{2/3}, \delta_0^{1/2-0.03}\delta_1^{-1/3},\alpha n, 4\hh n^{2/3}}$, for all $u \in \fP_1 \cap \LL_0$, $v\in\fP_2 \cap \LL_n$, and $\alpha \in \delta_1 \Z$ with $\kappa<\alpha<1-\kappa$, and $c_0$ be the same as in the event $\cD$.
By Lemma \ref{lem:multi-peaks}, we have $\PP[\cM]<C\delta_0^{-2}\delta_1^{-1}(\delta_0^{1/2-0.03}\delta_1^{-1/3})^{5-0.01}<C\delta_0^{0.3}\delta_1^{-8/3}$.
\end{itemize}
We denote $\cE=\cT^c\cap\cT_*^c\cap\cF^c\cap\cD^c\cap\cH^c\cap\cM^c$.
These events are designed so that $\cE$ happens with high probability, and under $\cE$ we have covering of geodesics. 
\begin{lemma}  \label{lem:cov-geo}
Under $\cE$ the following holds:
for any $|a|, |b| < \hh n^{2/3}$, there exist $u\in\fP_1$ and $v\in\fP_2$, with $d(u)<4\kappa n$ and $d(v)>(1-4\kappa)n$, such that $\Gamma_{\langle 0,a\rangle, \langle n,b\rangle}$ is the same as $\Gamma_{u,v}$ between $\LL_{ 2\kappa n}$ and $\LL_{(1- 2\kappa) n}$.
\end{lemma}

\begin{proof}
Assume that $\cE$ holds, and fix $a,b$ such that $|a|, |b| < \hh n^{2/3}$. 
By ordering of geodesics (Lemma \ref{l:ordering}), $\Gamma_{\langle 0,a\rangle, \langle n,b\rangle}$ is between $\Gamma_{\langle 0, -\hh n^{2/3} \rangle, \langle n, -\hh n^{2/3}  \rangle}$ and $\Gamma_{\langle 0, \hh n^{2/3} \rangle, \langle n, \hh n^{2/3}  \rangle}$.
Then by $\cT_*^c$, we have
\begin{equation}  \label{eq:transabn}
\Gamma_{\langle 0,a\rangle, \langle n,b\rangle} \subset \{u\in \Z^2: 0\le d(u) \le 2n, |ad(u)| \le 4\hh n^{2/3}\}.    
\end{equation}

Let $b^+$ be the smallest number with $b^+\in \delta_0n^{2/3}\Z$ and $b^+\ge b$.
As indicated above, we now show that we can find $u^*\in\fP_1$ with $d(u^*)<4\kappa n$, such that there exists $u\in\Gamma_{\langle 0,a\rangle, \langle n,b\rangle}$ with $d(u)=d(u^*)$ and $ad(u) \le ad(u^*) \le ad(u)+2\delta_0n^{2/3}$,
and
$\Gamma_{u^*,\langle n ,b^+\rangle}$ intersects $\Gamma_{\langle 0,a\rangle, \langle n,b\rangle}$ before $\LL_{2\kappa n}$.

For this we argue by contradiction, and assume that no such $u^*$ exists.
We will sequentially find the vertices $u_5, u_4, u_3, u_2, u_1$ (as illustrated in Figure \ref{fig:uni-cov-pf}) and then use them to find some multiple peaks, thus get a contradiction with $\cM^c$.
The idea is to take each $u_i$ as the vertex in $\fP_1\cap \LL_{\alpha_i n}$ that is to the right of and closest to $\Gamma_{\langle 0,a\rangle, \langle n,b\rangle}$.
Here $\alpha_i$ are numbers to be chosen sequentially: given $u_{i+1}$, we find $\alpha_i$ such that the intersections of $\LL_{\alpha_i}$ with $\Gamma_{\langle 0,a\rangle, \langle n,b\rangle}$ and $\Gamma_{u_{i+1}, \langle n,b^+\rangle}$ are $c_0(\delta_{i+1} n)^{2/3}$ apart, using $\cD^c$.
Finally we consider the intersections of each $\Gamma_{u_i, \langle n,b^+\rangle}$ with $\LL_{\alpha_0 n}$: we can ensure that they are still $c_0(\delta_1 n)^{2/3}$ apart from each other, using transversal fluctuation bounds (from event $\cT^c$) and the fact that $\alpha_0-\alpha_i$ is chosen to be in the order of $\delta_i$.\\

\noindent\textbf{Sequential construction.} Let's start by choosing $u_5$.
We take $\alpha_5$ as the smallest number such that $\alpha_5\in \delta_5\Z$ and $\alpha_5>\kappa$, and take $u_5\in \fP_1\cap \LL_{\alpha_5n}$ being the first one on or to the right of $\Gamma_{\langle 0,a\rangle, \langle n,b\rangle}$.
In other words, we have
$0\le ad(u_5)-ad(\Gamma_{\langle 0,a\rangle, \langle n,b\rangle} \cap \LL_{\alpha_5n})<2\delta_0n^{2/3}$.
Then by \eqref{eq:transabn}, we have that $|ad(u_5)|\le 4\hh n^{2/3}$.
Consider the path $\Gamma_{u_5, \langle n,b^+\rangle}$.
Again by $\cT_*^c$ and ordering of geodesics (Lemma \ref{l:ordering}), it is between $\Gamma_{\langle 0, -3\hh n^{2/3} \rangle, \langle n, -3\hh n^{2/3}  \rangle}$ and $\Gamma_{\langle 0, 3\hh n^{2/3} \rangle, \langle n, 3\hh n^{2/3}  \rangle}$ and
\begin{equation}  \label{eq:transabns}
\Gamma_{u_5, \langle n,b^+\rangle} \subset \{u\in \Z^2: 0\le d(u) \le 2n, |ad(u)| \le 8\hh n^{2/3}\}.    
\end{equation}

For each $j \in \llbracket 0, \delta^{-7}\rrbracket$, we have
\[
ad(\LL_{(\alpha_5+j\delta_5)n}\cap \Gamma_{u_5, \langle n,b^+\rangle}) - ad(\LL_{(\alpha_5+j\delta_5)n}\cap \Gamma_{\langle 0,a\rangle, \langle n,b\rangle} ) \ge 0,
\]
by ordering of geodesics (Lemma \ref{l:ordering}).
We claim that there must exist $j_5 \in \llbracket 0, \delta^{-7}\rrbracket$, such that the left-hand side for $j=j_5$ is at least $2c_0(\delta_5n)^{2/3}$.
Indeed, otherwise we can show that the event $\cD^{\langle 0,a\rangle, \langle n,b\rangle}_{\delta^{-7},\delta_5n,\alpha_5n}$ holds with the path being $\Gamma_{u_5, w_5}$, where $w_5=\Gamma_{u_5, \langle n,b^+\rangle}\cap\LL_{(\alpha_5+\delta^{-7}\delta_5)n}$ (see Figure \ref{fig:uni-cov-pf}).
For this we just verify several things:
\begin{itemize}
    \item By the assumption above (that no such $u^*$ exists), $\Gamma_{u_5, \langle n,b^+\rangle}$ is disjoint from $\Gamma_{\langle 0,a\rangle, \langle n,b\rangle}$ before $\LL_{2\kappa n}$, thus $\Gamma_{u_5, w_5}$ is disjoint from $\Gamma_{\langle 0,a\rangle, \langle n,b\rangle}$ since by taking $\delta$ small enough depending on $\kappa$ we have $\alpha_5+\delta^{-7}\delta_5<2\kappa$.
    \item We have $|ad(w_5)|\le 8\hh n^{2/3}$ by \eqref{eq:transabns}.
By $\cT^c$ we have that \[|ad(u_5)-ad(w_5)|<2\delta^{-1}(\delta^{-7}\delta_5n)^{2/3} =  2\delta^{-17/3}(\delta_5n)^{2/3}.\]
Then we have $T_{u_5,w_5}\ge 4\delta^{-7}\delta_5n - c_0\delta^{-6}(\delta_5n)^{1/3}$ by $\cH^c$.
\end{itemize}
Thus the event $\cD^{\langle 0,a\rangle, \langle n,b\rangle}_{\delta^{-7},\delta_5n,\alpha_5n}$ holds, contradicting with $\cD^c$. So such $j_5$ must exist.

We next let $\alpha_4=\alpha_5+j_5\delta_5$,
and take $u_4\in \fP_1\cap \LL_{\alpha_4n}$ being the first one on or to the right of $\Gamma_{\langle 0,a\rangle, \langle n,b\rangle}$.
Using the same arguments we find $0\le j_4 \le \delta^{-7}$, such that
\[
|ad(\LL_{(\alpha_4+j_4\delta_4)n}\cap \Gamma_{\langle 0,a\rangle, \langle n,b\rangle} )-ad(\LL_{(\alpha_4+j_4\delta_4)n}\cap \Gamma_{u_4, \langle n,b^+\rangle})| \ge 2c_0(\delta_4n)^{2/3}.
\]
Then we let $\alpha_3=\alpha_4+j_4\delta_4$.
Similarly we find $j_3, j_2, j_1 \in \llbracket 0, \delta^{-7}\rrbracket$, and  $\alpha_2=\alpha_3+j_3\delta_3$, $\alpha_1=\alpha_2+j_2\delta_2$, $\alpha_0=\alpha_1+j_1\delta_1$, and vertices $u_3\in \fP_1\cap \LL_{\alpha_3n}$, $u_2\in \fP_1\cap \LL_{\alpha_2n}$, $u_1\in \fP_1\cap \LL_{\alpha_1n}$, such that for each $i=1,2,3$ we have
\begin{equation}  \label{eq:adic}
0\le ad(u_i) - ad(\LL_{\alpha_{i}n}\cap \Gamma_{\langle 0,a\rangle, \langle n,b\rangle} ) < 2\delta_0 n^{2/3},
\end{equation}
and
\begin{equation}  \label{eq:adif}
ad(\LL_{\alpha_{i-1}n}\cap \Gamma_{u_i, \langle n,b^+\rangle}) - ad(\LL_{\alpha_{i-1}n}\cap \Gamma_{\langle 0,a\rangle, \langle n,b\rangle} ) \ge 2c_0(\delta_in)^{2/3}.
\end{equation}
Note that $\LL_{(\alpha_i+j_i\delta_i)n}=\LL_{\alpha_{i-1}n}$ for each $i=1,2,3,4,5$, and that \eqref{eq:adic} and \eqref{eq:adif} also hold for $i=4,5$ as stated above.
See Figure \ref{fig:uni-cov-pf} for (some of) these constructed objects.\\

\begin{figure}[hbt!]
    \centering
\begin{tikzpicture}[line cap=round,line join=round,>=triangle 45,x=10cm,y=10cm]
\clip(-0.22,-0.15) rectangle (1.05,1.05);

\begin{scriptsize}
\draw (0.32,0.4) node[anchor=north]{$u_5$};
\draw (0.44,0.58) node[anchor=east]{$u_4$};
\draw (0.475,0.66) node[anchor=east]{$u_3$};
\draw (0.72,0.57) node[anchor=north]{$w_5$};

\draw (0.75,-0.12) node[anchor=west]{$\LL_{\kappa n}$};
\draw (0.05,-0.12) node[anchor=west]{$\LL_{0}$};
\draw (1.05,0.4) node[anchor=north east]{$\LL_{(\alpha_5+\delta^{-7}\delta_5) n}$};
\draw (1.05,0.17) node[anchor=north east]{$\LL_{\alpha_0n}$};
\draw (1.05,0.07) node[anchor=north east]{$\LL_{(\alpha_5+j_5\delta_5)n}=\LL_{\alpha_4n}$};

\draw (0.632,0.548) node[anchor=north]{$\langle \alpha_0n,b_5\rangle$};
\draw (0.553,0.627) node[anchor=north]{$\langle \alpha_0n,b_4\rangle$};
\draw (0.481,0.699) node[anchor=south]{$\langle \alpha_0n,b_0\rangle$};
\draw (0,0) node[anchor=east]{$\langle 0,a\rangle$};
\draw (-0.07,0.07) node[anchor=east]{$\langle 0,a^-\rangle$};
\draw (1.05,1.05) node[anchor=south east]{$\langle n,b\rangle$};
\draw (0.85,0.9) node[anchor=south east]{$\Gamma_{\langle 0,a\rangle,\langle n,b\rangle}$};
\draw (0.93,0.8) node[anchor=south east]{$\Gamma_{u_3,\langle n,b^+\rangle}$};
\draw (0.94,0.7) node[anchor=south east]{$\Gamma_{u_4,\langle n,b^+\rangle}$};
\draw (1.0,0.59) node[anchor=south east]{$\Gamma_{u_5,\langle n,b^+\rangle}$};
\end{scriptsize}
\draw [line width=.75pt] (-10,10) -- (10,-10);
\draw [line width=.75pt] (-0.5,1.2) -- (1.2,-0.5);
\draw [line width=.4pt] (-0.5,1.79) -- (1.79,-0.5);
\draw [line width=.4pt] (-0.5,1.68) -- (1.68,-0.5);
\draw [line width=.4pt] (-0.5,1.52) -- (1.52,-0.5);

\draw [red] plot [smooth] coordinates {(1.05,1.05) (0.97,1.02) (0.91,0.97) (0.85,0.94) (0.78,0.89) (0.71,0.85) (0.63,0.8) (0.59,0.78) (0.53,0.76) (0.48,0.7) (0.44,0.6) (0.39,0.57) (0.35,0.55) (0.33,0.49) (0.31, 0.41) (0.24,0.38) (0.2,0.3) (0.16,0.23) (0.13,0.15) (0.06,0.11) (0.04,0.05) (0,0)};
\draw [blue] plot [smooth] coordinates {(1.13,0.83) (1.07,0.76) (0.99,0.73) (0.94,0.67) (0.86,0.63) (0.79,0.6) (0.72,0.57) (0.64,0.55) (0.59,0.53) (0.53,0.49) (0.49,0.45) (0.4,0.42) (0.32, 0.4)};
\draw [blue] plot [smooth] coordinates {(1.08,0.87) (0.98,0.83) (0.91,0.8) (0.85,0.76) (0.79,0.73) (0.72,0.7) (0.64,0.66) (0.58,0.65) (0.53,0.61) (0.44,0.58)};
\draw [blue] plot [smooth] coordinates {(1.07,0.99) (0.99,0.96) (0.94,0.93) (0.89,0.89) (0.84,0.85) (0.75,0.82) (0.69,0.79) (0.64,0.76) (0.58,0.71) (0.55,0.69) (0.475,0.66)};
\draw [fill=uuuuuu] (0.,0.) circle (1.0pt);
\draw [fill=uuuuuu] (-0.07,0.07) circle (1.0pt);
\draw [fill=uuuuuu] (0.32, 0.4) circle (1.0pt);
\draw [fill=uuuuuu] (0.44, 0.58) circle (1.0pt);
\draw [fill=uuuuuu] (0.475, 0.66) circle (1.0pt);

\draw [fill=uuuuuu] (0.72,0.57) circle (1.0pt);

\draw [fill=red] (0.632,0.548) circle (1.0pt);
\draw [fill=red] (0.553,0.627) circle (1.0pt);
\draw [fill=red] (0.507,0.673) circle (1.0pt);
\draw [fill=red] (0.481,0.699) circle (1.0pt);

\end{tikzpicture}
\caption{An illustration of the geodesics $\Gamma_{u_i,\langle n,b^+\rangle}$ for $i=5,4,3$. Their intersections with $\LL_{\alpha_0 n}$ are separated by $c_0(\delta_1n)^{2/3}$.}   \label{fig:uni-cov-pf}
\end{figure}

\noindent\textbf{Multiple peaks event.}
We denote the intersections of $\LL_{\alpha_0n}$ with $\Gamma_{\langle 0,a\rangle, \langle n,b\rangle}$ and $\Gamma_{u_i, \langle n,b^+\rangle}$ to be $\langle \alpha_0n, b_0 \rangle$ and $\langle \alpha_0n, b_i \rangle$, for $i=1,2,3,4,5$.
We next lower bound the differences between these $b_i$.

From \eqref{eq:adif} we have that $b_1-b_0\ge c_0(\delta_1n)^{2/3}$.
We next show that $b_i-b_{i-1}\ge c_0(\delta_1n)^{2/3}$, for each $i=2,3,4,5$.
By $\cT^c$ and considering $\Gamma_{\langle 0,a\rangle, \langle n,b\rangle}$ above $\LL_{\alpha_in}$ and $\Gamma_{u_i, \langle n,b^+\rangle}$, and using \eqref{eq:adic}, we have 
\[b_i-b_0 < \delta_0n^{2/3} + 2\delta^{-1}(\alpha_0-\alpha_i)^{2/3}n^{2/3}< \delta_0n^{2/3}+ 2\delta^{-1}(2\delta^{-7}\delta_in)^{2/3},\]
for each $i=1,2,3,4,5$, where the last inequality is by $\alpha_0-\alpha_i \le \delta^{-7}\sum_{i'=1}^i \delta_{i'} < 2\delta^{-7}\delta_i$.
Similarly, by $\cT^c$ and considering $\Gamma_{\langle 0,a\rangle, \langle n,b\rangle}$ and $\Gamma_{u_i, \langle n,b^+\rangle}$ above $\LL_{\alpha_{i-1}n}$, and using \eqref{eq:adif}, we have 
\[b_i-b_0 \ge c_0(\delta_in)^{2/3}- 2\delta_0 n^{2/3} - 2\delta^{-1}(\alpha_0-\alpha_{i-1})^{2/3}n^{2/3} > c_0(\delta_in)^{2/3}- 2\delta_0 n^{2/3} - 2\delta^{-1}(2\delta^{-7}\delta_{i-1}n)^{2/3}, \]
for each $i=2,3,4,5$.
Thus we get that 
\[
b_i-b_{i-1}>c_0(\delta_in)^{2/3}- 3\delta_0 n^{2/3} - 4\delta^{-1}(2\delta^{-7}\delta_{i-1}n)^{2/3} > c_0(\delta_1n)^{2/3},
\]for each $i=2,3,4,5$.

Besides, since $|b_0|\le 2\hh n^{2/3}$ (by \eqref{eq:transabn}), we have that $-2\hh n^{2/3}\le b_0 < b_5 < (2\hh + 1)n^{2/3}$.

To obtain the multiple peaks event at these $b_i$, the remaining task is to bound the passage times through each $\langle \alpha_0 n, b_i\rangle$, from $\langle 0, a^-\rangle$ to $\langle n, b^-\rangle$, where $a^-, b^-$ are the largest numbers satisfying $a^-, b^-\in \delta_0n^{2/3}\Z$ and $a^-\le a$, $b^-\le b$.
Recall that we denote $T_{u,v}^\bu = T_{u,v}-\xi(v)$ for any vertices $u\le v$.
For each $i=1,2,3,4,5$, denote $u_i'=\Gamma_{\langle 0,a\rangle, \langle n,b\rangle}\cap \LL_{\alpha_in}$, and we have $|ad(u_i)|, |ad(u_i')|\le 4\hh n^{2/3}$ by \eqref{eq:transabn} and \eqref{eq:adic}. We then have
\[
\begin{split}
&T_{\langle 0,a^-\rangle, \langle \alpha_0n,b_i \rangle}^\bu + T_{\langle \alpha_0n,b_i \rangle, \langle n,b^-\rangle}
\\
\ge &T_{\langle 0,a\rangle, \langle \alpha_0n,b_i \rangle}^\bu + T_{\langle \alpha_0n,b_i \rangle, \langle n,b^+\rangle} - 2\delta_0^{1/2-0.02}n^{1/3}
\\
\ge &
T_{\langle 0,a\rangle, u_i'}^\bu + T_{u_i', \langle \alpha_0n,b_i \rangle}^\bu + T_{\langle \alpha_0n,b_i \rangle, \langle n,b^+\rangle} - 2\delta_0^{1/2-0.02}n^{1/3}
\\
\ge &
T_{\langle 0,a\rangle, u_i'}^\bu + T_{u_i, \langle \alpha_0n,b_i \rangle}^\bu + T_{\langle \alpha_0n,b_i \rangle, \langle n,b^+\rangle} - 3\delta_0^{1/2-0.02}n^{1/3}
\\
= &
T_{\langle 0,a\rangle, u_i'}^\bu + T_{u_i, \langle n,b^+\rangle} - 3\delta_0^{1/2-0.02}n^{1/3}
\\
\ge &T_{\langle 0,a\rangle, u_i'}^\bu + T_{u_i', \langle n,b\rangle} - 4\delta_0^{1/2-0.02}n^{1/3}
\\
= &
T_{\langle 0,a\rangle, \langle n,b\rangle} - 4\delta_0^{1/2-0.02}n^{1/3}
\\
\ge &T_{\langle 0,a^-\rangle, \langle n,b^-\rangle} - 5\delta_0^{1/2-0.02}n^{1/3},
\end{split}
\]
where the second inequality is by $T_{\langle 0,a\rangle, \langle \alpha_0n,b_i \rangle}^\bu\ge T_{\langle 0,a\rangle, u_i'}^\bu + T_{u_i', \langle \alpha_0n,b_i \rangle}^\bu$ which follows from the definition of passage times, and every other inequality is due to $\cF^c$.
Note that if $\langle \alpha_0n, b_0^- \rangle$ is the intersection of $\Gamma_{\langle 0,a^-\rangle, \langle n,b^-\rangle}$ with $\LL_{\alpha_0 n}$, then $-2\hh n^{2/3}\le b_0^-\le b_0$ by $\cT_*^c$ and ordering of geodesics (Lemma \ref{l:ordering}).
Then we have that $\cM^{\langle 0,a^-\rangle, \langle n,b^-\rangle}_{c_0(\delta_1n)^{2/3}, \delta_0^{1/2-0.03}\delta_1^{-1/3},\alpha_0 n, 4\hh n^{2/3}}$ holds with $b_0^-<b_1<b_2<b_3<b_4<b_5$.
Also note that $\alpha_0\ge \alpha_5>\kappa$, and \[
\alpha_0\le \alpha_5+\sum_{i=1}^5 \delta^{-7}\delta_i< \alpha_5 + 2\delta^{-7}\delta_5 \le \kappa+\delta_5+2\delta^{-7}\delta_5 < 1-\kappa.\]
Thus we get a contradiction with $\cM^c$.
Now we conclude that there exists $u^*\in\fP_1$ with $d(u^*)<4\kappa n$, such that there is $u\in\Gamma_{\langle 0,a\rangle, \langle n,b\rangle}$ with $d(u)=d(u^*)$ and $ad(u) \le ad(u^*) \le ad(u)+2\delta_0n^{2/3}$, and $\Gamma_{u^*,\langle n ,b^+\rangle}$ intersects $\Gamma_{\langle 0,a\rangle, \langle n,b\rangle}$ before $\LL_{2\kappa n}$.\\

\noindent\textbf{Final steps.}
Using the same arguments, we can find $v^*\in\fP_2$ with $d(v^*)>(2-4\kappa)n$, such that there is $v\in\Gamma_{\langle 0,a\rangle, \langle n,b\rangle}$ with $d(v)=d(v^*)$ and $ad(v) \le ad(v^*) \le ad(v)+2\delta_0n^{2/3}$, and $\Gamma_{u^*,v^*}$ intersects $\Gamma_{\langle 0,a\rangle, \langle n,b\rangle}$ after $\LL_{(1-2\kappa)n}$.
We now consider the geodesics $\Gamma_{\langle 0,a\rangle, \langle n,b\rangle}$, $\Gamma_{u^*,\langle n,b^+\rangle}$, and $\Gamma_{u^*,v^*}$, between $\LL_{2\kappa n}$ and $\LL_{(1-2\kappa)n}$.
By ordering of geodesics (Lemma \ref{l:ordering}), we have that either $\Gamma_{u^*,\langle n,b^+\rangle}$ is sandwiched between $\Gamma_{\langle 0,a\rangle, \langle n,b\rangle}$ and $\Gamma_{u^*,v^*}$, or $\Gamma_{u^*,v^*}$ is sandwiched between $\Gamma_{\langle 0,a\rangle, \langle n,b\rangle}$ and $\Gamma_{u^*,\langle n,b^+\rangle}$.
In the former case we have that $\Gamma_{u^*,\langle n,b^+\rangle}$ intersects $\Gamma_{\langle 0,a\rangle, \langle n,b\rangle}$ before $\LL_{2\kappa n}$ and after $\LL_{(1-2\kappa)n}$, so $\Gamma_{\langle 0,a\rangle, \langle n,b\rangle}$ is the same as $\Gamma_{u^*,\langle n,b^+\rangle}$ between $\LL_{ 2\kappa n}$ and $\LL_{(1- 2\kappa) n}$; 
in the later case we have that $\Gamma_{u^*,v^*}$ intersects $\Gamma_{\langle 0,a\rangle, \langle n,b\rangle}$ before $\LL_{2\kappa n}$ and after $\LL_{(1-2\kappa)n}$, so $\Gamma_{\langle 0,a\rangle, \langle n,b\rangle}$ is the same as $\Gamma_{u^*,v^*}$ between $\LL_{ 2\kappa n}$ and $\LL_{(1- 2\kappa) n}$. Thus the conclusion follows.
\end{proof}
We can now finish the proof of Proposition \ref{prop:uniform-conv} using Lemma \ref{lem:cov-geo}.
\begin{proof}[Proof of Proposition \ref{prop:uniform-conv}]
As stated above we write the proof for $\rho=1/2$ for simplicity of notations.

We now consider general $n$, i.e.\;not necessarily in $\sN$.
We let $n'$ be the largest number such that $n'\le n$ and $n'\in \sN$.
Then we have that $n'\to\infty$ and $n'/n\to 1$ as $n\to\infty$. 
We define $\cE'$ as $\cE$ for $n'$ instead of $n$, and $\fP_1'$, $\fP_2'$ as $\fP_1$, $\fP_2$ for $n'$ instead of $n$.

By Theorem \ref{thm:finite-slope}, as $n\to\infty$ we have
\[
\max_{u\in\fP_1',v\in\fP_2'} |\mu_{u,v}(f)-\nu(f)| \to 0,
\]
in probability. Thus by Lemma \ref{lem:cov-geo}, and that $f$ is a bounded function on $\R^{\llbracket -s, s\rrbracket^2}\times\{0,1\}^{\llbracket -s, s\rrbracket^2}$, we have
\[
\PP\left[\cE', \max_{a,b\in\Z,|a|,|b|<\hh {n'}^{2/3}} |\mu_{\langle 0,a \rangle, \langle n',b \rangle}(f) - \nu(f)| > 10\kappa\|f\|_\infty\right] \to 0.
\]
Denote $\cT'=\cT^{\langle 0, \lceil hn^{2/3}\rceil \rangle, \langle n, \lceil hn^{2/3}\rceil \rangle}_{n-n', \hh {n'}^{2/3}(n-n')^{-2/3}/2}\cup \cT^{\langle 0, -\lceil hn^{2/3}\rceil \rangle, \langle n, -\lceil hn^{2/3}\rceil \rangle}_{n-n', \hh {n'}^{2/3}(n-n')^{-2/3}/2}$.
By ordering of geodesics (Lemma \ref{l:ordering}), we have that $\Gamma_{\langle 0, a\rangle, \langle n, b\rangle}$ for $|a|, |b| < hn^{2/3}$ is sandwiched between $\Gamma_{\langle 0, -\lceil hn^{2/3}\rceil \rangle, \langle n, -\lceil hn^{2/3}\rceil \rangle}$ and $\Gamma_{\langle 0, \lceil hn^{2/3}\rceil \rangle, \langle n, \lceil hn^{2/3}\rceil \rangle}$; so assuming the complement of $\cT'$, we must have that for any $\Gamma_{\langle 0, a\rangle, \langle n, b\rangle}$ with $|a|, |b| < hn^{2/3}$, it intersects $\LL_{n'}$ at some vertex $\langle n', b'\rangle$ with 
\[|b'|\le \lceil hn^{2/3}\rceil+\hh {n'}^{2/3}/2 < \hh {n'}^{2/3},\] where the second inequality is by taking $\hh$ much larger than $h$.
Thus we have
\[
\PP\Big[\cE'\cap
{\cT'}^c,
\max_{a,b\in\Z,|a|,|b|<hn^{2/3}/2} |\mu_{\langle 0,a \rangle, \langle n,b \rangle}(f) - \nu(f)| > (10\kappa+2(n-n')/n)\|f\|_\infty\Big] \to 0.
\]
Then since $(n-n')/n\to 0$ as $n\to\infty$, we have
\begin{equation}  \label{eq:uni-cov-f-b}
\limsup_{n\to\infty}\PP\Big[
\max_{a,b\in\Z,|a|,|b|<hn^{2/3}/2} |\mu_{\langle 0,a \rangle, \langle n,b \rangle}(f) - \nu(f)| > 11\kappa\|f\|_\infty\Big] \le \limsup_{n\to\infty}\PP[\cT'] + \PP[{\cE'}^c].    
\end{equation}
By Lemma \ref{lem:trans-fluc}, we have $\limsup_{n\to\infty}\PP[\cT']=0$.
Also, by the discussion of the events $\cT$, $\cT_*$, $\cF$, $\cD$, $\cH$, $\cM$ before Lemma \ref{lem:cov-geo}, we have that $\lim_{\hh\to\infty}\limsup_{\delta\to 0}\limsup_{n\to\infty}\PP[{\cE'}^c] = 0$.
Thus we conclude that the left-hand side of \eqref{eq:uni-cov-f-b} equals $0$. Then since $\kappa$ can be arbitraily taken, the conclusion follows.
\end{proof}

For the next two subsections we prove Lemmas \ref{lem:conti-passage-time}, \ref{lem:disjoint-paths}, and \ref{lem:multi-peaks}.

\subsection{Continuity of passage times and multiple peaks}  \label{ssec:multi-p}
In this subsection we prove Lemma \ref{lem:conti-passage-time} and \ref{lem:multi-peaks}.
For both of them we use the convergence of the point-to-line profile to the Airy$_2$ process, which is a stationary ergodic process minus a parabola. 
Such convergence in the sense of finite dimensional distributions is from \cite{BF08,BP08}. 
Using the so-called slow decorrelation phenomenon, and proving equicontinuity of the point-to-line profile,
it also follows that the weak convergence holds in the topology of uniform convergence on compact sets \cite{basu2019temporal,FO17}.
More precisely, let $\cA_2$ denote the stationary Airy$_2$ process on $\R$, and let us define the stochastic process $\cL:\R\to \R$ by
\[\cL(x):=\cA_{2}(x)-x^2.\]
We quote the following result.
\begin{theorem}[\protect{\cite[Theorem 3.8]{basu2019temporal}}]  \label{thm:profile-a2}
Consider the function
\[
\cL_n:x\mapsto 2^{-4/3}n^{-1/3}\left(T_{\boo,\langle n, x(2n)^{2/3}\rangle}-4n\right),
\]
where we linearly interpolate between points in $(2n)^{-2/3}\Z$.
As $n\to\infty$, we have $\cL_n\to\cL$ weakly in the topology of uniform convergence on compact sets.
\end{theorem}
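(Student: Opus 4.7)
The plan is to combine finite-dimensional-distribution (fdd) convergence with tightness in the uniform-on-compacts topology. For the fdd step, I would invoke the determinantal structure of exponential LPP: by Johansson, $T_{\boo,(m,n)}$ has the distribution of the largest eigenvalue of a complex Wishart ensemble, and more generally the joint law of $T_{\boo,\langle n,b\rangle}$ as $b$ varies along $\LL_n$ admits an extended determinantal description whose kernel is explicit. Steepest-descent analysis at the edge, under the scaling
\[
F_n(x) := 2^{-4/3} n^{-1/3}\left(T_{\boo,\langle n,\lfloor x(2n)^{2/3}\rfloor\rangle}-4n\right),
\]
yields convergence of the $k$-point distributions of $F_n$ to those of $\cA_{2}(x)-x^2=\cL(x)$, for any finite tuple of arguments. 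This is the input from the integrable side of the model.

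For tightness, I would establish an equicontinuity bound of Kolmogorov--Chentsov flavour: for each compact $[-K,K]$, each small $\delta>0$, and each $t\geq 1$,
\[
\PP\Bigl[\sup_{\substack{|x|,|x'|\leq K\\|x-x'|\leq \delta}} |F_n(x)-F_n(x')| \geq t\delta^{1/2-\varepsilon}+C_K \delta\Bigr] \leq C_K e^{-ct},
\]
uniformly in $n$ large, for some fixed small $\varepsilon>0$. This is exactly the statement of Lemma~\ref{lem:conti-passage-time} transcribed into the rescaled variable. To prove it, I would apply Proposition~\ref{t:seg-to-seg}: the parallelogram estimate controls the joint fluctuation of $T_{u,v}$ when $u,v$ range over anti-diagonal segments of length $n^{2/3}$, which is exactly the relevant transversal scale. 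Combining this with the mean asymptotics \eqref{e:mean} (which supplies the deterministic parabolic $-x^2$ centering) and a union bound over a $\delta$-net of $[-K(2n)^{2/3},K(2n)^{2/3}]\cap\Z$ gives the required modulus estimate.

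Once the fdd limit and the tightness estimate are in hand, weak convergence of $F_n$ to $\cL$ in $C([-K,K])$ follows for every $K$, which is the claimed convergence in the topology of uniform convergence on compact sets.

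The main obstacle is the tightness estimate. Proposition~\ref{t:seg-to-seg} gives uniform control of $T_{u,v}-\E T_{u,v}$ when $u,v$ range over short anti-diagonal segments, but what is needed is control of the \emph{difference} $T_{\boo,\langle n,b_1\rangle}-T_{\boo,\langle n,b_2\rangle}$ when $|b_1-b_2|$ is small; in particular, the same starting vertex $\boo$ appears on both sides. The bridge is a geodesic-coupling argument: by the transversal fluctuation estimates (Corollary~\ref{cor:trans-fluc-comb}), with high probability both geodesics $\Gamma_{\boo,\langle n,b_i\rangle}$ enter a common thin parallelogram by anti-diagonal $\LL_{n-n^{2/3+\varepsilon}}$, so the two passage times differ only by the contribution of a short terminal segment. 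Applying Proposition~\ref{t:seg-to-seg} to that short segment, in the form of the uniform fluctuation over parallelograms with appropriately tuned aspect ratio, then produces the $t\delta^{1/2-\varepsilon}n^{1/3}$ bound with the needed exponential tail in $t$.
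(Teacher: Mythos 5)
The paper does not prove Theorem~\ref{thm:profile-a2}; it is quoted verbatim from \cite[Theorem 3.8]{basu2019temporal}, and the preceding paragraph in Section~7.1 explains that the route is fdd convergence (from \cite{BF08,BP08}), slow decorrelation, and equicontinuity of the prelimiting profile. Your overall scheme --- fdd convergence plus Kolmogorov--Chentsov tightness --- matches that route, and your identification of Proposition~\ref{t:seg-to-seg} as the engine for tightness is the right instinct. However, there is a genuine gap in the tightness step, and a circularity you should be aware of.

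The circularity: you propose to use Lemma~\ref{lem:conti-passage-time} ``transcribed into the rescaled variable'' as the tightness input. But in this paper, Lemma~\ref{lem:conti-passage-time} is proved \emph{from} Theorem~\ref{thm:profile-a2}: the proof of Lemma~\ref{lem:conti-pass-one-side} begins ``By Theorem~\ref{thm:profile-a2}, it suffices to bound \ldots'' and then applies the Brownian comparison Theorem~\ref{thm:a2-comp-br}. So you would need to re-prove the modulus estimate from scratch, which is what you attempt in the last paragraph. The gap is there. You claim that, with high probability, both geodesics $\Gamma_{\boo,\langle n,b_1\rangle}$ and $\Gamma_{\boo,\langle n,b_2\rangle}$ enter a thin parallelogram above $\LL_{n-r}$ ``so the two passage times differ only by the contribution of a short terminal segment.'' This does not follow: confinement to a thin parallelogram is not coalescence. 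Writing $w_i=\Gamma_{\boo,\langle n,b_i\rangle}\cap\LL_{n-r}$, the difference $T_{\boo,\langle n,b_1\rangle}-T_{\boo,\langle n,b_2\rangle}$ decomposes as a short-segment part $T_{w_1,\langle n,b_1\rangle}-T_{w_2,\langle n,b_2\rangle}$ (which is indeed of order $r^{1/3}$) \emph{plus} the term $T_{\boo,w_1}-T_{\boo,w_2}$. With the natural choice $r\asymp\delta^{3/2}n$, the transversal fluctuation estimates give $|ad(w_1)-ad(w_2)|\lesssim \delta n^{2/3}$, so $T_{\boo,w_1}-T_{\boo,w_2}$ is exactly a difference of the same form you started with --- the reduction loops. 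Nor can you escape via Proposition~\ref{prop:coalesce}: the coalescence probability there is only $1-Cm^{-2/3}|b_1-b_2|$, which is polynomially, not exponentially, close to $1$, so it cannot furnish the $e^{-ct}$ tail the modulus estimate requires. A non-circular proof of the tightness bound needs either a crossing-monotonicity argument to decouple the starting point (so that $T_{\boo,w_1}-T_{\boo,w_2}$ is sandwiched by quantities controlled directly by Proposition~\ref{t:seg-to-seg}), or a comparison with the Busemann/stationary LPP profile whose increments are i.i.d.\ --- neither of which your sketch supplies.
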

We shall also use the following (quantitative) comparison between the Airy$_2$ process, and a Brownian motion.

For $K\in \R, d>0$, let $\cB^{[K,K+d]}$ denote the law of a Brownian motion with diffusivity $2$ on $[K,K+d]$, taking value $0$ at $K$. Let $\cL^{[K,K+d]}$ denote the random function on $[K,K+d]$ defined by 
\[\cL^{[K,K+d]}(x):=\cL(x)-\cL(K), ~\forall x\in [K,K+d].\]
Let $C\big([K,K+d],\mathbb{R} \big)$ denote the space of all real-valued continuous functions defined on $[K,K+d]$ which vanish at $K$, with the topology of uniform convergence. The following result can be obtained from \cite{HHJ+}. 
\begin{theorem}[\protect{\cite[Theorem 1.1]{HHJ+}}]\label{thm:a2-comp-br}
There exists an universal constant $G>0$ such that the following holds.
For any fixed $M>0,$ there exists $a_0=a_0(M)$ such that for all intervals $[K,K+d]\subset [-M,M]$ and for all measurable $A \subset C\big([K,K+d],\mathbb{R} \big)$ with $0<\cB^{[K,K+d]}(A)=a\le a_0,$
\[\PP\left[\cL^{[K,K+d]} \in A\right] \leq a     e^{ G {M} ( \log a^{-1} )^{5/6} }.\]
\end{theorem}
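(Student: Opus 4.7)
The plan is to invoke the Brownian Gibbs property of the Airy line ensemble $(\cA_k)_{k\ge1}$, whose top line (after subtracting the parabola $x^2$) is exactly $\cL$. This property states that, conditional on the curves $\cA_k$ for $k\ge2$ and on the endpoint values $\cA_1(K), \cA_1(K+d)$, the restriction of $\cA_1$ to $[K, K+d]$ is distributed as a Brownian bridge with diffusivity $2$ between those endpoints, conditioned to stay above $\cA_2$ throughout the interval. From this, the conditional Radon--Nikodym derivative of $\cL^{[K,K+d]}$ with respect to $\cB^{[K,K+d]}$ equals $Z^{-1}\mathbf{1}\{\text{bridge stays above } \cA_2\}$, where $Z$ is the free-bridge probability of the non-crossing event. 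So the task reduces to showing $\E[Z^{-1}\mathbf{1}_A]\le a \exp(GM(\log a^{-1})^{5/6})$, and the core difficulty is a sufficiently strong tail control on $Z^{-1}$.

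To bound $\E[Z^{-1}\mathbf{1}_A]$, I would use Hammond's \emph{jump ensemble} technique: introduce a modified resampling in which the bridge is allowed to ``jump over'' the lower curve with tunable height parameter $h$. Under this modification one obtains an improved lower bound on the partition function at the cost of a Gaussian penalty of order $\exp(-ch^2/d)$ for the bridge reaching height $h$. Optimizing $h$ against the factor $\log a^{-1}$ produced by the smallness of $A$, one arrives at the balance $h \sim (\log a^{-1})^{1/3}$, which yields a net multiplicative factor of order $\exp(GM(\log a^{-1})^{5/6})$. The linear dependence on $M$ reflects both the parabolic drift from subtracting $x^2$ (which grows in $M$) and the need to iterate or union-bound the estimates across the window $[-M, M]$ of bounded but nontrivial length.

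The main obstacle is the rigorous execution of the jump ensemble construction: it requires careful monotone couplings between Brownian bridges with different endpoint configurations, quantitative one-point upper and lower tail bounds on the curves $\cA_k$ (including uniform control on how far $\cA_2$ can descend below $\cA_1$), and an induction on the Gibbs data of the deeper curves to handle the unbounded range of $\cA_2$ relative to $\cA_1$. Since the statement is the main theorem of \cite{HHJ+}, in practice we simply quote it; the heuristic above is meant to indicate why the $5/6$ exponent — rather than the naive $1$ coming from a trivial Radon--Nikodym bound, or the $1/2$ from a one-scale Gaussian estimate — is the sharp exponent produced by the optimization, and to emphasize that we will use Theorem \ref{thm:a2-comp-br} as a black box in the proofs of Lemma \ref{lem:conti-passage-time} and Lemma \ref{lem:multi-peaks}, converting continuity and multi-peak estimates for Brownian motion into the corresponding statements for the point-to-line last-passage profile via Theorem \ref{thm:profile-a2}.
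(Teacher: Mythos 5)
The paper offers no proof of this result; it simply cites \cite[Theorem~1.1]{HHJ+} and uses it as a black-box comparison between $\cL^{[K,K+d]}$ and Brownian motion in the proofs of Lemma~\ref{lem:conti-passage-time} and Lemma~\ref{lem:multi-peaks}, which is exactly how you propose to use it. Your sketch of the jump-ensemble mechanism behind the $5/6$ exponent is reasonable background (though the claimed scaling $h\sim(\log a^{-1})^{1/3}$ does not by itself produce the $5/6$ via a naive $e^{-ch^2}$ penalty), but it plays no role in the paper's argument, which treats the theorem as an external input.
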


Now we prove Lemma \ref{lem:conti-passage-time}.
We start with the following estimate on deviations when moving one endpoint.
\begin{lemma} \label{lem:conti-pass-one-side}
There are constants $c,C>0$ such that the following holds.
For any $h\in\R$, $0\theta<1$, and $t>1$, we have
\begin{equation}  \label{eq:probdiv}
\PP\left[ \max_{hn^{2/3}<b, b'<(h+1)n^{2/3}, |b-b'|<\theta n^{2/3}} |T_{\boo,\langle n, b\rangle} - T_{\boo,\langle n, b'\rangle}| >  t\theta^{1/2-0.01}n^{1/3} + C(|h|+1)\theta n^{1/3} \right] < Ce^{-ct}    
\end{equation}
for $n$ large enough (depending on $h,\theta,t$).
\end{lemma}
\begin{proof}
For any continuous function $f:\R\to \R$, we let
\[
\sM (f):= \max_{2^{-2/3}h\le x, x'\le 2^{-2/3}(h+1), |x-x'|\le 2^{-2/3}\theta} |f(x)-f(x')|.
\]
It is straightforward to check that $\sM$ is a continuous functional on the space of all continuous real-valued functions on $\R$, with the topology of uniform convergence on compact sets.

By Theorem \ref{thm:a2-comp-br}, $\sM (\cL)$ has continuous distribution since this is the case when $\cL$ is replaced by a Brownian motion.
Thus by Theorem \ref{thm:profile-a2}, as $n\to\infty$ we have
$\PP[\sM(\cL_n) > x ] \to \PP[\sM(\cL) > x ]$ for any $x>0$.
We note that the left-hand side of \eqref{eq:probdiv} is bounded by
\[
\PP[\sM(\cL_n) > 2^{-4/3}t\theta^{1/2-0.01} + 2^{-4/3}C(|h|+1)\theta ].
\]
Thus as $n\to\infty$, the $\limsup$ of the left-hand side of \eqref{eq:probdiv} is bounded by
\[
\PP[\sM(\cL) > 2^{-4/3}t\theta^{1/2-0.01} + 2^{-4/3}C(|h|+1)\theta ].
\]
We next show that this is bounded by $Ce^{-ct}$.
When $C>2$ we have $|x^2-x'^2|<2^{-4/3}C(|h|+1)\theta$ for all $x,x'$ with $2^{-2/3}h\le x, x'\le 2^{-2/3}(h+1)$ and $|x-x'|\le 2^{-2/3}\theta$.
Then by stationarity of $\cA_2$, we can bound this probability by
\[
\PP\left[\max_{0\le x, x'\le 2^{-2/3}, |x-x'|\le 2^{-2/3}\theta} |\cL(x)-\cL(x')| >  2^{-4/3}t\theta^{1/2-0.01} \right].
\]
Note that the event now only relies on $\cL^{[0,2^{-2/3}]}$.
Using modulus of continuity for Brownian motions and Theorem \ref{thm:a2-comp-br}, we can bound this by $Ce^{-ct}$ as desired.
\end{proof}
We can now prove Lemma \ref{lem:conti-passage-time} by using Lemma \ref{lem:conti-pass-one-side} repeatedly. 
\begin{proof} [Proof of Lemma \ref{lem:conti-passage-time}]
First, note that we have the following inequality for passage times: 
\[
T_{\langle 0,a \rangle,\langle n,b\rangle} - T_{\langle 0,a \rangle,\langle n,b'\rangle} \ge
T_{\langle 0,a' \rangle,\langle n,b\rangle}
- T_{\langle 0,a' \rangle,\langle n,b'\rangle}
\]
for any $a\le a', b\le b'$.
Indeed, if we take the geodesics $\Gamma_{\langle 0,a \rangle,\langle n,b'\rangle}$ and $\Gamma_{\langle 0,a' \rangle,\langle n,b\rangle}$, then they must intersect. By switching the paths after their first intersection, we get two up-right paths, from $\langle 0,a \rangle$ to $\langle n,b\rangle$ and from $\langle 0,a' \rangle$ to $\langle n,b'\rangle$, and the sum of their passage times equals $T_{\langle 0,a' \rangle,\langle n,b\rangle} + T_{\langle 0,a \rangle,\langle n,b'\rangle}$.
Thus we get the above inequality from the definition of last-passage times.

Using this inequality, for any $|a|,|a'|,|b|,|b'|<hn^{2/3}$ we have
\[
|T_{\langle 0, a\rangle, \langle n, b\rangle}-T_{\langle 0, a\rangle, \langle n, b'\rangle}| \le
|T_{\langle 0, -\lceil hn^{2/3}\rceil\rangle, \langle n, b\rangle}-T_{\langle 0, -\lceil hn^{2/3}\rceil\rangle, \langle n, b'\rangle}|
\vee |T_{\langle 0, \lceil hn^{2/3}\rceil\rangle, \langle n, b\rangle}-T_{\langle 0, \lceil hn^{2/3}\rceil\rangle, \langle n, b'\rangle}|,
\]
and
\[
|T_{\langle 0, a\rangle, \langle n, b'\rangle}-T_{\langle 0, a'\rangle, \langle n, b'\rangle}| \le
|T_{\langle 0, a\rangle, \langle n, -\lceil hn^{2/3}\rceil\rangle}-T_{\langle 0, a'\rangle, \langle n, -\lceil hn^{2/3}\rceil\rangle}|
\vee |T_{\langle 0, a\rangle, \langle n, \lceil hn^{2/3}\rceil\rangle}-T_{\langle 0, a'\rangle, \langle n, \lceil hn^{2/3}\rceil\rangle}|.
\]
By adding up these two inequalities and using the triangle inequality, we have
\[
\begin{split}
|T_{\langle 0, a\rangle, \langle n, b\rangle}-T_{\langle 0, a'\rangle, \langle n, b'\rangle}|
\le & |T_{\langle 0, -\lceil hn^{2/3}\rceil\rangle, \langle n, b\rangle}-T_{\langle 0, -\lceil hn^{2/3}\rceil\rangle, \langle n, b'\rangle}|
\vee |T_{\langle 0, \lceil hn^{2/3}\rceil\rangle, \langle n, b\rangle}-T_{\langle 0, \lceil hn^{2/3}\rceil\rangle, \langle n, b'\rangle}|\\
+&|T_{\langle 0, a\rangle, \langle n, -\lceil hn^{2/3}\rceil\rangle}-T_{\langle 0, a'\rangle, \langle n, -\lceil hn^{2/3}\rceil\rangle}|
\vee |T_{\langle 0, a\rangle, \langle n, \lceil hn^{2/3}\rceil\rangle}-T_{\langle 0, a'\rangle, \langle n, \lceil hn^{2/3}\rceil\rangle}|
\end{split}
\]
By symmetry, now it suffices to bound 
\[
\PP\left[ \max_{\substack{|b|, |b'| < hn^{2/3} \\
|b-b'|<\theta n^{2/3}}}
|T_{\langle 0, -\lceil hn^{2/3}\rceil\rangle, \langle n, b\rangle}-T_{\langle 0, -\lceil hn^{2/3}\rceil\rangle, \langle n, b'\rangle}| > \frac{1}{2}(t\theta^{1/2-0.01}n^{1/3} + Ch\theta n^{1/3})
\right].
\]
For this we split $\{\langle 0, b\rangle: |b|<hn^{2/3}\}$ into overlapping segments of length $n^{2/3}$, and apply Lemma \ref{lem:conti-pass-one-side} to each of them, and get the desired bound.
\end{proof}

We next prove Lemma \ref{lem:multi-peaks}.
Again, using Theorem \ref{thm:profile-a2} we reduce the point-to-line profiles to Airy$_2$ processes, and then by applying Theorem \ref{thm:a2-comp-br} we can just prove the result for Brownian motions.
\begin{proof}[Proof of Lemma \ref{lem:multi-peaks}]
Denote $\cL_{\alpha,\beta}:\R\to\R$ as the process given by
\[
\cL_{\alpha,\beta}(x):=\alpha^{1/3}\cL(\alpha^{-2/3}x) + (1-\alpha)^{1/3}\cL'((1-\alpha)^{-2/3}(x-2^{-2/3}\beta)),
\]
where $\cL'$ is an independent copy of $\cL$.
Denote
\[
\cL_{n,\alpha,\beta}(x):= 2^{-4/3}n^{-1/3}\left(T_{\boo,\langle \lfloor \alpha n\rfloor, x(2n)^{2/3} \rangle}^\bu + T_{\langle \lfloor \alpha n\rfloor, x(2n)^{2/3} \rangle, \langle n, \lfloor\beta n^{2/3}\rfloor \rangle}-4n\right),
\]
where we linearly interpolate between points in $(2n)^{-2/3}\Z$.
Using Theorem \ref{thm:profile-a2}, we can deduce that $\cL_{n,\alpha,\beta}\to\cL_{\alpha,\beta}$ as $n\to\infty$, weakly in the topology of uniform convergence on compact sets.

We let $\Omega$ be the set of all continuous function $f:\R\to\R$, such that there exist $-2^{1/3}h\le x_1<x_2<x_3<x_4<x_5<x_6\le 2^{1/3}h$, with $x_2-x_1, x_3-x_2, x_4-x_3, x_5-x_4, x_6-x_5 \ge 2^{-2/3}\theta$, and $x_1=\argmax_{[-2^{4/3}h,2^{4/3}h]}f$, and
\[
f(x_1) \le f(x_i) + 2^{-4/3}t\theta^{1/2},\quad \forall i=2,3,4,5,6. 
\]
It is straightforward to check that $\Omega$ is a closed set, in the space of all continuous function with the topology of uniform convergence on compact sets.
It is also straightforward to check that $\cM_{\theta n^{2/3},t,\lfloor\alpha n\rfloor,2hn^{2/3}}^{\boo, \langle n, \lfloor\beta n^{2/3}\rfloor\rangle}$ implies $\cL_{n,\alpha,\beta}\in \Omega$.
So by Theorem \ref{thm:profile-a2} we have
\[
\limsup_{n\to\infty}\PP[\cM_{\theta n^{2/3},t,\lfloor\alpha n\rfloor,2hn^{2/3}}^{\boo, \langle n, \lfloor\beta n^{2/3}\rfloor\rangle}]\le
\limsup_{n\to\infty}\PP[\cL_{n,\alpha,\beta}\in \Omega] \le \PP[\cL_{\alpha,\beta} \in \Omega].
\]
We just need to bound the right-hand side.
By Theorem \ref{thm:a2-comp-br}, we can consider the probability of a (two-sided) Brownian motion (with diffusivity $4$) belonging to $\Omega$.
By Lemma \ref{lem:appa:1} below this probability is bounded by $Ct^5$ for $C>0$ being a universal constant, so the conclusion follows.
\end{proof}
We finally bound the event on Brownian motions.
\begin{lemma}\label{lem:appa:1}
There exists a universal constant $C>0$, such that for any $t,\theta>0$, the following event holds with probability at most $Ct^5$.
For $W:[-2,2]\to\R$ being a two-sided Brownian motion, there are $-1<x_1<x_2<x_3<x_4<x_5<x_6<1$, with $x_2-x_1, x_3-x_2, x_4-x_3, x_5-x_4, x_6-x_5 > \theta$, such that $x_1=\argmax_{[-2,2]}W$, and
\[
W(x_1) < W(x_i) + t\theta^{1/2},\quad \forall i=2,3,4,5,6. 
\]
\end{lemma}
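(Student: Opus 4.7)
The plan is to reduce the event to a statement about a Bessel$(3)$ process via Williams' path decomposition of Brownian motion at its global maximum, and then to prove the reduced tail bound by an iterative strong-Markov argument using the classical hitting probabilities for Bessel$(3)$.

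Assume WLOG $t\le 1$ and $\theta$ small enough that six points with consecutive spacings exceeding $\theta$ fit inside $(-1,1)$ (otherwise the lemma is trivial). Let $\tau:=\argmax_{[-2,2]} W\in(-1,1)$ and $M:=W(\tau)$, and set $L:=2-\tau$. By Williams' decomposition, conditional on $\tau$ the reflected post-maximum process
\[
R(s):=M-W(\tau+s),\qquad s\in[0,L],
\]
is distributed as a Bessel$(3)$ process started from $0$. Writing $s_i:=x_{i+1}-\tau$ for $i=1,\dots,5$, the event of the lemma forces $R(s_i)\le u:=t\sqrt{\theta}$ at five times satisfying $s_1\ge\theta$, $s_{i+1}-s_i\ge\theta$, and $s_i<1-\tau<L$. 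Since $\tau$ has a bounded density on $(-1,1)$, it therefore suffices to show that for $R$ a standard Bessel$(3)$ process from $0$,
\[
\PP\bigl(\exists\ 0<s_1<\cdots<s_5:\ s_1\ge\theta,\ s_{i+1}-s_i\ge\theta,\ R(s_i)\le u\bigr)\le Ct^5.
\]

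For this bound, introduce the stopping times $\sigma_0:=0$ and
\[
\sigma_k:=\inf\{s\ge\sigma_{k-1}+\theta:\ R(s)\le u\},\qquad k\ge 1.
\]
The event above is contained in $\{\sigma_5<\infty\}$. Two ingredients are needed: the classical Bessel$(3)$ hitting formula $\PP_r(R\text{ hits }u)=u/r$ for $r>u$, and the uniform estimate $\E[R(\theta)^{-1}\mid R(0)=r]\le C/\sqrt{\theta}$ for $r\in[0,u]$ (a short computation from the explicit Bessel$(3)$ transition density). The strong Markov property at $\sigma_{k-1}$ then gives
\[
\PP(\sigma_k<\infty\mid\mathcal{F}_{\sigma_{k-1}})\le\E\!\left[\min\!\Bigl(1,\ \tfrac{u}{R(\sigma_{k-1}+\theta)}\Bigr)\,\Big|\,R(\sigma_{k-1})\right]\le u\cdot\frac{C}{\sqrt{\theta}}=Ct,
\]
and iterating five times yields $\PP(\sigma_5<\infty)\le(Ct)^5$, which is the desired bound.

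The main technical obstacle is justifying the precise form of Williams' decomposition used above — that, conditional on $\tau$ alone, the process $R$ is an unconditioned Bessel$(3)$ process of length $L$, not merely a Bessel$(3)$ bridge to the random endpoint $M-W(2)$. This follows from standard path-decomposition identities for BM at its argmax, together with the fact that the marginal law of $M-W(2)$ given $\tau$ is exactly the distribution of $R(L)$ under the unconditioned Bessel$(3)$ process; averaging the Bessel$(3)$ bridge-event probability over this marginal therefore recovers the Bessel$(3)$ process-event probability.
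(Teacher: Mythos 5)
Your reduction to Bessel$(3)$ contains a genuine error in the decomposition step. Denisov's decomposition of Brownian motion at its maximum on a bounded interval says that, conditional on $\tau=\argmax_{[-2,2]}W$, the reflected post-maximum process $R(s)=M-W(\tau+s)$ on $[0,L]$ (with $L=2-\tau$) is a \emph{Brownian meander} of duration $L$, not an unconditioned Bessel$(3)$ process. The justification you offer --- that the marginal of $M-W(2)$ given $\tau$ coincides with that of $R(L)$ under Bessel$(3)$, so averaging the bridge over this marginal recovers the Bessel$(3)$ event probability --- is false: the endpoint density of a length-$L$ meander is proportional to $(r/L)e^{-r^2/2L}$, while that of Bessel$(3)$ from $0$ at time $L$ is proportional to $(r^2/L^{3/2})e^{-r^2/2L}$. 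By Imhof's relation the Radon--Nikodym derivative of the meander law with respect to the Bessel$(3)$ law on $[0,L]$ is $\sqrt{\pi L/2}\,/\,\omega(L)$, which is unbounded, so one cannot simply pass the bound from one to the other.

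The gap is repairable because you have a gap of length $1$ after the last relevant time: the event $A$ is $\cF_{1-\tau}$-measurable and $L-(1-\tau)=1$, so
\[
\PP_{\mathrm{me}}(A)=\sqrt{\pi L/2}\ \E_{\mathrm{Bes}}\!\bigl[\don[A]\ \E_{\mathrm{Bes}}[R(L)^{-1}\mid\cF_{1-\tau}]\bigr]\le \sqrt{\pi L/2}\cdot\sqrt{2/\pi}\cdot\PP_{\mathrm{Bes}}(A)\le\sqrt{3}\,\PP_{\mathrm{Bes}}(A),
\]
using $\E^{\mathrm{Bes}}_r[R_s^{-1}]=\tfrac1r\bigl(2\Phi(r/\sqrt s)-1\bigr)\le\sqrt{2/(\pi s)}$ uniformly over $r\ge0$; this is the very same identity you invoke with $s=\theta$ in the iteration. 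After this correction your stopping-time argument (Bessel$(3)$ hitting probability $u/r$ together with $\E_r[R_\theta^{-1}]\le C/\sqrt\theta$) is sound and yields $(Ct)^5$. Note that, once repaired, your argument is not really a different route from the paper's: the paper also conditions on the argmax location, introduces five successive stopping times at which the path returns to within $t\theta^{1/2}$ of the maximum, and bounds each step by $O(t)$; it phrases the post-maximum process as Brownian motion conditioned to stay negative on a finite horizon and uses reflection, which is the same Bessel$(3)$ hitting estimate in different clothing. Your framing makes the per-step bound cleaner, but the meander-versus-Bessel$(3)$ distinction is exactly where the content of the decomposition step lies and cannot be elided.
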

\begin{proof}
Fix $T_1 \in [-1,1]$, and let $\cE$ be the event where $W(T_1)=\max_{[-2,2]}W$.
For $i=2,3,4,5,6$, let $T_i=\min\{x\ge T_{i-1}+\theta:W(x)\ge W(x_1)-t\theta^{1/2}\}$.
It suffices to show that $\PP[T_6\le 1\mid \cE]<Ct^5$ for some universal constant $C>0$.
For $i=2,3,4,5,6$, conditioned on $\cE$ and the event $T_{i-1}\le 1$, and given the values of $T_{i-1}$ and $W(T_{i-1}) - W(T_1)$, the process $x\mapsto W(T_{i-1}+x) - W(T_1)$ on $[0,2-T_{i-1}]$ has the same law of $W'$, which is a Brownian motion on $[0,2-T_{i-1}]$ starting from $W'(0)=W(T_{i-1})-W(T_1)$ and conditioned to stay below zero (for $i=2$ this degenerates to a Brownian meander). Using the reflection principle we have that $\PP[\max_{[\theta, 2-T_{i-1}]}W' \ge -t\theta^{1/2}]<C't$ for some universal constant $C'>0$. So we have that $\PP[T_i \le 1\mid \cE, T_{i-1}\le 1] < C't$. Thus $\PP[T_6 \le 1\mid \cE] < (C't)^5$, which implies the conclusion.
\end{proof}

\subsection{Disjoint paths}  \label{ssec:dis-path}
In this subsection we prove Lemma \ref{lem:disjoint-paths}.
The idea is to show that for a path restricted to be close to another (deterministic) path for a while, its passage time is unlikely to be small (compared to that of a geodesic with the same endpoints).
We then use the FKG inequality to move from a deterministic path to a geodesic.
\begin{lemma}  \label{lem:max-small-seg}
For sufficiently small $c_0>0$, there is $c_1>0$, such that for $l\in\N$ large enough (depending on $c_0$) and any $r\in\Z$, we have
\[
\E\left[ \max_{a, b \in \llbracket 0, c_0l^{2/3}\rrbracket} T_{\langle 0,a\rangle, \langle l,r+b\rangle}\right] < 4l-c_1l^{1/3}.
\]
\end{lemma}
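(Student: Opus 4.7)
The plan is to split into cases according to the size of $|r|$ and combine, in each case, a sharp bound on the expected one-point passage time with either a continuity or a concentration estimate. The key input is the \emph{sign} of the $l^{1/3}$ correction in the one-point mean. Combining the uniform integrability provided by the tails in \eqref{e:wslope}--\eqref{e:slope} with the classical identification of $T_{\boo,(l,l)}$ as the largest eigenvalue of $X^*X$ for $X$ an $(l+1)\times(l+1)$ matrix of i.i.d.\ complex Gaussians, and the known Tracy--Widom$_2$ asymptotics, which have strictly negative mean, one obtains a constant $c_{TW}>0$ such that $\E T_{\boo,(l,l)} \le 4l - 2c_{TW}l^{1/3}$ for all $l$ large enough. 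Using the locally uniform convergence in Theorem \ref{thm:profile-a2} together with the same uniform integrability, this extends to
\[
\E T_{\boo,\langle l, c\rangle} \le 4l - c_{TW}\,l^{1/3},
\qquad \text{uniformly in } |c|\le K l^{2/3},
\]
for any fixed $K$ and $l$ large enough depending on $K$.

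In the main case $|r| \le K l^{2/3}$, with $K$ a constant to be chosen shortly, translation invariance gives $\E T_{\langle 0,a\rangle,\langle l, r+b\rangle} = \E T_{\boo,\langle l, r+b-a\rangle} \le 4l - c_{TW} l^{1/3}$ for all $0\le a,b\le c_0 l^{2/3}$ (since $|r+b-a|\le (K+c_0)l^{2/3}$). After an anti-diagonal shift of the origin by roughly $r/2$, all four anti-diagonal coordinates involved are $O(K l^{2/3})$, so one may apply Lemma \ref{lem:conti-passage-time} with $h\asymp K$ and $\theta\asymp c_0$ and integrate the resulting exponential tail to obtain
\[
\E\Bigl[\max_{0\le a,b\le c_0 l^{2/3}} T_{\langle 0,a\rangle,\langle l,r+b\rangle} - T_{\langle 0,0\rangle,\langle l, r\rangle}\Bigr] \le C_K\,c_0^{1/2-0.01}\,l^{1/3} + C_K\,c_0\,l^{1/3},
\]
using the trivial bound $\max T - T_{0,0} \le \max_{a,a',b,b'}|T - T'|$. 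Choosing $c_0$ small enough depending on $K$ and $c_{TW}$ makes this continuity correction at most $c_{TW}l^{1/3}/2$, yielding $\E[\max T] \le 4l - c_{TW}l^{1/3}/2$.

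In the complementary range $|r|>K l^{2/3}$ (taking $|r|\le l/2$; the remaining very-large-$|r|$ regime is vacuous by the shape function), each admissible pair satisfies $|r+b-a|\ge K l^{2/3}/2$ provided $c_0\le K/4$, and hence $(\sqrt{l+(r+b-a)}+\sqrt{l-(r+b-a)})^2 \le 4l - K^2 l^{1/3}/4$. Combined with \eqref{e:mean} this yields $\E T \le 4l - (K^2/4 - C')l^{1/3}$. Proposition \ref{t:seg-to-seg}, applied to a parallelogram of scale $l^{2/3}$ containing both endpoint segments, produces $\E\sup_{a,b}(T-\E T)\le C_*\, l^{1/3}$, and combining yields $\E[\max T]\le 4l - (K^2/4 - C' - C_*)l^{1/3}$. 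Picking $K$ first so that $K^2/4 - C' - C_* \ge c_{TW}/2$, and then $c_0$ small enough depending on $K$, settles this case; the lemma holds with $c_1=c_{TW}/2$.

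The main obstacle is securing the negative sign of the $l^{1/3}$ correction in the one-point mean: Theorem \ref{t:onepoint} by itself only gives $\E T_{\boo,\langle l,c\rangle}=4l\pm O(l^{1/3})$ without a sign, and extracting the sign forces one to invoke the Tracy--Widom limit and pass to the mean via uniform integrability. The remainder of the argument is the routine combination of this sharp one-point input with the continuity estimate of Lemma \ref{lem:conti-passage-time} and the concentration estimate of Proposition \ref{t:seg-to-seg}.
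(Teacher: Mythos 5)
Your proposal is correct and rests on the same essential input as the paper's proof, namely the strict negativity of the $l^{1/3}$ correction in the one-point mean coming from the GUE Tracy--Widom limit; but the mechanism for passing from the one-point bound to a bound on the maximum is genuinely different. The paper uses a concatenation (superadditivity) trick: it picks sacrificial endpoints $u=\langle -\lfloor c_0^{3/2}l\rfloor,0\rangle$ and $v=\langle l+\lfloor c_0^{3/2}l\rfloor,r'\rangle$ (with $r'$ rounding $r$ to the grid $\lfloor c_0 l^{2/3}\rfloor\Z$), observes that any competitor path can be extended to a $u$-to-$v$ path so that
\[
\max_{a,b} T_{\langle 0,a\rangle,\langle l,r+b\rangle}\;\le\; T_{u,v}-\min_a T_{u,\langle-1,a\rangle}-\min_b T_{\langle l+1,r+b\rangle,v},
\]
bounds the two short flanking minima from below by Proposition~\ref{t:seg-to-seg} (the choice of scale $c_0^{3/2}l$ makes the transverse width $c_0 l^{2/3}$ exactly order one), and bounds $\E T_{u,v}$ from above with the shape function when $l^{-2/3}|r|$ is large and the Tracy--Widom mean when it is small, using the grid to reduce to finitely many directions. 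You instead control $\max_{a,b}T - T_{\langle 0,0\rangle,\langle l,r\rangle}$ directly via the continuity estimate of Lemma~\ref{lem:conti-passage-time}, and split on $|r|\lessgtr K l^{2/3}$ much as the paper does for $\E T_{u,v}$. The paper's route is more self-contained (it only needs Proposition~\ref{t:seg-to-seg}, not the Airy$_2$-vs-Brownian machinery underlying Lemma~\ref{lem:conti-passage-time}), but yours is a fair alternative.

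Two places in your write-up deserve a little more care. First, when you ``integrate the resulting exponential tail'' from Lemma~\ref{lem:conti-passage-time}, note that the tail bound there is stated for $n$ large depending on $t$, so it cannot be integrated over all $t$ at once; you should cap $t$ at a large fixed $t_0$, use Lemma~\ref{lem:conti-passage-time} below $t_0$, and control the contribution from the event $\{\max|T-T'|>t_0\theta^{1/2-0.01}l^{1/3}\}$ by Cauchy--Schwarz together with the uniform-in-$l$ second-moment bound coming from Proposition~\ref{t:seg-to-seg}. Second, the uniformity in $c$ of the bound $\E T_{\boo,\langle l,c\rangle}\le 4l-c_{TW}l^{1/3}$ over the whole interval $|c|\le K l^{2/3}$ does not follow directly from weak convergence of the process plus uniform integrability at a point; you need either equicontinuity of the means (obtainable from Lemma~\ref{lem:conti-passage-time}) or to discretize $c$ as the paper does. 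These are repairable, but they are exactly the wrinkles the paper's grid-plus-concatenation scheme is designed to avoid.
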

\begin{proof}
Take $u=\langle -\lfloor c_0^{3/2}l \rfloor, 0\rangle$ and $v=\langle l+\lfloor c_0^{3/2}l \rfloor, r'\rangle$, where $r'$ is the number in $\lfloor c_0l^{2/3}\rfloor \Z$ with $r\le r'< r+\lfloor c_0l^{2/3}\rfloor$.
Note that
\[
\E \left[\max_{a, b \in \llbracket 0, c_0l^{2/3}\rrbracket} T_{\langle 0,a\rangle, \langle l,r+b\rangle}\right] 
\leq  \E[T_{u,v}] -\E\left[\min_{a \in \llbracket 0, c_0l^{2/3}\rrbracket} T_{u, \langle -1,a\rangle}\right]-\E\left[\min_{b \in \llbracket 0, c_0l^{2/3}\rrbracket} T_{\langle l+1,r+b\rangle, v}\right].
\]
By Proposition \ref{t:seg-to-seg}, we have \[\E\left[\min_{a \in \llbracket 0, c_0l^{2/3}\rrbracket} T_{u, \langle -1,a\rangle}\right],\;\E\left[\min_{b \in \llbracket 0, c_0l^{2/3}\rrbracket} T_{\langle l+1,r+b\rangle, v}\right] \geq 4c_0^{3/2}l-Cc_0^{1/2}l^{1/3},\]
where $C>0$ is a universal constant. 
We also claim that for $l$ sufficiently large,
\begin{equation}  \label{eq:max-small-seg1}
  \E[T_{u,v}]\leq 4(l+2c_0^{3/2}l)-c_2l^{1/3},
\end{equation}
for some small universal constant $c_2>0$.
Let $C'>0$ be a large enough universal constant.
When $l^{-2/3}|r|>C'$, \eqref{eq:max-small-seg1} follows from \eqref{e:mean}.
When $l^{-2/3}|r|\le C'$, for each $l$ there are at most $3C'/c_0$ possible numbers $r'$ can take. For each of them, by Theorem \ref{thm:profile-a2} the corresponding $T_{u,v}$ after rescaling converges (as $l\to\infty$) to one point of the Airy$_2$ process, whose law is given by the GUE Tracy-Widom distribution.
Thus \eqref{eq:max-small-seg1} (for $l$ large enough) follows since the GUE Tracy-Widom distribution has negative expectation.
By choosing $c_0$ such that $2Cc_0^{1/2}<c_2/2$ and letting $c_1=c_2/2$, we complete the proof.
\end{proof}
For the next lemma, as before we denote $T_{u,v}^\bu = T_{u,v}-\xi(v)$ for any vertices $u\le v$.
\begin{lemma}  \label{lem:disjoint-path-fix}
For $l,M\in\N$ and any $r_0,\ldots,r_M \in \Z$, we have
\[
\PP\left[ \max_{ a_0,\ldots,a_M\in \llbracket 0, cl^{2/3}\rrbracket}
\sum_{i=0}^{M-1}T_{\langle il, r_i+a_i \rangle, \langle (i+1)l, r_{i+1}+a_{i+1} \rangle}^\bu \ge 4Ml - cMl^{1/3}
\right] < Ce^{-cM},
\]
for some universal constants $c,C>0$, when $l$ is large enough.
\end{lemma}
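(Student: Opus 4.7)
The plan is to decompose the quantity inside the probability into a sum of independent contributions, one per strip between consecutive anti-diagonal lines, and then apply concentration. First, by pulling the joint maximum inside the sum,
\[
\max_{0\le a_0,\ldots,a_M \le c_0 l^{2/3}} \sum_{i=0}^{M-1} T^*_{\langle il, r_i+a_i\rangle, \langle (i+1)l, r_{i+1}+a_{i+1}\rangle} \le \sum_{i=0}^{M-1} Y_i,
\]
where $Y_i := \max_{0 \le a, b \le c_0 l^{2/3}} T^*_{\langle il, r_i+a\rangle, \langle (i+1)l, r_{i+1}+b\rangle}$. Because $T^*_{u,v}$ depends only on weights $\xi(w)$ with $d(u)\le d(w) < d(v)$, the variables $Y_0, \ldots, Y_{M-1}$ depend on disjoint sets of weights and are mutually independent.

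For each $Y_i$, translation invariance of the i.i.d.\ weights together with Lemma \ref{lem:max-small-seg} (applied with parameter $r = r_{i+1}-r_i$, and using $T^* \le T$) yields $\E Y_i \le 4l - c_1 l^{1/3}$ uniformly in $r_i, r_{i+1}$, for some absolute $c_1 > 0$. The next step is an upper-tail bound of sub-exponential type, namely
\[
\PP[Y_i > \E Y_i + x l^{1/3}] \le C e^{-cx}, \qquad x > 0,
\]
uniformly in $r_i, r_{i+1}$. When $|r_{i+1}-r_i| \le \psi l$ for some fixed $\psi < 1$, this follows from Proposition \ref{t:seg-to-seg} applied to the two segments of length $2c_0 l^{2/3}$ around $\langle il, r_i\rangle$ and $\langle (i+1)l, r_{i+1}\rangle$. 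When instead $|r_{i+1}-r_i| > \psi l$, the one-point bound \eqref{e:mean} already gives a far stronger deficit $\E Y_i \le 4l - c'' l$, while \eqref{e:wslope} provides a sub-exponential tail at the $l^{1/3}$ scale, so these slanted strips contribute harmlessly to the total.

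Finally, a Bernstein-type inequality for sums of independent sub-exponential variables applied to $Y_i - \E Y_i$ yields
\[
\PP\left[\sum_{i=0}^{M-1}(Y_i - \E Y_i) > \tfrac{c_1}{2} M l^{1/3}\right] \le C e^{-c' M}
\]
for some $c' > 0$; combined with $\sum_i \E Y_i \le 4Ml - c_1 M l^{1/3}$ and any $c \le c_1/2$ in the statement, this finishes the proof. The main obstacle I anticipate is ensuring uniformity of the per-strip tail bound in the $r_i$'s, since only the endpoint difference $|r_0-r_M|$ is constrained and individual $|r_{i+1}-r_i|$ can be as large as order $l$; the two-regime split above is the cleanest way I see to handle this.
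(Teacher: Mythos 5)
Your proposal follows essentially the same route as the paper: decompose into the per-strip maxima $Y_i$ (called $S_i$ in the paper), observe that because $T^*$ excludes the terminal vertex each $Y_i$ depends on weights in the disjoint strip $\{w:2il\le d(w)<2(i+1)l\}$ and hence they are independent, bound the mean via Lemma \ref{lem:max-small-seg}, establish a uniform exponential upper tail at scale $l^{1/3}$, and finish with a Bernstein-type bound on the independent sum.

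The one place you diverge from the paper is in the steep-slope subcase $|r_{i+1}-r_i|>\psi l$, where the aspect ratio violates the hypothesis of Proposition \ref{t:seg-to-seg}. The paper handles this by moving the terminal endpoint forward by $\lfloor 0.1l\rfloor$: since $T^*$ only increases, one gets an upper bound whose aspect ratio is restored, so Proposition \ref{t:seg-to-seg} applies directly and still controls the max over the whole $\sim l^{4/3}$ grid of endpoints. Your alternative — use \eqref{e:mean} for a large deficit and \eqref{e:wslope} for the tail — is workable because the deficit in this regime is of order $l$, not $l^{1/3}$, which dwarfs every error, but two of your statements are imprecise as written. First, the natural tail scale in \eqref{e:wslope} is $m^{1/2}n^{-1/6}$, which can be much larger than $l^{1/3}$ when $n=l-(r_{i+1}-r_i)$ is small (e.g.\ order $l^{1/2}$ when $n$ is of order $l^{2/3}$); it is not a tail at scale $l^{1/3}$. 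Second, both \eqref{e:mean} and \eqref{e:wslope} are one-point estimates, so a union bound over the $\sim l^{4/3}$ pairs $(a,b)$ is needed to control $Y_i$; the polynomial factor this produces is easily absorbed by the order-$l$ deficit, but the step should be made explicit. Once those two gaps are filled (or you adopt the paper's cleaner endpoint-shift device so that Proposition \ref{t:seg-to-seg} covers both regimes), the argument is complete and correct.
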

\begin{proof}
In this proof we let $c,C>0$ denote small and large enough universal constants, and their values can change from line to line.

Take $c_0, c_1>0$ such that Lemma \ref{lem:max-small-seg} holds.
For each $0\le i \le M-1$ we denote $S_i=\max_{a_i, a_{i+1} \in \llbracket 0, c_0l^{2/3}\rrbracket} T_{\langle il, r_i+a_i\rangle, \langle (i+1)l, r_{i+1}+a_{i+1}\rangle}^\bu$.
Then (by Lemma \ref{lem:max-small-seg}) we have each $\E[S_i] < 4l-c_1l^{1/3}$ when $l$ is large enough.

Next we apply Proposition \ref{t:seg-to-seg}.
When $|r_i-r_{i+1}|\le 0.9l$ we could directly apply it; and when $|r_i-r_{i+1}|> 0.9l$, the slope condition may not be satisfied, thus we use the fact that $T_{\langle il, r_i+a_i\rangle, \langle (i+1)l, r_{i+1}+a_{i+1}\rangle}^\bu < T_{\langle il, r_i+a_i\rangle, \langle (i+1)l+\lfloor 0.1l \rfloor, r_{i+1}+a_{i+1}\rangle}^\bu$, and upper bound the later using Proposition \ref{t:seg-to-seg}.
In either case we conclude that
$\PP[S_i > 4l + xl^{1/3}] < Ce^{-cx}$,
for any $x>0$.

Note that $S_i$ for each $i$ are independent.
Thus by a Bernstein type bound on the sum of independent random variables with exponential tails, we have 
\[
\PP\left[ \max_{a_0,\ldots,a_M\in \llbracket 0, c_0l^{2/3}\rrbracket}
\sum_{i=0}^{M-1}T_{\langle il, r_i+a_i \rangle, \langle (i+1)l, r_{i+1}+a_{i+1} \rangle}^\bu \ge 4Ml - \frac{c_1}{2}Ml^{1/3}
\right] < Ce^{-cM}.
\]
Then the conclusion follows.
\end{proof}
\begin{proof}[Proof of Lemma \ref{lem:disjoint-paths}]
Take any up-right path $\Gamma$ from $\boo$ to $\langle n,b\rangle$.
Denote $\cD_{\Gamma}$ as the following event: there exists an up-right path $\gamma$ from $\LL_m$ to $\LL_{m+Ml}$,
such that
\begin{itemize}
    \item $\gamma$ is disjoint from $\Gamma$.
    \item The passage time of $\gamma$ (i.e.\;$T(\gamma)$) is at least $4Ml-c_0Ml^{1/3}$.
    \item For each $i=0,1,\ldots,M$, $|ad(\Gamma\cap \LL_{m+il}) - ad(\gamma\cap \LL_{m+il})| < 2c_0l^{2/3}$.
\end{itemize}
Here $c_0>0$ is the same as in the definition of $\cD_{M,l,m}^{\boo, \langle n, b\rangle}$.
Now we consider the event $\Gamma_{\boo, \langle n, b\rangle}=\Gamma$.
Under this event we have $\cD_{M,l,m}^{\boo, \langle n, b\rangle} = \cD_\Gamma$.
Also, $\Gamma_{u,v}=\Gamma$ is a negative event of the field on $\Z^2\setminus \Gamma$, while $\cD_\Gamma$ is determined by the field on $\Z^2\setminus \Gamma$, and is a positive event of the field on $\Z^2\setminus \Gamma$.
By the FKG inequality we have
\[\PP[\cD_{M,l,m}^{\boo, \langle n, b\rangle}\mid \Gamma_{u,v}=\Gamma] = \PP[\cD_\Gamma\mid \Gamma_{u,v}=\Gamma] \le \PP[\cD_\Gamma].\]
By Lemma \ref{lem:disjoint-path-fix}, we have $\PP[\cD_{\Gamma}]<Ce^{-cM}$ when $c_0<c$ and $l>C$, for $c,C>0$ being universal constants.
By averaging over all $\Gamma$ we get the conclusion. 
\end{proof}

\section{Convergence of one point distribution}  \label{sec:one-point}
In this section we prove Theorems \ref{t:one-point-converg-infinite} and \ref{t:one-point-converg-finite}.
The general idea is to show that the law of the environment around a specific vertex in the geodesic is close to that of nearby vertices along the geodesic; and this is achieved by a coalescing argument. Then we use Proposition \ref{prop:uniform-conv} to argue that certain time average (of environments along the geodesic) is close to the stationary measure $\nu$.

To prove Theorem \ref{t:one-point-converg-infinite}, a key step would be to bound the total variation distance between $\xi\{\Gamma_\boo[i]\}, \Gamma_\boo-\Gamma_\boo[i]$ and $\xi\{\Gamma_\boo[i-r]\}, \Gamma_\boo-\Gamma_\boo[i-r]$ in a finite box, for any $i$ large and $r$ much smaller than $i$.
For this, we use translation invariance, and consider the environment around $\Gamma_{v[r]}[i-r]$ instead of $\Gamma_\boo[i-r]$, where
\[
v[r]=
\begin{cases}
    \langle \lfloor r/2\rfloor, 0\rangle_{} & r \text{ is even}; \\
    \langle \lfloor r/2\rfloor, 0\rangle_{} + (1,0) & r \text{ is odd}.
\end{cases}
\]
We define $v[r]$ this way so that there is always $d(v[r])=r$.
We show that with high probability $\Gamma_{v[r]}[i-r]=\Gamma_\boo[i]$, and in a finite box around this vertex the paths $\Gamma_{v[r]}$ and $\Gamma_\boo$ are the same.
Towards this we need the following estimate on coalescence of geodesics, which directly follows from Proposition \ref{prop:coal-semi-inf-1} and Lemma \ref{lem:semi-inf-trans}.
\begin{lemma}  \label{lem:coal-semi-inf}
There is a constant $C>0$, such that for any $r\in\N$, and $k>2$, we have 
$\PP[\Gamma_\boo\cap \LL_{\lfloor rk\rfloor} \neq \Gamma_{v[r]}\cap \LL_{\lfloor rk\rfloor}] < C\log(k)k^{-2/3}$.
\end{lemma}
\begin{proof}
Denote the intersections of $\Gamma_\boo$ and $\Gamma_{v[r]}$ with $\LL_r$ as $\langle r,b_r\rangle_{}$ and $\langle r,b_r'\rangle_{}$, respectively.
By Lemma \ref{lem:semi-inf-trans} and Proposition \ref{prop:coal-semi-inf-1}, there is a constant $C_0>0$ such that
\[\PP[|b_r|,|b_r'|\le C_0\log(k)r^{2/3}] > 1-C_0k^{-1},\] 
and
\[\PP[\Gamma_{\langle r, -\lfloor C_0\log(k)r^{2/3}\rfloor-1\rangle_{}}\cap \LL_{\lfloor rk\rfloor} \neq \Gamma_{\langle r,\lfloor C_0\log(k)r^{2/3}\rfloor+1\rangle_{}}\cap \LL_{\lfloor rk\rfloor}] < C_0^2 \log(k)(k-1)^{-2/3}.\]
Thus the conclusion follows by ordering of geodesics (Lemma \ref{l:ordering}).
\end{proof}

\begin{proof} [Proof of Theorem \ref{t:one-point-converg-infinite}]
Take any $s\in\N$ and any continuous function $f:\R^{\llbracket -s,s\rrbracket^2}\times \{0,1\}^{\llbracket -s,s\rrbracket^2}\to [0,1]$, regarded as a function on $\R^{\Z^2}\times \{0,1\}^{\Z^2}$. We need to show that 
$\lim_{i\to\infty}\E[f(\xi\{\Gamma_\boo[i]\}, \Gamma_\boo-\Gamma_\boo[i])] = \nu(f)$.

For $i, r\in\N$ and $k>2$ with $i-2s>2rk$, by Lemma \ref{lem:coal-semi-inf}, with probability at least $1-C\log(k)k^{-2/3}$ we have $\Gamma_\boo[j]=\Gamma_{v[r]}[j-r]$ for any $j\ge i-2s$; thus $\xi\{\Gamma_\boo[i]\}, \Gamma_\boo-\Gamma_\boo[i]$ and $\xi\{\Gamma_{v[r]}[i-r]\}, \Gamma_{v[r]}-\Gamma_{v[r]}[i-r]$ are the same in $\llbracket -s, s \rrbracket^2$.
Since $\xi\{\Gamma_{v[r]}[i-r]\}, \Gamma_{v[r]}-\Gamma_{v[r]}[i-r]$ have the same joint distribution as $\xi\{\Gamma_\boo[i-r]\}, \Gamma_\boo-\Gamma_\boo[i-r]$, we must have that
\[
|\E[f(\xi\{\Gamma_\boo[i]\}, \Gamma_\boo-\Gamma_\boo[i])] - \E[f(\xi\{\Gamma_\boo[i-r], \Gamma_\boo-\Gamma_\boo[i-r]\})] | \le C\log(k)k^{-2/3}.
\]
By averaging over $r\in \llbracket 0, i/4k\rrbracket$, we have (when $i>4s$)
\[
|\E[f(\xi\{\Gamma_\boo[i]\}, \Gamma_\boo-\Gamma_\boo[i])] - \E [\mu_{\Gamma_\boo[i-\lfloor i/4k\rfloor],\Gamma_\boo[i]} (f)] | \le C\log(k)k^{-2/3}.
\]
By Lemma \ref{lem:semi-inf-trans}, and Proposition \ref{prop:uniform-conv}, for any fixed $k>0$, we have $\mu_{\Gamma_\boo[i-\lfloor i/4k\rfloor],\Gamma_\boo[i]} (f) \to \nu(f)$ in probability as $i\to\infty$.
Thus we have that \[\limsup_{i\to\infty} |\E [f(\xi\{\Gamma_\boo[i]\}, \Gamma_\boo-\Gamma_\boo[i])] - \nu (f) | \le C\log(k)k^{-2/3}.\]
Since $k$ can be arbitrarily large, the conclusion follows.
\end{proof}

The proof of Theorem \ref{t:one-point-converg-finite} is similar.
Again we need the following estimate on coalescence of geodesics, 
which follows from Corollary \ref{cor:trans-fluc-comb} and Proposition \ref{prop:coalesce}.
Recall that we denote $\bn^{}=\bn^\rho=\langle n,0\rangle=\left(\left\lfloor\frac{2(1-\rho)^2n}{\rho^2+(1-\rho)^2}\right\rfloor, \left\lceil\frac{2\rho^2n}{\rho^2+(1-\rho)^2}\right\rceil\right)$ for any $n\in\Z$.
\begin{lemma}  \label{lem:coal-f}
There is a constant $C>0$, such that for any $r, n\in\N$ and $k>2$, with $n\ge 2rk$, we have $\PP[\Gamma_{\boo,\bn^{}}\cap \LL_{\lfloor rk\rfloor} \neq \Gamma_{v[r],\bn^{}+v[r]}\cap \LL_{\lfloor rk\rfloor}] < C\log(k)k^{-2/3}$, and $\PP[\Gamma_{\boo,\bn^{}}\cap \LL_{n-\lfloor rk\rfloor} \neq \Gamma_{v[r],\bn^{}+v[r]}\cap \LL_{n-\lfloor rk\rfloor}] < C\log(k)k^{-2/3}$.
\end{lemma}
\begin{figure}[hbt!]
    \centering
\begin{tikzpicture}[line cap=round,line join=round,>=triangle 45,x=7cm,y=7cm]
\clip(-0.55,-0.15) rectangle (1.55,1.2);

\draw (0.5,-0.06) node[anchor=north east]{$\LL_r$};
\draw (0.9,-0.06) node[anchor=north east]{$\LL_{\lfloor rk\rfloor}$};
\draw (1.23,0.5) node[anchor=north east]{$\LL_{n-r}$};

\draw [line width=.4pt] (-0.15,0.55) -- (0.55,-0.15);
\draw [line width=.4pt] (-0.15,0.95) -- (0.95,-0.15);
\draw [line width=.4pt] (-0.15,1.85) -- (1.25,0.45);

\draw (0,0) node[anchor=east]{$\boo$};
\draw (1.05,1.05) node[anchor=south east]{$\bn^{}+v[r]$};
\draw (0.95,0.95) node[anchor=west]{$\bn^{}$};
\draw (0.1,0.1) node[anchor=east]{$v[r]$};

\begin{scriptsize}
\draw (0.12,0.28) node[anchor=east]{$\langle r, -\lfloor C_0\log(k)r^{2/3}\rfloor-1\rangle_{}$};
\draw (0.77,0.93) node[anchor=east]{$\langle n-r, -\lfloor C_0\log(k)r^{2/3}\rfloor-1\rangle_{}$};
\draw (0.28,0.12) node[anchor=west]{$\langle r, \lfloor C_0\log(k)r^{2/3}\rfloor+1\rangle_{}$};
\draw (0.93,0.77) node[anchor=west]{$\langle n-r, \lfloor C_0\log(k)r^{2/3}\rfloor+1\rangle_{}$};
\end{scriptsize}

\draw [red] plot [smooth] coordinates {(1.05,1.05) (0.97,1.02) (0.91,0.97) (0.85,0.94) (0.78,0.89) (0.71,0.85) (0.68,0.78) (0.66,0.75) (0.6,0.69) (0.58,0.64) (0.52,0.6) (0.46,0.57) (0.39,0.55) (0.35,0.49) (0.31, 0.41) (0.27,0.36) (0.25,0.27) (0.23,0.19) (0.18,0.14) (0.14,0.09) (0.07,0.04) (0,0)};
\draw [red] plot [smooth] coordinates {(0.95,0.95) (0.89,0.92) (0.8,0.88) (0.73,0.85) (0.68,0.78) (0.66,0.75) (0.6,0.69) (0.58,0.64) (0.52,0.6) (0.46,0.57) (0.39,0.55) (0.35,0.49) (0.31, 0.41) (0.24,0.38) (0.2,0.3) (0.16,0.23) (0.13,0.19) (0.1,0.1)};

\draw [blue] plot [smooth] coordinates {(0.77,0.93) (0.73,0.89) (0.7,0.86) (0.66,0.78) (0.65,0.75) (0.6,0.69) (0.58,0.64) (0.52,0.6) (0.46,0.57) (0.39,0.55) (0.35,0.49) (0.32, 0.41) (0.31,0.31) (0.28,0.2) (0.28,0.12)};
\draw [blue] plot [smooth] coordinates {(0.93,0.77) (0.82,0.76) (0.78,0.75) (0.7,0.75) (0.65,0.74) (0.6,0.69) (0.58,0.64) (0.52,0.6) (0.46,0.57) (0.39,0.55) (0.35,0.49) (0.31, 0.41) (0.24,0.38) (0.18,0.3) (0.12,0.28)};
\draw [fill=uuuuuu] (0.,0.) circle (1.0pt);
\draw [fill=uuuuuu] (1.05,1.05) circle (1.0pt);
\draw [fill=uuuuuu] (0.1,0.1) circle (1.0pt);
\draw [fill=uuuuuu] (0.95,0.95) circle (1.0pt);

\draw [fill=uuuuuu] (0.28,0.12) circle (1.0pt);
\draw [fill=uuuuuu] (0.12,0.28) circle (1.0pt);
\draw [fill=uuuuuu] (0.93,0.77) circle (1.0pt);
\draw [fill=uuuuuu] (0.77,0.93) circle (1.0pt);

\end{tikzpicture}
\caption{An illustration of the proof of Lemma \ref{lem:coal-f}. 
The geodesics $\Gamma_{\boo,\bn^{}}$ and $\Gamma_{v[r],\bn^{}+v[r]}$ are sandwiched between $\Gamma_{\langle r, -\lfloor C_0\log(k)r^{2/3}\rfloor-1\rangle_{},\langle n-r, -\lfloor C_0\log(k)r^{2/3}\rfloor-1\rangle_{}}$ and $\Gamma_{\langle r, \lfloor C_0\log(k)r^{2/3}\rfloor+1\rangle_{},\langle n-r, \lfloor C_0\log(k)r^{2/3}\rfloor+1\rangle_{}}$.}   \label{fig:overlap}
\end{figure}
\begin{proof}
Since $n\ge 2rk$, we just show
$\PP[\Gamma_{\boo,\bn^{}}\cap \LL_{\lfloor rk\rfloor} \neq \Gamma_{v[r],\bn^{}+v[r]}\cap \LL_{\lfloor rk\rfloor}] < C\log(k)k^{-2/3}$, and by symmetry the other inequality would follow.

Denote the intersections of $\Gamma_{\boo,\bn^{}}$ and $\Gamma_{v[r],\bn^{}+v[r]}$ with $\LL_r$ as $\langle r,b_-\rangle_{}$ and $\langle r,b_-'\rangle_{}$, respectively;
and the intersections of $\Gamma_{\boo,\bn^{}}$ and $\Gamma_{v[r],\bn^{}+v[r]}$ with $\LL_{n-r}$ as $\langle n-r,b_+\rangle_{}$ and $\langle n-r,b_+'\rangle_{}$, respectively.
There is a constant $C_0>0$, such that
\[\PP[|b_-|,|b_-'|\le C_0\log(k)r^{2/3}],\;
\PP[|b_+|,|b_+'|\le C_0\log(k)r^{2/3}]
> 1-C_0k^{-1}\]
by Corollary \ref{cor:trans-fluc-comb};
and
\[
\begin{split}
&\PP[\Gamma_{\langle r, -\lfloor C_0\log(k)r^{2/3}\rfloor-1\rangle_{},\langle n-r, -\lfloor C_0\log(k)r^{2/3}\rfloor-1\rangle_{}}\cap \LL_{\lfloor rk\rfloor} \neq \Gamma_{\langle r, \lfloor C_0\log(k)r^{2/3}\rfloor+1\rangle_{},\langle n-r, \lfloor C_0\log(k)r^{2/3}\rfloor+1\rangle_{}}\cap \LL_{\lfloor rk\rfloor}] 
\\
\le &
\PP[\Gamma_{\langle r, -\lfloor C_0\log(k)r^{2/3}\rfloor-1\rangle_{},\langle n-r, -\lfloor C_0\log(k)r^{2/3}\rfloor-1\rangle_{}}\cap \LL_{\lfloor rk\rfloor} \neq \Gamma_{\langle r, -\lfloor C_0\log(k)r^{2/3}\rfloor-1\rangle_{},\langle n-r, \lfloor C_0\log(k)r^{2/3}\rfloor+1\rangle_{}}\cap \LL_{\lfloor rk\rfloor}] 
\\
&+\PP[\Gamma_{\langle r, -\lfloor C_0\log(k)r^{2/3}\rfloor-1\rangle_{},\langle n-r, \lfloor C_0\log(k)r^{2/3}\rfloor+1\rangle_{}}\cap \LL_{\lfloor rk\rfloor} \neq \Gamma_{\langle r, \lfloor C_0\log(k)r^{2/3}\rfloor+1\rangle_{},\langle n-r, \lfloor C_0\log(k)r^{2/3}\rfloor+1\rangle_{}}\cap \LL_{\lfloor rk\rfloor}] 
\\
< & C_0^2 \log(k)(k-1)^{-2/3},
\end{split}
\]
where the last inequality is by Proposition \ref{prop:coalesce}. Then the conclusion follows by ordering of geodesics (Lemma \ref{l:ordering}). See Figure \ref{fig:overlap} for an illustration.
\end{proof}

\begin{proof} [Proof of Theorem \ref{t:one-point-converg-finite}]
Take any $s\in\N$ and any continuous function $f:\R^{\llbracket -s,s\rrbracket^2}\times \{0,1\}^{\llbracket -s,s\rrbracket^2}\to [0,1]$, regarded as a function on $\R^{\Z^2}\times \{0,1\}^{\Z^2}$. We need to show that 
\[\lim_{n\to\infty}\E [f(\xi\{\Gamma_{\boo,\bn^{}}[\lfloor \alpha n\rfloor]\}, \Gamma_{\boo,\bn^{}}-\Gamma_{\boo,\bn^{}}[\lfloor \alpha n\rfloor])] = \nu(f).\]
Without loss of generality we assume that $\alpha\le 1$.
For $n, r\in\N$ and $k>2$ with $\alpha n-2s>2rk$ and $\alpha n+2s<2n-2rk$, by Lemma \ref{lem:coal-f} we have 
\[
\PP[\Gamma_{\boo,\bn^{}}[\lfloor\alpha n\rfloor + j]=\Gamma_{v[r],\bn^{}+v[r]}[\lfloor\alpha n\rfloor-r+j]\},\; \forall j \in \llbracket -2s, 2s \rrbracket ] \ge 1-C\log(k)k^{-2/3}.
\]
By translation invariance, $\xi\{\Gamma_{v[r],\bn^{}+v[r]}[\lfloor\alpha n\rfloor-r]\}, \Gamma_{v[r],\bn^{}+v[r]}-\Gamma_{v[r],\bn^{}+v[r]}[\lfloor\alpha n\rfloor-r]$ have the same joint distribution as $\xi\{\Gamma_{\boo,\bn^{}}[\lfloor\alpha n\rfloor-r]\}, \Gamma_{\boo,\bn^{}}-\Gamma_{\boo,\bn^{}}[\lfloor\alpha n\rfloor-r]$. So we must have that 
\[
|\E[f(\xi\{\Gamma_{\boo,\bn^{}}[\lfloor\alpha n\rfloor]\}, \Gamma_{\boo,\bn^{}}-\Gamma_{\boo,\bn^{}}[\lfloor\alpha n\rfloor])] - \E[f(\xi\{\Gamma_{\boo,\bn^{}}[\lfloor\alpha n\rfloor-r]\}, \Gamma_{\boo,\bn^{}}-\Gamma_{\boo,\bn^{}}[\lfloor\alpha n\rfloor-r])] | \le C\log(k)k^{-2/3}.
\]
By averaging over $r\in \llbracket 0, \alpha n/4k\rrbracket$, we have (when $\alpha n>4s$)
\[
|\E[f(\xi\{\Gamma_{\boo,\bn^{}}[\lfloor\alpha n\rfloor]\}, \Gamma_{\boo,\bn^{}}-\Gamma_{\boo,\bn^{}}[\lfloor\alpha n\rfloor])] - \E [\mu_{\Gamma_{\boo,\bn^{}}[\lfloor\alpha n\rfloor-\lfloor \alpha n/4k\rfloor],\Gamma_{\boo,\bn^{}}[\lfloor\alpha n\rfloor]} (f)] | \le C\log(k)k^{-2/3}.
\]
By Corollary \ref{cor:trans-fluc-comb} and Proposition \ref{prop:uniform-conv}, for fixed $k$ we have $\mu_{\Gamma_{\boo,\bn^{}}[\lfloor\alpha n\rfloor-\lfloor \alpha n/4k\rfloor],\Gamma_{\boo,\bn^{}}[\lfloor\alpha n\rfloor]} (f) \to \nu(f)$ in probability as $n\to\infty$.
Thus we have that \[\limsup_{i\to\infty} |\E[f(\xi\{\Gamma_{\boo,\bn^{}}[\lfloor\alpha n\rfloor]\}, \Gamma_{\boo,\bn^{}}-\Gamma_{\boo,\bn^{}}[\lfloor\alpha n\rfloor])] - \nu (f) | \le C\log(k)k^{-2/3}.\]
Then the conclusion follows since $k$ can be arbitrarily large.
\end{proof}

\section{Exponential concentration via counting argument}  \label{sec:exp-con}
Using a covering argument, we can prove the following exponential concentration of the empirical environment, for both finite and semi-infinite geodesics.
\begin{prop}  \label{prop:exp-concen-infinite}
For any $s\in\N$, and any bounded continuous $f:\R^{\llbracket -s,s\rrbracket^2}\times \{0,1\}^{\llbracket -s,s\rrbracket^2}\to \R$, regarded as a function on $\R^{\Z^2}\times \{0,1\}^{\Z^2}$, and any $\epsilon>0$, we have
\[
\PP[|\mu_{\boo;r}(f)-\nu(f)| > \epsilon] < Ce^{-cr},
\]
for $r$ large enough, and $c,C>0$ depending on $s,f,\epsilon$.
\end{prop}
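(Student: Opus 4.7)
The plan is to partition the length-$(2r+1)$ prefix of $\Gamma_\boo^\rho$ into $N = \lfloor (2r+1)/L \rfloor$ blocks of a fixed length $L = L(\epsilon, f, s, \rho)$, show that each block's empirical contribution is within $\epsilon/4$ of $\nu^\rho(f)$ with arbitrarily small failure probability, and aggregate via approximate independence and Hoeffding's inequality to get exponential decay in $N$, hence in $r$. Using a translation-invariant version of Proposition \ref{prop:uniform-conv}, I would fix $h > 0$ large and choose $L$ large enough (both depending only on $\epsilon, f, s, \rho$) so that
\[
\PP\Bigl[\max_{|a|,|b|<hL^{2/3}} \bigl|\mu_{\langle 0, a\rangle_\rho, \langle L, b\rangle_\rho}(f) - \nu^\rho(f)\bigr| > \epsilon/4 \Bigr] < \delta,
\]
where $\delta := \epsilon/(100(\|f\|_\infty+1))$. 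Setting $u_k := \Gamma_\boo^\rho[kL+1]$, uniqueness of geodesics yields $\mu_{\boo;r}^\rho(f) = N^{-1}\sum_{k=0}^{N-1} \mu_{u_k, u_{k+1}}(f)$ up to an $O(sL/r)$ error arising from the terminal points and from the $s$-neighborhood of block boundaries.

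Next I would localize, via Corollary \ref{cor:trans-semi-inf} and Lemma \ref{lem:trans-fluc-uni} and a union bound over blocks: on a good event $\mathcal{L}$, every $u_k$ lies in the anti-diagonal window of width $hL^{2/3}$ about $\langle kL, 0 \rangle_\rho$, and every sub-geodesic $\Gamma_{u_k, u_{k+1}}$ stays inside a parallelogram $\Pi_k$ of longitudinal length $L$ and transversal width $hL^{2/3}$. Since the $\Pi_k$ lie in pairwise disjoint anti-diagonal slabs and $L \gg hs$, their $s$-fattenings $\Pi_k + \llbracket -s, s \rrbracket^2$ remain pairwise disjoint. Restricted to $\mathcal{L}$, each empirical $\mu_{u_k, u_{k+1}}(f)$ becomes a function of weights inside $\Pi_k + \llbracket -s, s\rrbracket^2$ only, so the block empiricals depend on pairwise disjoint sets of i.i.d.\ weights. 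Conditioning on the endpoint sequence $(u_k)$ and applying Hoeffding's inequality (using that each individual term is bounded by $2\|f\|_\infty$ and is within $\epsilon/4$ of $\nu^\rho(f)$ with conditional probability $1-\delta$), one obtains
\[
\PP\Bigl[\bigl|N^{-1}\textstyle\sum_k \mu_{u_k,u_{k+1}}(f) - \nu^\rho(f)\bigr| > \epsilon/2 \,\Big|\, (u_k), \mathcal{L}\Bigr] \le 2\exp\bigl(-c\epsilon^2 N/\|f\|_\infty^2\bigr) \le 2e^{-c'r},
\]
since $N \asymp r/L$ with $L$ constant.

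The main obstacle will be reconciling the probability of the localization event $\mathcal{L}$ (which is only $1 - CNe^{-ch^3}$ via transversal-fluctuation bounds) with the target $Ce^{-cr}$ tail: this forces $h \gtrsim r^{1/3}$, whereas Proposition \ref{prop:uniform-conv} is stated for fixed $h$. The natural remedy is either (i) to revisit the proof of Proposition \ref{prop:uniform-conv} and track how its constants degrade with $h$ (expected to be only polynomial in $h$, allowing $h = (\log r)^{1/3}$ or even $h = r^{1/3}$ at the cost of a slightly larger $\delta$), or (ii) to bootstrap via a multi-scale iteration, proving polynomial tail decay from a constant-$h$ localization and then iterating on sub-intervals to amplify it to exponential. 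A secondary subtlety is the rigorous conditional independence of the block empiricals given $(u_k)$: since $(u_k)$ depends on the field globally, the weights inside the $\Pi_k$ are not literally i.i.d.\ under this conditioning. The standard resolution, in the spirit of the resampling device used in Section \ref{sec:finite-cov}, is to replace each sub-geodesic by a fresh point-to-point geodesic with endpoints on prescribed anti-diagonal lines $\LL_{kL}$, built using only the weights inside a slightly enlarged parallelogram, and then verify that on a high-probability event these resampled sub-geodesics coincide with the original ones, reducing the problem to the truly independent setting above.
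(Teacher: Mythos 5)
Your high-level strategy---cover the length-$(2r+1)$ prefix of $\Gamma_\boo^\rho$ by order-$r/L$ fixed-length sub-geodesics, apply Proposition~\ref{prop:uniform-conv} blockwise, and aggregate via independence plus Chernoff---is indeed the strategy of Section~\ref{sec:exp-con}. But the two difficulties you flag at the end are real gaps, and the remedies you propose for them are not what the paper does and would be considerably harder to carry out.

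On the first obstacle (the localization event $\mathcal{L}$ needing $h\gtrsim r^{1/3}$ for exponentially small failure probability): the paper never demands that the whole geodesic stay inside a tube of width $hm^{2/3}$ with exponentially high probability. Instead it records, at each block boundary $\LL_{im}$, an integer $j_i$ indicating which length-$2m^{2/3}$ segment $L_{i,j_i}$ the geodesic passes through, and controls the sum of squared increments $\sum_{i=1}^k(j_i-j_{i-1})^2$. Lemma~\ref{lem:large-total-tran} shows that \emph{every} path in the class $P_{k,D}$ (those with sum exceeding $Dk$) has passage-time deficit of order $Dkm^{1/3}$, with probability $1-e^{-c_0k}$ simultaneously over all sequences; comparison with the passage time actually achieved by the geodesic then gives $\PP[\gamma\in P_{k,D}]<e^{-ck}$. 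Crucially this works with a \emph{fixed} threshold $D$: the exponential-in-$k$ decay comes from having linearly many ``bad'' blocks when the sum of squares is large, not from widening the tube. Your remedy (i.a) of tracking $h$-dependence in Proposition~\ref{prop:uniform-conv} is unpromising because that proposition is proved by a multi-parameter, purely qualitative compactness scheme; and remedy (i.b), a multi-scale bootstrap, is a far heavier machine than this counting argument.

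On the second obstacle (block empiricals not conditionally independent given the endpoint sequence $(u_k)$): the paper's resolution is cleaner than the resampling device you sketch. It does not condition on the actual geodesic endpoints. For each pair of prescribed segments $L_{i-1,j_{i-1}}$, $L_{i,j_i}$ it defines the block event
\[
\max_{u\in L_{i-1,j_{i-1}},\ v\in L_{i,j_i}}\bigl|\mu^*_{u,v}(f)-\nu^\rho(f)\bigr|\le \epsilon^2,
\]
a supremum over \emph{all} admissible endpoints, not merely the ones the geodesic happens to hit. This event is a deterministic function of the weights in the slab between $\LL_{(i-1)m}$ and $\LL_{im}$ alone, hence genuinely independent across $i$, and Proposition~\ref{prop:uniform-conv} (applied once per block, with $n=m$ fixed) bounds its failure probability by a small $\varepsilon$. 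Because the number of admissible sequences $(j_0,\dots,j_k)$ with $j_0=0$ and $\sum(j_i-j_{i-1})^2\le Dk$ is at most $\binom{Dk+k-1}{k-1}$, a Chernoff bound for these independent block events, made of order $(D+1)^{-2k}$, survives the union bound over sequences. No resampling or geodesic-stability argument is required, and no conditioning on $(u_k)$ ever occurs.
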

\begin{prop}  \label{prop:exp-concen-finite}
Let $\{b_n\}_{n\in\N}$ be a sequence of integers such that $\lim_{n\to\infty} n^{-2/3}|b_n| < \infty$.
Then for any $s\in\N$, any bounded continuous $f:\R^{\llbracket -s,s\rrbracket^2}\times \{0,1\}^{\llbracket -s,s\rrbracket^2}\to \R$, regarded as a function on $\R^{\Z^2}\times \{0,1\}^{\Z^2}$, and any $\epsilon>0$, we have
\[
\PP[|\mu_{\boo,\langle n, b_n\rangle_{}}(f)-\nu(f)| > \epsilon] < Ce^{-cn},
\]
for $n$ large enough, and $c,C>0$ depending on $s,f,\epsilon$.
\end{prop}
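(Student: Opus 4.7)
The plan is to establish exponential concentration by decomposing $\Gamma_{\boo,\langle n,b_n\rangle_\rho}$ into short segments lying in disjoint diagonal strips, applying Proposition \ref{prop:uniform-conv} on each segment, and invoking a Chernoff-type bound exploiting independence across strips. Fix a large constant $m=m(\rho,s,f,\epsilon)$ and set $K=\lfloor n/m\rfloor$; let $v_k$ denote the intersection of $\Gamma_{\boo,\langle n,b_n\rangle_\rho}$ with $\LL_{km}$, and set $c_k=\lfloor kb_n/K\rfloor$ for its expected antidiagonal coordinate. Since $f$ depends only on $\xi$ in a box of radius $s$, we may decompose
\[
\mu_{\boo,\langle n,b_n\rangle_\rho}(f) = \frac{1}{|\Gamma_{\boo,\langle n,b_n\rangle_\rho}|}\sum_{k=1}^K |\Gamma_{v_{k-1},v_k}|\,\mu_{v_{k-1},v_k}(f) + O\!\left(\tfrac{s\|f\|_\infty}{m}\right),
\]
with the error coming from vertices within distance $s$ of strip boundaries, negligible for $m$ large.

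Introduce the key strip event
\[
\cA_k := \Big\{\max_{|a|,|b|<hm^{2/3}} \big|\mu_{\langle (k-1)m,c_{k-1}+a\rangle_\rho,\langle km,c_k+b\rangle_\rho}(f) - \nu^\rho(f)\big| < \tfrac{\epsilon}{4}\Big\},
\]
for a large constant $h$. The crucial point is that $\cA_k$ is measurable with respect to $\xi$ restricted to the $k$-th strip $\{u:2(k-1)m\le d(u)\le 2km\}$, so $\cA_1,\dots,\cA_K$ are mutually independent. By Proposition \ref{prop:uniform-conv} together with translation invariance of the exponential weights, for any $\delta>0$ one can choose $m=m(h,\delta,\epsilon,f)$ large enough that $\PP(\cA_k^c)\le\delta$, and Chernoff's inequality yields $\PP(\#\{k:\cA_k^c\}>2\delta K)\le e^{-c_0\delta K}$.

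To turn the uniform-in-endpoints control in $\cA_k$ into control of the actual $\mu_{v_{k-1},v_k}(f)$, we need $v_{k-1},v_k$ to lie in the prescribed antidiagonal windows of width $hm^{2/3}$. Let $\cB_k := \{|ad(v_k)-c_k|<hm^{2/3}\}$; Lemma \ref{lem:trans-fluc-one-pt} (applied to $\Gamma_{\boo,\langle n,b_n\rangle_\rho}$ viewed both as a geodesic from $\boo$ and from $\langle n,b_n\rangle_\rho$) gives $\PP(\cB_k^c)\le Ce^{-ch}$. The \textbf{main obstacle} is that the pointwise union bound $\PP(\bigcup_k\cB_k^c)\le KCe^{-ch}$ is only sub-exponential in $n$ for fixed $h$, and the $\cB_k$'s are not independent as they all concern the single global geodesic. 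The resolution is to obtain exponential concentration of $\#\{k:\cB_k^c\}$ via monotonicity of geodesics: $\cB_k^c$ implies that either $\Gamma_{\boo,\langle km,c_k-hm^{2/3}\rangle_\rho}$ crosses some strip-$k$ landmark from above or $\Gamma_{\boo,\langle km,c_k+hm^{2/3}\rangle_\rho}$ crosses from below, and each such failure decomposes into a disjoint sum of local segment-fluctuation events (via Corollary \ref{cor:trans-fluc-comb} at scale $m$), which can be independently dominated by Bernoullis with success probability $\le Ce^{-ch}$. A Chernoff argument then gives $\PP(\#\{k:\cB_k^c\}>2\delta K)\le e^{-c_1\delta K}$ provided $h=h(\delta)$ is chosen so that $Ce^{-ch}\le\delta/4$.

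Combining the two concentration bounds, on the good event $\{\#\{k:\cA_k^c\}\le 2\delta K\}\cap\{\#\{k:\cB_k^c\}\le 2\delta K\}$, at most $6\delta K$ strips are bad—each contributing at most $2\|f\|_\infty$ to the total deviation—while every remaining strip contributes at most $\epsilon/4$. Choosing $\delta=\epsilon/(96\|f\|_\infty)$ and then $m$ correspondingly large, we obtain $|\mu_{\boo,\langle n,b_n\rangle_\rho}(f)-\nu^\rho(f)|<\epsilon$ on the good event, whose complement has probability at most $2e^{-c\delta K}\le Ce^{-cn}$ with $c,C$ depending on $\rho,s,f,\epsilon$, completing the proof.
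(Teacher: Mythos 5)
Your decomposition into length-$m$ segments and the use of Proposition \ref{prop:uniform-conv} plus Chernoff on the per-strip events $\cA_k$ are in the right spirit, but there is a fatal error at the $\cB_k$ step that you have (almost) flagged yourself. The issue is not that the $\cB_k$'s are dependent; it is that $\PP(\cB_k^c)$ is simply \emph{not} small for intermediate $k$. The event $\cB_k$ demands that the geodesic's antidiagonal coordinate at $\LL_{km}$ lie within $hm^{2/3}$ of the deterministic interpolant $c_k=\lfloor kb_n/K\rfloor$. But the transversal fluctuation of $\Gamma_{\boo,\langle n,b_n\rangle_\rho}$ at level $\LL_{km}$ around the straight line is of order $\min(km,n-km)^{2/3}$; for $k\asymp K/2$ this is $\Theta(n^{2/3})$, which is $\gg hm^{2/3}$ for fixed $m,h$. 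Concretely, Lemma \ref{lem:trans-fluc-one-pt} gives $\PP[|ad(v_k)-c_k|>x(km)^{2/3}]\le Ce^{-cx}$, so to get $\PP(\cB_k^c)\le Ce^{-ch}$ you would need $h(km)^{-2/3}m^{2/3}=h k^{-2/3}$ in the exponent, not $h$; equivalently, $\PP(\cB_k^c)\le Ce^{-ch/k^{2/3}}\to C$ as $n\to\infty$ for fixed $m,h$. No choice of fixed constants $m,h$ makes $\PP(\cB_k^c)$ uniformly small, and the monotonicity/Bernoulli-domination remedy you sketch cannot fix this, because it attacks dependence, not the fact that the individual probabilities are order one.

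What the paper does differently, and what you should do, is to make the box centers \emph{random}: let the geodesic itself determine the labels $j_0,\ldots,j_K$ of the antidiagonal windows it visits at $\LL_0,\LL_m,\ldots$, rather than comparing to a deterministic interpolation. Independence per strip is then still available because, conditionally on the geodesic lying in a \emph{fixed} sequence of windows $P_{j_0,\ldots,j_K}$, the event that the geodesic segment in strip $k$ has bad empirical measure is measurable with respect to strip $k$'s weights, and Chernoff applies as you wished. Two things then need to be controlled: (i) there are too many sequences $(j_i)$ unless the total transversal variation $\sum_i(j_i-j_{i-1})^2$ is $O(K)$, but then the count of sequences is only $\binom{DK+K-1}{K-1}\le(D+1)^{2K}$, so the union bound over sequences costs an exponential factor that can be beaten by taking the per-strip failure probability $\delta$ small relative to $D$; and (ii) paths in $P_{K,D}$ (with $\sum(j_i-j_{i-1})^2>DK$) must be shown to have too small a total weight to be the geodesic—this is precisely Lemma \ref{lem:large-total-tran}, which you do not invoke and cannot avoid. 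You would also want to phrase the argument via $T_{\boo,\langle Km,b_n\rangle_\rho}^*$ and the tail of $T_{\langle Km,\cdot\rangle_\rho,\langle n,b_n\rangle_\rho}$ over the short residual strip, as in the paper, to handle the last piece of the path.
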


From Proposition \ref{prop:exp-concen-infinite} we can deduce Theorem \ref{thm:semi-infinite}.
\begin{proof}[Proof of Theorem \ref{thm:semi-infinite}]
By Proposition \ref{prop:exp-concen-infinite}, for any bounded continuous $f:\R^{\llbracket -s, s \rrbracket^2}\times\{0,1\}^{\llbracket -s, s \rrbracket^2} \to \R$ (regarded as a function on $\R^{\Z^2}\times \{0,1\}^{\Z^2}$) and $\epsilon>0$, we have that $\sum_{r\in\N}\PP[|\mu_{\boo;r}(f)-\nu(f)| > \epsilon]<\infty$.
So almost surely, there exists some (random) $r_0$ such that $|\mu_{\boo;r}(f)-\nu(f)| \le \epsilon$ for any $r>r_0$.
Thus we have that $\mu_{\boo;r}(f)\to\nu(f)$ almost surely.
The conclusion follows by taking all $s\in\N$, and $f$ over
a countable dense subset of the space of continuous and compactly supported functions on $\R^{\llbracket -s,s\rrbracket^2}\times \{0,1\}^{\llbracket -s,s\rrbracket^2}$ with the uniform convergence topology.
\end{proof}
Using the same arguments we can deduce Theorem \ref{thm:finite} from Proposition \ref{prop:exp-concen-finite}. We omit the details.

To prove these exponential concentration bounds (Propositions \ref{prop:exp-concen-infinite} and \ref{prop:exp-concen-finite}), we cover the geodesics with  short finite ones, and use Proposition \ref{prop:uniform-conv}.

We take $m\in\N$ such that $m^{2/3}\in\Z$.
For each $i, j \in \Z$ we denote $L_{i,j}$ as the segment joining $\langle im, (2j-1)m^{2/3} \rangle_{}$ and $\langle im, (2j+1)m^{2/3} \rangle_{}$.
For each integer sequence $j_0,j_1,\ldots,j_k$, we let $P_{j_0,\ldots,j_k}$ be the collection of paths from $L_{0,j_0}$ to $L_{k,j_k}$, intersecting each $L_{i,j_i}$, $0\le i \le k$.
For any $k\in\N$ and $D>0$, we denote $P_{k,D}$ as the union of all $P_{j_0, j_1,\ldots,j_k}$ such that $j_0=0$ and $\sum_{i=1}^k (j_i-j_{i-1})^2 > Dk$. 
In words, $P_{k,D}$ contains all paths from $L_{0,0}$ to $\LL_{km}$ with `quadratic variation' $>Dk$.
We next upper bound the passage times of these paths.
\begin{lemma}  \label{lem:large-total-tran}
There exists $c_0>0$, such that when $m,k,D$ are large enough,
\[
\PP\bigg[\exists \gamma\in P_{k,D}, T(\gamma)> \frac{2km}{(1-\rho)^2+\rho^2} - (b_+-b_-)(\rho^{-1}-(1-\rho)^{-1}) - c_0Dkm^{1/3}\bigg] < e^{-c_0k},
\]
where $b_-,b_+\in\Z$ such that $\langle 0,b_-\rangle, \langle km, b_+\rangle_{}$ are the intersections of $\gamma$ with $\LL_0, \LL_{km}$, respectively.
\end{lemma}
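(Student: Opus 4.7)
The plan is to decompose $P_{k,D}$ by the sequence of antidiagonal segments $L_{i,j_i}$ traversed by a path $\gamma$, bound each sequence's contribution via independence across strips, and conclude by a union bound in which the combinatorial cost is absorbed by exponential concentration coming from the quadratic ``characteristic-direction'' penalty on passage times of non-straight segments.

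For each sequence $j_* = (0 = j_0, j_1, \ldots, j_k) \in \{0\}\times\Z^k$ with $s(j_*) := \sum_{i=1}^k (j_i - j_{i-1})^2 > Dk$, set $T^{(i)}(j_*) := \sup_{u \in L_{i-1, j_{i-1}},\, v \in L_{i, j_i}} T_{u,v}$. Any $\gamma \in P_{j_0, \ldots, j_k}$ can be cut at its successive intersections $V_0, \ldots, V_k$ with the $L_{i, j_i}$, giving $T(\gamma) \le \sum_i T_{V_{i-1}, V_i} \le \sum_i T^{(i)}(j_*)$. The $T^{(i)}(j_*)$ are independent since they depend on disjoint strips of weights. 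Writing $V_i = \langle im, b_i\rangle_\rho$ and applying \eqref{e:mean} together with a Taylor expansion of $(\sqrt{\Delta a_1} + \sqrt{\Delta a_2})^2$ in $b_i - b_{i-1}$ around the characteristic direction (leveraging concavity of this function in its argument to control higher-order terms), one obtains constants $c_1, C > 0$ depending only on $\rho$ such that
\[
\E T^{(i)}(j_*) \le \frac{2m}{\rho^2+(1-\rho)^2} - (b_i - b_{i-1})(\rho^{-1}-(1-\rho)^{-1}) - c_1 (j_i - j_{i-1})^2 m^{1/3} + C m^{1/3},
\]
where the quadratic penalty uses $|b_i - b_{i-1}| \gtrsim (|j_i - j_{i-1}| - 1) m^{2/3}$, with the $|j_i - j_{i-1}| \le 1$ case absorbed into the $Cm^{1/3}$ error. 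Summing and telescoping,
\[
\sum_i \E T^{(i)}(j_*) \le \frac{2km}{\rho^2+(1-\rho)^2} - (b_+ - b_-)(\rho^{-1}-(1-\rho)^{-1}) - c_1 s(j_*) m^{1/3} + Ckm^{1/3}.
\]

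Proposition \ref{t:seg-to-seg} (supplemented by Theorem \ref{t:onepoint}\eqref{e:wslope} for extreme slopes) yields $\PP[T^{(i)}(j_*) - \E T^{(i)}(j_*) > x m^{1/3}] \le C e^{-cx}$ uniformly in $j_*$. Since the $T^{(i)}(j_*)$ are independent and subexponential with scale $m^{1/3}$, Bernstein's inequality gives, for $s(j_*) > k$,
\[
\PP\Big[\sum_i \big(T^{(i)}(j_*) - \E T^{(i)}(j_*)\big) > \tfrac{c_1}{4} s(j_*) m^{1/3}\Big] \le \exp\big(-c_2 s(j_*)\big).
\]
Taking $c_0 \le c_1/4$ and $D$ so large that $Ck \le \tfrac{c_1}{4} s(j_*)$ whenever $s(j_*) > Dk$, the event in the statement forces this Bernstein event for some $j_*$. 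The number of $j_* \in \{0\}\times\Z^k$ with $s(j_*) = s$ is $[z^s]\theta(z)^k$ for the Jacobi theta function $\theta(z) = \sum_{\delta \in \Z} z^{\delta^2}$; the saddle point $\lambda = k/(2s)$ together with the classical asymptotic $\theta(e^{-\lambda}) \sim \sqrt{\pi/\lambda}$ yields $[z^s]\theta(z)^k \le (2\pi e s/k)^{k/2}$ for $s \ge k$. A union bound produces
\[
\PP[\exists \gamma \in P_{k,D}: T(\gamma) > \text{target}] \le \sum_{s \ge Dk} (2\pi e s/k)^{k/2}\, e^{-c_2 s},
\]
whose summand is decreasing in $s$ for $s > k/(2c_2)$, so the whole sum is bounded by its leading term $\exp\big(k[\tfrac{1}{2}\log(2\pi e D) - c_2 D]\big)$.

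The principal obstacle is precisely this last balance: the per-step combinatorial entropy $\tfrac{1}{2}\log(2\pi e D)$ must be dominated by the per-step concentration gain $c_2 D$ from the quadratic penalty. Linear growth beats logarithmic, but only for $D$ above an absolute threshold determined by $c_1$ and $c_2$, which is exactly what the hypothesis ``$D$ large enough'' allows. A secondary technical point is obtaining the subexponential tail for $T^{(i)}$ uniformly over $|j_i - j_{i-1}|$ up to its maximal possible value $O(m^{1/3})$; Proposition \ref{t:seg-to-seg} covers moderate slopes, while extreme slopes are handled by the weaker but still sufficient Theorem \ref{t:onepoint}\eqref{e:wslope}.
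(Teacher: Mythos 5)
Your overall strategy — cut at the antidiagonal segments, exploit independence across strips, harvest a quadratic penalty from the slope adjustment, run a Bernstein-plus-union-bound, and let $D$ balance the combinatorial entropy — is exactly the paper's strategy, and your theta-function count is just a reorganization of the paper's cleaner factorized sum $e^{-c_2 Dk}\bigl(\sum_{j\in\Z} e^{-c_2 j^2}\bigr)^k$. However, there is a genuine bug in the per-strip estimate that would make the argument fail for $\rho\ne 1/2$.

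The problem is the inequality $\E\, T^{(i)}(j_*) \le \frac{2m}{\rho^2+(1-\rho)^2} - (b_i-b_{i-1})(\rho^{-1}-(1-\rho)^{-1}) - c_1(j_i-j_{i-1})^2 m^{1/3} + Cm^{1/3}$. The left side is a deterministic number (the mean of a supremum over the entire strip), while the right side depends on the choice of $b_{i-1}\in L_{i-1,j_{i-1}}$ and $b_i\in L_{i,j_i}$, which you are implicitly allowing to be the actual crossing points of the path $\gamma$. For the bound to be usable uniformly in $\gamma$, you would need it to hold for the \emph{worst} choice of $b_{i-1},b_i$ in the segments. But as $(b_{i-1},b_i)$ range over their respective segments, $b_i-b_{i-1}$ moves by $4m^{2/3}$, so the linear term moves by $4|\rho^{-1}-(1-\rho)^{-1}|\,m^{2/3}$, which is an order of magnitude larger than the $O(m^{1/3})$ budget on the right. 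Concretely, $\E T^{(i)} = \E\sup_{b,b'} T_{u,v}$ is approximately $\E T_{u^*,v^*}$ at the endpoint $\beta^*=b'^*-b^*$ where the linear part of the shape function is \emph{largest}, while your right side is smallest at the \emph{opposite} endpoint; the discrepancy is $\Theta(m^{2/3})$ per strip. Summing, $\sum_i \E T^{(i)}$ sits $\Theta(k\,m^{2/3})$ \emph{above} the target for $m$ large, so the event $\sum T^{(i)} > \text{target}$ is typical, not rare, and the Bernstein step collapses. (When $\rho=1/2$ the slope term vanishes and your version goes through, which may be why the error is easy to miss.)

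The fix is exactly what the paper does: put the slope adjustment \emph{inside} the supremum. Define $\hat T^{(i)} := \sup_{u\in L_{i-1,j_{i-1}},\,v\in L_{i,j_i}} \bigl[T^*_{u,v} + (b'-b)(\rho^{-1}-(1-\rho)^{-1})\bigr]$. The quantity inside the brackets is, after the Taylor expansion of the shape function, $\mu - c_1'(b'-b)^2/\mu + O(m^{1/3})$ — the linear part is cancelled pointwise, so the $O(m^{2/3})$ slop disappears and Proposition~\ref{t:seg-to-seg} gives $\E\hat T^{(i)}\le \mu + (C_1-c_1\Delta j_i^2)m^{1/3}$ and the matching exponential tail, uniformly over the strip. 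Then for the actual path, $T_{V_{i-1},V_i}+(b_i-b_{i-1})\kappa \le \hat T^{(i)}$ directly, and telescoping gives $T(\gamma)+(b_+-b_-)\kappa \le \sum_i \hat T^{(i)}$ without ever writing an inequality with mismatched sides. The rest of your plan (independence, Bernstein, combinatorics) then works. One smaller point: $T^{(i)}$ and $T^{(i+1)}$ share the weights on $\LL_{im}$, so you should use $T^*_{u,v}=T_{u,v}-\xi(v)$ for all but the last strip to make them genuinely independent, as the paper does.
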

\begin{proof}
First, there exist $c_1,C_1>0$ such that for $m$ large enough and any $j\in\Z$, $x>0$,
\[
\E\Bigg[\max_{\substack{\langle 0,b\rangle_{}\in L_{0,0},\\\langle m,b'\rangle_{}\in L_{1,j}}} T_{\langle 0,b\rangle_{},\langle m,b'\rangle_{}} +(b'-b)(\rho^{-1}-(1-\rho)^{-1}) \Bigg] < \frac{2m}{(1-\rho)^2+\rho^2}+(C_1-c_1j^2)m^{1/3},\]
\[
\PP\Bigg[\max_{\substack{\langle 0,b\rangle_{}\in L_{0,0},\\\langle m,b'\rangle_{}\in L_{1,j}}} T_{\langle 0,b\rangle_{},\langle m,b'\rangle_{}} +(b'-b)(\rho^{-1}-(1-\rho)^{-1}) > \frac{2m}{(1-\rho)^2+\rho^2}+(x-c_1j^2)m^{1/3}\Bigg] < C_1e^{-c_1x}.
\]
When $|j|<(\rho^2\wedge (1-\rho)^2)m^{1/3}$ these inequalities follow from Proposition \ref{t:seg-to-seg} and \eqref{e:mean}, and fundamental computations. When $|j|\ge (\rho^2\wedge (1-\rho)^2)m^{1/3}$ these inequalities can be obtained by applying \eqref{e:wslope} in Theorem \ref{t:onepoint} to each $T_{u,v}$ with $u\in L_{0,0}$ and $v\in L_{1,j}$ and taking a union bound.

Note that
\[
\max_{\gamma\in P_{j_0,j_1,\ldots,j_k}} T(\gamma) \le \sum_{i=1}^{k-1} \max_{\substack{u\in L_{i-1,j_{i-1}},\\ v\in L_{i,j_i}}} T_{u,v}^\bu + \max_{\substack{u\in L_{k-1,j_{k-1}},\\ v\in L_{k,j_k}}} T_{u,v}.
\]
Here $T_{u,v}^\bu = T_{u,v}-\xi(v)$ for any $u\le v \in \Z^2$.
Then by a Bernstein type estimate for independent random variables with exponential tails, we have
\begin{multline*}
\PP\left[
\max_{\gamma\in P_{j_0,j_1,\ldots,j_k}} T(\gamma) + (b_+-b_-)(\rho^{-1}-(1-\rho)^{-1}) > \frac{2km}{(1-\rho)^2+\rho^2} - \frac{c_1}{2} Dk m^{1/3}
\right]\\ < C_2e^{-c_2\sum_{i=1}^k(j_i-j_{i-1})^2},    
\end{multline*}
for any $D$ large (depending on $c_1,C_1$) and any integer sequence $j_0,\ldots,j_k$ with $j_0=0$, $\sum_{i=1}^k(j_i-j_{i-1})^2>Dk$. 
Here $c_2,C_2>0$ are constants, and $\langle 0,b_-\rangle, \langle km, b_+\rangle_{}$ are the intersections of $\gamma$ with $\LL_0,\LL_{km}$.
Summing over all such sequences $j_0, j_1,\ldots,j_k$, the right-hand side is bounded by
\[
 C_2e^{-c_2Dk/2}\left(\sum_{j\in\Z}e^{-c_2j^2/2}\right)^k.
\]
By taking $D$ large so that $e^{c_2D/4}>\sum_{j\in\Z}e^{-c_2j^2/2}$, we get the conclusion.
\end{proof}
We next prove Proposition \ref{prop:exp-concen-infinite}. The general idea is to upper bound the `quadratic variation' of the first $r$ steps of $\Gamma_\boo$, and use Proposition \ref{prop:uniform-conv} to show that the empirical environment between each $\LL_{im}$ and $\LL_{(i+1)m}$ is close to $\nu$, and use independence to deduce the exponential concentration.
\begin{proof}[Proof of Proposition \ref{prop:exp-concen-infinite}]
For any vertices $u\le v$, denote
\[
\mu_{u,v}^\bu := \frac{1}{|\Gamma_{u,v}|-1} \sum_{w\in \Gamma_{u,v}, w\neq v} \delta_{(\xi\{w\}, \Gamma_{u,v}-w)},
\]
i.e.\;it is the empirical environment along $\Gamma_{u,v}$, excluding the last vertex $v$.
Without loss of generality we assume that $0\le f\le 1$, and $\epsilon$ is small enough (depending on $s$ and $f$).

We first consider paths with small `quadratic variation'. Take $D>0$ and $m\in\N$ such that $m^{2/3}\in\Z$, and let them be large enough as required by Lemma \ref{lem:large-total-tran}. We also choose $m$ large enough such that
\begin{equation}  \label{eq:abmaxg}
\PP\left[\max_{|a|,|b|<\epsilon^{-2}m^{2/3}} |\mu_{\langle 0,a \rangle_{}, \langle m,b \rangle_{}}^\bu(f) - \nu(f)| > \epsilon^2\right] < \varepsilon,    
\end{equation}
by Proposition \ref{prop:uniform-conv}.
Here $\varepsilon$ is a small number depending on $D, \epsilon$ and to be determined.
Take any $k\in\N$ (also large enough as required by Lemma \ref{lem:large-total-tran}), and a sequence $j_0,\ldots,j_k$ such that $j_0=0$ and $\sum_{i=1}^k (j_i-j_{i-1})^2 \le Dk$.
We let $I'\subset \{1,\ldots,k\}$ be the collection of indices such that
$|j_i-j_{i-1}|<\epsilon^{-2}/2-1$ for each $i\in I'$. Then $|I'|>(1-\epsilon/2)k$, when $\epsilon$ is small enough (depending on $D$).
Next we let $I\subset I'$ such that for each $i\in I$, 
\[
\max_{u\in L_{i-1,j_{i-1}}, v\in L_{i,j_i}} |\mu_{u,v}^\bu(f) - \nu(f)| \le \epsilon^2.
\]
By \eqref{eq:abmaxg} we have $\PP[i\in I]>1-\varepsilon$ for each $i\in I'$.
Also note that $i_1\in I$ and $i_2\in I$ are independent for any $i_1, i_2\in I'$ with $i_1-i_2\ge 2$.
Then by a Chernoff bound and taking $\varepsilon$ small enough (depending on $D,\epsilon$), we can make  $\PP[|I'|-|I|>\epsilon^2 k]<(D+1)^{-2k}$.

Let $\gamma$ be the path consisting of the first $2km+1$ vertices of $\Gamma_\boo$;
i.e.\;$\gamma$ is the part of $\Gamma_\boo$ on and between $\LL_0$ and $\LL_{km}$.
Given that $\gamma\in P_{j_0,\ldots,j_k}$, and $|I'|-|I|\le \epsilon^2 k$, 
for any $r\in\llbracket 2km, 2(k+1)m\rrbracket$ we must have that $|\mu_{\boo;r}(f)-\nu(f)| \le \epsilon/2 + 2\epsilon^2 + 1/(k+1)$.
So when $k>\epsilon^{-2}$ and $\epsilon$ is large enough, we have
\[
\PP\left[\gamma\in P_{j_0,\ldots,j_k}, |\mu_{\boo;r}(f)-\nu(f)| > \epsilon \right] < (D+1)^{-2k}.
\]
Thus by summing over all sequences $j_0,\ldots,j_k$ with $j_0=0$, $\sum_{i=1}^k (j_i-j_{i-1})^2 \le Dk$, we have 
\[
\PP\left[\gamma\not\in P_{k,D}, |\mu_{\boo;r}(f)-\nu(f)| > \epsilon \right] < {\lfloor Dk\rfloor+k-1 \choose k-1} (D+1)^{-2k} < e^{-ck}
\]
for some $c>0$ depending on $D$.

Now it remains to bound $\PP[\gamma\in P_{k,D}]$.
By Lemma \ref{lem:large-total-tran}, we have
\begin{equation}  \label{eq:exp-concen-i-pf}
\PP[\gamma\in P_{k,D}] < e^{-c_0k}+\PP\bigg[T(\gamma)\le \frac{2km}{(1-\rho)^2+\rho^2} - b_+(\rho^{-1}-(1-\rho)^{-1}) - c_0Dkm^{1/3}\bigg],
\end{equation}
where $\langle km, b_+\rangle_{}$ is the intersection of $\Gamma_\boo$ with $\LL_{km}$, and recall that $c_0>0$ is a constant independent of $m,k,D$. 
When the event in the right-hand side of \eqref{eq:exp-concen-i-pf} happens, we must have that (at least) one of the following happens:
\begin{itemize}
    \item $|b_+|>km^{2/3}$,
    \item $\max_{|b|\le km^{2/3}} \bB(\langle km,b \rangle_{}, \langle km,0 \rangle_{})-b(\rho^{-1}-(1-\rho)^{-1})\ge c_0Dkm^{1/3}/3$,
    \item $T_{\boo,\langle km,0 \rangle_{}}^\bu\le \frac{2km}{(1-\rho)^2+\rho^2} - c_0Dkm^{1/3}/2$,
\end{itemize}
where $T_{u,v}^\bu = T_{u,v}-\xi(v)$ for any vertices $u\le v$ as before, and recall that $\bB$ is the Busemann function (defined in Section \ref{ssec:buseman}).
To see this, we assume the contrary, i.e.\;none of the above three events happen (while the event in the right-hand side of \eqref{eq:exp-concen-i-pf} happens). 
Then we must have $T_{\boo,\langle km,0 \rangle_{}}^\bu-T_{\boo,\langle km,0 \rangle_{}}^\bu> \bB(\langle km,b_+ \rangle_{}, \langle km,0 \rangle_{})$, which contradicts with Lemma \ref{lem:buse-opti}.

We claim that we can bound the probability of each of the three events by $C'e^{-c'k}$, for some $c',C'>0$ depending on $m,D$.
For the first event the bound is by Lemma \ref{lem:semi-inf-trans}.
For the second event, note that $b\mapsto \bB(\langle km,b \rangle_{}, \langle km,0 \rangle_{})-b(\rho^{-1}-(1-\rho)^{-1})$ is a (two-sided) centered random walk; for the third event, use Theorem \ref{t:onepoint}.

Finally, by sequentially choosing $D, \epsilon, \varepsilon, m$, and considering all large enough $k$ and each $r\in\llbracket 2km, 2(k+1)m\rrbracket$, the conclusion follows.
\end{proof}
We prove Proposition \ref{prop:exp-concen-finite} using a similar strategy.
\begin{proof}[Proof of Proposition \ref{prop:exp-concen-finite}]
The first half of this proof follows the same way as the proof of Proposition \ref{prop:exp-concen-infinite}.
We omit the details, and conclude that the following is true for any $D>0$, $\epsilon>0$, $m\in\N$ with $m^{2/3}\in\Z$, and $k\in \N$, such that $D,m$ are large enough as required by Lemma \ref{lem:large-total-tran}, $\epsilon$ is small enough depending on $D$, and $m$ is large enough depending on $D,\epsilon$.
Take any $k\in\N$ which is $>\epsilon^{-2}$ and large enough as required by Lemma \ref{lem:large-total-tran}, and take any $n\in \llbracket km, (k+1)m\rrbracket$.
Let $\gamma$ be the path from $\LL_0$ to $\LL_{km}$, consisting of the first $2km+1$ vertices of $\Gamma_{\boo,\langle n,b_n\rangle_{}}$.
Then we have
\[
\PP\left[\gamma \not\in P_{k,D}, |\mu_{\boo,\langle n,b_n\rangle_{}}(f)-\nu(f)| > \epsilon \right] < {\lfloor Dk\rfloor+k-1 \choose k-1} (D+1)^{-2k} < e^{-ck}    
\]
for some $c>0$ depending on $D$.
It remains to bound $\PP[\gamma\in P_{k,D}]$.
By Lemma \ref{lem:large-total-tran}, we have
\begin{equation}  \label{eq:exp-concen-f-pf2}
\PP[\gamma\in P_{k,D}] < e^{-c_0k}+\PP\bigg[T(\gamma)\le \frac{2km}{(1-\rho)^2+\rho^2} - b_+(\rho^{-1}-(1-\rho)^{-1}) - c_0Dkm^{1/3}\bigg],
\end{equation}
where $\langle km, b_+\rangle_{}$ is the intersection of $\Gamma_{\boo,\langle n,b_n\rangle_{}}$ with $\LL_{km}$. 
When the event in the right-hand side of \eqref{eq:exp-concen-f-pf2} happens, we must have that (at least) one of the following happens:
\begin{itemize}
    \item $\max_{b\in\Z}  
    T_{\langle km, b\rangle_{},\langle n,b_n\rangle_{}}
    -(b-b_n)(\rho^{-1}-(1-\rho)^{-1})\ge c_0Dkm^{1/3}/3$,
    \item $T_{\boo,\langle km,b_n\rangle_{}}\le \frac{2km}{(1-\rho)^2+\rho^2}
    -b_n(\rho^{-1}-(1-\rho)^{-1})
    -c_0Dkm^{1/3}/2$.
\end{itemize}
To see this, we assume the contrary, i.e.\;none of the above events happen. 
Then we must have
\begin{multline*}
T(\gamma) > T_{\boo,\langle n,b_n\rangle_{}} - T_{\langle km, b_+\rangle_{},\langle n,b_n\rangle_{}} 
\ge  T_{\boo,\langle km,b_n\rangle_{}}
- T_{\langle km, b_+\rangle_{},\langle n,b_n\rangle_{}} \\ >
\frac{2km}{(1-\rho)^2+\rho^2}
-b_+(\rho^{-1}-(1-\rho)^{-1}) - 5c_0Dkm^{1/3}/6, \end{multline*}
which contradicts with the event in the right-hand side of \eqref{eq:exp-concen-f-pf2}.
We claim that we can bound the probability of each of the two events by $C'e^{-c'k}$, for some $c',C'>0$ depending on $m,D$.
For the first event,
note that $n-km\le m$, then the bound can be obtained by taking a union bound over all up-right paths from $\LL_{km}$ to $\langle n,b_n\rangle_{}$ (there are at most $2^{2m}$ such paths, and the passage time of each is the sum of at most $2m+1$ i.i.d.\;$\Exp(1)$ random variables).
For the second event, apply Theorem \ref{t:onepoint}.
Thus the conclusion follows. 
\end{proof}

\vspace{0.05in}

\section*{Acknowledgement}
We thank Timo Sepp\"{a}l\"{a}inen and Pablo 
Ferrari for valuable conversations. We 
thank the organisers of the workshop on integrable
probability at the Open Online Probability School in
June 2020 for hosting a talk by LZ on joint work with AS,
which led to this collaboration.
We would also like to thank anonymous referees for carefully reading this paper and providing many valuable comments that help improve the expository.
AS was supported by NSF grants DMS-1352013 and
DMS-1855527, Simons Investigator grant and a MacArthur Fellowship.

\vspace{0.5in}
\bibliographystyle{halpha}
\bibliography{bibliography}

\end{document}